\def\Rf{\mathbf{Rf}}
\def\Fft{\mathbf{Fft}}
\def\Zar{\mathbf{Zar}}
\def\PP{\textnormal{P}}
\def\QQ{\textnormal{Q}}
\def\XX{\mathfrak{X}}
\def\UU{\mathfrak{U}}
\def\YY{\mathfrak{Y}}
\def\ZZ{\mathfrak{Z}}
\def\SS{\mathfrak{S}}
\def\Coh{\mathbf{Coh}}
\def\Ob{\mathbf{Ob}}
\def\FHom{\mathscr{H}om}
\def\Ad{\mathbf{Ad}}
\def\MS{\mathbf{S}}
\def\MR{\mathbf{R}}
\def\MB{\mathbf{B}}
\DeclareMathOperator{\cris}{crys}
\DeclareMathOperator{\Cris}{Crys}
\DeclareMathOperator{\Cov}{Cov}
\DeclareMathOperator{\fppf}{fppf}
\DeclareMathOperator{\Supp}{Supp}
\DeclareMathOperator{\red}{red}
\DeclareMathOperator{\zar}{zar}
\DeclareMathOperator{\ad}{ad}
\DeclareMathOperator{\Ker}{Ker}
\DeclareMathOperator{\Hom}{Hom}
\DeclareMathOperator{\Spec}{Spec}
\DeclareMathOperator{\Spf}{Spf}
\DeclareMathOperator{\rH}{H}
\DeclareMathOperator{\rD}{D}
\DeclareMathOperator{\rW}{W}
\DeclareMathOperator{\rE}{E}
\DeclareMathOperator{\rG}{G}
\DeclareMathOperator{\rL}{L}
\DeclareMathOperator{\rM}{M}
\DeclareMathOperator{\rR}{R}
\DeclareMathOperator{\id}{id}
\DeclareMathOperator{\rig}{rig}
\DeclareMathOperator{\rconv}{rconv}
\DeclareMathOperator{\conv}{conv}
\DeclareMathOperator{\Conv}{Conv}
\DeclareMathOperator{\RConv}{RConv}
\DeclareMathOperator{\pConv}{pConv}
\DeclareMathOperator{\Iso}{Iso}
\newtheorem{theorem}{Theorem}[section]
\newtheorem{prop}[theorem]{Proposition}
\newtheorem{lemma}[theorem]{Lemma}
\newtheorem{coro}[theorem]{Corollary}
\theoremstyle{definition}
\newtheorem{rem}[theorem]{Remark}
\newtheorem{definition}[theorem]{Definition}
\newtheorem{nothing}[theorem]{}
\numberwithin{equation}{section}
\numberwithin{equation}{theorem}
\newsavebox{\@brx}
\newcommand{\llangle}[1][]{\savebox{\@brx}{\(\m@th{#1\langle}\)}%
  \mathopen{\copy\@brx\kern-0.5\wd\@brx\usebox{\@brx}}}
\newcommand{\rrangle}[1][]{\savebox{\@brx}{\(\m@th{#1\rangle}\)}%
  \mathclose{\copy\@brx\kern-0.5\wd\@brx\usebox{\@brx}}}
\newcommand*{\relrelbarsep}{.386ex}
\newcommand*{\relrelbar}{%
  \mathrel{%
    \mathpalette\@relrelbar\relrelbarsep
  }%
}
\newcommand*{\@relrelbar}[2]{%
  \raise#2\hbox to 0pt{$\m@th#1\relbar$\hss}%
  \lower#2\hbox{$\m@th#1\relbar$}%
}
\providecommand*{\rightrightarrowsfill@}{%
  \arrowfill@\relrelbar\relrelbar\rightrightarrows
}
\providecommand*{\leftleftarrowsfill@}{%
  \arrowfill@\leftleftarrows\relrelbar\relrelbar
}
\providecommand*{\xrightrightarrows}[2][]{%
  \ext@arrow 0359\rightrightarrowsfill@{#1}{#2}%
}
\providecommand*{\xleftleftarrows}[2][]{%
  \ext@arrow 3095\leftleftarrowsfill@{#1}{#2}%
}
\title{On higher direct images of convergent isocrystals}
\author{Daxin Xu}
\date{\today}
\begin{document}
\selectlanguage{english}
\maketitle

\begin{abstract}
	Let $k$ be a perfect field of characteristic $p>0$ and $\rW$ the ring of Witt vectors of $k$.
	In this article, we give a new proof of the Frobenius descent for convergent isocrystals on a variety over $k$ relative to $\rW$. 
	This proof allows us to deduce an analogue of the de Rham complexes comparaison theorem of Berthelot \cite{Ber00} without assuming a lifting of the Frobenius morphism. 
	As an application, we prove a version of Berthelot's conjecture on the preservation of convergent isocrystals under the higher direct image by a smooth proper morphism of $k$-varieties. 
\end{abstract}

\tableofcontents

\section{Introduction}

\begin{nothing}
	Let $k$ be a perfect field of characteristic $p>0$. A good $p$-adic cohomology theory on a variety over $k$ is the rigid cohomology developed by Berthelot \cite{Ber86, Ber96}. 
	The coefficients for this theory are (over-)convergent $F$-isocrystals: they play a similar role of the lisse $\ell$-adic sheaves in $\ell$-adic cohomology. 
	In (\cite{Ber86} 4.3, \cite{Tsu03}), Berthelot and Tsuzuki conjectured that under a smooth proper morphism of varieties over $k$, the higher direct image of a (over-)convergent ($F$-)isocrystal is still a (over-)convergent ($F$-)isocrystal analogue to the $\ell$-adic case. 
	Various cases and variants of this conjecture have been proved by Tsuzuki \cite{Tsu03}, Shiho \cite{Shi08II}, \'Etess \cite{Ete12}, Caro \cite{Caro15}, etc. We refer to an article of Lazda \cite{Laz15} for a survey on these results and the relation between them. 
	The goal of this article is to prove a version of Berthelot's conjecture in the context of convergent topos developed by Ogus \eqref{intro Berthelots conj}.
\end{nothing}

\begin{nothing} \label{intro conv topos}
	In \cite{Ogus84,Ogus90}, Ogus introduced a crystalline-like site: convergent site and defined a \textit{convergent isocrystal} as a crystal on this site. Let us briefly recall his definition. 

	Let $\rW$ be the ring of Witt vectors of $k$, $K$ its fraction field and $X$ a scheme of finite type over $k$. We denote by $\Conv(X/\rW)$ the category of couples $(\mathfrak{T},u)$ consisting of a flat formal $\rW$-scheme of finite type $\mathfrak{T}$ and a $k$-morphism $u$ from the reduced subscheme $T_{0}$ of the special fiber of $\mathfrak{T}$ to $X$. Morphisms are defined in a natural way. A family of morphisms $\{(\mathfrak{T}_{i},u_{i})\to (\mathfrak{T},u)\}_{i\in I}$ is a covering if $\{\mathfrak{T}_{i}\to \mathfrak{T}\}_{i\in I}$ is a Zariski covering. 

	The functor $(\mathfrak{T},u)\mapsto \Gamma(\mathfrak{T}_{\zar},\mathscr{O}_{\mathfrak{T}}[\frac{1}{p}])$ is a sheaf of rings that we denote by $\mathscr{O}_{X/K}$. An $\mathscr{O}_{X/K}$-module amounts to give the following data: 
	\begin{itemize}
		\item[(i)] for every object $(\mathfrak{T},u)$ of $\Conv(X/\rW)$, an $\mathscr{O}_{\mathfrak{T}}[\frac{1}{p}]$-module $\mathscr{F}_{\mathfrak{T}}$ of $\mathfrak{T}_{\zar}$,

		\item[(ii)] for every morphism $f:(\mathfrak{T}_{1},u_{1})\to (\mathfrak{T}_{2},u_{2})$ of $\Conv(X/\rW)$, an $\mathscr{O}_{\mathfrak{T}_{1}}$-linear morphism $c_{f}:f^{*}(\mathscr{F}_{\mathfrak{T}_{2}})\to \mathscr{F}_{\mathfrak{T}_{1}}$,
	\end{itemize}
	satisfying a cocycle condition for the composition of morphisms as in (\cite{BO} 5.1). 	
	
	A convergent isocrystal on $\Conv(X/\rW)$ is a coherent crystal of $\mathscr{O}_{X/K}$-modules $\mathscr{F}$ on $\Conv(X/\rW)$, i.e. for every object $(\mathfrak{T},u)$ of $\Conv(X/\rW)$, $\mathscr{F}_{\mathfrak{T}}$ is coherent, and for every morphism $f$ of $\Conv(X/\rW)$, the transition morphism $c_{f}$ is an isomorphism.
	We denote by $\Iso^{\dagger}(X/\rW)$ the category of convergent isocrystals on $\Conv(X/\rW)$. 
	If $X$ is smooth over $k$, a convergent isocrystal can be viewed as the isogeny class of a crystal of $\mathscr{O}_{X/\rW}$-module on the crystalline site $\Cris(X/\rW)$ satisfying certain convergent conditions (cf. \cite{Ogus90} 0.7.2 and \cite{Ber96} 2.2.14). 
\end{nothing}
\begin{nothing} \label{intro Frob descent Ogus sec}
	In (\cite{Ogus84} 4.6), Ogus showed that the category $\Iso^{\dagger}(X/\rW)$ satisfies descent property under a proper and surjective morphism of $k$-schemes. 
	Then, if $X'$ denotes the base change of $X$ by the Frobenius morphism of $k$, the functorial morphism of convergent topoi induced by the relative Frobenius morphism $F_{X/k}:X\to X'$ gives an equivalence of categories:
	\begin{equation} \label{intro Frob descent Ogus}
		F_{X/k,\conv}^{*}: \Iso^{\dagger}(X'/\rW)\xrightarrow{\sim} \Iso^{\dagger}(X/\rW),
	\end{equation}
	that we call \textit{Frobenius descent}.

	A \textit{convergent $F$-isocrystal} on $\Conv(X/\rW)$ is a couple $(\mathscr{E},\varphi)$ of a convergent isocrystal $\mathscr{E}$ on $\Conv(X/\rW)$ and an isomorphism $\varphi$ between $\mathscr{E}$ and its pullback via the absolute Frobenius morphism of $X$ (cf. \ref{def F-isocrystal} for a precise definition). 
\end{nothing}

\begin{nothing}
	To study the higher direct image of convergent ($F$-)isocrystals, we need the notion of convergent topos over a $p$-adic base developed by Shiho \cite{Shi02,Shi08} \footnote{Actually, Shiho developed a theory of log convergent site and log convergent cohomology over a $p$-adic base with log structure.}. 
	Let $\SS$ be a flat formal $\rW$-scheme of finite type, $S_{0}$ the reduced subscheme of its special fiber and $X$ an $S_{0}$-scheme. We define the convergent site $\Conv(X/\SS)$ of $X$ relative to $\SS$ and the category $\Iso^{\dagger}(X/\SS)$ of convergent isocrystals on $\Conv(X/\SS)$ as in \ref{intro conv topos} (cf. \ref{def conv cat} and \ref{coh fppf descent}). 
	Shiho generalized Ogus' proper surjective descent for convergent isocrystals in this setting (\cite{Shi08II} 7.3). 

	We denote by $(X/\SS)_{\conv,\fppf}$ the topos of fppf sheaves on the category $\Conv(X/\SS)$ \eqref{def topology Conv}. As a first step towards Berthelot's conjecture, we show the following result. 
\end{nothing}

\begin{theorem}[\ref{Frob descent fppf topos}] \label{intro Frob descent fppf topos}
	Suppose that the Frobenius morphism $F_{S_{0}}:S_{0}\to S_{0}$ is flat.
	Let $X$ be an $S_{0}$-scheme locally of finite type, $X'=X\times_{S_{0},F_{S_{0}}}S_{0}$ and $F_{X/S_{0}}:X\to X'$ the relative Frobenius morphism. 
	The functorial morphism of topoi $F_{X/S_{0},\conv}:(X/\SS)_{\conv,\fppf}\to (X'/\SS)_{\conv,\fppf}$ is an equivalence of topoi. 
\end{theorem}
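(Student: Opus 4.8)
The plan is to show that $F_{X/S_0,\conv}$ is an equivalence of topoi by exhibiting an explicit quasi-inverse on the underlying sites, using the fact that the relative Frobenius becomes an equivalence after passing to $p$-adically complete, $p$-torsion-free thickenings. The key point is that the convergent site only sees flat formal $\rW$-schemes $\mathfrak{T}$ together with a morphism from the \emph{reduced} special fiber $T_0$, and such a reduced $k$-scheme of characteristic $p$ does not distinguish $X$ from $X'$ up to the Frobenius twist. First I would recall that for an $S_0$-scheme $Y$ locally of finite type the relative Frobenius $F_{Y/S_0}\colon Y\to Y^{(p)}$ induces, after reduction, a morphism that is a universal homeomorphism; combined with the hypothesis that $F_{S_0}$ is flat, one gets that $X'=X\times_{S_0,F_{S_0}}S_0$ and that the projection $X'\to X$ (through $F_{S_0}$) together with $F_{X/S_0}$ fit into the standard Frobenius factorization of the absolute Frobenius of $X$.

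The main construction is a functor $\Phi\colon \Conv(X'/\SS)\to \Conv(X/\SS)$ sending $(\mathfrak{T},u')$ to $(\mathfrak{T},u)$ where $u\colon T_0\to X$ is obtained from $u'\colon T_0\to X'$ as follows: since $T_0$ has characteristic $p$, its absolute Frobenius $F_{T_0}$ factors through $T_0^{(p)}$, and one checks that $X'$ receives a canonical map from $T_0^{(p)}$-type data so that composing $u'$ with the Frobenius twist produces a morphism landing in $X$ after identifying $X'\times_{S_0}S_0\cong X$ via $F_{S_0}$ — this is exactly the reduced-special-fiber incarnation of the fact that $\Conv$ only depends on $X$ through its reduction and characteristic-$p$ Frobenius structure. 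I would verify that $\Phi$ is continuous and cocontinuous for the fppf topology (it is the identity on the formal-scheme component $\mathfrak{T}$, so it preserves and reflects fppf coverings), hence induces a morphism of topoi, and that $\Phi$ and the functor underlying $F_{X/S_0,\conv}$ are mutually quasi-inverse on objects and morphisms, the checks reducing to the identity $F_{T_0} = F_{X/S_0}\circ(\text{base change of }u)$ on reduced $k$-schemes. The flatness of $F_{S_0}$ enters to guarantee that $X'$ is again an $S_0$-scheme locally of finite type (so that $\Conv(X'/\SS)$ is defined in the same generality) and that the base-change square defining $X'$ behaves well; without it the construction of $\Phi$ on the target side would not make sense.

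The main obstacle I anticipate is \emph{functoriality and the cocycle/compatibility bookkeeping} rather than any deep geometric input: one must check that $\Phi$ respects composition of morphisms in the convergent site and that the two composites $\Phi\circ(\text{Frob functor})$ and $(\text{Frob functor})\circ\Phi$ are canonically isomorphic to the respective identity functors, compatibly with the fppf topology, so that the induced morphisms of topoi are genuinely inverse equivalences. A secondary subtlety is ensuring that everything is done at the level of sites in a way that is insensitive to the choice of $p$-adic thickening — i.e.\ that the morphism $u$ produced from $u'$ depends only on $(\mathfrak{T},u')$ and not on auxiliary lifts of Frobenius, which is precisely what makes this approach work \emph{without} assuming a Frobenius lifting, in the spirit of the de Rham comparison alluded to in the abstract. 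Once the equivalence of sites (with topologies) is established, the equivalence of the associated fppf topoi is formal.
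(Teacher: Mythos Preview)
Your proposal has a genuine gap: the quasi-inverse functor $\Phi\colon \Conv(X'/\SS)\to \Conv(X/\SS)$ you describe does not exist. Given $(\mathfrak{T},u')$ with $u'\colon T_{0}\to X'$, there is in general no $S_{0}$-morphism $u\colon T_{0}\to X$ with $F_{X/S_{0}}\circ u = u'$. For example, take $\mathfrak{T}$ to be any flat formal $\SS$-scheme with $T_{0}=X'$ and $u'=\id_{X'}$; a lift $u$ would be a section of $F_{X/S_{0}}$, which does not exist for a general $X$. The manipulations you sketch with $T_{0}^{(p)}$ and the identification ``$X'\times_{S_{0}}S_{0}\cong X$'' do not produce such a map: base-changing $u'$ by $F_{S_{0}}$ lands in $X''=(X')'$, not in $X$, and the projection $X'\to X$ is not an $S_{0}$-morphism. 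So the functor $\rho\colon \Conv(X/\SS)\to \Conv(X'/\SS)$, $(\mathfrak{T},u)\mapsto (\mathfrak{T},F_{X/S_{0}}\circ u)$, is fully faithful but \emph{not} essentially surjective, and there is no inverse on the level of sites.

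The paper's proof proceeds differently. It invokes a general criterion (Proposition~\ref{lemma adjunction iso}): if a functor $u\colon\mathscr{C}\to\mathscr{D}$ between sites is fully faithful, continuous and cocontinuous, and every object of $\mathscr{D}$ admits a covering by objects in the image of $u$, then the induced morphism of topoi is an equivalence. Full faithfulness of $\rho$ is Lemma~\ref{functor rho fully faithful} (using that $F_{S_{0}}$ is faithfully flat and $T_{0}$ is reduced), and continuity/cocontinuity is Lemma~\ref{cont cocont func}. The substantive step is Lemma~\ref{lemma condition 4}: given $(\mathfrak{T},u')\in\Conv(X'/\SS)$ (locally on $\mathfrak{T}$), one constructs an \emph{fppf covering} $\rho(\mathfrak{Z},v)\to(\mathfrak{T},u')$ with $(\mathfrak{Z},v)\in\Conv(X/\SS)$. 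Here $\mathfrak{Z}$ is \emph{not} $\mathfrak{T}$; rather, after embedding the relevant affine open into $\YY=\Spf(\mathscr{O}_{\SS}\{T_{1},\dots,T_{d}\})$ and using the explicit Frobenius lift $F\colon\YY\to\YY$, $T_{i}\mapsto T_{i}^{p}$, one sets $\mathfrak{Z}=\mathfrak{T}\times_{\YY,F}\YY$. The projection $\mathfrak{Z}\to\mathfrak{T}$ is faithfully flat because $F$ is, and the reduced special fibre of $\mathfrak{Z}$ maps to $X$. Thus the equivalence of topoi is obtained \emph{without} an equivalence of the underlying categories, precisely because the fppf topology allows one to replace $\mathfrak{T}$ by a flat cover on which the lift exists.
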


Our proof is inspired by a site-theoretic construction of the Cartier transform of Ogus-Vologodsky due to Oyama \cite{OV07,Oy} and its lifting modulo $p^{n}$ developed by the author \cite{Xu}. 
By Gabber-Bosch-G\"ortz's faithfully flat descent theory for coherent sheaves in rigid geometry \cite{BG98}, 
we obtain a new proof of the Frobenius descent \eqref{intro Frob descent Ogus}.

\begin{coro}[\ref{Frob descent isocrystals}]\label{intro Frob descent}
	Keep the hypotheses of \ref{intro Frob descent fppf topos}.
	The direct image and inverse image functors of $F_{X/S_{0},\conv}$ induce equivalences of categories quasi-inverse to each other: 
	\begin{equation}
		\Iso^{\dagger}(X/\SS) \leftrightarrows \Iso^{\dagger}(X'/\SS). 
	\end{equation}
\end{coro}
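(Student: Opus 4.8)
The plan is to deduce Corollary~\ref{intro Frob descent} from Theorem~\ref{intro Frob descent fppf topos} by descending the equivalence of fppf topoi to the level of convergent isocrystals. First I would record that $\mathscr{O}_{X/K}$ is the sheaf of rings on $(X/\SS)_{\conv,\fppf}$ obtained by $p$-inverting the structure sheaf, that convergent isocrystals can be described as coherent crystals of $\mathscr{O}_{X/K}$-modules (as in \ref{intro conv topos}), and that this description is insensitive to whether one works with the Zariski or the fppf topology on $\Conv(X/\SS)$: the content here is an fppf descent statement for coherent sheaves, which is exactly what the hypothesis \ref{coh fppf descent} in the excerpt provides (and where Gabber--Bosch--G\"ortz's faithfully flat descent in rigid geometry \cite{BG98} enters). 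Thus $\Iso^{\dagger}(X/\SS)$ is equivalent to the category of coherent crystals of $\mathscr{O}_{X/K}$-modules on the fppf topos $(X/\SS)_{\conv,\fppf}$, and likewise for $X'$.

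Next I would check that the morphism of topoi $F_{X/S_0,\conv}\colon (X/\SS)_{\conv,\fppf}\to (X'/\SS)_{\conv,\fppf}$ carries the ring $\mathscr{O}_{X'/K}$ to $\mathscr{O}_{X/K}$ compatibly, i.e. there is a canonical isomorphism $F_{X/S_0,\conv}^{*}\mathscr{O}_{X'/K}\xrightarrow{\sim}\mathscr{O}_{X/K}$ of sheaves of rings; this is immediate since on an object $(\mathfrak{T},u)$ of $\Conv(X/\SS)$ both sides compute $\Gamma(\mathfrak{T}_{\zar},\mathscr{O}_{\mathfrak{T}}[\tfrac1p])$, the relative Frobenius only altering the structure map $u$ to $X$ and not the formal scheme $\mathfrak{T}$. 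Consequently $F_{X/S_0,\conv}^{*}$ and $F_{X/S_0,\conv,*}$ induce a pair of adjoint functors between the categories of $\mathscr{O}_{X'/K}$-modules and $\mathscr{O}_{X/K}$-modules. Since Theorem~\ref{intro Frob descent fppf topos} asserts that $F_{X/S_0,\conv}$ is an equivalence of topoi, these are inverse equivalences between the categories of all (sheaves of) modules over the respective structure rings.

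It then remains only to verify that these inverse equivalences preserve the subcategories of coherent crystals, and this is formal: an equivalence of ringed topoi preserves coherence (it is a local, finite-presentation condition that can be tested after any, equivalently every, covering, and is transported by the equivalence), and it preserves the crystal condition, since the latter says that the transition maps $c_f$ attached to morphisms $f$ of the underlying site are isomorphisms, a property manifestly stable under an equivalence of sites. Pulling this through the identification from the first paragraph, $F_{X/S_0,\conv}^{*}$ and $F_{X/S_0,\conv,*}$ restrict to quasi-inverse equivalences $\Iso^{\dagger}(X'/\SS)\leftrightarrows\Iso^{\dagger}(X/\SS)$, which is the claim; I would also remark that on the smooth-over-$k$ and absolute base $\SS=\Spf\rW$ this recovers Ogus' Frobenius descent \eqref{intro Frob descent Ogus}.

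The only genuine obstacle is the first paragraph: one must make sure the equivalence $\Iso^{\dagger}(X/\SS)\simeq\{\text{coherent crystals of }\mathscr{O}_{X/K}\text{-modules on the fppf topos}\}$ is legitimate, i.e. that passing from the Zariski topology (in which $\Conv(X/\SS)$ and $\Iso^{\dagger}$ were originally defined in \ref{intro conv topos}) to the fppf topology does not change the category of coherent crystals. This is precisely an fppf-descent statement for coherent modules on rigid (or formal) spaces, and it is the technical heart cited from \cite{BG98}; once it is in hand, everything else is a transport of structure along the equivalence of topoi of Theorem~\ref{intro Frob descent fppf topos}.
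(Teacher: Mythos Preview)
Your overall strategy matches the paper's: pass to the fppf topology (using \ref{coh fppf descent}), invoke the equivalence of topoi from Theorem~\ref{intro Frob descent fppf topos}, and then argue that the induced equivalence of module categories restricts to convergent isocrystals. The inverse image direction is indeed immediate from \eqref{description of pullback}.

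The gap is in your treatment of the direct image. You write that preservation of ``coherent crystal'' is ``formal'' and ``manifestly stable under an equivalence of sites,'' but you do not have an equivalence of sites: the functor $\rho:\Conv(X/\SS)\to\Conv(X'/\SS)$ is fully faithful (Lemma~\ref{functor rho fully faithful}) but \emph{not} essentially surjective. The notions ``coherent'' and ``crystal'' in Definition~\ref{def convergent crystals} are phrased object-by-object and morphism-by-morphism over the site, not intrinsically over the topos. So for an object $(\mathfrak{T},u)$ of $\Conv(X'/\SS)$ that is not isomorphic to any $\rho(\mathfrak{Z})$, there is no tautological reason why $(F_{X/S_0,\conv*}\mathscr{F})_{\mathfrak{T}}$ should be a coherent $\mathscr{O}_{\mathfrak{T}}[\tfrac1p]$-module, nor why the transition maps $c_g$ of $F_{X/S_0,\conv*}\mathscr{F}$ should be isomorphisms for morphisms $g$ of $\Conv(X'/\SS)$ not in the image of $\rho$.

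This is exactly where the paper does real work. One chooses an fppf covering $\{\rho(\mathfrak{Z})\to\mathfrak{T}\}$ by objects in the image of $\rho$ (Lemma~\ref{lemma condition 4}), observes that $\rho(\mathfrak{Z})\times_{\mathfrak{T}}\rho(\mathfrak{Z})$ again lies in the image of $\rho$ (Lemma~\ref{lemma rho 4 comp}), uses the crystal property of $\mathscr{F}$ on $\Conv(X/\SS)$ to produce a genuine descent datum on the coherent module $\mathscr{F}_{\mathfrak{Z}}$, and then invokes Gabber--Bosch--G\"ortz (Theorem~\ref{descent fppf}) to descend it to a coherent module on $\mathfrak{T}$; the fppf sheaf condition identifies this with $(F_{X/S_0,\conv*}\mathscr{F})_{\mathfrak{T}}$. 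A separate argument (the paper cites \cite{Xu} 8.15) is needed to show the transition maps of $F_{X/S_0,\conv*}\mathscr{F}$ are isomorphisms. None of this is automatic from the equivalence of topoi alone; your proposal correctly names Gabber--Bosch--G\"ortz as the key input but applies it only to justify the Zariski/fppf comparison, whereas it must be applied a second time, at each object of $\Conv(X'/\SS)$, to establish coherence of the pushforward.
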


%\begin{theorem}
%	Let $\SS$ be a flat formal $\rW$-scheme of finite type such that the absolute Frobenius morphism $F_{S_{0}}$ of the reduced subscheme of its special fiber $S$ is flat and let $X$ be an $S_{0}$-scheme of finite type. 
%	We deonte by $F_{X/S_{0}}:X\to X'=X\times_{S_{0},F_{S_{0}}}S_{0}$ the relative Frobenius morphism.
%	Then, the direct image and inverse image functors of $F_{X/S_{0},\conv}$ induces equivalences of categories quasi-inverse to each other:
%\begin{equation}
%	\Iso^{\dagger}(X/\SS) \leftrightarrows \Iso^{\dagger}(X'/\SS).
%\end{equation}
%\end{theorem}

\begin{nothing}
	Keep the notation of \ref{intro Frob descent fppf topos} and suppose that there exists smooth liftings $\XX$ of $X$ and $\XX'$ of $X'$ over $\SS$ (in particular, $X$ is smooth over $S_{0}$). 
	We denote by $\widehat{\Omega}_{\XX/\SS}^{1}$ the $\mathscr{O}_{\XX}$-module of differentials of $\XX$ relative to $\SS$. 
	Given a convergent isocrystal $\mathscr{E}\in \Ob(\Iso^{\dagger}(X/\SS))$, there exists an integrable connection $\nabla:\mathscr{E}_{\XX}\to \mathscr{E}_{\XX}\otimes_{\mathscr{O}_{\XX}}\widehat{\Omega}_{\XX/\SS}^{1}$ on the coherent $\mathscr{O}_{\XX}[\frac{1}{p}]$-module $\mathscr{E}_{\XX}$ \eqref{convergent to isocry}. 
	We denote by $\mathscr{E}_{\XX}\otimes_{\mathscr{O}_{\XX}}\widehat{\Omega}_{\XX/\SS}^{\bullet}$ the associated de Rham complex. 
	We deduce from \ref{intro Frob descent fppf topos} the following result about comparing de Rham complexes for the Frobenius descent: 
\end{nothing}

\begin{coro}[\ref{Comp dR complexes}] \label{intro dR coh}
	Keep the above notation and let $f:X\to S_{0}$ be the canonical morphism. There exists a canonical isomorphism between de Rham complexes of $\mathscr{E}$ and of $F_{X/S_{0},\conv*}(\mathscr{E})$ in $\rD(X'_{\zar},f^{-1}(\mathscr{O}_{\SS}))$:
	\begin{equation}
		F_{X/S_{0}*}(\mathscr{E}_{\XX}\otimes_{\mathscr{O}_{\XX}}\widehat{\Omega}_{\XX/\SS}^{\bullet}) \xrightarrow{\sim} (F_{X/S_{0},\conv*}(\mathscr{E}))_{\XX'}\otimes_{\mathscr{O}_{\XX'}}\widehat{\Omega}_{\XX'/\SS}^{\bullet}.
	\end{equation}
\end{coro}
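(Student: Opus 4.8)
The plan is to reduce the statement to the equivalence of fppf convergent topoi of Theorem \ref{intro Frob descent fppf topos}, by realizing both de Rham complexes as the cohomology of the structure sheaf (twisted by $\mathscr{E}$) computed on an appropriate localized convergent topos. First I would recall that for a smooth lifting $\XX$ of $X$ over $\SS$, the pair $(\XX, X \hookrightarrow \XX_0 \to X)$ (more precisely, using the divided-power or tubular neighbourhoods of the diagonal embeddings of $\XX$) defines an object, or rather a simplicial object $\XX^{\bullet}$ coming from the Čech nerve of $\XX \to X$, of $\Conv(X/\SS)$; the key point is that the associated localized topos $(X/\SS)_{\conv}/\XX^{\bullet}$ computes convergent cohomology, and that for a convergent isocrystal $\mathscr{E}$ the evaluation on this simplicial object, after taking the total complex, is canonically quasi-isomorphic to the de Rham complex $\mathscr{E}_{\XX}\otimes_{\mathscr{O}_{\XX}}\widehat{\Omega}^{\bullet}_{\XX/\SS}$. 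This is the convergent analogue of the standard crystalline-to-de-Rham comparison, and I would invoke Ogus \cite{Ogus90} / Shiho \cite{Shi08} for it, or re-derive it from the description of $\mathscr{O}_{X/K}$-modules in \ref{intro conv topos}.

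The second step is to match the two sides under Frobenius. Since $F_{X/S_0}: X \to X'$ induces the equivalence $F_{X/S_0,\conv}$ of Theorem \ref{intro Frob descent fppf topos}, pushing forward $\mathscr{E}$ gives a convergent isocrystal $\mathscr{E}' = F_{X/S_0,\conv*}(\mathscr{E})$ on $X'$, and by the very construction of the equivalence the sheaf $\mathscr{E}'$ on $(X'/\SS)_{\conv,\fppf}$ is identified, via the morphism of topoi, with $\mathscr{E}$ on $(X/\SS)_{\conv,\fppf}$. Now I would observe that the simplicial object $\XX^{\bullet}$ attached to the lifting $\XX/\SS$ of $X$ and the simplicial object $\XX'^{\bullet}$ attached to $\XX'/\SS$ of $X'$, when pulled back to the common (equivalent) topos, both cover the terminal object; hence by cohomological descent the total complex of $\mathscr{E}$ evaluated on $\XX^{\bullet}$ and the total complex of $\mathscr{E}'$ evaluated on $\XX'^{\bullet}$ are each canonically quasi-isomorphic to $R\Gamma$ of the same sheaf on the same topos, localized over $X'$ (using that $F_{X/S_0}$ is a homeomorphism on underlying spaces, so $X_{\zar} \cong X'_{\zar}$ and we may regard everything in $\rD(X'_{\zar}, f^{-1}(\mathscr{O}_{\SS}))$). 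Combining this chain of canonical isomorphisms with the de Rham comparison of the first step yields the desired isomorphism $F_{X/S_0*}(\mathscr{E}_{\XX}\otimes_{\mathscr{O}_{\XX}}\widehat{\Omega}^{\bullet}_{\XX/\SS}) \xrightarrow{\sim} \mathscr{E}'_{\XX'}\otimes_{\mathscr{O}_{\XX'}}\widehat{\Omega}^{\bullet}_{\XX'/\SS}$.

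The main obstacle I anticipate is not the formal descent argument but the \emph{compatibility} and \emph{local-to-global} bookkeeping: one must check that the de Rham comparison of step one is functorial for morphisms of convergent topoi (so that it is genuinely compatible with $F_{X/S_0,\conv}$), that it is compatible with the twist by the coefficient isocrystal and with its connection, and that it descends Zariski-locally on $X$ and $X'$ since a global Frobenius lift need not exist — this last point is exactly where the new proof of Theorem \ref{intro Frob descent fppf topos} (which does not assume a Frobenius lift) is essential, in contrast to Berthelot's original argument in \cite{Ber00}. I would handle the local-to-global issue by working throughout with the simplicial/Čech constructions, which are insensitive to the existence of global lifts, and by noting that all the identifications are manifestly natural, so they glue. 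A secondary technical care point is to ensure that passing from $\mathscr{O}_{\XX}$-modules to $\mathscr{O}_{\XX}[\tfrac1p]$-modules and taking the associated analytic/rigid objects is compatible with the fppf (rigid faithfully flat) descent of \cite{BG98} used in the proof of the theorem, so that the de Rham complexes, a priori defined only up to isogeny, are compared on the nose.
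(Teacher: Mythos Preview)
Your approach is correct and is essentially the paper's: both de Rham complexes are identified with the derived pushforward of $\mathscr{E}$ (resp.\ $F_{X/S_0,\conv*}\mathscr{E}$) along the canonical projection $u_{X/\SS}\colon (X/\SS)_{\conv,\zar}\to X_{\zar}$ via Shiho's comparison \eqref{direct image u dR}, and the equivalence of fppf topoi (Theorem~\ref{intro Frob descent fppf topos}) matches the two. The paper organizes this more directly, proving first (Proposition~\ref{thm comp coh}) that $F_{X/S_0*}\rR u_{X/\SS*}(\mathscr{E})\simeq \rR u_{X'/\SS*}(F_{X/S_0,\conv,\zar*}\mathscr{E})$ straight from the topos equivalence together with the vanishing $\rR^{i}\alpha_{*}(\mathscr{E})=0$ for $i\ge 1$ (Corollary~\ref{coh zar fppf}) to mediate between fppf and Zariski; this bypasses your simplicial \v{C}ech machinery and makes the anticipated gluing and rigid-descent concerns moot, since global liftings $\XX,\XX'$ are assumed and no lift of Frobenius is ever invoked.
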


	In \cite{Ber96II}, Berthelot introduced a sheaf $\mathscr{D}^{\dagger}$ of differential operators over $\XX$ and described (over-)convergent isocrystals in terms of arithmetic $\mathscr{D}^{\dagger}$-modules. 
	If there exists a lifting $F:\XX\to \XX'$ of the relative Frobenius morphism $F_{X/S_{0}}$, he used it to establish a version of Frobenius descent for arithmetic $\mathscr{D}^{\dagger}$-modules and a comparison result for de Rham complexes (cf. \cite{Ber00} 4.2.4 and 4.3.5). 
	The above results can be viewed as a counterpart of Berthelot's results for convergent isocrystals without assuming a lifting of Frobenius morphism. 

Our main result is the following.

\begin{theorem}[\ref{coro main thm}]\label{intro Berthelots conj}
	Let $g:X\to Y$ be a smooth proper morphism of $k$-schemes locally of finite type. 
	The higher direct image of a convergent isocrystal (resp. $F$-isocrystal) on $\Conv(X/\rW)$ \eqref{intro Frob descent Ogus sec} is a convergent isocrystal (resp. F-isocrystal) on $\Conv(Y/\rW)$.
\end{theorem}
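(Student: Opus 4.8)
The plan is to establish the statement for convergent isocrystals first; the case of $F$-isocrystals will then follow formally. The assertion is local on $Y$ for the Zariski topology, and since the convergent site sees $Y$ only through morphisms from reduced schemes, we may assume that $Y$ is reduced, affine and of finite type over $k$. Applying de Jong's theorem on alterations, choose a proper surjective morphism $\widetilde{Y}\to Y$ with $\widetilde{Y}$ smooth over $k$; combining the proper surjective descent for convergent isocrystals of Ogus and Shiho (\cite{Ogus84} 4.6, \cite{Shi08II} 7.3) with cohomological descent applied to the base change of $g$ along a smooth hypercovering of $Y$, we reduce to the case where $Y$, hence also $X$, is smooth over $k$. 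For a fixed object $(\mathfrak{T},u)$ of $\Conv(Y/\rW)$, the base change property of higher direct images in the convergent topos identifies the value $(R^{q}g_{\conv*}\mathscr{E})_{(\mathfrak{T},u)}$ with the relative convergent cohomology of $X\times_{Y,u}T_{0}$ over $\mathfrak{T}$ with coefficients in the restriction of $\mathscr{E}$, the transition morphisms being the corresponding base change morphisms; so it remains to show that this relative convergent cohomology is a coherent $\mathscr{O}_{\mathfrak{T}}[\tfrac{1}{p}]$-module whose formation is compatible with an arbitrary morphism of formal schemes $\mathfrak{T}_{1}\to\mathfrak{T}_{2}$.

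To compute it I would proceed by a local de Rham computation. Working Zariski-locally on a smooth formal $\rW$-scheme $\YY$ lifting an affine open of $Y$, and using the smoothness of $g$, choose a system of smooth formal $\YY$-schemes $\XX_{i}$ lifting the members of an open covering of $X$; a single smooth proper lifting of $g$ over $\YY$ need not exist, and handling the resulting embedding system $\XX_{\bullet}$ rather than one lift is precisely the technical setting for which the Frobenius-free comparison of de Rham complexes \ref{intro dR coh} is designed. The isocrystal $\mathscr{E}$ then furnishes an integrable $\YY$-connection on $\mathscr{E}_{\XX_{\bullet}}$, and $Rg_{\conv*}\mathscr{E}$ is computed by the relative de Rham complex $\mathscr{E}_{\XX_{\bullet}}\otimes\widehat{\Omega}^{\bullet}_{\XX_{\bullet}/\YY}$, a bounded complex of coherent sheaves on the tube $\,]X[_{\XX_{\bullet}}$, pushed forward to $\,]Y[_{\YY}$; this push-forward is along a proper morphism because $g$ is proper. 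Coherence of each $R^{q}g_{\conv*}\mathscr{E}$ then follows from Kiehl's finiteness theorem for proper morphisms in rigid geometry.

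The heart of the proof is the crystal property, i.e.\ the compatibility of $R^{q}g_{\conv*}\mathscr{E}$ with a morphism $\mathfrak{T}_{1}\to\mathfrak{T}_{2}$ of test objects whose underlying morphism of formal schemes is not flat; this is a base change assertion for the relative de Rham cohomology above, which cannot be settled by flat base change alone. Here I would bring in the Frobenius descent \ref{intro Frob descent}: replacing $\mathscr{E}$ by its successive Frobenius descents $\mathscr{E}^{(n)}$ along the relative Frobenius morphisms of $X$ over $k$, the equivalence \ref{intro Frob descent fppf topos} applied to $X$ and to $Y$ identifies $Rg_{\conv*}\mathscr{E}$ with the Frobenius pullback of $Rg^{(n)}_{\conv*}\mathscr{E}^{(n)}$, where $g^{(n)}$ is the corresponding twist of $g$, and \ref{intro dR coh} guarantees that this identification is compatible with the de Rham computation above, so that finiteness and base change transfer back to $\mathscr{E}$. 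For $n$ large enough $\mathscr{E}^{(n)}$ is the isogeny class of a crystal on $\Cris(X^{(n)}/\rW)$ (cf.\ \cite{Ogus90} 0.7.2, \cite{Ber96} 2.2.14); the relative convergent cohomology is then, up to isogeny, relative crystalline cohomology, whose compatibility with the base change $\mathfrak{T}_{1}\to\mathfrak{T}_{2}$ is known (Berthelot--Ogus, cf.\ \cite{BO}). Together with the Gauss--Manin connection this exhibits $R^{q}g_{\conv*}\mathscr{E}$ as a convergent isocrystal on $\Conv(Y/\rW)$. I expect the main obstacle to lie in this last reduction: controlling the comparison between convergent and crystalline cohomology in families, keeping track of the radius of convergence under iterated Frobenius descent, and making the embedding-system computation functorial enough to be fed into \ref{intro dR coh}.

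Finally, the case of convergent $F$-isocrystals is a formal consequence. As $k$ is perfect the Frobenius of $\Spec k$ is bijective, so \ref{intro Frob descent} applies over $\rW$; combined with the base change for higher direct images along the commutative square relating $g$ with the absolute Frobenius morphisms of $X$ and $Y$, and with the identification furnished by \ref{intro dR coh}, one obtains a canonical isomorphism of $R^{q}g_{\conv*}\mathscr{E}$ with its pullback under the absolute Frobenius of $Y$ whenever $\mathscr{E}$ carries such structure. Thus, for a convergent $F$-isocrystal $(\mathscr{E},\varphi)$, transporting $\varphi$ through this isomorphism yields $\varphi_{q}$ making $(R^{q}g_{\conv*}\mathscr{E},\varphi_{q})$ a convergent $F$-isocrystal on $\Conv(Y/\rW)$.
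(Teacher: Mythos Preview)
Your outline shares the broad architecture of the paper's proof---reduce to smooth $Y$, then use a Frobenius/Dwork argument---but two steps contain genuine gaps.

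\textbf{The descent to smooth $Y$.} You invoke ``proper surjective descent for convergent isocrystals'' together with ``cohomological descent along a smooth hypercovering'' to reduce to $Y$ smooth. But Ogus--Shiho descent is a statement about the \emph{category} $\Iso^\dagger(Y/\rW)$: it lets you recover an isocrystal from one on $\widetilde Y$ with descent data, not conclude that an arbitrary sheaf on $\Conv(Y/\rW)$ is an isocrystal once its pullback is. What you actually need is: given that $h_{\conv}^*(\rR^i g_{\conv*}\mathscr E)$ is a coherent crystal on $\widetilde Y$, show that $\rR^i g_{\conv*}\mathscr E$ itself is one on $Y$. The paper handles exactly this by introducing the \emph{rigid convergent topos} $(Y/\rW)_{\rconv,\fppf}$ (Section~\ref{rig conv topos}): Ogus' theorem~\ref{thm proper descent Ogus} says that for each test object $(\mathfrak T,u)$ of $\Conv(Y/\rW)$ one can manufacture, from the alteration $\widetilde Y\to Y$, a \emph{faithfully rig-flat} morphism $\mathfrak R\to\mathfrak T$ with $\mathfrak R$ lying over $\widetilde Y$; then Gabber--Bosch--G\"ortz fppf descent for coherent sheaves on rigid spaces gives coherence of $(\rR^i g_{\rconv,\fppf*}\mathscr E)_{\mathfrak T^{\rig}}$ and the crystal property. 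Your proposal skips this machinery entirely; ``cohomological descent'' does not furnish it.

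\textbf{The Dwork step.} Your argument that ``for $n$ large enough $\mathscr E^{(n)}$ is the isogeny class of a crystal on $\Cris(X^{(n)}/\rW)$'' misidentifies what iterating Frobenius accomplishes. Every convergent isocrystal already yields a crystalline isocrystal (Prop.~\ref{convergent to isocry}); the obstruction to base change is not in $\mathscr E$ but in the test object $(\mathfrak T,u)$, which need not lie in $\pConv(Y/\rW)$. The paper's mechanism (Def.~\ref{n p-adic enlargements}, Lemma~\ref{lemma rho Dwork's trick}) is that the Frobenius functor $\rho$ sends $\Conv^{(n+1)}(Y/\rW)$ into $\Conv^{(n)}(Y'/\rW)$, so after finitely many applications \emph{the test object} becomes $p$-adic, and only then does Shiho's crystalline base change~\ref{smooth base change cry coh} apply. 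You should rewrite this step so that the induction is on the nilpotency level of $T\to T_0$, not on a property of $\mathscr E^{(n)}$.

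The coherence argument you sketch for smooth $Y$ (de Rham complex, properness) and the $F$-isocrystal deduction are essentially the paper's, though the paper routes coherence through the crystalline comparison (Thm.~\ref{coh rig coh cristalline}) rather than Kiehl directly.
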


In \cite{Ogus84}, Ogus describle $\rR^{i}g_{\conv*}(\mathscr{O}_{X/K})$ in terms of relative crystalline cohomology. Recently, Morrow show that it is same as the higher direct images in crystalline cohomology when $Y$ is smooth \cite{Morrow}. Our proof follows a similar line of their work. 

\begin{nothing}
	In the following, we explain the structure of this article and the strategy for proving \ref{intro Berthelots conj}.

	Section \ref{Preliminary} contains general conventions and a review on formal geometry.
	In section \ref{rel crys coh}, we review (iso-)crystals on crystalline site and the finiteness, base change property of their crystalline cohomology due to Shiho \cite{Shi08}. 
	In section \ref{sec conv topos}, we recall the definition of the convergent topos over a $p$-adic base following Shiho \cite{Shi08}. 
	In section \ref{Higher direct image}, we show that under a smooth proper morphism of smooth $k$-schemes $X\to Y$, the higher direct image of a convergent isocrystal on $\Conv(X/\rW)$ is a ``$p$-adic convergent isocrystal'' on $\Conv(Y/\rW)$, i.e. it satisfies the property of coherent crystal in a certain subcategory of $\Conv(Y/\rW)$ \eqref{p-adic convergent isocry}.
	Section \ref{Frob descents} is devoted to the Frobenius descent (\ref{intro Frob descent fppf topos}, \ref{intro Frob descent}). Using Dworks' trick, we deduce \ref{intro Berthelots conj} in the case where $Y$ is smooth over $k$ \eqref{higher direct image of con Fiso weak}. 
	In section \ref{review rig}, we briefly review Raynaud's approach to rigid geometry following Abbes' book \cite{Ab10}. 
	Section \ref{rig conv topos} is devoted to a modification of convergent topos which allows us to apply the faithfully flat descent in rigid geometry in the full extent. 
	Based on previous results and Ogus' proper surjective descent, we complete the proof of \ref{intro Berthelots conj} in section \ref{final sec}. 
\end{nothing}

\textbf{Acknowledgement.} 
 I would like to thank Ahmed Abbes for discussions and his comments on an earlier version of this paper.
 I would like to thank Atsushi Shiho for his comments and suggestions to improve \ref{intro Berthelots conj}. 
I would like to thank Arthur Ogus for helpful discussions. 
Part of the work was done when the author was at the BICMR and IHÉS and the author would like to thank their hospitality.

\section{Preliminary} \label{Preliminary} 

\begin{nothing}	\label{def cat MS}
	In this article, $p$ denotes a prime number, $k$ denotes a perfect field of characteristic $p$, $\rW$ the ring of Witt vectors of $k$. % and $K_{0}$ the fraction field of $\rW$. 

	We denote by $\MS$ the category whose objects are formal $\rW$-scheme of finite type (\cite{Ab10} 2.3.13) and morphisms are adic morphisms (\cite{Ab10} 2.2.7). By (\cite{EGAInew} 6.1.5(v)), morphisms of $\MS$ are of finite type.
	We denote by $\MS^{\diamond}$ the full subcategory of $\MS$ consisting of \textit{flat} formal $\rW$-schemes of finite type. 
	
	Let $\XX$ be a $p$-adic formal $\rW$-scheme. For any $n\ge 1$, we denote by $\XX_{n}$ the reduction modulo $p^{n}$ of $\XX$. 
	If we use a gothic letter $\mathfrak{X}$ to denote a $p$-adic formal $\rW$-scheme, the corresponding roman letter $X$ will denote its special fiber $\XX_{1}$.
\end{nothing}

\begin{nothing} \label{cat up to isog}
	Let $\mathscr{A}$ be an abelian category. We denote by $\mathscr{A}_{\mathbb{Q}}$ the category with same objects as $\mathscr{A}$ such that the set of morphisms is given for any object $M,N$ of $\mathscr{A}$, by
	\begin{displaymath}
		\Hom_{\mathscr{A}_{\mathbb{Q}}}(M,N)=\Hom_{\mathscr{A}}(M,N)\otimes_{\mathbb{Z}}\mathbb{Q}.
	\end{displaymath}
	For any object $M$ of $\mathscr{A}$, we denote its image in $\mathscr{A}_{\mathbb{Q}}$ by $M_{\mathbb{Q}}$.
%	We denote the canonical functor by
%	\begin{equation}
%		\mathscr{A}\mapsto \mathscr{A}_{\mathbb{Q}},\qquad M\mapsto M_{\mathbb{Q}}.
%	\end{equation}
%	If $\mathscr{A}$ is moreover an abelian category, the category $\mathscr{A}_{\mathbb{Q}}$ is abelian and the canonical functor is exact.
\end{nothing}

\begin{definition}[\cite{SGAVI} I 1.3.1] \label{def locally proj}
	Let $(\mathscr{T},A)$ be a ringed topos. We say that an $A$-module $M$ of $\mathscr{T}$ is \textit{locally projective of finite type} if the following equivalent conditions are satisfied:

	(i) $M$ is of finite type and the functor $\FHom_{A}(M,-)$ is exact;

	(ii) $M$ is of finite type and every epimorphism of $A$-modules $N\to M$ admits locally a section.

	(iii) $M$ is locally a direct summand of a free $A$-module of finite type.
\end{definition}

When $\mathscr{T}$ has enough points, and for every point $x$ of $\mathscr{T}$, the stalk of $A$ at $x$ is a local ring, the locally projective $A$-modules of finite type are locally free $A$-modules of finite
type (\cite{SGAVI} I 2.15.1).

\begin{nothing}
	In the following of this section, $\XX$ denotes an object of $\MS$ \eqref{def cat MS}. For any $\mathscr{O}_{\XX}$-module $\mathscr{F}$, we set
	\begin{equation} \label{OT mod inverse p}
		\mathscr{F}[\frac{1}{p}]=\mathscr{F}\otimes_{\mathbb{Z}_{p}}\mathbb{Q}_{p}.
	\end{equation}
	We denote by $\Coh(\mathscr{O}_{\XX})$ (resp. $\Coh(\mathscr{O}_{\XX}[\frac{1}{p}])$) the category of coherent $\mathscr{O}_{\XX}$-modules (resp. $\mathscr{O}_{\XX}[\frac{1}{p}]$-modules).
	The canonical functor $\Coh(\mathscr{O}_{\XX})\to \Coh(\mathscr{O}_{\XX}[\frac{1}{p}])$ defined by $\mathscr{F}\mapsto \mathscr{F}[\frac{1}{p}]$ induces an equivalence of categories (\cite{AGT} III.6.16)
	\begin{equation} \label{equi coh mod inverse p}
		\Coh(\mathscr{O}_{\XX})_{\mathbb{Q}}\xrightarrow{\sim} \Coh(\mathscr{O}_{\XX}[\frac{1}{p}]).	
	\end{equation}

	If $\XX=\Spf(A)$ is moreover affine, a coherent $\mathscr{O}_{\XX}[\frac{1}{p}]$-module $\mathscr{F}$ is locally projective of finite type if and only if $\Gamma(\XX,\mathscr{F})$ is a projective $A[\frac{1}{p}]$-module of finite type (\cite{AGT} III.6.17).
\end{nothing}

\begin{nothing} \label{recall fppf covering}
	We denote by $\Zar_{/\XX}$ (resp. $\XX_{\zar}$) the Zariski site (resp. topos) of $\XX$.

	Recall that a family of morphisms of schemes $\{f_{i}:T_{i}\to T\}_{i\in I}$ is called an \textit{fppf covering} if each morphism $f_{i}$ is flat and locally of finite presentation and if $|T|=\bigcup_{i\in I}f_{i}(|T_{i}|)$ (cf. \cite{SGAIII} IV 6.3.2).

	We say that a family of morphisms $\{f_{i}:\XX_{i}\to \XX\}_{i\in I}$ of $\MS$ is an \textit{fppf covering} if each morphism $f_{i}$ is flat and if $|\XX|=\bigcup_{i\in I}f_{i}(|\XX_{i}|)$. Since $\XX$ is quasi-compact, each fppf covering of $\XX$ admits a finite fppf sub-covering of $\XX$. 
	
	By (\cite{Ab10} 2.3.16, 5.1.2), a family of adic formal $\XX$-schemes $\{\XX_{i}\to \XX\}_{i\in I}$ is an fppf covering if and only if the family $\{\XX_{i,n}\to \XX_{n}\}_{i\in I}$ is an fppf covering of schemes for all integer $n\ge 1$. 

	Since fppf coverings of schemes are stable by base change and by composition, the same holds for fppf coverings of formal $\rW$-schemes of finite type. 

	We denote by $\Fft_{/\XX}$ the full subcategory of $\MS_{/\XX}$ consisting of flat adic formal $\XX$-schemes and by $\XX_{\fppf}$ the topos of sheaves of sets on $\Fft_{/\XX}$, equipped with the topology generated by fppf coverings.
%	(The full subcategory of $\Fft_{/\XX}$ made of separated formal-$\mathfrak{T}$-schemes topologically generates $\Fft_{/\XX}$ (to modify)).
\end{nothing}

\begin{nothing} \label{fppf to zar and functorial}
	The canonical functor $\Zar_{/\XX}\to \Fft_{/\XX}$ is continuous and left exact. Then it induces a morphism of topoi
	\begin{equation}
		\alpha_{\XX}:\XX_{\fppf}\to \XX_{\zar}.
	\end{equation}

	Given a morphism $f:\XX'\to \XX$ of $\MS$, the canonical functor $\Fft_{/\XX}\to \Fft_{/\XX'}$ (resp. $\Zar_{/\XX}\to \Zar_{/\XX'}$) defined by $\YY\mapsto \YY\times_{\XX}\XX'$ is continuous and left exact. For $\tau\in \{\zar,\fppf\}$, it induces morphisms of topoi 
	\begin{equation} \label{functorial ftau}
		f_{\tau}:\XX'_{\tau}\to \XX_{\tau}
	\end{equation}
	compatible with $\alpha_{\XX}$ and $\alpha_{\XX'}$.
	If $f$ is a morphism of $\Fft_{/\mathfrak{X}}$ (resp. $\Zar_{/\XX}$), in view of the description of direct image functors, one verifies that the above morphism coincides with the localization morphism at $\XX'$.
\end{nothing}

\begin{nothing} \label{def rigid points}
	We say that a closed formal sub-scheme $\mathscr{P}$ of $\XX$ is a \textit{rigid point} if $\mathscr{P}$ is affine and if $\Gamma(\mathscr{P},\mathscr{O}_{\mathscr{P}})$ is an $1$-valuative order (\cite{Ab10} 1.11.1, 3.3.1). We denote by $\langle\XX\rangle$ the set of rigid points of $\XX$. 
%	Since the special fiber of $\XX$ is Jacobson (\cite{EGAInew} 0.2.8.2), the above definition coincides with the general definition given in (\cite{Ab10} 3.3.1). 

	Let $f:\mathfrak{X}\to \mathfrak{Y}$ be a morphism of $\MS$. Recall that $f$ is \textit{faithfully rig-flat} (\cite{Ab10} 5.5.9) if it is rig-flat (\cite{Ab10} 5.4.5) and if the associated map $\langle\XX\rangle\to \langle\YY\rangle$ on rigid points is surjective.

If $f$ is flat (resp. faithfully flat), then it is rig-flat (resp. faithfully rig-flat) (\cite{Ab10} 5.5.10). 
%	Suppose that $\XX=\Spf(B)$ is affine. Then the set of rigid points of $\XX$ is isomorphic to the set of closed points of $\Spec(B)-V(p)$ (\cite{Ab10} 3.3.2). 
%	Let $\mathscr{P}$ be a rigid point of $\XX$ and $\mathfrak{p}$ the associated prime ideal of $B$. 
%	Let $\YY=\Spf(A)$ be an affine formal $\rW$-scheme of finite type and $f:\XX\to \YY$ an adic morphism. We say that $f$ is \textit{rig-flat at $\mathscr{P}$} if $B_{\mathfrak{p}}$ is flat over $A$ (\cite{Ab10} 5.4.2).
\end{nothing}

%\begin{definition}[\cite{Ab10} 5.4.5, 5.5.9] \label{def rig-flat}
%	Let $f:\mathfrak{X}\to \mathfrak{Y}$ be a morphism of $\MS$ and $\mathscr{P}$ a rigid point of $\XX$.
%	
%	\textnormal{(i)} We say that $f$ is \textit{rig-flat at $\mathscr{P}$} if there exists an open affine neighborhood $U$ of $\mathscr{P}$ in $\XX$, an an open affine neighborhood $V$ of $f(\mathscr{P})$ in $\YY$ such that $f(U)\subset V$ and that $f|_U: U\to V$ is rig-flat at $\mathscr{P}$ \eqref{def rigid points}.
%
%	\textnormal{(ii)} We say that $f$ is \textit{rig-flat} (or \textit{$\XX$ is rig-flat over $\YY$}) if it is rig-flat at every rigid point of $\XX$.
%
%	\textnormal{(iii)} We say that $f$ is \textit{faithfully rig-flat} (or \textit{$\XX$ is faithfully rig-flat over $\YY$}) if it is rig-flat and if the associated map $\langle\XX\rangle\to \langle\YY\rangle$ on rigid points is surjective.
%\end{definition}
%If $f$ is flat (resp. faithfully flat), then it is rig-flat (resp. faithfully rig-flat) (\cite{Ab10} 5.5.10). 

\begin{theorem}[Gabber, Bosch and G\"ortz \cite{BG98}; \cite{Ab10} 5.11.11] \label{descent fppf}
	Let $f:\XX'\to \XX$ be a faithfully rig-flat morphism of $\MS$. Then, the canonical functor from $\Coh(\mathscr{O}_{\XX}[\frac{1}{p}])$ to the category of coherent $\mathscr{O}_{\XX'}[\frac{1}{p}]$-modules endowed with a descent data in $\Coh(\mathscr{O}_{\XX'\times_{\XX}\XX'}[\frac{1}{p}])$ is an equivalence.
\end{theorem}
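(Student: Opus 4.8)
The plan is to reduce the statement to the already-known rigid-analytic descent theorem of Gabber–Bosch–Görtz, following the treatment in Abbes' book. First I would recall the dictionary between coherent $\mathscr{O}_{\XX}[\frac{1}{p}]$-modules on a formal $\rW$-scheme of finite type $\XX$ and coherent sheaves on the associated rigid space $\XX^{\rig}$. Concretely, the rigid-analytic generic fibre functor sends $\XX$ to a quasi-compact rigid space $\XX^{\rig}$ whose points correspond to the rigid points $\langle\XX\rangle$ of \ref{def rigid points}, and by (\cite{Ab10}, the ``rig'' chapters) there is an equivalence $\Coh(\mathscr{O}_{\XX}[\frac{1}{p}])\xrightarrow{\sim}\Coh(\mathscr{O}_{\XX^{\rig}})$, at least for $\XX$ quasi-compact and quasi-separated, which is what we have here since $\XX\in\MS$ is of finite type over $\rW$. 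Under this equivalence, a faithfully rig-flat morphism $f:\XX'\to\XX$ in the sense of \ref{def rigid points} corresponds precisely to a faithfully flat morphism $f^{\rig}:\XX'^{\rig}\to\XX^{\rig}$ of rigid spaces (this is the content of the definition: rig-flatness translates to flatness on the generic fibre, and surjectivity on $\langle\XX\rangle$ translates to surjectivity on rigid points).

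The second step is to invoke the faithfully flat descent theorem for coherent sheaves on rigid spaces. This is the theorem of Gabber (unpublished) and Bosch–Görtz \cite{BG98}: for a faithfully flat quasi-compact morphism $g:Y'\to Y$ of rigid spaces, the category $\Coh(\mathscr{O}_Y)$ is equivalent, via pullback, to the category of coherent $\mathscr{O}_{Y'}$-modules equipped with a descent datum relative to $Y'\times_Y Y'$. Applying this with $g=f^{\rig}$ and then transporting back through the equivalences $\Coh(\mathscr{O}_{(-)^{\rig}})\simeq\Coh(\mathscr{O}_{(-)}[\frac{1}{p}])$ for $\XX$, $\XX'$, and $\XX'\times_{\XX}\XX'$, I would obtain the claimed equivalence. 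One needs here that the formation of the rigid generic fibre commutes with fibre products, i.e. $(\XX'\times_{\XX}\XX')^{\rig}\simeq \XX'^{\rig}\times_{\XX^{\rig}}\XX'^{\rig}$, which holds for adic morphisms of formal schemes of finite type; and one needs that the generic-fibre functor carries the two projections and the diagonal to the corresponding maps, so that descent data correspond to descent data. With these compatibilities in place the two descent problems are literally identified.

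The main obstacle, and the only point requiring real care, is the passage between the formal and rigid pictures when $\XX$ is not assumed flat over $\rW$: the $p$-adic inversion $\mathscr{F}\mapsto\mathscr{F}[\frac1p]$ kills $p$-power torsion, so a coherent $\mathscr{O}_{\XX}[\frac1p]$-module ``sees only'' the rig part of $\XX$, and one must be sure that the equivalence $\Coh(\mathscr{O}_{\XX})_{\mathbb{Q}}\xrightarrow{\sim}\Coh(\mathscr{O}_{\XX}[\frac1p])$ of \eqref{equi coh mod inverse p}, together with the rigid-analytification equivalence, is set up so that faithfully rig-flat (rather than genuinely flat) morphisms suffice — this is exactly why the statement is phrased with ``faithfully rig-flat'' and why (\cite{Ab10} 5.5.10) is recorded just before: a faithfully flat $f$ is in particular faithfully rig-flat, so the theorem also covers honest fppf coverings of formal $\rW$-schemes, which is the case we shall use in practice. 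I would therefore organize the argument as: (1) reduce to the affine case by a standard gluing/Zariski-local argument, noting coherence and descent data glue; (2) analytify to rigid spaces and check the fibre-product compatibility; (3) quote \cite{BG98} (equivalently (\cite{Ab10} 5.11.11)); (4) descend back. Steps (1), (2), (4) are formal, and step (3) is the cited input, so in fact the ``proof'' is essentially a translation — which is why the statement is attributed directly to Gabber, Bosch–Görtz, and Abbes.
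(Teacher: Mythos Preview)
Your proposal is correct. The paper itself gives essentially no proof beyond the one-line remark that ``although (\cite{Ab10} 5.11.11) is an assertion for coherent modules on rigid spaces, the same argument works for coherent $\mathscr{O}_{\XX}[\frac{1}{p}]$-modules''; so there is little to compare against. That said, there is a small difference in strategy worth noting: the paper suggests re-running the Gabber--Bosch--G\"ortz argument directly in the category $\Coh(\mathscr{O}_{\XX}[\frac{1}{p}])$, whereas you instead transport the problem to rigid spaces via the equivalence $\Coh(\mathscr{O}_{\XX}[\frac{1}{p}])\simeq\Coh(\mathscr{O}_{\XX^{\rig}})$ and then quote the rigid result as a black box. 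Your route is cleaner and more conceptual, but be aware that in this paper the equivalence you invoke is only established later (Proposition~\ref{rhoXX coh equi}); there is no logical circularity, since that proposition does not rely on descent, but if you were writing this up in the paper's order you would either need to move that equivalence forward or, as the author does, simply observe that the descent argument itself transposes verbatim to $\mathscr{O}_{\XX}[\frac{1}{p}]$-modules.
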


Although (\cite{Ab10} 5.11.11) is an assertion for coherent modules on rigid spaces, the same argument works for coherent $\mathscr{O}_{\XX}[\frac{1}{p}]$-modules.

Let $\mathscr{U}=\{\XX_{i}\to \XX\}_{i\in I}$ be a fppf covering. 
Since each fppf covering of $\XX$ admits a finite fppf sub-covering of $\XX$, we deduce that any descent datum on coherent $\mathscr{O}_{\XX_{i}}[\frac{1}{p}]$-modules for $\mathscr{U}$ is effective. 
The functor from $\Coh(\mathscr{O}_{\XX}[\frac{1}{p}])$ to the category of descent data with respect to $\mathscr{U}$ is fully faithful.

\begin{nothing} \label{coherent fppf}
	Let $\mathscr{F}$ be a coherent $\mathscr{O}_{\XX}[\frac{1}{p}]$-module. By fppf descent for coherent $\mathscr{O}_{\XX}[\frac{1}{p}]$-modules \eqref{descent fppf}, the presheaf on $\Fft_{/\XX}$
	\begin{displaymath}
		(f:\XX'\to \XX) \mapsto \Gamma(\XX',f^{*}_{\zar}(\mathscr{F}))
	\end{displaymath}
	is a sheaf for the fppf topology. In particular, $\mathscr{O}_{\XX}[\frac{1}{p}]$ defines a sheaf of rings of $\XX_{\fppf}$ that we still denote by $\mathscr{O}_{\XX}[\frac{1}{p}]$. We call abusively \textit{coherent $\mathscr{O}_{\XX}[\frac{1}{p}]$-module of $\XX_{\fppf}$} a sheaf of $\XX_{\fppf}$ associated to a coherent $\mathscr{O}_{\XX}[\frac{1}{p}]$-module.

	For $\tau\in \{\zar,\fppf\}$, the morphism of topoi $f_{\tau}$ \eqref{functorial ftau} is ringed by $\mathscr{O}_{\XX}[\frac{1}{p}]$ and $\mathscr{O}_{\XX'}[\frac{1}{p}]$. 
	For any $\mathscr{O}_{\XX}[\frac{1}{p}]$-module $M$ of $\XX_{\tau}$, we use $f^{-1}_{\tau}(M)$ to denote the inverse image in the sense of sheaves and we keep $f_{\tau}^{*}(M)$ for the inverse image in the sense of modules.
\end{nothing}

\begin{nothing} \label{def dilatation}
	Let $\mathscr{A}$ be an open ideal of finite type of $\XX$ (\cite{Ab10} 2.1.19). 
	%For any $n\ge 1$, we set 
	%\begin{equation} \label{blow-up mod In}
	%	\XX'_{n}=\Proj(\oplus_{m\ge 0}\mathscr{A}^{m}\otimes_{\mathscr{O}_{\XX}}\mathscr{O}_{\XX_{n}}).
	%\end{equation}
	%The sequence $(\XX_{n}')_{n\ge 1}$ forms an adic inductive $(\XX_{n})_{n\ge 1}$-system (\cite{Ab10} 2.2.13). 
	%We call its inductive limit $\XX'$ the \textit{admissible blow-up of $\mathscr{A}$ in $\XX$} (\cite{Ab10} 3.1.2). 
	We denote by $\XX'$ the admissible blow-up of $\mathscr{A}$ in $\XX$ (\cite{Ab10} 3.1.2). The canonical map $\XX'\to \XX$ is of finite type (\cite{Ab10} 2.3.13) and rig-flat (\cite{Ab10} 5.4.12).

	Suppose that $\XX$ is flat over $\rW$ and that $\mathscr{A}$ contains $p$. The ideal $\mathscr{A}\mathscr{O}_{\XX'}$ is invertible (\cite{Ab10} 3.1.4(i)), and $\XX'$ is flat over $\rW$ (\cite{Ab10} 3.1.4(ii)). We denote by $\XX_{(\mathscr{A}/p)}$ the maximal open formal subscheme of $\XX'$ on which 
	\begin{equation} \label{def dilatation formula}
		(\mathscr{A}\mathscr{O}_{\mathfrak{X}'})|\mathfrak{X}_{(\mathscr{A}/p)}=(p\mathscr{O}_{\mathfrak{X}'})|\mathfrak{X}_{(\mathscr{A}/p)}
	\end{equation}
	and we call it \textit{the dilatation of $\mathscr{A}$ with respect to $p$} (\cite{Ab10} 3.2.3.4 and 3.2.7). Note that $\XX_{(\mathscr{A}/p)}$ is the complement of $\Supp(\mathscr{A}\mathscr{O}_{\XX'}/p\mathscr{O}_{\XX'})$ in $\XX'$ (\cite{EGAInew} 0.5.2.2). 
	%In view of (\cite{Ab10} 3.1.5 and 3.2.7), the above definition coincides with the notion of dilatation of $\mathscr{A}$ with respect to $p$ introduced in (\cite{Ab10} 3.2.3.4).

	Let $S$ be a closed subscheme of $X$ and $\mathscr{I}$ the ideal sheaf associated to the canonical morphism $S\to \XX$. For any $n\ge 1$, we denote by $\mathfrak{T}_{S,n}(\XX)$ the dilatation of $\mathscr{I}^{n}+p\mathscr{O}_{\XX}$ with respect to $p$ (\cite{Ogus84} 2.5) \footnote{The union of rigid space $\cup_{n\ge 1}\mathfrak{T}_{S,n}(\XX)^{\rig}$ is same as the tube of $S$ in $\XX$ introduced by Berthelot (cf. \cite{Ber96} 1.1.2, 1.1.10).}. 
	By \eqref{def dilatation formula}, there exists a morphism from the reduced subscheme of $(\mathfrak{T}_{S,n}(\XX))_{1}$ to $S$ which fits into the following diagram:
	\begin{equation}
		\xymatrix{
			(\mathfrak{T}_{S,n}(\XX))_{1,\red} \ar[r] \ar[d] & \mathfrak{T}_{S,n}(\XX) \ar[d] \\
			S\ar[r] & \XX
		} \label{diagram red to S}
	\end{equation}

	The universal property of dilatation (\cite{Ab10} 3.2.6) can be reinterpreted in the following way. 
\end{nothing}

\begin{prop}[\cite{Xu} 3.6, 3.10] \label{univ dilatation}
	Keep the notations and assumptions of \ref{def dilatation}. 
	Let $\mathfrak{T}$ be an adic flat formal $\rW$-scheme, $\underline{T}$ the closed subscheme of $T$ defined by the ideal sheaf $\{x\in \mathscr{O}_{T}|x^{p}=0\}$ and $f:\mathfrak{T}\to \XX$ an adic morphism.
	Suppose that there exists a morphism $T\to S$ (resp. $\underline{T}\to S$) which fits into the following diagram 
	\begin{displaymath}
		\xymatrix{
			T\ar[r] \ar[d] & \mathfrak{T} \ar[d]^{f} \\
			S\ar[r] & \XX
		} \qquad
		\textnormal{(resp. }
		\xymatrix{
			\underline{T}\ar[r] \ar[d] & \mathfrak{T} \ar[d]^{f} \\
			S\ar[r] & \XX
		})
	\end{displaymath}
	Then there exists a unique adic morphism $g:\mathfrak{T}\to \mathfrak{T}_{S,1}(\XX)$ (resp. $g:\mathfrak{T}\to \mathfrak{T}_{S,p}(\XX)$) lifting $f$.
\end{prop}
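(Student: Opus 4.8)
The argument will rest on the universal property of the dilatation (\cite{Ab10} 3.2.6), which in the setting of \ref{def dilatation} says: for a flat adic formal $\rW$-scheme $\mathfrak{T}$ and an adic morphism $h\colon\mathfrak{T}\to\XX$, the morphism $h$ factors through $\XX_{(\mathscr{A}/p)}\to\XX$, necessarily uniquely, if and only if $\mathscr{A}\mathscr{O}_{\mathfrak{T}}=p\mathscr{O}_{\mathfrak{T}}$. Since $\mathfrak{T}_{S,1}(\XX)$ and $\mathfrak{T}_{S,p}(\XX)$ are the dilatations of $\mathscr{I}+p\mathscr{O}_{\XX}$ and of $\mathscr{I}^{p}+p\mathscr{O}_{\XX}$ with respect to $p$, it suffices to show that in each case the hypothesis on the commutative square is equivalent to the equality $\mathscr{A}\mathscr{O}_{\mathfrak{T}}=p\mathscr{O}_{\mathfrak{T}}$ for the corresponding ideal $\mathscr{A}$. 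The uniqueness of $g$ will then be part of the universal property, and its adicness automatic (all the formal schemes involved being $p$-adic).

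The case of $\mathfrak{T}_{S,1}(\XX)$ is immediate. Because $S$ is a closed subscheme of $X=\XX_{1}$, itself the closed subscheme of $\XX$ defined by $p\mathscr{O}_{\XX}$, the ideal $\mathscr{I}$ contains $p\mathscr{O}_{\XX}$; hence $\mathscr{I}+p\mathscr{O}_{\XX}=\mathscr{I}$, and $p\mathscr{O}_{\mathfrak{T}}\subseteq\mathscr{I}\mathscr{O}_{\mathfrak{T}}$ for any $f$ as above. Since $S\hookrightarrow\XX$ is the closed immersion with ideal $\mathscr{I}$, a morphism $T\to S$ fitting into the first square exists precisely when $T\to\mathfrak{T}\xrightarrow{f}\XX$ factors through $S$, i.e.\ when $\mathscr{I}\mathscr{O}_{T}=0$, i.e.\ when $\mathscr{I}\mathscr{O}_{\mathfrak{T}}\subseteq p\mathscr{O}_{\mathfrak{T}}$; combined with the inclusion just noted, this is exactly $\mathscr{I}\mathscr{O}_{\mathfrak{T}}=p\mathscr{O}_{\mathfrak{T}}$, and the universal property gives the unique adic lift $g\colon\mathfrak{T}\to\mathfrak{T}_{S,1}(\XX)$.

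The case of $\mathfrak{T}_{S,p}(\XX)$ follows the same pattern, but translating the hypothesis is the heart of the matter. Let $\mathfrak{n}\subseteq\mathscr{O}_{\mathfrak{T}}$ be the ideal defining $\underline{T}$ as a closed subscheme of $\mathfrak{T}$; since $\mathscr{O}_{T}=\mathscr{O}_{\mathfrak{T}}/p\mathscr{O}_{\mathfrak{T}}$ and $\underline{T}\subseteq T$ is cut out by $\{x\in\mathscr{O}_{T}\mid x^{p}=0\}$ (an ideal, the Frobenius of $\mathscr{O}_{T}$ being additive), we have $\mathfrak{n}=\{x\in\mathscr{O}_{\mathfrak{T}}\mid x^{p}\in p\mathscr{O}_{\mathfrak{T}}\}$. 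As before, a morphism $\underline{T}\to S$ fitting into the second square exists precisely when $\mathscr{I}\mathscr{O}_{\mathfrak{T}}\subseteq\mathfrak{n}$, that is, when $s^{p}\in p\mathscr{O}_{\mathfrak{T}}$ for every local section $s$ of $\mathscr{I}\mathscr{O}_{\mathfrak{T}}$; whereas the universal property of $\mathfrak{T}_{S,p}(\XX)$ requires $(\mathscr{I}^{p}+p\mathscr{O}_{\XX})\mathscr{O}_{\mathfrak{T}}=p\mathscr{O}_{\mathfrak{T}}$, i.e.\ $(\mathscr{I}\mathscr{O}_{\mathfrak{T}})^{p}\subseteq p\mathscr{O}_{\mathfrak{T}}$. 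Proving that these two conditions coincide is, I expect, the main obstacle. It is local, so one may work over an affine open, choose finitely many generators $t_{1},\dots,t_{r}$ of $\mathscr{I}$ with images $s_{i}$ in $\mathscr{O}_{\mathfrak{T}}$, and reduce modulo $p$; the essential point is then an ideal-theoretic comparison in $\mathscr{O}_{T}$ between the ideal killed by the Frobenius and the $p$-th power of the ideal generated by the $\overline{s}_{i}$, for which I would use the $p$-torsion-freeness of $\mathscr{O}_{\mathfrak{T}}$ — the sole place the flatness of $\mathfrak{T}$ over $\rW$ is needed — together with the hypothesis $p\in\mathscr{I}$. Granting this, the universal property of the dilatation provides the unique adic lift $g\colon\mathfrak{T}\to\mathfrak{T}_{S,p}(\XX)$, completing the proof.
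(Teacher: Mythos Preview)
The paper does not prove this statement; it only cites \cite{Xu}~3.6, 3.10. Your argument for $\mathfrak{T}_{S,1}(\XX)$ is correct and is precisely the expected translation of the universal property of the dilatation (\cite{Ab10}~3.2.6).

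For $\mathfrak{T}_{S,p}(\XX)$ there is a genuine gap, and the equivalence you isolate as ``the main obstacle'' is not an equivalence: the implication required by the proposition is the one that fails. Passing to $\mathscr{O}_{T}=\mathscr{O}_{\mathfrak{T}}/p$, the hypothesis says that every element of $J:=\mathscr{I}\mathscr{O}_{T}$ lies in the kernel of the Frobenius $F_{T}$; because $F_{T}$ is a ring endomorphism, this is equivalent to $\bar s_{i}^{\,p}=0$ for a generating set $\bar s_{i}$ of $J$, and that does \emph{not} force $J^{p}=0$. Concretely, let $\XX=\Spf(\rW\{u,v\})$ with $S$ the origin, let $\mathfrak{T}=\Spf\bigl(\rW[x,y]/(x^{p},y^{p})\bigr)$ (free of rank $p^{2}$ over $\rW$, hence flat), and let $f$ send $u\mapsto x$, $v\mapsto y$. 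Then $\mathscr{O}_{T}=k[x,y]/(x^{p},y^{p})$, every element of $J=(x,y)$ has vanishing $p$-th power, so $\underline{T}=\Spec(k)$ and the square with $\underline{T}\to S$ commutes; yet $x^{p-1}y\in\mathscr{I}^{p}\mathscr{O}_{\mathfrak{T}}$ is part of a $\rW$-basis of $\mathscr{O}_{\mathfrak{T}}$ and hence does not lie in $p\mathscr{O}_{\mathfrak{T}}$, so $f$ cannot factor through $\mathfrak{T}_{S,p}(\XX)$. The $p$-torsion-freeness of $\mathscr{O}_{\mathfrak{T}}$ is of no help here. Only the reverse implication $J^{p}=0\Rightarrow J\subseteq\ker F_{T}$ is trivially valid; you should consult \cite{Xu}~3.10 directly for the exact hypotheses under which the quoted result is stated before attempting to close the argument.
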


\begin{nothing}\label{generality morphism topos}
	Let $\mathscr{C}$ and $\mathscr{D}$ be two categories, $\widehat{\mathscr{C}}$ (resp. $\widehat{\mathscr{D}}$) the category of presheaves of sets on $\mathscr{C}$ (resp. $\mathscr{D}$) and $u:\mathscr{C}\to \mathscr{D}$ a functor. We have a functor
	\begin{equation} \label{pull back presheaves}
		\widehat{u}^{*}:\widehat{\mathscr{D}}\to \widehat{\mathscr{C}}\qquad \mathscr{G}\mapsto \widehat{u}^{*}(\mathscr{G})=\mathscr{G}\circ u.
	\end{equation}
	It admits a right adjoint (\cite{SGAIV} I 5.1)
	\begin{equation}
		\widehat{u}_{*}:\widehat{\mathscr{C}}\to \widehat{\mathscr{D}}.
	\end{equation}
%	and we recall its definition. For any $V\in \Ob(\mathscr{D})$, we denote by $I_{u,V}$ the category whose objects consist of an object $U$ of $\mathscr{C}$ and a morphism $\varphi:u(U)\to V$ of $\mathscr{D}$. Given two objects $(U_{1},\varphi_{1})$ and $(U_{2},\varphi_{2})$, a morphism from $(U_{1},\varphi_{1})$ to $(U_{2},\varphi_{2})$ is a morphism $\alpha:U_{1}\to U_{2}$ of $\mathscr{C}$ such that $\varphi_{1}=\varphi_{2}\circ u(\alpha)$. For any presheaf of sets $\mathscr{F}$ on $\mathscr{C}$, we define
%	\begin{equation}
%		\widehat{u}_{*}(\mathscr{F})(V)=\varprojlim_{(U,\varphi)\in I_{u,V}}\mathscr{F}(U). \label{push forward}
%	\end{equation}
%	The above assignment defines a presheaf on $\mathscr{D}$ and hence a functor $\widehat{\mathscr{C}}\to \widehat{\mathscr{D}}$, denoted by $\widehat{u}_{*}$. In fact, $\widehat{u}_{*}$ is a right adjoint of $\widehat{u}^{*}$.

	Let $\mathscr{C}$ and $\mathscr{D}$ be two sites \footnote{We suppose that the site $\mathscr{C}$ is small.}. If $u:\mathscr{C}\to \mathscr{D}$ is a cocontinuous (resp. continuous) functor and $\mathscr{F}$ (resp. $\mathscr{G}$) is a sheaf on $\mathscr{C}$ (resp. $\mathscr{D}$), then $\widehat{u}_{*}(\mathscr{F})$ (resp. $\widehat{u}^{*}(\mathscr{G})$) is a sheaf on $\mathscr{D}$ (resp. $\mathscr{C}$) (\cite{SGAIV} III 1.2, 2.2).

	Let $\widetilde{\mathscr{C}}$ (resp. $\widetilde{\mathscr{D}}$) be the topos of the sheaves of sets on $\mathscr{C}$ (resp. $\mathscr{D}$) and $u:\mathscr{C}\to \mathscr{D}$ a cocontinuous functor. Then $u$ induces a morphism of topoi $g:\widetilde{\mathscr{C}}\to \widetilde{\mathscr{D}}$ defined by $g_{*}=\widehat{u}_{*}$ and $g^{*}=a\circ \widehat{u}^{*}$, where $a$ is the sheafification functor (cf. \cite{SGAIV} III 2.3).
\end{nothing}

\begin{prop}[\cite{Oy} 4.2.1] \label{lemma adjunction iso}
	Let $\mathscr{C}$ be a site, $\mathscr{D}$ a site whose topology is defined by a pretopology and $u:\mathscr{C}\to \mathscr{D}$ a functor. Assume that:
	\begin{itemize}
		\item[(i)] $u$ is fully faithful,
		\item[(ii)] $u$ is continuous and cocontinuous,
		\item[(iii)] For every object $V$ of $\mathscr{D}$, there exists a covering of $V$ in $\mathscr{D}$ of the form $\{u(U_{i})\to V\}_{i\in I}$ where $U_{i}$ is an object of $\mathscr{C}$.
	\end{itemize}
	Then the morphism of topoi $g:\widetilde{\mathscr{C}}\to \widetilde{\mathscr{D}}$ defined by $g^{*}=\widehat{u}^{*}$ and $g_{*}=\widehat{u}_{*}$ \eqref{generality morphism topos} is an equivalence of topoi.
\end{prop}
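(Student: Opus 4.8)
The plan is to verify that under hypotheses (i)--(iii) the unit and counit of the adjunction $(g^{*},g_{*})$ — here $g^{*}=a\circ\widehat{u}^{*}$ and $g_{*}=\widehat{u}_{*}$ with $a$ the sheafification — are isomorphisms on sheaves. Since $u$ is both continuous and cocontinuous, $\widehat{u}^{*}$ already sends sheaves to sheaves (\cite{SGAIV} III 1.2, 2.2), so on the full subcategory of sheaves we may drop the sheafification and work with $g^{*}=\widehat{u}^{*}$ directly; thus it suffices to prove that $\widehat{u}^{*}:\widetilde{\mathscr{D}}\to\widetilde{\mathscr{C}}$ and $\widehat{u}_{*}:\widetilde{\mathscr{C}}\to\widetilde{\mathscr{D}}$ are mutually quasi-inverse. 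The key input that makes everything go is (i): full faithfulness of $u$ implies, by the standard formula for $\widehat{u}_{*}$ as a right Kan extension (\cite{SGAIV} I 5.1), that the counit $\widehat{u}^{*}\widehat{u}_{*}\to\id$ is an isomorphism on all presheaves, and dually that $\id\to\widehat{u}_{*}\widehat{u}^{*}$ evaluated at an object in the essential image of $u$ is an isomorphism. (Concretely, $(\widehat{u}_{*}\mathscr{F})(u(U))=\mathscr{F}(U)$ when $u$ is fully faithful.)

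First I would treat the counit. For a sheaf $\mathscr{F}$ on $\mathscr{C}$, the presheaf-level counit $\widehat{u}^{*}\widehat{u}_{*}(\mathscr{F})\to\mathscr{F}$ is already an isomorphism of presheaves by full faithfulness of $u$, hence a fortiori an isomorphism of sheaves; so $g^{*}g_{*}\cong\id_{\widetilde{\mathscr{C}}}$ with no further work. Next, the unit: for a sheaf $\mathscr{G}$ on $\mathscr{D}$ I must show $\mathscr{G}\to\widehat{u}_{*}\widehat{u}^{*}(\mathscr{G})$ is an isomorphism of sheaves. Evaluating at an object $V$ of $\mathscr{D}$, hypothesis (iii) provides a covering $\{u(U_{i})\to V\}_{i\in I}$ with $U_{i}\in\mathscr{C}$; since the topology of $\mathscr{D}$ comes from a pretopology I may also form the fibre products $u(U_{i})\times_{V}u(U_{j})$ and cover each of them again by objects of the form $u(U_{ijk})$ using (iii). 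Both $\mathscr{G}$ and $\widehat{u}_{*}\widehat{u}^{*}(\mathscr{G})$ are sheaves on $\mathscr{D}$, so each satisfies the sheaf condition (equalizer) for the covering $\{u(U_{i})\to V\}$ refined by the double covering of the overlaps. On objects of the form $u(U)$ the map $\mathscr{G}(u(U))\to(\widehat{u}_{*}\widehat{u}^{*}\mathscr{G})(u(U))=\widehat{u}^{*}(\mathscr{G})(U)=\mathscr{G}(u(U))$ is the identity (again by full faithfulness). Comparing the two equalizer diagrams — which agree term by term because every term is evaluated on an object in the image of $u$ — forces $\mathscr{G}(V)\xrightarrow{\sim}(\widehat{u}_{*}\widehat{u}^{*}\mathscr{G})(V)$. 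Hence $g_{*}g^{*}\cong\id_{\widetilde{\mathscr{D}}}$, and $g$ is an equivalence.

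The main obstacle, and the point demanding the most care, is the unit computation: one has to be sure the sheaf axiom for $\widehat{u}_{*}\widehat{u}^{*}(\mathscr{G})$ with respect to the covering $\{u(U_{i})\to V\}$ really does reduce everything to values on objects in the image of $u$, which is exactly why (iii) is needed not just for $V$ but, after passing to a pretopology, for the intersections $u(U_{i})\times_{V}u(U_{j})$ as well — these fibre products need not themselves lie in the image of $u$, so one genuinely uses that $u$ is cocontinuous (to control $\widehat{u}_{*}$ of a sheaf) together with (iii) to refine. Once that bookkeeping is set up, the rest is the formal Kan-extension identities coming from (i), and the role of (ii) is simply to guarantee that $\widehat{u}^{*}$ and $\widehat{u}_{*}$ preserve sheaves so the argument can be carried out entirely inside $\widetilde{\mathscr{C}}$ and $\widetilde{\mathscr{D}}$.
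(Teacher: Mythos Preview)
The paper does not supply its own proof of this proposition; it simply records the statement with a citation to \cite{Oy}~4.2.1. Your argument is the standard one and is essentially correct.

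Two small remarks on presentation. First, the isomorphism $\widehat{u}^{*}\widehat{u}_{*}\xrightarrow{\sim}\id$ on presheaves for fully faithful $u$ is a formal consequence of the Kan-extension description of $\widehat{u}_{*}$ exactly as you say; it needs no site-theoretic input, so the counit half is immediate. Second, your treatment of the unit is right in outline but is cleaner if organised as ``injectivity for all $V$ first, then surjectivity'': cover $V$ by $\{u(U_{i})\to V\}$ via (iii), use that $\eta_{u(U_{i})}$ is the identity and that $\mathscr{G}$ is separated to get injectivity of $\eta_{V}$; then, having injectivity of $\eta$ at \emph{every} object of $\mathscr{D}$ (in particular at the fibre products $u(U_{i})\times_{V}u(U_{j})$, which exist because the topology on $\mathscr{D}$ comes from a pretopology), lift a section of $\widehat{u}_{*}\widehat{u}^{*}\mathscr{G}$ over each $u(U_{i})$ and use injectivity on the overlaps to see the lifts glue. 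This is equivalent to your equalizer comparison but makes transparent that the second invocation of (iii) on the overlaps is only needed to supply injectivity there, not a full equalizer diagram.
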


\section{Crystalline cohomology for isocrystals} \label{rel crys coh}
\begin{nothing} \label{setting crystalline}
	In this section, $\mathfrak{S}$ denotes a flat formal $\rW$-scheme of finite type and $X$ an $S$-scheme. 

	We equip $p\mathscr{O}_{\mathfrak{S}}$ with the canonical PD-structure $\gamma$. Recall that the \textit{crystalline site} $\Cris(X/\mathfrak{S})$ is defined as follows (\cite{BO} 7.17): an object is a quadraple $(U,T,\iota,\delta)$ consisting of an open subscheme $U$ of $X$, a scheme $T$ over $\mathfrak{S}_{n}$ for some integer $n\ge 1$, a closed immersion $\iota:U\to T$ and a PD-structure $\delta$ on $\Ker(\mathscr{O}_{T}\to \mathscr{O}_{U})$ compatible with $\gamma$.
	A morphism from $(U',T',\iota',\delta')$ to $(U,T,\iota,\delta)$ of $\Cris(X/\mathfrak{S})$ consists of an open immersion $U'\to U$ and an $\mathfrak{S}$-morphism $T'\to T$ compatible with $\iota', \iota$ and the PD-structures.
	A family of morphisms $\{(U_{i},T_{i})\to (U,T)\}_{i\in I}$ is a covering if each morphism $T_{i}\to T$ is an open immersion and $|T|=\bigcup_{i\in I} |T_{i}|$.
	We denote by $(X/\mathfrak{S})_{\cris}$ the topos of sheaves of sets on $\Cris(X/\mathfrak{S})$.

	The presheaf of rings defined by $(U,T) \mapsto \Gamma(T,\mathscr{O}_{T})$, is a sheaf that we denote by $\mathscr{O}_{X/\mathfrak{S}}^{\cris}$.
	For an $\mathscr{O}_{X/\SS}^{\cris}$-module $\mathscr{F}$ and an object $(U,T)$ of $\Cris(X/\SS)$, we denote by $\mathscr{F}_{T}$ the valuation of $\mathscr{F}$ at $(U,T)$ (\cite{BO} 5.1). 
\end{nothing}

\begin{definition}[\cite{BO} 6.1, \cite{Shi08} 1.8] \label{def crystal crystalline}
	(i) We say that an $\mathscr{O}_{X/\mathfrak{S}}^{\cris}$-module $\mathscr{F}$ is \textit{a crystal}, if for every morphism $f:(U',T')\to (U,T)$ of $\Cris(X/\mathfrak{S})$, the transition morphism $f^{*}(\mathscr{F}_{T})\to \mathscr{F}_{T'}$ is an isomorphism.

	(ii) We say that a crystal $\mathscr{F}$ is \textit{a crystal of $\mathscr{O}_{X/\mathfrak{S}}^{\cris}$-modules of finite presentation} if $\mathscr{F}_{T}$ is an $\mathscr{O}_{T}$-module of finite presentation for every object $(U,T)$ of $\Cris(X/\mathfrak{S})$.
	
	(iii) We denote by $\mathscr{C}(\mathscr{O}_{X/\mathfrak{S}}^{\cris})$ the category of crystals of $\mathscr{O}_{X/\mathfrak{S}}^{\cris}$-modules of finite presentation. 
	Objects of $\mathscr{C}(\mathscr{O}_{X/\mathfrak{S}}^{\cris})_{\mathbb{Q}}$ \eqref{cat up to isog} are called \textit{isocrystals}.
\end{definition}

\begin{nothing} \label{def Hopf alg}
	Crystals have an equivalent description in terms of modules equppied with hyper-PD-stratification and of modules with integrable connection. Let us briefly recall these notions. %the notion of stratification with respect to a Hopf algebra. 

	Let $\XX$ be an adic formal $\SS$-scheme of finite type. 
	For any integer $r \ge 1$, let $\XX^{r+1}$ be the fiber product of $(r+1)$-copies of $\XX$ over $\SS$. %We consider $\XX$ as an adic formal $\XX^{2}$-scheme by the diagonal immersion $\Delta:\XX\to \XX^{2}$.
%	We denote by $\tau:\XX^{2}\to \XX^{2}$ the morphism which exchanges the factors of $\XX^{2}$, by $p_{i}:\XX^{2}\to \XX$ ($i=1,2$) the canonical projections and by $p_{ij}:\XX^{3}\to\XX^{2}$ ($1\le i<j\le 3$) be the projection whose composition with $p_{1}$ (resp. $p_{2}$) is the projection $\XX^{3}\to \XX$ on the $i$-th (resp. $j$-th) factor.
%	For any adic formal $\XX^{2}$-schemes $\YY$ and $\ZZ$, $\YY\times_{\XX}\ZZ$ denotes the product of $\YY\to \XX^{2}\xrightarrow{p_{2}} \XX$ and $\ZZ\to \XX^{2}\xrightarrow{p_{1}}\XX$, and we regard $\YY\times_{\XX}\ZZ$ as a formal $\XX^{2}$-scheme by the projection $\YY\times_{\XX}\ZZ\to \XX^{2}\times_{\XX}\XX^{2}=\XX^{3}\xrightarrow{p_{13}}\XX^{2}$.

	Let $\rG$ be an adic formal $\XX^{2}$-scheme and let $q_{1},q_{2}:\rG\to \XX$ be the canonical projections. 
	\textit{A formal $\XX$-groupoid struture over $\SS$ on $\rG$} are three adic morphisms
	\begin{equation}
	\alpha:\rG\times_{\XX}\rG\to \rG, \quad \iota:\XX\to \rG, \quad \eta:\rG\to \rG
	\end{equation}
	where the fibered product $\rG\times_{\XX}\rG$ is taken on the left (resp. right) for the $\XX$-structure defined by $q_{2}$ (resp. $q_{1}$), satisfying the compatibility conditions for groupoid (cf. \cite{Xu} 4.8).
	We set $q_{13}=\alpha$ and $q_{12},q_{23}:\rG\times_{\XX}\rG\to \rG$ the projection in the first and second component respectively. 
%	(i) $\alpha$ and $\iota$ are $\XX^{2}$-morphisms and the following diagrams are commutative:
%	\begin{equation}
%		\xymatrix{
%			\rG\times_{\XX}\rG\times_{\XX} \rG\ar[r]^-{\id\times \alpha} \ar[d]_{\alpha\times \id} & \rG\times_{\XX}\rG \ar[d]^{\alpha}\\
%			\rG\times_{\XX}\rG \ar[r]^-{\alpha} & \rG} \qquad
%		\xymatrix{
%			\rG\ar[r]^-{\iota\times \id} \ar[d]_{\id \times \iota} \ar[rd]^{\id} & \rG\times_{\XX}\rG \ar[d]^{\alpha}\\
%			\rG\times_{\XX}\rG \ar[r]^-{\alpha} & \rG}
%	\end{equation}
%
%	(ii) The morphism $\eta$ is compatible with $\tau:\XX^{2}\to \XX^{2}$ and we have $\eta^{2}=\id_{\rG}$, $\eta\circ \iota=\iota$. 
%	
%	(iii) Let $q_{1}$ (resp. $q_{2}$) be the projection $\rG\to \XX$ induced by $p_{1}$ (resp. $p_{2}$). The following diagrams are commutative:
%	(iv) The morphism of the underlying topological spaces $|\rG|\to |\XX^{2}|$ factors through $\Delta:|\XX|\to |\XX^{2}|$.
\end{nothing}

\begin{definition} \label{def stratification}
	Let $\rG$ be a formal $\XX$-groupoid over $\SS$ and $M$ an $\mathscr{O}_{\XX}$-module. 
	\textit{An $\mathscr{O}_{\rG}$-stratification on $M$} is an $\mathscr{O}_{\rG}$-linear isomorphism 
	\begin{equation}
		\varepsilon:q_{2}^{*}(M)\xrightarrow{\sim}q_{1}^{*}(M)
	\end{equation}
	satisfying $\iota^{*}(\varepsilon)=\id_{M}$ and the cocycle condition $q_{12}^{*}(\varepsilon)\circ q_{23}^{*}(\varepsilon)=q_{13}^{*}(\varepsilon)$. 	
%	(ii) \textit{An $\mathscr{O}_{\rG}$-stratification on a coherent $\mathscr{O}_{\XX}[\frac{1}{p}]$-module $M$} is an $\mathscr{O}_{\rG}$-linear isomorphism 
\end{definition}

\begin{nothing}
	Let $f:\XX\to \SS$ be a smooth morphism, $M$ a coherent $\mathscr{O}_{\XX}$-module (resp. $\mathscr{O}_{\XX}[\frac{1}{p}]$-module) and $\widehat{\Omega}_{\XX/\SS}^{1}$ the $\mathscr{O}_{\XX}$-module of differentials of $\XX$ relative to $\SS$. \textit{A connection on $M$ relative to $\SS$} is an $f^{-1}(\mathscr{O}_{\SS})$-linear morphism
	\begin{equation}
		\nabla: M\to M\otimes_{\mathscr{O}_{\XX}}\widehat{\Omega}_{\XX/\SS}^{1}
	\end{equation}
	such that for every local sections $f$ of $\mathscr{O}_{\XX}$ and $e$ of $M$, we have $\nabla(fe)= e\otimes d(f)+ f\nabla(e)$. 

	For any $q\ge 0$, the morphism $\nabla$ extends to a unique $f^{-1}(\mathscr{O}_{\SS})$-linear morphism
\begin{equation}
	\nabla_{q}:M\otimes_{\mathscr{O}_{\XX}}\widehat{\Omega}_{\XX/\SS}^{q} \to M\otimes_{\mathscr{O}_{\XX}}\widehat{\Omega}_{\XX/\SS}^{q+1}.
\end{equation}
%such that for every local sections $\omega$ of $\widehat{\Omega}_{\XX/\SS}^{q}$ and $e$ of $M$, we have
%\begin{equation} \label{def nabla 1}
%	\nabla_{q}(e\otimes\omega)= e\otimes d(\omega) + \nabla(e)\wedge\omega.
%\end{equation}
The composition $\nabla_{1}\circ \nabla$ is $\mathscr{O}_{\XX}$-linear. We say that $\nabla$ is \textit{integrable} if $\nabla_{1}\circ \nabla =0$. If $\nabla$ is integrable, we have $\nabla_{q+1}\circ \nabla_{q}=0$ for all $q\ge 0$ and we can associate to $(M,\nabla)$ a de Rham complex
\begin{equation}
	M\xrightarrow{\nabla} M\otimes_{\mathscr{O}_{\XX}}\widehat{\Omega}_{\XX/\SS}^{1} \xrightarrow{\nabla_{1}} M\otimes_{\mathscr{O}_{\XX}}\widehat{\Omega}_{\XX/\SS}^{2} \xrightarrow{\nabla_{2}}\cdots. \label{dR complex}
\end{equation}

When $M$ is a coherent $\mathscr{O}_{\XX}$-module, we say that an integrable connection $\nabla$ on $M$ is \textit{topologically quasi-nilpotent} if for every $n\ge 1$, its reduction modulo $p^{n}$ on $M/p^{n}M$ is quasi-nilpotent (\cite{BO} 6.1).
\end{nothing}

\begin{nothing}\label{crystal connection stratification}
	Suppose that $X$ is smooth over $S$ and admits a smooth lifting $\mathfrak{X}$ over $\mathfrak{S}$. 
	We denote by $\PP_{\XX_{n}/\SS_{n}}$ the PD-envelop of the diagonal immersion $\XX_{n}\to \XX^{2}_{n}$ compatible with the canonical PD-structure $\gamma$ \eqref{setting crystalline} for $n\ge 1$. The sequence $(\PP_{\XX_{n}/\SS_{n}})_{n\ge 1}$ forms an adic inductive $(\XX_{n}^{2})_{n\ge 1}$-system (cf. \cite{BO} 3.20.8) and we denote its inductive limit by $\PP_{\XX/\SS}$. We have $\XX_{\zar}=\PP_{\XX/\SS,\zar}$.
%	The structural sheaf $\mathscr{O}_{\PP_{\XX/\SS}}$ of $\PP_{\XX/\SS,\zar}=\XX_{\zar}$ is equipped with a formal Hopf $\mathscr{O}_{\XX}$-algebra structure. 
	By the universal property of PD-envelope, the formal $\XX^{2}$-schema $\PP_{\XX/\SS}$ is equipped with a formal $\XX$-groupoid structure \eqref{def Hopf alg}. 
%	Since $\PP_{\XX/\SS}$ is adic over $\SS$, for any coherent $\mathscr{O}_{\XX}$-module $M$ and $i=1,2$, the canonical morphism $M\otimes_{\mathscr{O}_{\XX},q_{i}}\mathscr{O}_{\PP_{\XX/\SS}}\to M\widehat{\otimes}_{\mathscr{O}_{\XX},q_{i}}\mathscr{O}_{\PP_{\XX/\SS}}$ is an isomorphism \footnote{We remark that, in some literatures, the hyper PD-stratification is defined as an isomorphism between the completed tensor product $M\widehat{\otimes}_{\mathscr{O}_{\XX},q_{i}}\mathscr{O}_{\PP_{\XX/\SS}}$ (which literatures?).}.

	Given an object $\mathscr{F}$ of $\mathscr{C}(\mathscr{O}^{\cris}_{X/\mathfrak{S}})$ \eqref{def crystal crystalline}, the coherent $\mathscr{O}_{\XX}$-module $\mathscr{F}_{\XX}=\varprojlim_{n\ge 1}\mathscr{F}_{\XX_{n}}$ is equipped with an $\mathscr{O}_{\PP_{\XX/\SS}}$-stratification and then an integrable connection. 
	%and we denote by $\mathscr{F}_{\XX}\otimes_{\mathscr{O}_{\XX}}\widehat{\Omega}_{\XX/\SS}^{\bullet}$ the associated de Rham complex \eqref{dR complex}.
	Moreover, the following categories are canonically equivalent (\cite{BO} 6.6, \cite{Stacks} 07JH):

	(i) The category $\mathscr{C}(\mathscr{O}_{X/\mathfrak{S}}^{\cris})$. 

	(ii) The category of coherent $\mathscr{O}_{\XX}$-modules equipped with an $\mathscr{O}_{\PP_{\XX/\SS}}$-stratification.

	(iii) The category of coherent $\mathscr{O}_{\XX}$-modules equipped with a topologically quasi-nilpotent integrable connection relative to $\SS$.
\end{nothing}

\begin{prop}[\cite{Ogus90} 0.7.5, \cite{Shi08} 1.23] \label{prop stratification}
	Keep the assumption of \ref{crystal connection stratification}. Let $M$ be a coherent $\mathscr{O}_{\XX}[\frac{1}{p}]$-module and $\varepsilon$ an $\mathscr{O}_{\PP_{\XX/\SS}}$-stratification on $M$. There exists a coherent $\mathscr{O}_{\XX}$-module $M^{\circ}$ and $\varepsilon^{\circ}$ an $\mathscr{O}_{\PP_{\XX/\SS}}$-stratification on $M^{\circ}$ such that $(M^{\circ}[\frac{1}{p}],\varepsilon^{\circ}\otimes\id)$ is isomorphic to $(M,\varepsilon)$. 
\end{prop}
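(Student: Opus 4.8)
The plan is to reduce the statement to an elementary lattice-theoretic argument on the affine pieces, exactly in the spirit of the classical proof that a coherent module on a rigid space descends, up to isogeny, to a coherent model on the formal scheme. First I would reduce to the affine situation: the assertion is local on $\XX$ (one may glue the integral models produced on an affine cover, since the stratifications on overlaps already agree after inverting $p$, and the equivalence \eqref{equi coh mod inverse p} lets one patch coherent $\mathscr{O}_{\XX}$-modules along isomorphisms defined up to $p$-torsion). So assume $\XX = \Spf(A)$ with $A$ a flat, topologically finitely generated $\rW$-algebra, and let $M$ correspond to a finite projective $A[\tfrac{1}{p}]$-module. By the affine description recalled after \eqref{equi coh mod inverse p}, any coherent $\mathscr{O}_{\XX}$-module $M^{\circ}$ with $M^{\circ}[\tfrac1p]\simeq M$ is the same as a finitely generated $A$-submodule $M^{\circ}\subset M$ with $M^{\circ}[\tfrac1p]=M$ — i.e. a coherent \emph{lattice} — and one may further arrange $M^{\circ}$ to be $p$-torsion free, hence flat over $\rW$.

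Next I would analyze the stratification. Writing $P = \Gamma(\PP_{\XX/\SS},\mathscr{O}_{\PP_{\XX/\SS}})$ for the relevant PD-envelope algebra (an inverse limit of the $\PP_{\XX_n/\SS_n}$), the datum $\varepsilon$ is a $P[\tfrac1p]$-linear isomorphism $q_2^{*}M \xrightarrow{\sim} q_1^{*}M$ satisfying $\iota^{*}\varepsilon = \id$ and the cocycle identity. The key point is a \textbf{boundedness/denominator estimate}: choose any coherent $p$-torsion-free lattice $N \subset M$; then $\varepsilon$ sends $q_2^{*}N$ into $p^{-r}\,q_1^{*}N$ for some integer $r\ge 0$, because $q_2^{*}N$ is a finitely generated $P$-module and $P$ is $p$-adically separated, so finitely many generators have bounded denominators. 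The natural candidate is then
\begin{displaymath}
	M^{\circ} \;=\; \sum_{n\ge 0}\; \iota^{*}\!\Big(\,\text{(iterates of $\varepsilon$ and $\varepsilon^{-1}$ applied to }q_j^{*}N)\Big),
\end{displaymath}
or, more efficiently, one takes $M^{\circ}$ to be the smallest coherent lattice containing $N$ which is \emph{stable} in the sense that $\varepsilon(q_2^{*}M^{\circ}) = q_1^{*}M^{\circ}$. To see such a minimal stable lattice exists and is still coherent, I would argue that the lattices $N_m := \sum_{i=0}^{m} (\text{$i$-fold applications of }\varepsilon^{\pm 1})(q_j^*N)$ form an increasing chain inside the fixed finitely generated $P[\tfrac1p]$-module $q_1^*M$, all contained in $p^{-C}q_1^*N$ for a single constant $C$ coming from the denominator estimate, and a $p$-adically complete Noetherian ring has no infinite strictly increasing chain of submodules trapped between $q_1^*N$ and $p^{-C}q_1^*N$; hence the chain stabilizes and its union $M^{\circ}$ (descended along $q_1^*$ back to an $A$-lattice using that $\iota^{*}q_1^{*}=\id$) is coherent. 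One then checks $M^{\circ}$ is independent enough of choices and that $\varepsilon$ restricts to an $\mathscr{O}_{\PP_{\XX/\SS}}$-linear \emph{isomorphism} $\varepsilon^{\circ}:q_2^{*}M^{\circ}\xrightarrow{\sim}q_1^{*}M^{\circ}$; the unit and cocycle conditions for $\varepsilon^{\circ}$ are inherited from $\varepsilon$ since they hold after inverting $p$ and $M^{\circ}$ is $p$-torsion free. Finally $(M^{\circ}[\tfrac1p],\varepsilon^{\circ}\otimes\id)\simeq(M,\varepsilon)$ by construction.

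The main obstacle is the boundedness step: one must know that the denominators introduced by repeatedly applying $\varepsilon$ and $\varepsilon^{-1}$ do \emph{not} grow without bound, so that a single coherent lattice absorbs all of them. This is where the geometry enters — it is essentially the statement that $q_1,q_2:\PP_{\XX/\SS}\to\XX$ are (ind-)adic with $\XX_{\zar}=\PP_{\XX/\SS,\zar}$, so that $P$ is $p$-adically a well-behaved $A$-algebra and the PD-structure does not interfere with $p$-adic estimates; concretely one reduces modulo each $p^{n}$ to the quasi-nilpotent stratifications on $\XX_n$ and invokes the classical crystalline equivalence \eqref{crystal connection stratification} together with the finiteness of $\PP_{\XX_n/\SS_n}$. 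Once boundedness is in hand, the rest is the stabilization-of-a-chain argument above, which is routine. I would also remark that, via the equivalence between stratifications and integrable connections in \eqref{crystal connection stratification}, one may phrase the whole argument in terms of finding a connection-stable coherent lattice, which is perhaps the cleanest formulation and makes the comparison with (\cite{Ogus90} 0.7.5) transparent.
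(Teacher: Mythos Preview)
The paper does not prove this proposition; it is quoted from \cite{Ogus90} 0.7.5 and \cite{Shi08} 1.23. Your reconstruction follows the right strategy --- produce an $\varepsilon$-stable coherent lattice --- and your identification of the boundedness step as the crux is correct, but there is a real gap precisely there.

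The problem is the uniform constant $C$. Your single-step estimate $\varepsilon(q_2^{*}N)\subset p^{-r}q_1^{*}N$ does not by itself bound ``$i$-fold applications of $\varepsilon^{\pm1}$'': na\"ively iterating would produce $p^{-ir}$, not a fixed $p^{-C}$. What actually closes this gap is the cocycle condition, which you invoke only at the end (to verify it for $\varepsilon^{\circ}$) rather than in the construction, where it does the real work. Concretely, since $q_1$ makes $\mathscr{O}_{\PP_{\XX/\SS}}$ topologically free over $\mathscr{O}_{\XX}$ on the PD-monomials $\xi^{[I]}$, one expands $\varepsilon(q_2^{*}m)=\sum_I D_I(m)\,\xi^{[I]}$ with $A$-linear operators $D_I$ on $M$; the single bound already forces $D_I(N)\subset p^{-r}N$ for \emph{all} $I$ at once, and the cocycle identity gives $D_J\circ D_I=\binom{I+J}{I}D_{I+J}$. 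Hence $M^{\circ}:=\sum_I D_I(N)$ is simultaneously contained in $p^{-r}N$ \emph{and} stable under every $D_J$, so $\varepsilon$ restricts to it --- no chain argument is needed. This is fortunate, because your Noetherian step is also shaky: $\mathscr{O}_{\PP_{\XX/\SS}}$ is \emph{not} Noetherian (its reduction mod $p$ is a divided-power polynomial ring in characteristic $p$), and the $\PP_{\XX_n/\SS_n}$ are not finite over $\XX_n$ as you suggest.

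Two smaller remarks. You should not assume $M$ is projective over $A[\tfrac1p]$: that is a \emph{consequence} of this proposition (via Lemma~\ref{connection locally proj}), not an input. And the gluing of locally constructed lattices is not automatic --- different choices of starting lattice $N$ on an affine cover give different $M^{\circ}$'s on overlaps; the clean fix is to start from a single global coherent lattice $N\subset M$ (which exists by \eqref{equi coh mod inverse p}) and run the construction globally.
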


\begin{lemma} \label{connection locally proj}
	Let $\XX$ be a smooth formal $\rW$-scheme, $M$ a coherent $\mathscr{O}_{\XX}$-module and $\nabla$ an integrable connection on $M$ relative to $\rW$. Then, $M[\frac{1}{p}]$ is a locally projective $\mathscr{O}_{\XX}[\frac{1}{p}]$-module of finite type. In particular, given a coherent $\mathscr{O}_{\XX}[\frac{1}{p}]$-module with an $\mathscr{O}_{\PP_{\XX/\rW}}$-stratification $(M,\varepsilon)$ (resp. an object $\mathscr{E}$ of $\mathscr{C}(\mathscr{O}_{X/\rW}^{\cris})$), $M$ (resp. $\mathscr{E}_{\XX}[\frac{1}{p}]$) is a locally projective $\mathscr{O}_{\XX}[\frac{1}{p}]$-module of finite type.
\end{lemma}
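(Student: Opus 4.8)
The statement to prove is Lemma~\ref{connection locally proj}: given a smooth formal $\rW$-scheme $\XX$, a coherent $\mathscr{O}_{\XX}$-module $M$ with an integrable connection $\nabla$ relative to $\rW$, the $\mathscr{O}_{\XX}[\frac{1}{p}]$-module $M[\frac{1}{p}]$ is locally projective of finite type.

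The plan is to reduce to the affine case and then invoke the classical ``Taylor trivialization'' of a module with integrable connection over an equicharacteristic-zero regular complete local ring. Since the assertion is local on $\XX$ and, by Definition~\ref{def locally proj}(iii), being locally projective of finite type can be checked over an open covering, I would assume at once that $\XX=\Spf(A)$ with $A$ a $p$-adically complete, flat, formally smooth $\rW$-algebra of finite type; in particular $A$ is Noetherian and regular, so $B:=A[\frac{1}{p}]$ is a Noetherian regular ring in which $p$ is invertible, $K:=\rW[\frac1p]$, and $\widehat{\Omega}^{1}_{\XX/\rW}[\frac1p]=\Omega_{B/K}$ is a projective $B$-module of finite type, of rank equal to the relative dimension $d$ of $\XX$ over $\rW$. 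Writing $N=\Gamma(\XX,M[\frac1p])$, which is a finitely generated $B$-module by \eqref{equi coh mod inverse p}, and using that over an affine formal scheme a coherent $\mathscr{O}_{\XX}[\frac1p]$-module is locally projective of finite type precisely when its module of global sections is projective of finite type (\cite{AGT} III.6.17), the statement reduces to showing $N$ is a projective $B$-module; since $B$ is Noetherian and $N$ is finitely presented, it suffices to show $N$ is $B$-flat.

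Next I would pass to completions: flatness of $N$ over $B$ is local on $\Spec B$, and for a maximal ideal $\mathfrak{m}$ it can be checked after the faithfully flat base change $B_{\mathfrak{m}}\to\widehat{B_{\mathfrak{m}}}$, so it is enough to prove that $\widehat{N_{\mathfrak{m}}}:=N\otimes_{B}\widehat{B_{\mathfrak{m}}}$ is free over $\widehat{B_{\mathfrak{m}}}$. Here the hypotheses enter: $B$ is regular and every residue field of $B$ has characteristic zero (as $p$ is invertible), so by the Cohen structure theorem $\widehat{B_{\mathfrak{m}}}$ is a formal power series ring $L[[s_{1},\dots,s_{d}]]$ over a field $L$ of characteristic zero, and $\Omega_{B/K}\otimes_{B}\widehat{B_{\mathfrak{m}}}$ becomes free with basis $ds_{1},\dots,ds_{d}$. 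The connection $\nabla$ extends by continuity to an integrable connection on $\widehat{N_{\mathfrak{m}}}$ relative to $K$; let $D_{1},\dots,D_{d}$ be the pairwise commuting derivations dual to $ds_{1},\dots,ds_{d}$.

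The heart of the argument is the Taylor isomorphism. Set $\mathfrak{n}=(s_{1},\dots,s_{d})$, $N_{0}=\widehat{N_{\mathfrak{m}}}/\mathfrak{n}\widehat{N_{\mathfrak{m}}}$ (a finite-dimensional $L$-vector space), and define
\[
\theta\colon \widehat{N_{\mathfrak{m}}}\longrightarrow N_{0}\otimes_{L}\widehat{B_{\mathfrak{m}}},\qquad
\theta(e)=\sum_{\alpha\in\mathbb{N}^{d}}\frac{s^{\alpha}}{\alpha!}\,\overline{D^{\alpha}(e)},
\]
where $D^{\alpha}=D_{1}^{\alpha_{1}}\cdots D_{d}^{\alpha_{d}}$ (well defined by integrability), $\overline{(\,\cdot\,)}$ denotes reduction modulo $\mathfrak{n}$, and the series converges for the $\mathfrak{n}$-adic topology (the denominators $\alpha!$ make sense because $\operatorname{char}L=0$). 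A direct computation with the Leibniz rule shows $\theta$ is $\widehat{B_{\mathfrak{m}}}$-linear and that $\theta\bmod\mathfrak{n}=\id_{N_{0}}$; since both sides are finitely generated $\widehat{B_{\mathfrak{m}}}$-modules and $N_{0}\otimes_{L}\widehat{B_{\mathfrak{m}}}$ is free, the complete-local Nakayama lemma forces $\theta$ to be an isomorphism, so $\widehat{N_{\mathfrak{m}}}$ is free over $\widehat{B_{\mathfrak{m}}}$. This yields that $N$ is $B$-flat, hence projective, which is the first assertion; and when the stalks of $\mathscr{O}_{\XX}[\frac1p]$ are local rings, ``locally projective of finite type'' coincides with ``locally free of finite type''.

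Finally, the two ``in particular'' clauses follow formally. Given $(M,\varepsilon)$ with $\varepsilon$ an $\mathscr{O}_{\PP_{\XX/\rW}}$-stratification, Proposition~\ref{prop stratification} furnishes a coherent $\mathscr{O}_{\XX}$-module $M^{\circ}$ with a stratification, hence (\ref{crystal connection stratification}) an integrable connection relative to $\rW$, and $M^{\circ}[\frac1p]\cong M$, so the first part applies; for $\mathscr{E}\in\mathscr{C}(\mathscr{O}_{X/\rW}^{\cris})$ the coherent $\mathscr{O}_{\XX}$-module $\mathscr{E}_{\XX}$ carries a topologically quasi-nilpotent integrable connection relative to $\rW$ by \ref{crystal connection stratification}(iii), so again the first part applies. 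The main obstacle — and the only place where characteristic zero is truly essential — is the construction of $\theta$: one needs $\widehat{B_{\mathfrak{m}}}$ to be genuinely a power series ring over its (characteristic-zero) residue field, so that the formal integration of the connection with its factorial denominators is legitimate.
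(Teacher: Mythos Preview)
Your argument is correct and is precisely the classical Taylor-trivialization proof that the paper invokes by citing \cite{Ka71}~8.8 and \cite{Ked}~1.2 (the paper gives no independent argument for the first assertion). Your deduction of the two ``in particular'' clauses from \ref{crystal connection stratification} and \ref{prop stratification} likewise matches the paper's one-line justification.
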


The first assertion is a standard result (cf. \cite{Ka71} 8.8, \cite{Ked} 1.2). Then the second assertion follows from \ref{crystal connection stratification} and \ref{prop stratification}.

\begin{nothing} \label{def relative cris coh}
	There exists a canonical functor $\pi:\Cris(X/\mathfrak{S})\to \Zar_{/X}$ defined by $(U,\mathfrak{T})\mapsto U$. It is cocontinuous and then induces a morphism of topoi (\cite{BO} 5.12)
	\begin{equation}
		u_{X/\mathfrak{S},\cris}: (X/\mathfrak{S})_{\cris}\to X_{\zar}.
	\end{equation}
	We denote by $g_{X/\mathfrak{S},\cris}$ the composition
	\begin{displaymath}
		g_{X/\mathfrak{S},\cris}: (X/\mathfrak{S})_{\cris}\to X_{\zar}\to \mathfrak{S}_{\zar},
	\end{displaymath}
	which is ringed by $\mathscr{O}_{X/\mathfrak{S}}^{\cris}$ and $\mathscr{O}_{\mathfrak{S}}$.
	We call $\rR^{\bullet}g_{X/\mathfrak{S},\cris*}(-)$ \textit{the relative crystalline cohomology}.

	For an isocrystal $\mathscr{E}=\mathscr{F}_{\mathbb{Q}}$ with $\mathscr{F} \in \Ob(\mathscr{C}(\mathscr{O}_{X/\mathfrak{S}}^{\cris}))$, we set \eqref{OT mod inverse p}
	\begin{eqnarray*}
%		\rR u_{X/\mathfrak{S}*}(\mathscr{E}) &=& \rR u_{X/\mathfrak{S}*}(\mathscr{F})[\frac{1}{p}], \\
		\rR^{q}g_{X/\mathfrak{S},\cris*}(\mathscr{E}) = \rR^{q}g_{X/\mathfrak{S},\cris*}(\mathscr{F})[\frac{1}{p}], \qquad \quad
		\rR g_{X/\mathfrak{S},\cris*}(\mathscr{E}) = \rR g_{X/\mathfrak{S},\cris*}(\mathscr{F})[\frac{1}{p}].
	\end{eqnarray*}
	It is clear that the above definition is independent of the choice of $\mathscr{F}$. 
\end{nothing}

%\begin{theorem}[\cite{Shi08}] \label{u direct image dR complex} (not nesscary to present here?)
%	Keep the assumption and notation of \ref{crystal connection stratification}. Let $\mathscr{E}=\mathscr{F}_{\mathbb{Q}}$ be an isocrystal. There exists a canonical isomorphism
%	\begin{equation}
%		\rR u_{X/\mathfrak{S}*}(\mathscr{E}) \xrightarrow{\sim} \mathscr{F}_{\XX}\otimes\Omega_{\XX/\SS}^{\bullet}.
%	\end{equation}
%\end{theorem}

\begin{theorem}[\cite{Shi08} 1.15] \label{finiteness crystalline coh}
	Suppose moreover that $\SS$ is separated and that $X$ is smooth and proper over $S$. Then, for any isocrystal $\mathscr{E}$ and $q\ge 0$, the relative crystalline cohomology $\rR^{q}g_{X/\mathfrak{S},\cris *} (\mathscr{E})$ is a coherent $\mathscr{O}_{\mathfrak{S}}[\frac{1}{p}]$-module.
\end{theorem}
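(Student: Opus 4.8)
The plan is to reduce the statement to a finiteness assertion for the crystalline cohomology of a genuine crystal of finite presentation, and then to dévisser modulo $p^{n}$ to the classical case of a base on which $p$ is nilpotent. By \ref{def relative cris coh} we have $\rR^{q}g_{X/\SS,\cris*}(\mathscr{E})=\rR^{q}g_{X/\SS,\cris*}(\mathscr{F})[\frac{1}{p}]$ for any crystal $\mathscr{F}$ of finite presentation with $\mathscr{F}_{\mathbb{Q}}\simeq\mathscr{E}$, and coherence is local on $\SS$; so it suffices to treat the case $\SS=\Spf(R)$ with $R$ a $p$-torsion-free, $p$-adically complete Noetherian ring, and to show that $\rR^{q}g_{X/\SS,\cris*}(\mathscr{F})$ is a finitely generated $R$-module for every such $\mathscr{F}$ — inverting $p$ then yields the claim. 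Note that $X$, being proper and smooth over $S$, is a separated quasi-compact scheme that is Zariski-locally liftable to smooth formal $\SS$-schemes.

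Next I would build a global complex computing the cohomology. Choose a finite affine open covering $\{U_{i}\}_{i\in I}$ of $X$ together with smooth formal liftings $\mathfrak{U}_{i}$ of $U_{i}$ over $\SS$. For nonempty $J\subseteq I$ let $\mathfrak{D}_{J}$ be the divided power envelope, compatible with $\gamma$, of the immersion $U_{J}=\bigcap_{i\in J}U_{i}\hookrightarrow\prod_{i\in J}\mathfrak{U}_{i}$ (fiber product over $\SS$); this immersion is regular, so $\mathfrak{D}_{J}$ is flat over $\SS$ by flatness of divided power envelopes of regular immersions. By the crystalline Poincaré lemma and cohomological descent along $\{U_{i}\}$, the total complex $C^{\bullet}$ of the Čech–de Rham bicomplex with terms $\bigoplus_{|J|=q+1}\Gamma\bigl(\mathfrak{D}_{J},\mathscr{F}_{\mathfrak{D}_{J}}\otimes_{\mathscr{O}_{\mathfrak{D}_{J}}}\widehat{\Omega}^{\bullet}_{\mathfrak{D}_{J}/\SS}\bigr)$ represents $\rR g_{X/\SS,\cris*}(\mathscr{F})$; it is a bounded complex of $p$-torsion-free, $p$-adically complete $R$-modules, and, by flat base change along the closed immersions $\SS_{n}\hookrightarrow\SS$, the complex $C^{\bullet}\otimes_{R}R/p^{n}$ represents $\rR g_{X/\SS_{n},\cris*}(\mathscr{F}_{n})$.

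Then I would invoke the classical case and pass to the limit. For each fixed $n$ the morphism $X\to S\hookrightarrow\SS_{n}$ is proper and $p$ is nilpotent on $\SS_{n}$; the classical finiteness theorem for crystalline cohomology with coefficients over such a base — which rests on the Hodge–de Rham spectral sequence together with the coherence of the higher direct images of coherent sheaves under a proper morphism of schemes — shows that $H^{q}(C^{\bullet}\otimes_{R}R/p^{n})$ is finitely generated over $R/p^{n}$ for every $q$, in a range of degrees independent of $n$. In particular $C^{\bullet}$ is bounded, derived $p$-complete, and its derived reduction $C^{\bullet}\otimes^{\mathbf{L}}_{R}R/p$ has finitely generated cohomology in a bounded range of degrees; since $R$ is Noetherian, a standard complete-Nakayama argument ("derived $p$-complete and perfect modulo $p$ implies perfect") shows that $C^{\bullet}$ is quasi-isomorphic to a bounded complex of finite free $R$-modules. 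Hence each $H^{q}(C^{\bullet})=\rR^{q}g_{X/\SS,\cris*}(\mathscr{F})$ is a finitely generated $R$-module, so $\rR^{q}g_{X/\SS,\cris*}(\mathscr{E})=H^{q}(C^{\bullet})[\frac{1}{p}]$ is a coherent $\mathscr{O}_{\SS}[\frac{1}{p}]$-module.

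The main obstacle I anticipate is the construction of $C^{\bullet}$ with all the properties used above: checking that the Čech–divided-power–de Rham complex genuinely computes $\rR g_{X/\SS,\cris*}(\mathscr{F})$ (this is where the crystalline Poincaré lemma and the comparison of the "big" divided power de Rham complex with the de Rham complex of a smooth lift enter), that its terms are derived $p$-complete, and that its formation commutes with reduction modulo $p^{n}$; once this is in place, both the mod-$p^{n}$ finiteness and the limit argument are essentially formal. A more economical route — probably closer to Shiho's — would be to avoid the explicit complex and instead use his crystalline base change theorem \cite{Shi08} to reduce the finiteness directly to the case of an Artinian base, where it is classical, at the cost of separately establishing the pseudo-coherence of $\rR g_{X/\SS,\cris*}(\mathscr{F})$ over $R$.
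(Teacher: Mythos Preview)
The paper does not prove this statement; it is quoted from Shiho \cite{Shi08} (Theorem~1.15 there) and used as a black box, so there is no proof in the paper to compare your attempt against.

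Your outline is the standard strategy and is essentially correct: localise to $\SS=\Spf(R)$, represent $\rR g_{X/\SS,\cris*}(\mathscr{F})$ by a \v{C}ech--PD--de~Rham complex built from smooth formal lifts of an affine cover, reduce modulo $p^{n}$ to land in the classical finiteness theorem over a base where $p$ is nilpotent, and then use derived $p$-adic Nakayama to conclude over $R$. This is indeed how Shiho proceeds. Your closing remark about an alternative route via base change is not quite right, since in \cite{Shi08} the base change theorem (1.19 there, \ref{smooth base change cry coh} here) is proved \emph{after} and partly \emph{using} the finiteness result, so that would be circular.

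One point deserves more care. You assert that the terms of $C^{\bullet}$ are $p$-torsion-free, but $\mathscr{F}_{\mathfrak{D}_{J}}$ need not be $p$-torsion-free for an arbitrary crystal $\mathscr{F}$ of finite presentation. There are two standard fixes. First, you may replace $\mathscr{F}$ by $p^{N}\mathscr{F}$ for $N$ large; since $X$ is quasi-compact and each $\mathscr{F}_{\mathfrak{U}_{i}}$ is coherent over the Noetherian $p$-torsion-free ring $\mathscr{O}_{\mathfrak{U}_{i}}$, a uniform $N$ kills all $p$-torsion on the chosen lifts, and then the flatness of $\mathfrak{D}_{J}\to\mathfrak{U}_{i}$ (which you correctly invoke, via regularity of the immersion) propagates $p$-torsion-freeness to $\mathscr{F}_{\mathfrak{D}_{J}}$. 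Second, and more robustly, you can drop the $p$-torsion-freeness claim entirely: the terms of $C^{\bullet}$ are $p$-adically complete, hence derived $p$-complete, and it suffices to know that $C^{\bullet}\otimes^{\mathbf{L}}_{R}R/p$ has finitely generated cohomology; this follows from the triangle relating $C^{\bullet}\otimes^{\mathbf{L}}_{R}R/p$ to $C^{\bullet}/p$ and $C^{\bullet}[p]$ together with the mod-$p^{n}$ finiteness for all $n$. Either way the argument goes through, but you should say which route you are taking.
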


%\begin{theorem}[\cite{Shi08} 1.16] \label{perfect complex cry coh}
%	Suppose that $g$ is smooth and proper. Then, for any locally projective isocrystal $\mathscr{E}$ of $(X/\mathfrak{S})_{\cris}$, $\rR g_{X/\mathfrak{S},\cris*}(\mathscr{E})$ is a perfect complex of $\mathscr{O}_{\mathfrak{S}}[\frac{1}{p}]$-modules, i.e. it is isomorphic to a bounded complex of flat, coherent $\mathscr{O}_{\mathfrak{S}}[\frac{1}{p}]$-modules. 
%\end{theorem}

\begin{definition}[\cite{Shi08} 1.2, 1.9] \label{notion MT locally proj}
	Let $\mathfrak{T}$ be an adic formal $\rW$-scheme. 
	
	(i) We denote by $\rM(\mathfrak{T})$ the category of projective systems of $(\mathscr{O}_{\mathfrak{T}_{n}})_{n\ge 1}$-modules $(M_{n})_{n\ge 1}$ such that each $M_{n}$ is an $\mathscr{O}_{\mathfrak{T}_{n}}$-module of finite presentation and that the canonical morphism $M_{n+1}\otimes_{\mathscr{O}_{\mathfrak{T}_{n+1}}}\mathscr{O}_{\mathfrak{T}_{n}}\xrightarrow{\sim} M_{n}$ is an isomorphism.

	(ii) We say that an object $M$ of $\rM(\mathfrak{T})_{\mathbb{Q}}$ \eqref{cat up to isog} is \textit{locally projective} if Zariski locally on $\mathfrak{T}$, $M$ is isomorphic to a direct summand of $((\mathscr{O}_{\mathfrak{T}_{n}}^{\oplus r})_{n\ge 1})_{\mathbb{Q}}$ for some integer $r$. 
\end{definition}

	If $\mathfrak{T}$ is of finite type over $\rW$, the canonical functor $(M_{n})_{n\ge 1}\mapsto \varprojlim_{n\ge 1}M_{n}$ induces an equivalence of categories $\rM(\mathfrak{T})\xrightarrow{\sim}\Coh(\mathscr{O}_{\mathfrak{T}})$.
	In this case, an object of $\rM(\mathfrak{T})_{\mathbb{Q}}$ is locally projective \eqref{notion MT locally proj} if and only if its image in $\Coh(\mathscr{O}_{\mathfrak{T}}[\frac{1}{p}])$ \eqref{equi coh mod inverse p} is locally projective of finite type \eqref{def locally proj}.

\begin{definition}[\cite{Shi08} 1.11, 1.12] \label{def crystal locally proj}
	(i) A projective system of objects $\{(U,T_{n},\iota_{n},\delta_{n})\}_{n\ge 1}$ of $\Cris(X/\mathfrak{S})$ is called \textit{a $p$-adic system}, if $\mathfrak{T}:= \varinjlim_{n} T_{n}$ is an adic formal $\mathfrak{S}$-scheme such that the canonical morphism $T_{n} \to\mathfrak{T}\otimes_{\mathbb{Z}_{p}}\mathbb{Z}/p^{n}\mathbb{Z}$ is an isomorphism.

	(ii) We say that an isocrystal $\mathscr{E}=\mathscr{F}_{\mathbb{Q}}$ \eqref{def crystal crystalline} is \textit{locally projective} if for every $p$-adic system $\mathfrak{T}=(U,\mathfrak{T}_{n})_{n\ge 1}$ of $\Cris(X/\mathfrak{S})$, $((\mathscr{F}_{\mathfrak{T}_{n}})_{n\ge 1})_{\mathbb{Q}} \in \rM(\mathfrak{T})_{\mathbb{Q}}$ is locally projective in the sense of \ref{notion MT locally proj}.
\end{definition}

\begin{theorem}[\cite{Shi08} 1.16]
	Keep the assumption of \ref{finiteness crystalline coh}. For any locally projective isocrystal $\mathscr{E}$, $\rR g_{X/\SS,\cris*}(\mathscr{E})$ is a perfect complex of $\mathscr{O}_{\SS}[\frac{1}{p}]$-modules, i.e. Zariski locally on $\SS$, it is isomorphic to a bounded complex of locally projective $\mathscr{O}_{\SS}[\frac{1}{p}]$-modules of finite type in the derived category of $\mathscr{O}_{\SS}[\frac{1}{p}]$-modules.
\end{theorem}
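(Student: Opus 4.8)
The plan is to derive perfectness from the finiteness statement \ref{finiteness crystalline coh} by exhibiting a model of $\rR g_{X/\SS,\cris*}(\mathscr{E})$, locally on $\SS$, that is a \emph{bounded} complex of modules \emph{flat} over $\mathscr{O}_{\SS}[\frac{1}{p}]$. First I would reduce to $\SS=\Spf(A)$ affine --- legitimate since being perfect is a Zariski-local condition on $\SS$ --- so that $A$ and $A[\frac{1}{p}]$ are Noetherian and, writing $\mathscr{E}=\mathscr{F}_{\mathbb{Q}}$ with $\mathscr{F}\in\Ob(\mathscr{C}(\mathscr{O}_{X/\SS}^{\cris}))$, one has $\rR g_{X/\SS,\cris*}(\mathscr{E})=\rR\Gamma((X/\SS)_{\cris},\mathscr{F})[\frac{1}{p}]$ as a complex of $A[\frac{1}{p}]$-modules. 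By \ref{finiteness crystalline coh} this complex has finitely generated cohomology; so, granting a bounded complex of flat $A[\frac{1}{p}]$-modules representing it, I would conclude at once: such a complex has finite Tor-dimension, it is pseudo-coherent (being bounded with coherent cohomology over the Noetherian ring $A[\frac{1}{p}]$), and a pseudo-coherent complex of finite Tor-dimension is perfect, hence quasi-isomorphic to a bounded complex of finite projective, i.e.\ locally projective of finite type \eqref{def locally proj}, $A[\frac{1}{p}]$-modules. Everything therefore reduces to producing that flat model.

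For the model I would run the \v{C}ech--de Rham resolution in Berthelot's style. Choose a finite affine open covering $\{X_{i}\}_{i\in I}$ of $X$ (possible since $X\to S$ is of finite type and separated); for $\emptyset\neq J\subseteq I$, $X_{J}=\bigcap_{i\in J}X_{i}$ is affine and smooth over $S$. Embed each $X_{i}$ as a closed $S$-subscheme of a smooth affine formal $\SS$-scheme $\YY_{i}$ of finite type, let $\YY'_{J}$ be an affine open of $\prod_{i\in J}\YY_{i}$ (fibre product over $\SS$) in which $X_{J}$ is closed, and let $\DD_{J}$ be the $p$-adic completion of the divided-power envelope of $X_{J}\hookrightarrow\YY'_{J}$ compatible with $\gamma$. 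These assemble into a cosimplicial formal $\SS$-scheme, and by the comparison of crystalline cohomology with the de Rham complex of a PD-envelope (\cite{BO} \S7; cf.\ also \ref{crystal connection stratification}) together with \v{C}ech descent for $\{X_{i}\}$, the total complex
\begin{displaymath}
	C^{\bullet}=\Tot\Bigl(\,\bigoplus_{|J|=\bullet+1}\Gamma\bigl(\DD_{J},\,\mathscr{F}_{\DD_{J}}\otimes_{\mathscr{O}_{\YY'_{J}}}\widehat{\Omega}^{\bullet}_{\YY'_{J}/\SS}\bigr)[\tfrac{1}{p}]\,\Bigr)
\end{displaymath}
represents $\rR\Gamma((X/\SS)_{\cris},\mathscr{F})[\frac{1}{p}]$ in $\rD(A[\frac{1}{p}])$. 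It is bounded (the \v{C}ech degree is at most $|I|-1$, the de Rham degree at most $\max_{J}\dim(\YY'_{J}/\SS)$), so it only remains to check that each term is flat over $A[\frac{1}{p}]$.

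This last step is, I expect, the real crux, and it is the only place the hypothesis that $\mathscr{E}$ is \emph{locally projective} is used. Since $X_{J}$ is smooth over $S$ and $\YY'_{J}$ is smooth over $\SS$, the closed immersion $X_{J}\hookrightarrow\YY'_{J}$ is regular --- locally it is cut out by $p$ together with lifts of a regular sequence defining $X_{J}$ in the special fibre $\YY'_{J,1}$ --- so $\DD_{J}$ is flat over $\SS$ (the divided-power envelope of a regular immersion in a flat formal scheme is flat over the base, by \cite{BO}, and $p$-adic completion over the Noetherian ring $A$ preserves flatness); in particular $\Gamma(\DD_{J},\mathscr{O}_{\DD_{J}})$ is flat over $A$. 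Moreover $\{(X_{J},\DD_{J,n})\}_{n\ge1}$ is a $p$-adic system of $\Cris(X/\SS)$, so local projectivity of $\mathscr{E}$ \eqref{def crystal locally proj} forces $((\mathscr{F}_{\DD_{J,n}})_{n})_{\mathbb{Q}}$ to be locally projective in $\rM(\DD_{J})_{\mathbb{Q}}$ \eqref{notion MT locally proj}; passing to the inverse limit, $\mathscr{F}_{\DD_{J}}[\frac{1}{p}]$ is Zariski-locally on $\DD_{J}$ a direct summand of a finite free $\mathscr{O}_{\DD_{J}}[\frac{1}{p}]$-module, hence --- $\DD_{J}$ being affine --- $\Gamma(\DD_{J},\mathscr{F}_{\DD_{J}})[\frac{1}{p}]$ is a finite projective $\Gamma(\DD_{J},\mathscr{O}_{\DD_{J}})[\frac{1}{p}]$-module. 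Tensoring with the locally free $\mathscr{O}_{\YY'_{J}}$-module $\widehat{\Omega}^{q}_{\YY'_{J}/\SS}$ keeps this property, and since $\Gamma(\DD_{J},\mathscr{O}_{\DD_{J}})[\frac{1}{p}]$ is flat over $A[\frac{1}{p}]$, each term of $C^{\bullet}$ is flat over $A[\frac{1}{p}]$, as required. The other input --- finiteness of the cohomology, which supplies pseudo-coherence --- is the deeper fact, but it is already at our disposal as \ref{finiteness crystalline coh}; so the genuine work is concentrated in this flatness analysis of the de Rham terms over the divided-power envelopes.
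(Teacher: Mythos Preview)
The paper does not supply its own proof of this statement; it is simply quoted from \cite{Shi08} as background in \S\ref{rel crys coh}. Your sketch is essentially the standard argument (and the one Shiho gives): represent $\rR g_{X/\SS,\cris*}(\mathscr{E})$ over an affine base $\SS=\Spf(A)$ by the total \v{C}ech--de Rham complex built from PD-envelopes $\DD_J$ of the members of a finite affine cover embedded in smooth formal $\SS$-schemes; regularity of the immersions makes each $\DD_J$ flat over $A$, local projectivity of $\mathscr{E}$ makes each de Rham term flat over $A[\frac{1}{p}]$, and then \ref{finiteness crystalline coh} plus ``pseudo-coherent $+$ finite Tor-amplitude $\Rightarrow$ perfect'' finishes.

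One step is under-justified. You pass from ``$\mathscr{F}_{\DD_J}[\frac{1}{p}]$ is Zariski-locally on $\DD_J$ a direct summand of a finite free module'' to ``$\Gamma(\DD_J,\mathscr{F}_{\DD_J})[\frac{1}{p}]$ is finite projective over $\Gamma(\DD_J,\mathscr{O}_{\DD_J})[\frac{1}{p}]$'' by invoking affineness. But $\Gamma(\DD_J,\mathscr{O}_{\DD_J})$ is typically not Noetherian, and the Zariski opens of $|\DD_J|=|X_J|$ correspond to $p$-adically \emph{completed} localizations rather than ordinary ones, so the usual local-to-global argument for projectivity is not immediate. The clean fix is to refine the cover at the outset: choose each $X_i$ small enough that it admits a smooth lift $\XX_i$ over $\SS$ with $\mathscr{F}_{\XX_i}[\frac{1}{p}]$ \emph{globally} a direct summand of a finite free $\mathscr{O}_{\XX_i}[\frac{1}{p}]$-module (possible by the local projectivity hypothesis applied to the $p$-adic system $(X_i,\XX_{i,n})_n$), and take $\YY_i=\XX_i$. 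Then for every $J$ and any $i\in J$ the crystal property identifies $\mathscr{F}_{\DD_J}$ with the pullback of $\mathscr{F}_{\XX_i}$ along the canonical map $\DD_J\to\XX_i$, so $\Gamma(\DD_J,\mathscr{F}_{\DD_J})[\frac{1}{p}]$ is globally a direct summand of $\Gamma(\DD_J,\mathscr{O}_{\DD_J})^{r}[\frac{1}{p}]$ and your flatness computation goes through directly.
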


\begin{theorem}[\cite{Shi08} 1.19] \label{smooth base change cry coh}
	Let $\varphi:\mathfrak{S}'\to \mathfrak{S}$ be an adic morphism of seperated, flat formal $\rW$-schemes of finite type, $X$ a smooth proper $S$-scheme, $X'=X\times_{\mathfrak{S}}\mathfrak{S}'$ and $\varphi_{\cris}:(X'/\mathfrak{S}')_{\cris}\to (X/\mathfrak{S})_{\cris}$ the functorial morphism of crystalline topoi.
	Then, for any locally projective isocrystal $\mathscr{E}$ of $(X/\mathfrak{S})_{\cris}$, there exists a canonical isomorphism in the derived category of $\mathscr{O}_{\SS'}[\frac{1}{p}]$-modules:
	\begin{equation}
		\rL\varphi^{*}_{\zar}(\rR g_{X/\mathfrak{S},\cris*}(\mathscr{E}))\xrightarrow{\sim} \rR g_{X'/\mathfrak{S}',\cris *}(\varphi^{*}_{\cris}(\mathscr{E})).
	\end{equation}
\end{theorem}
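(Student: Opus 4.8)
The plan is to prove Theorem~\ref{smooth base change cry coh} by a standard dévissage from the known finiteness/perfectness results to the isogeny setting, combined with the classical crystalline base change theorem modulo $p^{n}$. The main point is that the hypotheses are exactly those under which the integral statements (perfectness and base change) hold, and one only has to transport them through the functor $\mathscr{F}\mapsto\mathscr{F}[\frac{1}{p}]$ and keep track of derived functors.

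\textbf{Step 1: Reduce to a statement about integral crystals.} By definition \eqref{def relative cris coh}, for $\mathscr{E}=\mathscr{F}_{\mathbb{Q}}$ with $\mathscr{F}\in\Ob(\mathscr{C}(\mathscr{O}^{\cris}_{X/\SS}))$ we have $\rR g_{X/\SS,\cris*}(\mathscr{E})=\rR g_{X/\SS,\cris*}(\mathscr{F})[\frac{1}{p}]$, and similarly on the primed side $\varphi^{*}_{\cris}(\mathscr{E})=(\varphi^{*}_{\cris}\mathscr{F})_{\mathbb{Q}}$. So it suffices to produce, functorially in $\mathscr{F}$, a morphism $\rL\varphi^{*}_{\zar}(\rR g_{X/\SS,\cris*}(\mathscr{F}))\to \rR g_{X'/\SS',\cris*}(\varphi^{*}_{\cris}\mathscr{F})$ and show that it becomes an isomorphism after inverting $p$. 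The construction of this morphism is the usual base-change arrow: one has a commutative square of ringed topoi relating $(X'/\SS')_{\cris}$, $(X/\SS)_{\cris}$, $\SS'_{\zar}$ and $\SS_{\zar}$ via $\varphi_{\cris}$ and $\varphi_{\zar}$, and the morphism is obtained by adjunction from the base-change map $\varphi^{*}_{\cris}\rR g_{X/\SS,\cris*}\to \rR g_{X'/\SS',\cris*}\varphi^{*}_{\cris}$ together with the canonical map $\rL\varphi^{*}_{\zar}\to\varphi^{*}_{\zar}$ on the (flat, by hypothesis on $\varphi$) base; one must be mildly careful that $\varphi$ is only adic and $\SS,\SS'$ are $p$-adic formal schemes, so the square should be built level by level on the reductions modulo $p^{n}$.

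\textbf{Step 2: Apply crystalline base change modulo $p^{n}$ and pass to the limit.} For each $n\ge 1$, set $X_{n}=X\times_{\SS}\SS_{n}$ etc.; the classical base change theorem for crystalline cohomology of a crystal of finite presentation along the flat morphism $\SS'_{n}\to\SS_{n}$ (Berthelot--Ogus, \cite{BO}) gives $\rL\varphi_{n}^{*}(\rR g_{X_{n}/\SS_{n},\cris*}\mathscr{F}_{n})\xrightarrow{\sim}\rR g_{X'_{n}/\SS'_{n},\cris*}(\varphi_{n,\cris}^{*}\mathscr{F}_{n})$ — here smoothness and properness of $X$ over $S$ guarantee the complexes are perfect and the formation commutes with the base change. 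Taking $\rR\varprojlim_{n}$ and using that $\SS,\SS'$ are Noetherian of finite type over $\rW$ (so that $\rR\varprojlim$ of the coherent reductions computes the $p$-adic cohomology, as in \cite{Shi08} 1.15--1.16), one obtains the integral base-change isomorphism up to the $\rR\varprojlim$-versus-$\rL\varphi^{*}$ interchange. This interchange is where one genuinely uses that, after $\otimes\mathbb{Q}$, everything in sight is a \emph{perfect} complex of coherent modules: inverting $p$ kills the $\varprojlim^{1}$ and higher terms, and $\rL\varphi^{*}$ commutes with the finite homotopy limit. Concretely, by the perfectness theorem quoted just before \ref{smooth base change cry coh}, Zariski-locally on $\SS$ we may represent $\rR g_{X/\SS,\cris*}(\mathscr{E})$ by a bounded complex of locally projective $\mathscr{O}_{\SS}[\frac1p]$-modules of finite type; $\rL\varphi^{*}_{\zar}$ of such a complex is computed termwise, and the base-change arrow is then checked to be a quasi-isomorphism by reducing modulo $p^{n}$ for all $n$ and invoking Step~2's integral statement together with the flatness of $\varphi$.

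\textbf{Expected main obstacle.} The routine parts are the construction of the base-change arrow and the reduction modulo $p^{n}$. The delicate step is the commutation of $\rL\varphi^{*}_{\zar}$ with the projective limit over $n$ — i.e.\ justifying that $\bigl(\rL\varphi^{*}_{\zar}\rR g_{X/\SS,\cris*}\mathscr{F}\bigr)[\frac1p]$ is computed correctly by the levelwise base change. This is precisely handled by invoking the perfectness of $\rR g_{X/\SS,\cris*}(\mathscr{E})$ as a complex of $\mathscr{O}_{\SS}[\frac1p]$-modules together with Shiho's finiteness and base-change results \cite{Shi08} 1.15, 1.16, 1.19; indeed the cleanest route is to note that the assertion to be proved \emph{is} essentially (\cite{Shi08} 1.19) once one has identified $\rR g_{X/\SS,\cris*}(\mathscr{E})$ with the $p$-adic completion of the levelwise crystalline cohomology, so after Steps~1--2 the theorem follows by citing loc.\ cit., and the only work is checking the compatibility of the two formulations of the base-change morphism.
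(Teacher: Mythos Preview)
The paper does not prove this theorem: it is stated as a result of Shiho and cited as \cite{Shi08}~1.19, with no argument given in the present article. There is therefore no ``paper's own proof'' to compare your proposal against; the statement functions here purely as a black box imported from the literature, on the same footing as the finiteness and perfectness theorems quoted just before it.

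Your sketch is a reasonable outline of how a proof of such a base-change statement would go, and indeed you yourself observe in the final paragraph that ``the assertion to be proved \emph{is} essentially (\cite{Shi08}~1.19)''. That is exactly the situation: the theorem is Shiho's, and the paper simply invokes it. If you want to compare your dévissage to an actual argument, you would need to consult \cite{Shi08} directly rather than this paper.

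One small technical remark on your sketch: in Step~2 you appeal to the classical base-change theorem of \cite{BO} along the morphism $\SS'_{n}\to\SS_{n}$, calling it flat. But the hypothesis here is only that $\varphi$ is adic and that $\SS,\SS'$ are flat over $\rW$; there is no flatness assumption on $\varphi$ itself. The integral base-change map need not be an isomorphism modulo $p^{n}$ in this generality, and the perfectness of the complex after inverting $p$ is what saves the day (this is the content of Shiho's argument). So your Step~2 as written would not quite go through; the correct route is closer to what you gesture at in the ``expected obstacle'' paragraph, working directly with the perfect complex of $\mathscr{O}_{\SS}[\frac{1}{p}]$-modules rather than trying to prove an integral statement first.
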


\section{Convergent topos and convergent isocrystals} \label{sec conv topos}
	In this section, $\mathfrak{S}$ denotes a flat formal $\rW$-scheme of finite type and $X$ denotes an $S$-scheme.
	For any scheme $T$, we denote by $T_{0}$ the reduced subscheme of $T$.

	\begin{definition}[\cite{Ogus84} 2.1, \cite{Shi08} 2.4] \label{def conv cat}
	We define a category $\Conv(X/\SS)$ as follows.

	(i) An object of $\Conv(X/\SS)$ is a pair $(\mathfrak{T},u)$ consisting of a  formal $\SS$-scheme of finite type which is flat over $\rW$ and an $S$-morphism $u:T_{0}\to X$.

	(ii) Let $(\mathfrak{T}',u')$ and $(\mathfrak{T},u)$ be two objects of $\Conv(X/\SS)$. A morphism from $(\mathfrak{T}',u')$ to $(\mathfrak{T},u)$ is a $\SS$-morphism $f:\mathfrak{T}'\to \mathfrak{T}$ such that the induced morphism $f_{0}:T'_{0}\to T_{0}$ is compatible with $u'$ and $u$.
\end{definition}

We denote an object $(\mathfrak{T},u)$ of $\Conv(X/\SS)$ simply by $\mathfrak{T}$, if there is no risk of confusion.

It is clear that if $X\to Y$ is a nilpotent immersion of $S$-schemes, the category $\Conv(X/\mathfrak{S})$ is canonically equivalent to $\Conv(Y/\mathfrak{S})$.

\begin{nothing} \label{def fibered product}
%	Recall that a formal $\rW$-scheme $\mathfrak{T}$ of locally of finite type is flat if and only if, for every $n\ge 1$, $\mathfrak{T}_{n}$ is flat over $\rW_{n}$ () and that
%	We say that a morphism $(\mathfrak{T}',u')\to (\mathfrak{T},u)$ of $\Conv(X/\SS)$ is \textit{rig-flat} if the morphism $\mathfrak{T}'\to \mathfrak{T}$ is rig-flat (\cite{Ab10} 5.4.5).
	Let $f:(\mathfrak{T}',u')\to (\mathfrak{T},u)$ and $g:(\mathfrak{T}'',u'')\to (\mathfrak{T},u)$ be two morphisms of $\Conv(X/\SS)$. We denote by $\mathfrak{Z}$ the closed formal sub-scheme of $\mathfrak{T}'\times_{\mathfrak{T}}\mathfrak{T}''$ defined by the ideal of $p$-torsion elements of $\mathscr{O}_{\mathfrak{T}'\times_{\mathfrak{T}}\mathfrak{T}''}$.	
	The fibered product of $f$ and $g$ in $\Conv(X/\mathfrak{S})$ is represented by $\mathfrak{Z}$, which is flat over $\rW$, equipped with the composition $Z_{0}\to T'_{0}\times_{T_{0}}T''_{0}\to X$ induced by $u'$ and $u''$.

	If either $\mathfrak{T}''\to \mathfrak{T}$ or $\mathfrak{T}'\to \mathfrak{T}$ is flat, then $\mathfrak{Z}$ is equal to $\mathfrak{T}'\times_{\mathfrak{T}}\mathfrak{T}''$.
\end{nothing}

\begin{nothing} \label{def topology Conv}
	We say that a family of morphisms $\{(\mathfrak{T}_{i},u_{i})\to (\mathfrak{T},u)\}_{i\in I}$ is a Zariski (resp. fppf) covering if morphisms of formal schemes $\{\mathfrak{T}_{i}\to \mathfrak{T}\}_{i\in I}$ is a Zariski (resp. fppf) covering. 
	By \ref{recall fppf covering}, Zariski (resp. fppf) coverings form a pretopology. 
	For $\tau\in \{\zar,\fppf\}$, we denote by $(X/\SS)_{\conv,\tau}$ the topos of sheaves of sets on $\Conv(X/\SS)$, equipped with the $\tau$-topology.
\end{nothing}

\begin{rem}
	The above definition of convergent site is slightly different to that of \cite{Ogus84,Shi08} where they consider a category whose objects are triples $(\mathfrak{T},Z,u)$ where $\mathfrak{T}$ is the same as above, $Z$ is a closed subscheme of definition of $\mathfrak{T}$ such that $T_{0}\to \mathfrak{T}$ factors through $Z$ and $u:Z\to X$ is an $S$-morphism.	
	However, it follows from \ref{lemma adjunction iso} that the convergent topoi (with Zariski topology) defined by two different ways are equivalent and we freely use results of \cite{Shi08} in our setting. 
%	Indeed, in (\cite{Ogus84} 0.2.1, \cite{Shi08} 2.4), they consider a category $\Conv'(X/\SS)$ whose objects are triples $(\mathfrak{T},Z,u)$ where $\mathfrak{T}$ is the same as above, $Z$ is a closed subscheme of definition of $\mathfrak{T}$ such that $T_{0}\to \mathfrak{T}$ factors through $Z$ and $u:Z\to X$ is an $S$-morphism. There exists a fully faithful functor $i:\Conv(X/\SS)\to \Conv'(X/\SS)$ and a functor $j:\Conv'(X/\SS)\to \Conv(X/\SS)$ sending $(\mathfrak{T},Z,u)$ to $(\mathfrak{T},T_0\to Z\to X)$. Equipped with the Zariski topology, the functor $i$ is continuous and cocontinuous \eqref{generality morphism topos}. For any object $(\mathfrak{T},Z,u)$, $i\circ j(\mathfrak{T},Z,u)\to (\mathfrak{T},Z,u)$ is a covering. Then an equivalence of topoi follows from \ref{lemma adjunction iso}. 
\end{rem}

	\begin{nothing} \label{construction descent data}
		Let $(\mathfrak{T},u)$ be an object of $\Conv(X/\mathfrak{S})$ and $\tau\in \{\zar,\fppf\}$. The canonical functor
	\begin{eqnarray}
		\Fft_{/\mathfrak{T}} \quad \textnormal{(resp. } \Zar_{/\mathfrak{T}}) &\to& \Conv(X/\mathfrak{S}) \\
		(f:\mathfrak{T}'\to \mathfrak{T}) &\mapsto& (\mathfrak{T}',u\circ f_{0}) \nonumber
	\end{eqnarray}
	is cocontinuous and it induces a morphism of topoi
	\begin{equation} \label{morphism of topoi sT}
		s_{\mathfrak{T}}: \mathfrak{T}_{\tau} \to (X/\mathfrak{S})_{\conv,\tau} \qquad \tau\in \{\zar,\fppf\}.
	\end{equation}
	For any sheaf $\mathscr{F}$ of $(X/\mathfrak{S})_{\conv,\tau}$, we set $\mathscr{F}_{\mathfrak{T}}=s_{\mathfrak{T}}^{*}(\mathscr{F})$.
	For any morphism $f:\mathfrak{T}'\to \mathfrak{T}$ of $\Conv(X/\mathfrak{S})$, we have a canonical morphism 
	\begin{equation}
		\beta_{f}: \mathscr{F}_{\mathfrak{T}}\to f_{\tau*}(\mathscr{F}_{\mathfrak{T}'}) \label{transition beta f}
	\end{equation}
	and we denote its adjoint by
	\begin{equation} \label{transition gamma f}
		\gamma_{f}:f_{\tau}^{*}(\mathscr{F}_{\mathfrak{T}}) \to \mathscr{F}_{\mathfrak{T}'}
	\end{equation}
	where $f_{\tau}:\mathfrak{T}_{\tau}'\to \mathfrak{T}_{\tau}$ denotes the functorial morphism for $\tau$-topology \eqref{fppf to zar and functorial}. It is clear that $\gamma_{\id}=\id$. 
	If $f$ is a morphism of $\Fft_{/\mathfrak{T}}$ (resp. $\Zar_{/\mathfrak{T}}$), $f_{\tau}$ is the localisation morphism at $\mathfrak{T}'$ \eqref{fppf to zar and functorial} and then $\gamma_{f}$ is an isomorphism. 
	If $g:\mathfrak{T}''\to \mathfrak{T}'$ is another morphism of $\Conv(X/\mathfrak{S})$, one verifies that $\gamma_{g\circ f}=\gamma_{f}\circ f^{*}_{\tau}(\gamma_{g})$.
\end{nothing}

\begin{prop} \label{descent data sheaf}
	For $\tau\in \{\zar,\fppf\}$, a sheaf $\mathscr{F}$ of $\Conv(X/\mathfrak{S})_{\conv,\tau}$ is equivalent to the following data:

	\textnormal{(i)} For every object $(\mathfrak{T},u)$ of $\Conv(X/\mathfrak{S})$, a sheaf $\mathscr{F}_{\mathfrak{T}}$ of $\mathfrak{T}_{\tau}$,

	\textnormal{(ii)} For every morphism $f:(\mathfrak{T}',u')\to (\mathfrak{T},u)$, a transition morphism $\gamma_{f}$ \eqref{transition gamma f},

	subject to the following conditions

	\textnormal{(a)} If $f$ is the identity morphism of $(\mathfrak{T},u)$, $\gamma_{f}$ is the identity morphism.

	\textnormal{(b)} If $f:\mathfrak{T}'\to \mathfrak{T}$ is a morphism of $\Zar_{/\mathfrak{T}}$ (resp. $\Fft_{/\mathfrak{T}}$), $\gamma_{f}$ is an isomorphism.

	\textnormal{(c)} If $f$ and $g$ are two composable morphisms, then we have $\gamma_{g\circ f}=\gamma_{f}\circ f^{*}_{\tau}(\gamma_{g})$.
\end{prop}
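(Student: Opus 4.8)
The plan is to treat this as the standard description of a sheaf on a site assembled from localizing cocontinuous functors, in the spirit of (\cite{BO} 5.1). One direction of the equivalence is already provided by \ref{construction descent data}: to a sheaf $\mathscr{F}$ one associates the family $\mathscr{F}_{\mathfrak{T}} = s_{\mathfrak{T}}^{*}(\mathscr{F})$ together with the transition morphisms $\gamma_{f}$ of \eqref{transition gamma f}, and properties (a), (b), (c) are exactly the statements $\gamma_{\id} = \id$, the fact that $f_{\tau}$ is the localization morphism when $f\in\Fft_{/\mathfrak{T}}$ (resp. $\Zar_{/\mathfrak{T}}$) whence $\gamma_{f}$ is an isomorphism, and the cocycle identity, all verified there. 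So the substance of the proof is to construct a quasi-inverse and to check it is one.

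Given data $((\mathscr{F}_{\mathfrak{T}})_{\mathfrak{T}}, (\gamma_{f})_{f})$ satisfying (a)--(c), I would first define a presheaf $F$ on $\Conv(X/\mathfrak{S})$ by $F(\mathfrak{T},u) = \Gamma(\mathfrak{T}_{\tau}, \mathscr{F}_{\mathfrak{T}})$; for a morphism $f\colon(\mathfrak{T}',u')\to(\mathfrak{T},u)$ the restriction $F(\mathfrak{T})\to F(\mathfrak{T}')$ is the composite of $f_{\tau}^{*}\colon\Gamma(\mathfrak{T}_{\tau},\mathscr{F}_{\mathfrak{T}})\to\Gamma(\mathfrak{T}'_{\tau},f_{\tau}^{*}\mathscr{F}_{\mathfrak{T}})$ with $\Gamma(\mathfrak{T}'_{\tau},\gamma_{f})$. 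Condition (a) says the identity restricts to the identity, and condition (c) gives functoriality of restriction, so $F$ is a well-defined presheaf.

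Next I would show $F$ is a sheaf for the $\tau$-topology. If $\{(\mathfrak{T}_{i},u_{i})\to(\mathfrak{T},u)\}_{i}$ is a $\tau$-covering, then $\{\mathfrak{T}_{i}\to\mathfrak{T}\}_{i}$ is a $\tau$-covering of formal schemes, and the fibered products $\mathfrak{T}_{i}\times_{\mathfrak{T}}\mathfrak{T}_{j}$ computed in $\Conv(X/\mathfrak{S})$ coincide with the ordinary ones by \ref{def fibered product} (one leg being flat); so all the covering maps and all projections from these fibered products are morphisms of $\Fft_{/\mathfrak{T}}$ (resp. $\Zar_{/\mathfrak{T}}$), hence the attached $\gamma$'s are isomorphisms by (b). Using moreover that $f_{\tau}$ is a localization morphism for such $f$, so that $\Gamma(\mathfrak{T}'_{\tau},f_{\tau}^{*}\mathscr{F}_{\mathfrak{T}})$ is simply the value of the sheaf $\mathscr{F}_{\mathfrak{T}}$ at $\mathfrak{T}'$, the exact sequence expressing the sheaf axiom for $F$ relative to this covering is identified with the one expressing the sheaf axiom for $\mathscr{F}_{\mathfrak{T}}$ on $\mathfrak{T}_{\tau}$ relative to $\{\mathfrak{T}_{i}\to\mathfrak{T}\}$, which holds.

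Finally I would check the two functors are mutually quasi-inverse. The crux is that for a sheaf $\mathscr{F}$ on $\Conv(X/\mathfrak{S})$ the restriction of $\mathscr{F}$ along the functor $\Fft_{/\mathfrak{T}}\to\Conv(X/\mathfrak{S})$ of \ref{construction descent data} is already a sheaf: for a $\tau$-covering $\{\mathfrak{T}_{i}\to\mathfrak{T}'\}$ in $\Fft_{/\mathfrak{T}}$ the fibered products $\mathfrak{T}_{i}\times_{\mathfrak{T}'}\mathfrak{T}_{j}$ are flat over $\rW$ (being flat over $\mathfrak{T}_{j}$), hence objects of $\Fft_{/\mathfrak{T}}$ agreeing with the ones formed in $\Conv(X/\mathfrak{S})$ by \ref{def fibered product}, so the sheaf axiom for the restriction is literally that for $\mathscr{F}$; whence $s_{\mathfrak{T}}^{*}(\mathscr{F}) = \mathscr{F}|_{\Fft_{/\mathfrak{T}}}$ and $\Gamma(\mathfrak{T}_{\tau}, s_{\mathfrak{T}}^{*}\mathscr{F}) = \mathscr{F}(\mathfrak{T})$, compatibly with restriction maps, which yields $F\cong\mathscr{F}$. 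Conversely $F_{\mathfrak{T}} = s_{\mathfrak{T}}^{*}(F) = F|_{\Fft_{/\mathfrak{T}}}$ sends $\mathfrak{T}'\mapsto\Gamma(\mathfrak{T}'_{\tau},\mathscr{F}_{\mathfrak{T}'})$, and the family $(\gamma_{f})_{f\in\Fft_{/\mathfrak{T}}}$ --- isomorphisms by (b), compatible by (c) --- glues into an isomorphism $F_{\mathfrak{T}}\xrightarrow{\sim}\mathscr{F}_{\mathfrak{T}}$ matching the transition morphisms. The step I expect to demand the most care is precisely this last identification: checking that the transition morphisms reconstructed from $F$ are the prescribed $\gamma_{f}$, which amounts to unwinding the definitions of $\beta_{f}$, $\gamma_{f}$ and of the localization morphisms; everything else is formal.
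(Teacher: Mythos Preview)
Your proposal is correct and follows essentially the same approach as the paper: define the presheaf $\mathfrak{T}\mapsto \mathscr{F}_{\mathfrak{T}}(\mathfrak{T})=\Gamma(\mathfrak{T}_{\tau},\mathscr{F}_{\mathfrak{T}})$ with restriction maps induced by the $\gamma_{f}$, use (a) and (c) for functoriality, (b) for the sheaf axiom, and observe this is quasi-inverse to \ref{construction descent data}. The paper's proof is much terser---it dispatches the sheaf property and the quasi-inverse check in one sentence each---while you spell out the fiber-product compatibility via \ref{def fibered product} and the identification $s_{\mathfrak{T}}^{*}(\mathscr{F})=\mathscr{F}|_{\Fft_{/\mathfrak{T}}}$ explicitly; these elaborations are correct and indeed constitute what the paper is implicitly invoking.
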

\begin{proof}
	Given a data $\{\mathscr{F}_{\mathfrak{T}},\gamma_{f}\}$ as in the proposition, for any morphism $f:\mathfrak{T}'\to \mathfrak{T}$ of $\Conv(X/\mathfrak{S})$, the morphism $\gamma_{f}$ induces a morphism
	\begin{displaymath}
		\mathscr{F}_{\mathfrak{T}}(\mathfrak{T})\to \mathscr{F}_{\mathfrak{T}'}(\mathfrak{T}').
	\end{displaymath}
	In view of conditions (a) and (c), the correspondence
\begin{displaymath}
	\mathfrak{T}\mapsto \mathscr{F}_{\mathfrak{T}}(\mathfrak{T})
\end{displaymath}
defines a presheaf $\mathscr{F}$ on $\Conv(X/\mathfrak{S})$. In view of condition (b), $\mathscr{F}$ is a sheaf and the above construction is quasi-inverse to \ref{construction descent data}. Then the proposition follows.
\end{proof}

\begin{nothing} \label{def alpha}
	Note that the fppf topology on $\Conv(X/\SS)$ is finer than the Zariski topology. Equipped with the fppf topology on the source and the Zariski topology on the target, the identical functor $\id:\Conv(X/\SS)\to \Conv(X/\SS)$ is cocontinuous. By \ref{generality morphism topos}, it induces a morphism of topoi
	\begin{equation} \label{morphism of topoi fppf zar}
	\alpha: (X/\SS)_{\conv,\fppf}\to (X/\SS)_{\conv,\zar}.
\end{equation}
If $\mathscr{F}$ is a sheaf of $(X/\SS)_{\conv,\fppf}$, $\alpha_{*}(\mathscr{F})$ is equal to $\mathscr{F}$ as presheaves. If $\mathscr{G}$ is a sheaf of $(X/\SS)_{\conv,\zar}$, then $\alpha^{*}(\mathscr{G})$ is the sheafification of $\mathscr{G}$ with respect to the fppf topology.
\end{nothing}

\begin{nothing} \label{def functorial functor}
	Let $g:\SS'\to \SS$ be a morphism of $\MS^{\diamond}$ \eqref{def cat MS}, $X'$ an $S'$-scheme and $f:X'\to X$ a morphism compatible with $g$, i.e. the diagram
	\begin{equation}
		\xymatrix{
			X'\ar[r] \ar[d]_{f} & S'\ar[r] \ar[d] & \SS' \ar[d]^{g}\\
			X \ar[r] & S \ar[r]& \SS
		}
	\end{equation}
	is commutative. 
	For any object $(\mathfrak{T},u)$ of $\Conv(X'/\SS')$, $(\mathfrak{T},f\circ u)$ defines an object of $\Conv(X/\SS)$.
	We obtain a functor that we denote by
	\begin{equation} \label{functorial functor}
		\varphi: \Conv(X'/\SS')\to \Conv(X/\SS), \qquad (\mathfrak{T},u) \mapsto (\mathfrak{T},f\circ u).
	\end{equation}
	It is clear that $\varphi$ commutes with the fibered product \eqref{def fibered product}.
\end{nothing}

\begin{lemma} \label{cont cocont func}
	\textnormal{(i)} Let $(\mathfrak{T},u)$ be an object of $\Conv(X'/\SS')$ and $g:(\mathfrak{Z},w)\to \varphi(\mathfrak{T},u)$ a morphism of $\Conv(X/\SS)$. Then there exists an object $(\mathfrak{Z},v)$ of $\Conv(X'/\SS')$ and a morphism $h:(\mathfrak{Z},v)\to (\mathfrak{T},u)$ of $\Conv(X'/\SS')$ such that $g=\varphi(h)$.

	\textnormal{(ii)} Equipped with the Zariski topology (resp. fppf topology) \eqref{def topology Conv} on both sides, the functor $\varphi$ is continuous and cocontinuous.
\end{lemma}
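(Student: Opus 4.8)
The plan is to establish (i) by a direct construction and then to read off (ii) from (i) together with the fact, already recorded in \ref{def functorial functor}, that $\varphi$ commutes with fibered products.

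For (i): a morphism $g:(\mathfrak{Z},w)\to\varphi(\mathfrak{T},u)=(\mathfrak{T},f\circ u)$ of $\Conv(X/\SS)$ is an adic $\SS$-morphism $g:\mathfrak{Z}\to\mathfrak{T}$ together with the relation $w=(f\circ u)\circ g_{0}$ on reduced special fibres. First I would equip $\mathfrak{Z}$ with the $\SS'$-scheme structure given by the composite $\mathfrak{Z}\xrightarrow{g}\mathfrak{T}\to\SS'$, where $\mathfrak{T}\to\SS'$ is the structure morphism of $\mathfrak{T}$ viewed as an object of $\Conv(X'/\SS')$. This morphism $\mathfrak{Z}\to\SS'$ is adic and of finite type, being a composite of such; $\mathfrak{Z}$ is flat over $\rW$ because $(\mathfrak{Z},w)$ lies in $\Conv(X/\SS)$; and since $g$ is an $\SS$-morphism and the $\SS$-structure of $\varphi(\mathfrak{T},u)$ is by definition $\mathfrak{T}\to\SS'\to\SS$, the new $\SS'$-structure on $\mathfrak{Z}$ is compatible with its original $\SS$-structure. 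In particular $g$ is now an $\SS'$-morphism, so its reduction $g_{0}:Z_{0}\to T_{0}$ (the reduced scheme $Z_{0}$ maps to $\mathfrak{T}_{1}$, hence factors through $T_{0}$) is an $S'$-morphism. I would then set $v:=u\circ g_{0}:Z_{0}\to X'$, which is an $S'$-morphism because $u$ is, so that $(\mathfrak{Z},v)$ is an object of $\Conv(X'/\SS')$, and take $h:=g$ regarded as an $\SS'$-morphism; the identity $u\circ h_{0}=u\circ g_{0}=v$ shows $h$ is a morphism $(\mathfrak{Z},v)\to(\mathfrak{T},u)$ of $\Conv(X'/\SS')$. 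Finally $\varphi(h)$ has underlying morphism $g$ and source $(\mathfrak{Z},f\circ v)=(\mathfrak{Z},f\circ u\circ g_{0})=(\mathfrak{Z},w)$, so $\varphi(h)=g$.

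For (ii), continuity: a Zariski (resp. fppf) covering of $\Conv(X'/\SS')$ is by definition a family $\{(\mathfrak{T}_{i},u_{i})\to(\mathfrak{T},u)\}$ whose underlying family $\{\mathfrak{T}_{i}\to\mathfrak{T}\}$ of morphisms of formal schemes is such a covering; applying $\varphi$ does not alter these underlying morphisms, so $\varphi$ sends $\tau$-coverings to $\tau$-coverings for $\tau\in\{\zar,\fppf\}$. Combined with the compatibility of $\varphi$ with the fibered products of $\Conv$ (\ref{def functorial functor}) --- in particular with $(\mathfrak{T}_{i},u_{i})\times_{(\mathfrak{T},u)}(\mathfrak{Z},w)$ for any $(\mathfrak{Z},w)\to(\mathfrak{T},u)$ --- this yields continuity of $\varphi$ for both topologies by the standard criterion for functors between sites equipped with pretopologies (\cite{SGAIV} III).

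For cocontinuity: fix an object $(\mathfrak{T},u)$ of $\Conv(X'/\SS')$ and a $\tau$-covering $\{g_{j}:V_{j}\to\varphi(\mathfrak{T},u)\}_{j}$ in $\Conv(X/\SS)$. By (i), each $g_{j}$ equals $\varphi(h_{j})$ for a morphism $h_{j}:(\mathfrak{Z}_{j},v_{j})\to(\mathfrak{T},u)$ of $\Conv(X'/\SS')$ with the same underlying morphism of formal schemes as $g_{j}$. Hence $\{\mathfrak{Z}_{j}\to\mathfrak{T}\}$ is a $\tau$-covering, so $\{h_{j}:(\mathfrak{Z}_{j},v_{j})\to(\mathfrak{T},u)\}_{j}$ is a $\tau$-covering of $\Conv(X'/\SS')$, and its image under $\varphi$ is precisely the covering $\{g_{j}\}$ we started from; this is exactly what cocontinuity requires (\cite{SGAIV} III). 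I expect the whole argument to be routine; the only point demanding care is the bookkeeping in (i) --- that the $\SS'$-structure on $\mathfrak{Z}$ induced by $g$ is adic, of finite type, and compatible with the original $\SS$-structure, and that $g_{0}$ really lands in $T_{0}$ and is an $S'$-morphism --- after which (ii) follows formally from (i) and from $\varphi$ preserving the underlying formal schemes and their fibered products.
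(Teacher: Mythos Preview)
Your proof is correct and follows essentially the same approach as the paper: for (i) you construct $(\mathfrak{Z},v)$ via the compositions $\mathfrak{Z}\to\mathfrak{T}\to\SS'$ and $Z_{0}\to T_{0}\xrightarrow{u} X'$, exactly as the paper does; for (ii) you use preservation of coverings plus commutation with fibered products for continuity (\cite{SGAIV} III 1.6) and the lifting property (i) for cocontinuity (\cite{SGAIV} III 2.1), again matching the paper. Your write-up is simply more explicit about the bookkeeping in (i) than the paper's one-line sketch.
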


\begin{proof}
	(i) By considering compositions $\mathfrak{Z}\to \mathfrak{T}\to \SS'$ and $Z_{0}\to T_{0}\to X'$, we obtain an object $(\mathfrak{Z},v)$ of $\Conv(X'/\SS)$ and a morphism $h: (\mathfrak{Z},v)\to (\mathfrak{T},u)$ of $\Conv(X'/\SS')$ such that $g=\varphi(h)$.

	(ii) A family of morphisms $\{(\mathfrak{T}_{i},u_{i})\to (\mathfrak{T},u)\}_{i\in I}$ of $\Conv(X'/\SS')$ belongs to $\Cov_{\zar}(\mathfrak{T},u)$ (resp. $\Cov_{\fppf}(\mathfrak{T},u)$) if and only if, its image by $\varphi$ belongs to $\Cov_{\zar}(\varphi(\mathfrak{T},u))$ (resp. $\Cov_{\fppf}(\varphi(\mathfrak{T},u))$).
Since $\varphi$ commutes with the fibered product, the continuity of $\varphi$ follows from (\cite{SGAIV} III 1.6).

Let $\{(\mathfrak{T}_{i},u_{i})\to \varphi(\mathfrak{T},u)\}_{i\in I}$ be an element of $\Cov_{\zar}(\varphi(\mathfrak{T},u))$ (resp. $\Cov_{\fppf}(\varphi(\mathfrak{T},u))$). By (i), there exists an element $\{(\mathfrak{T}_{i},u_{i})\to (\mathfrak{T},u)\}_{i\in I}$ of $\Cov_{\zar}(\mathfrak{T},u)$ (resp. $\Cov_{\fppf}(\mathfrak{T},u)$) mapping by $\varphi$ to the given element. Then, $\varphi$ is cocontinuous by \textnormal{(\cite{SGAIV} III 2.1)}.
\end{proof}

\begin{nothing} \label{setting functorial morphism}
	By \ref{generality morphism topos} and \ref{cont cocont func}, the functor $\varphi$ \eqref{functorial functor} induces morphisms of topoi
	\begin{eqnarray}
		\label{morphism of topoi functorial} f_{\conv,\tau}:(X'/\SS')_{\conv,\tau}\to (X/\SS)_{\conv,\tau},
	\end{eqnarray}
	such that the pullback functor is induced by the composition with $\varphi$. For a sheaf $\mathscr{F}$ of $(X/\mathfrak{S})_{\conv,\tau}$ and an object $\mathfrak{T}$ of $\Conv(X'/\mathfrak{S}')$, we have \eqref{descent data sheaf}
	\begin{equation}
		(f_{\conv,\tau}^{*}(\mathscr{F}))_{\mathfrak{T}}=\mathscr{F}_{\varphi(\mathfrak{T})}.\qquad \label{description of pullback}
	\end{equation}
	For any morphism $g$ of $\Conv(X'/\mathfrak{S}')$, the transition morphism of $f_{\conv,\tau}^{*}(\mathscr{F})$ associated to $g$ \eqref{descent data sheaf} is equal to the transition morphism of $\mathscr{F}$ associated to $\varphi(g)$.
	
	By considering inverse image functors, one verifies that the following diagram commutes \eqref{morphism of topoi fppf zar}
	\begin{equation} \label{functorial map sigma fppf Zar}
		\xymatrixcolsep{5pc}\xymatrix{
			(X'/\SS')_{\conv,\fppf} \ar[r]^{f_{\conv,\fppf}} \ar[d]_{\alpha'}& (X/\SS)_{\conv,\fppf} \ar[d]^{\alpha} \\
			(X'/\SS')_{\conv,\zar} \ar[r]^{f_{\conv,\zar}} & (X/\SS)_{\conv,\zar}}
	\end{equation}
%For $\tau\in \{\zar,\fppf\}$, it is clear that $(S/\mathfrak{S})_{\conv,\tau}\simeq \mathfrak{S}_{\tau}$. In particular, the canonical morphism $X\to S$ induces a morphism of topoi
\end{nothing}

\begin{nothing}
	We define a presheaf of rings $\mathscr{O}_{X/\mathfrak{S}}[\frac{1}{p}]$ on $\Conv(X/\mathfrak{S})$ by
	\begin{equation}
		(\mathfrak{T},u)\mapsto \Gamma(\mathfrak{T},\mathscr{O}_{\mathfrak{T}}[\frac{1}{p}]).
	\end{equation}
	By fppf descent \eqref{descent fppf}, $\mathscr{O}_{X/\mathfrak{S}}[\frac{1}{p}]$ is a sheaf for the fppf topology. Since the fppf topology is finer than the Zariski topology, it is also a sheaf for the Zariski topology.

	For any object $(\mathfrak{T},u)$ of $\Conv(X/\mathfrak{S})$, we have $(\mathscr{O}_{X/\mathfrak{S}}[\frac{1}{p}])_{\mathfrak{T}}=\mathscr{O}_{\mathfrak{T}}[\frac{1}{p}]$. 
	If $\mathscr{F}$ is an $\mathscr{O}_{X/\mathfrak{S}}[\frac{1}{p}]$-module of $(X/\SS)_{\conv,\tau}$, $\mathscr{F}_{\mathfrak{T}}$ is an $\mathscr{O}_{\mathfrak{T}}[\frac{1}{p}]$-module of $\mathfrak{T}_{\tau}$. For any morphism $f:\mathfrak{T}'\to \mathfrak{T}$ of $\Conv(X/\mathfrak{S})$, the transition morphism $\gamma_{f}$ \eqref{descent data sheaf} extends to an $\mathscr{O}_{\mathfrak{T}'}[\frac{1}{p}]$-linear morphism \eqref{coherent fppf}
	\begin{equation} \label{lin transition morphism}
		c_{f}: f^{*}_{\tau}(\mathscr{F}_{\mathfrak{T}})\to \mathscr{F}_{\mathfrak{T}'}.
	\end{equation}
	
	In view of \ref{descent data sheaf}, we deduce the following description for $\mathscr{O}_{X/\SS}[\frac{1}{p}]$-modules.
\end{nothing}

\begin{prop} \label{description of mods}
		For $\tau\in\{\zar,\fppf\}$, an $\mathscr{O}_{X/\mathfrak{S}}[\frac{1}{p}]$-module of $(X/\mathfrak{S})_{\conv,\tau}$ is equivalent to the following data:

		\textnormal{(i)} For every object $\mathfrak{T}$ of $\Conv(X/\mathfrak{S})$, an $\mathscr{O}_{\mathfrak{T}}[\frac{1}{p}]$-module $\mathscr{F}_{\mathfrak{T}}$ of $\mathfrak{T}_{\tau}$,

		\textnormal{(ii)} For every morphism $f:\mathfrak{T}'\to \mathfrak{T}$ of $\Conv(X/\mathfrak{S})$, an $\mathscr{O}_{\mathfrak{T}'}$-linear morphism $c_{f}$ \eqref{lin transition morphism}.

	which is subject to the following conditions

	\textnormal{(a)} If $f$ is the identity morphism, then $c_{f}$ is the identity.

	\textnormal{(b)} If $f:\mathfrak{T}'\to \mathfrak{T}$ is a morphism of f $\Zar_{/\mathfrak{T}}$ (resp. $\Fft_{/\mathfrak{T}}$), then $c_{f}$ is an isomorphism.

	\textnormal{(c)} If f and g are two composable morphisms, then we have $c_{g\circ f}=c_{f}\circ f^{*}_{\tau}(c_{g})$.
\end{prop}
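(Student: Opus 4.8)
The plan is to bootstrap from Proposition~\ref{descent data sheaf}, which already identifies a sheaf of sets on $(X/\mathfrak{S})_{\conv,\tau}$ with a compatible system $\{\mathscr{F}_{\mathfrak{T}},\gamma_{f}\}$, and to carry the $\mathscr{O}_{X/\mathfrak{S}}[\frac1p]$-module structure along this equivalence. The key input is that, for every object $\mathfrak{T}$ of $\Conv(X/\mathfrak{S})$, the functor $s_{\mathfrak{T}}^{*}\colon (X/\mathfrak{S})_{\conv,\tau}\to\mathfrak{T}_{\tau}$, $\mathscr{F}\mapsto\mathscr{F}_{\mathfrak{T}}$, is the inverse image functor of the morphism of topoi $s_{\mathfrak{T}}$ of \eqref{morphism of topoi sT}; as such it is exact, in particular it commutes with finite products, and by construction it sends the ring object $\mathscr{O}_{X/\mathfrak{S}}[\frac1p]$ to $\mathscr{O}_{\mathfrak{T}}[\frac1p]$. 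Consequently $s_{\mathfrak{T}}^{*}$ takes an $\mathscr{O}_{X/\mathfrak{S}}[\frac1p]$-module $\mathscr{F}$ --- that is, an abelian group object of $(X/\mathfrak{S})_{\conv,\tau}$ equipped with an action map satisfying the module axioms, all expressed by commutative diagrams of finite products --- to an $\mathscr{O}_{\mathfrak{T}}[\frac1p]$-module $\mathscr{F}_{\mathfrak{T}}$ of $\mathfrak{T}_{\tau}$. This produces datum (i).

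For datum (ii): given $f\colon\mathfrak{T}'\to\mathfrak{T}$ in $\Conv(X/\mathfrak{S})$, the morphism $\beta_{f}$ of \eqref{transition beta f} becomes $\mathscr{O}_{\mathfrak{T}}[\frac1p]$-linear, hence its adjoint $\gamma_{f}$ of \eqref{transition gamma f} is linear over the canonical ring homomorphism $f_{\tau}^{-1}(\mathscr{O}_{\mathfrak{T}}[\frac1p])\to\mathscr{O}_{\mathfrak{T}'}[\frac1p]$; by extension of scalars (cf.~\ref{coherent fppf}) this is the same datum as an $\mathscr{O}_{\mathfrak{T}'}[\frac1p]$-linear map $c_{f}\colon f_{\tau}^{*}(\mathscr{F}_{\mathfrak{T}})\to\mathscr{F}_{\mathfrak{T}'}$ as in \eqref{lin transition morphism}. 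The compatibilities (a), (b), (c) for $c_{f}$ are inherited from the corresponding properties of $\gamma_{f}$ recorded in \ref{construction descent data} and \ref{descent data sheaf}: (a) and (c) are equalities of morphisms that survive the passage from $\gamma_{f}$ to $c_{f}$, and in case (b) the functorial morphism $f_{\tau}$ is a localization morphism, for which $f_{\tau}^{*}$ and $f_{\tau}^{-1}$ agree and being an isomorphism is detected on underlying sheaves, so $c_{f}$ is an isomorphism precisely when $\gamma_{f}$ is. For the converse, starting from a datum $\{\mathscr{F}_{\mathfrak{T}},c_{f}\}$ I would forget the linear structure, invoke \ref{descent data sheaf} to obtain a sheaf $\mathscr{F}$ with $\mathscr{F}(\mathfrak{T})=\mathscr{F}_{\mathfrak{T}}(\mathfrak{T})$, and then observe that the $\mathscr{O}_{\mathfrak{T}}[\frac1p]$-module structures on the $\mathscr{F}_{\mathfrak{T}}$ endow each $\mathscr{F}(\mathfrak{T})$ with an $\mathscr{O}_{X/\mathfrak{S}}[\frac1p](\mathfrak{T})$-module structure that the $\mathscr{O}$-linearity of the $c_{f}$ makes compatible with the transition maps of $\mathscr{F}$; this upgrades $\mathscr{F}$ to a presheaf, hence --- being a sheaf of sets --- a sheaf, of $\mathscr{O}_{X/\mathfrak{S}}[\frac1p]$-modules. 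That the two constructions are mutually quasi-inverse follows as in the proof of \ref{descent data sheaf}.

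I expect the only delicate point to be purely bookkeeping: matching the semilinearity of $\gamma_{f}$ over $f_{\tau}^{-1}(\mathscr{O}_{\mathfrak{T}}[\frac1p])\to\mathscr{O}_{\mathfrak{T}'}[\frac1p]$ with the genuine $\mathscr{O}_{\mathfrak{T}'}[\frac1p]$-linearity of its adjoint $c_{f}$, and checking that conditions (a)--(c) are insensitive to this adjunction. Everything else is the formal transport of the module structure through the equivalence of \ref{descent data sheaf}, together with the exactness of the functors $s_{\mathfrak{T}}^{*}$.
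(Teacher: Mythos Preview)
Your proposal is correct and follows essentially the same approach as the paper, which simply states that the proposition is deduced from \ref{descent data sheaf} without spelling out the details. You have in fact supplied more detail than the paper does, correctly identifying the passage from the semilinear $\gamma_{f}$ to the $\mathscr{O}_{\mathfrak{T}'}[\frac1p]$-linear $c_{f}$ via extension of scalars as the only point requiring care.
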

%\begin{proof}
%	By \ref{descent data sheaf}, a data $\{\mathscr{F}_{\mathfrak{T}},c_{f}\}$ gives a sheaf of $(X/\mathfrak{S})_{\conv,\tau}$. The collection of $\mathscr{O}_{\mathfrak{T}}[\frac{1}{p}]$-module structure on $\mathscr{F}_{\mathfrak{T}}$ is equivalent to an $\mathscr{O}_{X/\mathfrak{S}}[\frac{1}{p}]$-module structure on $\mathscr{F}$. (complete it!)
%\end{proof}
\begin{definition} \label{def convergent crystals}
	Let $\mathscr{F}$ be an $\mathscr{O}_{X/\mathfrak{S}}[\frac{1}{p}]$-module of $(X/\mathfrak{S})_{\conv,\tau}$.

	\textnormal{(i)} We say that $\mathscr{F}$ is \textit{coherent} if for every object $\mathfrak{T}$ of $\Conv(X/\mathfrak{S})$, $\mathscr{F}_{\mathfrak{T}}$ is a coherent \eqref{coherent fppf}.

	\textnormal{(ii)} We say that $\mathscr{F}$ is a \textit{crystal} if for every morphism $f$ of $\Conv(X/\mathfrak{S})$, $c_{f}$ is an isomorphism.
\end{definition}

With the notation of \ref{setting functorial morphism}, $f_{\conv,\tau}^{*}$ sends coherent $\mathscr{O}_{X/\SS}[\frac{1}{p}]$-modules (resp. crystals) to coherent $\mathscr{O}_{X'/\SS'}[\frac{1}{p}]$-modules (resp. crystals).

\begin{nothing} \label{coh fppf descent}
	Let $\mathscr{E}$ be a coherent crystal of $\mathscr{O}_{X/\mathfrak{S}}[\frac{1}{p}]$-modules of $(X/\mathfrak{S})_{\conv,\zar}$.
	For any object $\mathfrak{T}$ of $\Conv(X/\mathfrak{S})$, $\alpha^{*}(\mathscr{E})_{\mathfrak{T}}$ \eqref{morphism of topoi fppf zar} is the fppf sheaf associated to the presheaf
	\begin{equation}
		(f: \mathfrak{T}'\to \mathfrak{T}) \mapsto \Gamma(\mathfrak{T}',f^{*}_{\zar}(\mathscr{E}_{\mathfrak{T}}) ).
	\end{equation}
	Since $\mathscr{E}$ is a coherent crystal, $\alpha^{*}(\mathscr{E})$ is equal to $\mathscr{E}$ as presheaves on $\Conv(X/\mathfrak{S})$ by fppf descent.

	By \ref{def alpha}, the direct image and inverse image functors of $\alpha$ induce an equivalence between the category of coherent crystals of $\mathscr{O}_{X/\mathfrak{S}}[\frac{1}{p}]$-modules of $(X/\mathfrak{S})_{\conv,\zar}$ and the category of coherent crystals of $\mathscr{O}_{X/\mathfrak{S}}[\frac{1}{p}]$-modules of $(X/\mathfrak{S})_{\conv,\fppf}$.

	Following \cite{Ogus84,Shi08}, coherent crystal of $\mathscr{O}_{X/\mathfrak{S}}[\frac{1}{p}]$-modules of $(X/\SS)_{\conv,\tau}$ are called \textit{convergent isocrystals of $(X/\SS)_{\conv,\tau}$}. We denote the full category of $\mathscr{O}_{X/\mathfrak{S}}[\frac{1}{p}]$-modules consisting of these objects by $\Iso^{\dagger}(X/\mathfrak{S})$. 

	We say that a convergent isocrystal $\mathscr{F}$ is \textit{locally projective} if for every object $\mathfrak{T}$ of $\Conv(X/\mathfrak{S})$, $\mathscr{F}_{\mathfrak{T}}$ is locally projective of finite type \eqref{def locally proj}. In this case, $\mathscr{F}$ is locally projective $\mathscr{O}_{X/\SS}^{\cris}$-module of finite type.
\end{nothing}

\begin{prop}[\cite{Ogus90} 0.7.2, \cite{Shi08} 2.35] \label{convergent to isocry}
	Suppose that $X$ is smooth over $S$. There exists a canonical functor
	\begin{equation}
		\iota: \Iso^{\dagger}(X/\mathfrak{S})\to \mathscr{C}(\mathscr{O}_{X/\mathfrak{S}}^{\cris})_{\mathbb{Q}}.
	\end{equation}
%	Moreover, a convergent isocrystal is locally projective \eqref{coh fppf descent} if and only if its image via $\iota$ is locally projective \eqref{def crystal locally proj}.
\end{prop}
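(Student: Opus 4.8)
\emph{Strategy.} I would construct $\iota$ Zariski-locally on $X$ and then glue; on small affines it will be the ``module with integrable connection attached to a crystal'' of \ref{crystal connection stratification}. Since $X$ is smooth over $S$, Zariski-locally on $X$ there is a smooth lifting $\XX$ of $X$ over $\SS$, which I regard as the object $(\XX,X_{0}\hookrightarrow X)$ of $\Conv(X/\SS)$, where $X_{0}$ is the reduced subscheme of $X=\XX_{1}$.

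\emph{From a convergent isocrystal to a convergent connection.} Let $\mathscr{F}\in\Ob(\Iso^{\dagger}(X/\SS))$ and put $\mathscr{M}=\mathscr{F}_{\XX}$, a coherent $\mathscr{O}_{\XX}[\frac{1}{p}]$-module. For each $n\ge1$, let $\mathfrak{T}_{\Delta,n}(\XX^{2})$ be the dilatation (in the sense of \ref{def dilatation}) of the diagonal $\Delta\subset X\times_{S}X$ inside $\XX^{2}=\XX\times_{\SS}\XX$; by \eqref{diagram red to S} it is a flat formal $\rW$-scheme of finite type over $\SS$ whose reduced special fibre maps to $\Delta$, hence, composing with a projection $\Delta\xrightarrow{\sim}X$, it underlies an object of $\Conv(X/\SS)$, and the two projections $q_{1},q_{2}\colon\mathfrak{T}_{\Delta,n}(\XX^{2})\to\XX$ are morphisms of $\Conv(X/\SS)$ (agreeing on reduced special fibres). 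As $\mathscr{F}$ is a crystal, $c_{q_{1}}$ and $c_{q_{2}}$ are isomorphisms, so $\varepsilon_{n}:=c_{q_{1}}\circ c_{q_{2}}^{-1}$ is an isomorphism $q_{2}^{*}(\mathscr{M})\xrightarrow{\sim}q_{1}^{*}(\mathscr{M})$ on $\mathfrak{T}_{\Delta,n}(\XX^{2})$. Running the same construction with $\XX^{3}$ and invoking the cocycle relation \ref{description of mods}(c), one checks that $\varepsilon_{n}$ restricts to the identity along the diagonal, satisfies the cocycle condition, and is compatible as $n$ varies. The associated first-order datum is an integrable connection $\nabla$ on $\mathscr{M}$ relative to $\SS$; the existence of the $\varepsilon_{n}$ for \emph{every} $n$ --- i.e. the datum being defined over the whole tube of the diagonal, not merely over a PD-neighbourhood --- is the convergence condition.

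\emph{From a convergent connection to an isocrystal on $\Cris$.} Since $\XX^{2}$ is smooth over $\SS$, the diagonal is a regular immersion, so $\PP_{\XX/\SS}$ is flat over $\SS$ and its rigid fibre (the PD-polydisc) is contained in the tube $\bigcup_{n}\mathfrak{T}_{\Delta,n}(\XX^{2})^{\rig}$ of the diagonal; restricting $\varinjlim_{n}\varepsilon_{n}$ along this inclusion yields an $\mathscr{O}_{\PP_{\XX/\SS}}[\frac{1}{p}]$-stratification on $\mathscr{M}$ --- equivalently, $\nabla$ becomes topologically quasi-nilpotent on any coherent $\mathscr{O}_{\XX}$-lattice, by a Dwork-type estimate (cf. \cite{Ogus90} 0.7.2). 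By \ref{prop stratification} there are a coherent $\mathscr{O}_{\XX}$-module $\mathscr{M}^{\circ}$ and an $\mathscr{O}_{\PP_{\XX/\SS}}$-stratification $\varepsilon^{\circ}$ with $(\mathscr{M}^{\circ}[\frac{1}{p}],\varepsilon^{\circ}\otimes\id)\cong(\mathscr{M},\varepsilon)$, and by the equivalence of categories in \ref{crystal connection stratification} the pair $(\mathscr{M}^{\circ},\varepsilon^{\circ})$ corresponds to an object $\mathscr{G}_{\XX}$ of $\mathscr{C}(\mathscr{O}_{X/\SS}^{\cris})$. I then set $\iota(\mathscr{F})|_{\XX}:=(\mathscr{G}_{\XX})_{\mathbb{Q}}$; by \eqref{equi coh mod inverse p} this depends only on $(\mathscr{M},\varepsilon)$ and not on the chosen lattice.

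\emph{Gluing, functoriality, and the main obstacle.} To glue over a covering of $X$ by such affines I would use that the value of the crystal attached to $(\mathscr{M},\varepsilon)$ on a PD-thickening $(U,T)$ of $\Cris(X/\SS)$ is obtained by pulling $(\mathscr{M},\varepsilon)$ back along \emph{any} local lift $T\to\XX$ (which exists by formal smoothness of $\XX/\SS$, the quasi-nilpotence making the result independent of the lift): hence the locally defined $(\mathscr{G}_{\XX_{j}})_{\mathbb{Q}}$ are canonically independent of $\XX_{j}$, agree on overlaps, and glue to a well-defined object $\iota(\mathscr{F})\in\Ob(\mathscr{C}(\mathscr{O}_{X/\SS}^{\cris})_{\mathbb{Q}})$; functoriality in $\mathscr{F}$ is clear, as every step is functorial. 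I expect the hard part to be the two preceding paragraphs --- producing the stratification and verifying the topological quasi-nilpotence of the connection attached to a convergent isocrystal --- which is precisely where the convergence built into $\Iso^{\dagger}(X/\SS)$ (descent over all $\mathfrak{T}_{\Delta,n}(\XX^{2})$, not just over PD-neighbourhoods) enters, together with the comparison between tubes and PD-envelopes; this is the content of \cite{Ogus90} 0.7.2 and \cite{Shi08} 2.35. The gluing and the independence of the lift, by contrast, are formal once one knows that $\mathscr{C}(\mathscr{O}_{X/\SS}^{\cris})$ is a stack for the Zariski topology on $X$.
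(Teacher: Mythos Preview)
Your approach is correct and follows essentially the same outline as the paper's: evaluate the convergent isocrystal on the lift $\XX$, use the crystal property over dilatations of the diagonal to produce a stratification, pull it back to the PD-envelope, and invoke \ref{prop stratification} and \ref{crystal connection stratification}. The gluing discussion you give is standard and the paper simply leaves it implicit.

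There is, however, one notable simplification in the paper that you miss. You work with the whole tower $\{\mathfrak{T}_{\Delta,n}(\XX^{2})\}_{n\ge1}$ and then pass to $\PP_{\XX/\SS}$ via an inclusion of rigid spaces (PD-polydisc into the tube) or, alternatively, via a Dwork-type estimate on the connection. The paper instead uses only the single dilatation $\QQ_{\XX/\SS}:=\mathfrak{T}_{X,p}(\XX^{2})$, observes that it carries a formal $\XX$-groupoid structure over $\SS$, and then produces a \emph{direct morphism of formal $\XX$-groupoids} $\PP_{\XX/\SS}\to\QQ_{\XX/\SS}$ from the universal property \ref{univ dilatation}: since every element of the PD-ideal has $p$-th power divisible by $p$, the morphism $\underline{\PP_{\XX/\SS,1}}\to X\times_{S}X$ factors through the diagonal, and \ref{univ dilatation} applies. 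Pulling the $\mathscr{O}_{\QQ_{\XX/\SS}}$-stratification back along this morphism gives the $\mathscr{O}_{\PP_{\XX/\SS}}$-stratification immediately, with no recourse to rigid geometry or convergence estimates. Your route is closer to the original arguments in \cite{Ogus90} and \cite{Ber96}; the paper's route is shorter and purely formal-scheme-theoretic.

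One minor slip: your formula $\varepsilon_{n}=c_{q_{1}}\circ c_{q_{2}}^{-1}$ does not typecheck as a map $q_{2}^{*}(\mathscr{M})\to q_{1}^{*}(\mathscr{M})$; it should be $c_{q_{1}}^{-1}\circ c_{q_{2}}$, as in the paper.
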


We briefly review the construction of the functor in the case where $X$ is separated and admits a smooth lifting $\XX$ over $\SS$. To do this, we need a formal $\XX$-groupoid constructed by admissible blow-up.

We set $\QQ_{\XX/\SS}=\mathfrak{T}_{X,p}(\XX^{2})$ the dilatation of the diagonal immersion $X\to \XX^{2}$ \eqref{def dilatation}. 
By \ref{univ dilatation}, the canonical morphism $\underline{\QQ_{\XX/\SS,1}}\to X\times_{S}X$ factors through the diagonal immersion. Then $\QQ_{\XX/\SS}$ defines an object of $\Conv(X/\SS)$.
Using its universal property \eqref{univ dilatation}, one verifies that $\QQ_{\XX/\SS}$ is equipped with a formal $\XX$-groupoid structure \eqref{def Hopf alg} (cf. \cite{Xu} 4.12).

%We set $\QQ_{\XX/\SS}=\mathfrak{T}_{X,p}(\XX^{2})$ (resp. $\QQ_{\XX/\SS}(2)=\mathfrak{T}_{X,p}(\XX^{3})$) the dilatation of the diagonal immersion $X\to \XX^{2}$ (resp. $X\to \XX^{3}$) \eqref{def dilatation}. 
%By \ref{univ dilatation}, the canonical morphism $\underline{\QQ_{\XX/\SS,1}}\to X\times_{S}X$ and $\underline{(\QQ_{\XX/\SS}(2))_{1}}\to X\times_{S}X\times_{S}X$ factor through the diagonal immersion. Then $\QQ_{\XX/\SS}$ and $\QQ_{\XX/\SS}(2)$ define objects of $\Conv(X/\SS)$.
%We denote abusively the direct image of $\mathscr{O}_{\QQ_{\XX/\SS}}$ via the morphism of topological spaces $\varpi: |\QQ_{\XX/\SS}|\to |\XX|$ by $\mathscr{O}_{\QQ_{\XX/\SS}}$. 
%Using its universal property \eqref{univ dilatation}, one verifies that $\QQ_{\XX/\SS}(2)\xrightarrow{\sim}\QQ_{\XX/\SS}\times_{\XX}\QQ_{\XX/\SS}$ and that $\QQ_{\XX/\SS}$ is equipped with a formal $\XX$-groupoid structure \eqref{def Hopf alg} (cf. \cite{Xu} 4.12).
%$\mathscr{O}_{\QQ_{\XX/\SS}}$ is equipped with a formal Hopf $\mathscr{O}_{\XX}$-algebra structure \eqref{def Hopf alg} (cf. \cite{Xu} 4.12). 

Let $\mathscr{E}$ be an object of $\Iso^{\dagger}(X/\mathfrak{S})$. The canonical morphisms $p_{1},p_{2}:\QQ_{\XX/\SS}\to \XX$ give rise to morphisms of $\Conv(X/\SS)$. Since $\mathscr{E}$ is a crystal, $p_{1},p_{2}$ induce isomorphisms 
\begin{equation}
	p_{2}^{*}(\mathscr{E}_{\XX})\xrightarrow[c_{p_{2}}]{\sim} \mathscr{E}_{\QQ_{\XX/\SS}} \xleftarrow[c_{p_{1}}]{\sim} p_{1}^{*}(\mathscr{E}_{\XX}).
\end{equation}
%We set $\varepsilon=c_{p_{1}}^{-1}\circ c_{p_{2}}$ and then we have $p_{13}^{*}(\varepsilon)\xrightarrow{\sim} p_{12}^{*}(\varepsilon)\circ p_{23}^{*}(\varepsilon)$. 
By a standard argument, one verifies that $\varepsilon=c_{p_{1}}^{-1}\circ c_{p_{2}}$ defines an $\mathscr{O}_{\QQ_{\XX/\SS}}$-stratification on $\mathscr{E}_{\XX}$ \eqref{def stratification}.

%Moreover, by a theorem of Gabber (), there exists a coherent $\mathscr{O}_{\XX}$-module $\mathscr{F}$ and an $\mathscr{O}_{\QQ_{\XX/\SS}}$-stratification $\eta$ on $\mathscr{F}$ such that $(\mathscr{F},\eta)[\frac{1}{p}]=(\mathscr{E}_{\XX},\varepsilon)$.
%For any element $x$ of the PD-ideal $\ker(\mathscr{O}_{\PP_{\XX,\SS,1}}\to \mathscr{O}_{X})$, we have $x^{p}=p!x^{[p]}=0$. Then, the canonical morphism $\underline{\PP_{\XX/\SS,1}}\to X\times_{S}X$ factors through the diagonal immersion $X\to X\times_{S}X$. 

Using \ref{univ dilatation}, the canonical morphism $\PP_{\XX/\SS}\to \XX^{2}$ induces a morphism of formal $\XX$-groupoids $\PP_{\XX/\SS}\to \QQ_{\XX/\SS}$ (cf \cite{Xu} 5.7).
By taking inverse image, we obtain an $\mathscr{O}_{\PP_{\XX/\SS}}$-strafitifcation on $\mathscr{E}_{\XX}$ and then a crystal of $\mathscr{O}_{X/\mathfrak{S}}^{\cris}$-modules of finite presentation up to isogeny by \ref{crystal connection stratification} and \ref{prop stratification}. The construction is clearly functorial.

\begin{nothing}\label{def uXS conv}
%If $j:U\to X$ is an open immersion and $\mathscr{E}$ is a sheaf of $(X/\mathfrak{S})_{\conv,\zar}$, the functor 
%	\begin{displaymath}
%		U\mapsto \Gamma( (U/\mathfrak{S})_{\conv,\zar}, j^{*}_{\conv,\zar}(\mathscr{E}))
%	\end{displaymath}
%	is a sheaf of $X_{\zar}$ that we denote by $u_{X/\mathfrak{S}*}(\mathscr{E})$. Given a sheaf $\mathscr{F}$ of $X_{\zar}$, the functor
%	\begin{displaymath}
%		(\mathfrak{T},u)\mapsto u_{\zar}^{*}(\mathscr{F})(\mathfrak{T})
%	\end{displaymath}
%	is a sheaf of $(X/\SS)_{\conv,\zar}$ that we denote by $u_{X/\mathfrak{S}}^{*}(\mathscr{F})$.
%	One verifies that these two functors are adjoint. The left exactness of $u^{*}_{X/\mathfrak{S}}$ follows from the that of $u^{*}_{\zar}$ and of $\Gamma(\mathfrak{T}_{\zar},-)$.
	There exists a canonical morphism of topoi $u_{X/\SS}:(X/\SS)_{\conv,\zar}\to X_{\zar}$ (cf. \cite{Ogus90} \S 4). 
	Suppose that $\SS$ is separated and that $X$ admits a smooth lifting $f: \XX \to \SS$. 
	Let $\mathscr{E}$ be a convergent isocrystal of $(X/\SS)_{\conv,\zar}$. By \ref{crystal connection stratification} and \ref{prop stratification}, there exists an integrable connection on $\mathscr{E}_{\XX}$ relative to $\SS$ and we denote by $\mathscr{E}_{\XX}\otimes_{\mathscr{O}_{\XX}}\widehat{\Omega}_{\XX/\SS}^{\bullet}$ the associated de Rham complex. Then there exists a canonical isomorphism in the derived category $\rD(\XX_{\zar},f^{-1}(\mathscr{O}_{\SS}[\frac{1}{p}]))$ (\cite{Shi08} 2.33)
	\begin{equation} \label{direct image u dR}
		\rR u_{X/\SS*}(\mathscr{E})\xrightarrow{\sim} \mathscr{E}_{\XX}\otimes_{\mathscr{O}_{\XX}}\widehat{\Omega}_{\XX/\SS}^{\bullet}.
	\end{equation}

	Based on the above isomorphism, Shiho showed the following results.
\end{nothing}

%\begin{nothing} \label{def rel conv coh}
%	For $\tau\in \{\zar,\fppf\}$, the inverse image functor $s_{\mathfrak{S}}^{*}:(S/\mathfrak{S})_{\conv,\tau}\to \mathfrak{S}_{\tau}$ \eqref{morphism of topoi sT} is exact. For $i\ge 0$ and an abelian sheaf $\mathscr{E}$ of $(X/\mathfrak{S})_{\conv,\tau}$, 
%	\begin{eqnarray}
%		&\rR^{i}g_{X/\mathfrak{S},\tau}(\mathscr{E})&=(\rR^{i}g_{\conv,\tau*}(\mathscr{E}))_{\mathfrak{S}},\\
%		&\rR g_{X/\mathfrak{S},\tau}(\mathscr{E})&=(\rR g_{\conv,\tau*}(\mathscr{E}))_{\mathfrak{S}}
%	\end{eqnarray}
%	we call $(\rR^{i}g_{\conv,\tau*}(\mathscr{E}))_{\mathfrak{S}}$ the \textit{$i$-th relative convergent cohomology of $\mathscr{E}$}. 
%\end{nothing}

\begin{theorem}[\cite{Shi08} 2.36] \label{coh rig coh cristalline}
	Assume that $\SS$ is separated and that $X$ is smooth and proper over $S$.
	Let $\mathscr{E}$ be a convergent isocrystal of $(X/\SS)_{\conv,\zar}$ and $g_{\conv,\zar}:(X/\SS)_{\conv,\zar}\to (S/\SS)_{\conv,\zar}$ the functorial morphism. Then there exists a canonical isomorphism in the derived category of $\mathscr{O}_{\SS}[\frac{1}{p}]$-modules \textnormal{(\ref{def relative cris coh}, \ref{convergent to isocry})}
	\begin{equation}
		(\rR g_{\conv,\zar*}(\mathscr{E}))_{\mathfrak{S}}\xrightarrow{\sim} \rR g_{X/\mathfrak{S},\cris*}(\iota(\mathscr{E})).
	\end{equation}
	In particular, $(\rR^{i} g_{\conv,\zar*}(\mathscr{E}))_{\mathfrak{S}}$ is coherent for any $i\ge 0$. 
\end{theorem}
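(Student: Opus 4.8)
The plan is to reduce to the case of a small affine base over which $X$ has a smooth lifting, where both sides become the hypercohomology of one and the same de Rham complex, and then to globalize by cohomological descent. Throughout I will use that the functor $\iota$ of \ref{convergent to isocry} is available (since $X$ is smooth over $S$).

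\textbf{Step 1: reductions.} The assertion is local on $\SS$: both $(\rR^{i}g_{\conv,\zar*}(\mathscr{E}))_{\SS}$ and $\rR^{i}g_{X/\SS,\cris*}(\iota(\mathscr{E}))$ are $\mathscr{O}_{\SS}[\tfrac1p]$-modules, and a comparison morphism, once produced canonically, will glue; coherence is local as well. So I may assume $\SS=\Spf(A)$ is affine (it is then separated). I also record that, once the comparison isomorphism is proved, the coherence statement is automatic: $\iota(\mathscr{E})$ is an isocrystal of $(X/\SS)_{\cris}$ with $\SS$ separated and $X$ smooth and proper over $S$, so \ref{finiteness crystalline coh} gives that $\rR^{i}g_{X/\SS,\cris*}(\iota(\mathscr{E}))$, hence $(\rR^{i}g_{\conv,\zar*}(\mathscr{E}))_{\SS}$, is coherent. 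It therefore suffices to construct the canonical isomorphism.

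\textbf{Step 2: the affine case with a smooth lifting.} Assume in addition that $X$ is separated and admits a smooth lifting $f:\XX\to\SS$. Using the functoriality and localization properties of the convergent topos (\ref{construction descent data}, \ref{setting functorial morphism}) together with the factorization of $g_{\conv,\zar}$ through $u_{X/\SS}$ on underlying topoi, one identifies $(\rR g_{\conv,\zar*}(\mathscr{E}))_{\SS}$ with $\rR\Gamma(\XX_{\zar},\rR u_{X/\SS*}(\mathscr{E}))$; by \eqref{direct image u dR} this is $\rR\Gamma(\XX,\mathscr{E}_{\XX}\otimes_{\mathscr{O}_{\XX}}\widehat{\Omega}_{\XX/\SS}^{\bullet})$. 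On the crystalline side, write $\iota(\mathscr{E})=\mathscr{F}_{\mathbb{Q}}$ and, via the construction of $\iota$ together with \ref{prop stratification} and \ref{connection locally proj}, choose the crystal $\mathscr{F}\in\Ob(\mathscr{C}(\mathscr{O}_{X/\SS}^{\cris}))$ so that $\mathscr{F}_{\XX}$ is a coherent $\mathscr{O}_{\XX}$-module with topologically quasi-nilpotent integrable connection; by the de Rham comparison for crystalline cohomology in the presence of a smooth lifting (\cite{BO}), $\rR g_{X/\SS,\cris*}(\mathscr{F})\cong\rR\Gamma(\XX,\mathscr{F}_{\XX}\otimes_{\mathscr{O}_{\XX}}\widehat{\Omega}_{\XX/\SS}^{\bullet})$, and inverting $p$ gives $\rR g_{X/\SS,\cris*}(\iota(\mathscr{E}))\cong\rR\Gamma(\XX,\mathscr{E}_{\XX}\otimes_{\mathscr{O}_{\XX}}\widehat{\Omega}_{\XX/\SS}^{\bullet})$. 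The two de Rham complexes agree: the connection underlying the $\QQ_{\XX/\SS}$-stratification attached to $\mathscr{E}$ coincides, by the construction of $\iota$ (the morphism of formal $\XX$-groupoids $\PP_{\XX/\SS}\to\QQ_{\XX/\SS}$), with the one attached to $\iota(\mathscr{E})$. This yields the canonical isomorphism in this case; since both sides are intrinsic, it does not depend on the chosen lifting.

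\textbf{Step 3: globalization.} For general $X$ proper over $S$, choose a finite affine open covering $\{X_{i}\}_{i\in I}$ of $X$ such that each $X_{i}$, being smooth and affine, admits a smooth lifting $\XX_{i}$ over $\SS$; every finite intersection $X_{J}$ then inherits a smooth lifting from any $\XX_{i}$ with $i\in J$. Both $\rR g_{\conv,\zar*}$ and $\rR g_{X/\SS,\cris*}$ satisfy Zariski (cohomological) descent, so each side is the total complex of the \v{C}ech double complex associated with $\{X_{i}\}$, whose row of degree $n$ is the direct sum over $|J|=n$ of the corresponding cohomology over $X_{J}$; here \ref{smooth base change cry coh} controls the behaviour of the crystalline terms under the restrictions $X_{J}\hookrightarrow X$. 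By Step 2 these two \v{C}ech double complexes are canonically identified term by term, compatibly with the face maps, and passing to total complexes gives $(\rR g_{\conv,\zar*}(\mathscr{E}))_{\SS}\xrightarrow{\sim}\rR g_{X/\SS,\cris*}(\iota(\mathscr{E}))$; gluing over an affine covering of $\SS$ completes the proof.

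\textbf{Main obstacle.} The technical heart is the bookkeeping tying Steps 2 and 3 together: identifying $(\rR g_{\conv,\zar*}(\mathscr{E}))_{\SS}$ with the $\SS$-hypercohomology of $\rR u_{X/\SS*}(\mathscr{E})$ requires the precise functoriality of the convergent topos, and making the local de Rham comparisons mutually compatible — in particular genuinely independent of the auxiliary liftings $\XX_{i}$, so that the two \v{C}ech complexes coincide and not merely are quasi-isomorphic — is exactly where the crystal property of $\mathscr{E}$ and of $\iota(\mathscr{E})$ is indispensable.
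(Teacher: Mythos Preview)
The paper does not give its own proof of this theorem: it is quoted from \cite{Shi08}~2.36, with the single remark that Shiho's argument is ``based on the above isomorphism'' \eqref{direct image u dR}. Your sketch is therefore not competing with a proof in the paper but rather reconstructing Shiho's argument, and it does so along exactly the line the paper indicates: reduce both sides to the de Rham complex $\mathscr{E}_{\XX}\otimes_{\mathscr{O}_{\XX}}\widehat{\Omega}_{\XX/\SS}^{\bullet}$ via \eqref{direct image u dR} on the convergent side and the crystalline Poincar\'e lemma on the other, then globalize by a \v{C}ech/hypercovering argument over an affine cover of $X$ admitting smooth liftings.

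Two places deserve a bit more care. First, your identification $(\rR g_{\conv,\zar*}(\mathscr{E}))_{\SS}\simeq\rR\Gamma(\XX_{\zar},\rR u_{X/\SS*}(\mathscr{E}))$ in Step~2 is not immediate from \ref{construction descent data} and \ref{setting functorial morphism} alone; the clean way is via \ref{evaluation direct image} applied to $\mathfrak{T}=\SS$ in $\Conv(S/\SS)$, which reduces to the convergent cohomology of $X_{S_0}/\SS$, and then invoke \eqref{direct image u dR}. Second, in Step~3 your appeal to \ref{smooth base change cry coh} is misplaced: that result concerns base change in $\SS$, not restriction to opens $X_J\subset X$. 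What you actually need on the crystalline side is the standard \v{C}ech-to-derived spectral sequence for the Zariski cover $\{X_i\}$ (i.e.\ Zariski cohomological descent for $\rR u_{X/\SS,\cris*}$), which is elementary and does not require \ref{smooth base change cry coh}. With these adjustments the argument goes through, and coherence then follows from \ref{finiteness crystalline coh} as you say.
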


\begin{coro}[\cite{Shi08} 2.37]
	Keep the assumption of \ref{coh rig coh cristalline} and suppose moreover that $\mathscr{E}$ is locally projective \eqref{coh fppf descent}. Then, $(\rR g_{\conv,\zar*}(\mathscr{E}))_{\mathfrak{S}}$ is a perfect complex of $\mathscr{O}_{\SS}[\frac{1}{p}]$-modules. 
\end{coro}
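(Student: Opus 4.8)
The plan is to combine the comparison isomorphism of \ref{coh rig coh cristalline} with the perfectness of crystalline cohomology for locally projective isocrystals. By \ref{coh rig coh cristalline} there is a canonical isomorphism
\[
	(\rR g_{\conv,\zar*}(\mathscr{E}))_{\mathfrak{S}} \xrightarrow{\sim} \rR g_{X/\mathfrak{S},\cris*}(\iota(\mathscr{E}))
\]
in the derived category of $\mathscr{O}_{\SS}[\frac{1}{p}]$-modules, where $\iota$ is the functor of \ref{convergent to isocry}. Since perfectness is invariant under isomorphism in the derived category, it suffices to show that $\iota(\mathscr{E})$ is a \emph{locally projective} isocrystal in the sense of \ref{def crystal locally proj} and then to invoke the perfectness theorem (\cite{Shi08} 1.16).

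Everything therefore reduces to the statement that $\iota$ carries a locally projective convergent isocrystal to a locally projective crystalline isocrystal. Write $\iota(\mathscr{E}) = \mathscr{F}_{\mathbb{Q}}$ with $\mathscr{F}$ a crystal of $\mathscr{O}_{X/\mathfrak{S}}^{\cris}$-modules of finite presentation, and fix a $p$-adic system $\mathfrak{T} = (U, \mathfrak{T}_{n})_{n\ge 1}$ of $\Cris(X/\mathfrak{S})$; we must show that $((\mathscr{F}_{\mathfrak{T}_{n}})_{n\ge 1})_{\mathbb{Q}} \in \rM(\mathfrak{T})_{\mathbb{Q}}$ is locally projective (this is independent of the chosen lattice $\mathscr{F}$). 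The assertion being Zariski-local on $\mathfrak{T}$, hence on $U$, we may shrink $U$ so that it admits a smooth lifting $\XX$ over $\SS$. By the infinitesimal lifting property of the smooth $\SS_{n}$-scheme $\XX_{n}$ along the PD-thickening $U \hookrightarrow \mathfrak{T}_{n}$, the immersion $U \hookrightarrow \XX_{n}$ extends, after a further shrinking, to a PD-morphism $\phi_{n} : \mathfrak{T}_{n} \to \XX_{n}$; these can be chosen compatibly in $n$, yielding a morphism of $p$-adic systems $\phi$ from $\mathfrak{T}$ to the one attached to $\XX$. The crystal property of $\mathscr{F}$ then gives an isomorphism $\phi^{*}((\mathscr{F}_{\XX_{n}})_{n\ge 1}) \xrightarrow{\sim} (\mathscr{F}_{\mathfrak{T}_{n}})_{n\ge 1}$ in $\rM(\mathfrak{T})$. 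Now $\XX$ is an object of $\Conv(X/\mathfrak{S})$, and by the construction of $\iota$ (through \ref{crystal connection stratification} and \ref{prop stratification}) the coherent $\mathscr{O}_{\XX}[\frac{1}{p}]$-module $(\varprojlim_{n} \mathscr{F}_{\XX_{n}})[\frac{1}{p}]$ is isomorphic to $\mathscr{E}_{\XX}$, which is locally projective of finite type because $\mathscr{E}$ is a locally projective convergent isocrystal \eqref{coh fppf descent}. Since the pullback of a module that is Zariski-locally a direct summand of a finite free module retains that property, $((\mathscr{F}_{\mathfrak{T}_{n}})_{n\ge 1})_{\mathbb{Q}}$ is locally projective, as wanted.

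The one delicate point is this last verification. It has to be made for \emph{every} $p$-adic system of $\Cris(X/\mathfrak{S})$, including ones whose $\mathfrak{T}$ is non-Noetherian and whose structure maps $\phi_{n}$ are not flat: one needs that pullback along such maps preserves being ``Zariski-locally a direct summand of a finite free module'', and that the $\phi_{n}$ can be arranged coherently so as to define a genuine morphism in $\rM(\mathfrak{T})$. One should likewise keep track that the isomorphism of \ref{coh rig coh cristalline} is one of complexes of $\mathscr{O}_{\SS}[\frac{1}{p}]$-modules, so that perfectness transports along it. By contrast, the more hands-on route through the de Rham description \eqref{direct image u dR} — which would realize $(\rR g_{\conv,\zar*}(\mathscr{E}))_{\SS}$ as the pushforward along a proper smooth formal lifting $\XX/\SS$ of the de Rham complex of the locally free $\mathscr{O}_{\XX}[\frac{1}{p}]$-module $\mathscr{E}_{\XX}$ — works only when $X$ admits such a global lifting over $\SS$, which can fail even Zariski-locally on $\SS$; passing through crystalline cohomology is exactly what sidesteps this.
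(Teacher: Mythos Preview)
Your proof is correct and follows the natural route implicit in the paper's organization: the paper does not spell out a proof but simply cites \cite{Shi08} 2.37, and the obvious derivation from the surrounding material is precisely to combine the comparison isomorphism of \ref{coh rig coh cristalline} with the crystalline perfectness theorem (\cite{Shi08} 1.16, stated in the paper just before \ref{smooth base change cry coh}), after checking that $\iota$ preserves local projectivity. Your verification of this last point is sound; the worry you flag about non-Noetherian $\mathfrak{T}$ is not a real obstruction, since ``Zariski-locally a direct summand of a finite free module'' is stable under arbitrary base change, and the compatible system of maps $\phi_n:\mathfrak{T}_n\to\XX_n$ exists by the usual inductive lifting along the square-zero thickenings $\mathfrak{T}_n\hookrightarrow\mathfrak{T}_{n+1}$.
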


\begin{coro}[\cite{Shi08} 2.38] \label{coro base change conv}
	Let $\varphi:\mathfrak{S}'\to \mathfrak{S}$ be an adic morphism of separated flat formal $\rW$-schemes of finite type, $X'=X\times_{\mathfrak{S}}\mathfrak{S}'$ and $\varphi_{\conv,\zar}:(X'/\mathfrak{S}')_{\conv,\zar}\to (X/\mathfrak{S})_{\conv,\zar}$ the functorial morphism of convergent topoi \eqref{morphism of topoi functorial}.	
	Then, for a locally projective convergent isocrystal $\mathscr{E}$ of $(X/\mathfrak{S})_{\conv,\zar}$, we have a canonical isomorphism in the derived category of $\mathscr{O}_{\SS'}[\frac{1}{p}]$-modules:
	\begin{equation}
		\rL\varphi_{\zar}^{*}(\rR g_{\conv,\zar*}(\mathscr{E}))\xrightarrow{\sim} \rR g'_{\conv,\zar*}(\varphi^{*}_{\conv,\zar}(\mathscr{E})).
	\end{equation}
\end{coro}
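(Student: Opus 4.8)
The plan is to transport the statement to crystalline cohomology, where the base-change theorem \ref{smooth base change cry coh} is available, by means of the functor $\iota$ of \ref{convergent to isocry} and the comparison isomorphism \ref{coh rig coh cristalline}. Accordingly I keep the hypotheses of \ref{coh rig coh cristalline}: $X$ is smooth and proper over $S$, $\SS$ is separated (hence so is $\SS'$), and $\iota$, $g_{\conv,\zar}$ are defined. As in \ref{coh rig coh cristalline} I read $\rR g_{\conv,\zar*}(\mathscr{E})$ as the value $(\rR g_{\conv,\zar*}(\mathscr{E}))_{\SS}$ at the object $(\SS,\id)$, a complex of $\mathscr{O}_{\SS}[\frac{1}{p}]$-modules, and likewise on the primed side.

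First I would establish that $\iota$ is compatible with pullback, i.e. construct a canonical isomorphism of isocrystals
\[
  \iota\bigl(\varphi^{*}_{\conv,\zar}(\mathscr{E})\bigr)\ \xrightarrow{\ \sim\ }\ \varphi^{*}_{\cris}\bigl(\iota(\mathscr{E})\bigr)
\]
in $\mathscr{C}(\mathscr{O}_{X'/\SS'}^{\cris})_{\mathbb{Q}}$, where $\varphi_{\cris}$ is the functorial morphism of crystalline topoi of \ref{smooth base change cry coh}. This is a local question on $X$, so one may assume $\SS$ affine and $X$ equipped with a smooth lifting $\XX$ over $\SS$; then $\XX'=\XX\times_{\SS}\SS'$ is a smooth lifting of $X'$ over $\SS'$, and $\varphi$ induces morphisms of the formal $\XX$-groupoids $\QQ_{\XX'/\SS'}\to\QQ_{\XX/\SS}$ and $\PP_{\XX'/\SS'}\to\PP_{\XX/\SS}$, compatibly with the canonical morphisms $\PP_{\XX/\SS}\to\QQ_{\XX/\SS}$ on the two sides; here one uses only the functoriality of the dilatation and PD-envelope constructions of \ref{def dilatation}, \ref{univ dilatation}, \ref{crystal connection stratification}. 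Unwinding the construction of $\iota$ recalled after \ref{convergent to isocry}, both sides are then computed by pulling back along $\XX'\to\XX$ the coherent module with $\mathscr{O}_{\QQ_{\XX/\SS}}$-stratification $(\mathscr{E}_{\XX},\varepsilon)$ attached to $\mathscr{E}$, and the two resulting stratifications coincide; since all the morphisms involved are functorial, these local isomorphisms are independent of the chosen lifting and patch to the required global one. In the same step I would record that $\iota$ carries a locally projective convergent isocrystal to a locally projective isocrystal in the sense of \ref{def crystal locally proj}: a $p$-adic system of $\Cris(X/\SS)$ yields, by passing to the limit, an object $\mathfrak{T}$ of $\Conv(X/\SS)$, and the module attached to $\iota(\mathscr{E})$ along it is the value $\mathscr{E}_{\mathfrak{T}}$, which is locally projective of finite type by hypothesis. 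Thus $\iota(\mathscr{E})$ satisfies the hypothesis of \ref{smooth base change cry coh}.

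It then remains to concatenate isomorphisms. By \ref{coh rig coh cristalline},
\[
  \rL\varphi^{*}_{\zar}\bigl((\rR g_{\conv,\zar*}(\mathscr{E}))_{\SS}\bigr)\ \simeq\ \rL\varphi^{*}_{\zar}\bigl(\rR g_{X/\SS,\cris*}(\iota(\mathscr{E}))\bigr);
\]
by \ref{smooth base change cry coh} applied to the locally projective isocrystal $\iota(\mathscr{E})$ the right-hand side is canonically isomorphic to $\rR g_{X'/\SS',\cris*}\bigl(\varphi^{*}_{\cris}(\iota(\mathscr{E}))\bigr)$; by the compatibility of $\iota$ with pullback this equals $\rR g_{X'/\SS',\cris*}\bigl(\iota(\varphi^{*}_{\conv,\zar}(\mathscr{E}))\bigr)$; and finally \ref{coh rig coh cristalline} on the primed side identifies the latter with $(\rR g'_{\conv,\zar*}(\varphi^{*}_{\conv,\zar}(\mathscr{E})))_{\SS'}$. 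Composing these gives the desired base-change isomorphism, and one checks its canonicity by tracking the naturality of each of the four identifications. The main obstacle is the compatibility of $\iota$ with pullback: the delicate point is to verify that the identifications produced over affine opens with a smooth lifting do not depend on the lifting and glue to a morphism of isocrystals over all of $X$, which is exactly what the functoriality of the groupoids $\QQ_{\XX/\SS}$ and $\PP_{\XX/\SS}$ secures; the rest is a formal concatenation of isomorphisms already available from Sections \ref{rel crys coh} and \ref{sec conv topos}.
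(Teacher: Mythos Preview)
The paper does not supply its own proof of this corollary: it is simply cited from \cite{Shi08} 2.38, as one of the consequences Shiho draws from the comparison \ref{coh rig coh cristalline} (see the sentence ``Based on the above isomorphism, Shiho showed the following results'' preceding \ref{coh rig coh cristalline}). Your reconstruction---pass to crystalline cohomology via $\iota$ and \ref{coh rig coh cristalline}, invoke the crystalline base change \ref{smooth base change cry coh}, and return---is exactly the intended derivation and matches Shiho's argument.

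One point to tighten: your verification that $\iota(\mathscr{E})$ is locally projective in the sense of \ref{def crystal locally proj} is too quick. A $p$-adic system $(U,\mathfrak{T}_{n})_{n\ge 1}$ of $\Cris(X/\SS)$ need not have limit $\mathfrak{T}$ lying in $\Conv(X/\SS)$ (flatness over $\rW$ and finite type are not part of Definition \ref{def crystal locally proj}(i)), so you cannot in general read off $\iota(\mathscr{E})$ at $\mathfrak{T}$ as a value of $\mathscr{E}$. The correct argument is local: reduce to the case where $X$ has a smooth lifting $\XX$, use the equivalence of \ref{crystal connection stratification} to identify $\iota(\mathscr{E})$ on $(U,\mathfrak{T}_{n})$ with the pullback of $(\mathscr{E}_{\XX},\varepsilon)$ along a local map $\mathfrak{T}\to\XX$, and then invoke that $\mathscr{E}_{\XX}$ is locally projective (your hypothesis, or \ref{connection locally proj}). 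With this adjustment the argument is complete.
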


\section{Higher direct images of a convergent isocrystal are $p$-adically convergent} \label{Higher direct image}
\begin{nothing}\label{basic setting X to Y}
	In this section, we keep the notation of \S~\ref{sec conv topos} and let $g:X\to Y$ denote a morphism of $S$-schemes. 
	
	Let $\mathfrak{T}$ be an object of $\Conv(Y/\mathfrak{S})$ and $\tau\in \{\zar,\fppf\}$. By fppf descent for morphisms of formal $\rW$-schemes (\cite{Ab10} 5.12.1), the presheaf associated to $\mathfrak{T}$ is a sheaf for the fppf (resp. Zariski) topology that we denote by $\widetilde{\mathfrak{T}}$.
	We set $X_{T_{0}}=X\times_{Y}T_{0}$ and we denote by 
	\begin{eqnarray*}
		&g_{X/\mathfrak{T},\tau}:&(X_{T_{0}}/\mathfrak{T})_{\conv,\tau}\to (T_{0}/\mathfrak{T})_{\conv,\tau},\\
		&\omega_{\mathfrak{T}}:&(X_{T_{0}}/\mathfrak{T})_{\conv,\tau} \to (X/\mathfrak{S})_{\conv,\tau}
	\end{eqnarray*}
	the functorial morphisms of topoi \eqref{morphism of topoi functorial}.
\end{nothing}

\begin{lemma}[\cite{Ber} V 3.2.2] \label{localisation by a sheaf} 
	There exists a canonical equivalence of topoi:
	\begin{equation} \label{localisation of conv topos}
		(X/\mathfrak{S})_{\conv,\tau/g^{*}_{\conv,\tau}(\widetilde{\mathfrak{T}})}\xrightarrow{\sim} (X_{T_{0}}/\mathfrak{T})_{\conv,\tau}
	\end{equation}
	which identifies the localisation morphism and $\omega_{\mathfrak{T}}$.
\end{lemma}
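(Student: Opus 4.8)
The plan is to construct the equivalence by producing mutually quasi-inverse functors at the level of the underlying categories (with topology) and then invoking standard localisation theory for topoi together with the comparison results already recorded in the excerpt. First I would identify the sheaf $g^{*}_{\conv,\tau}(\widetilde{\mathfrak{T}})$ explicitly: by \ref{setting functorial morphism}, for an object $(\mathfrak{T}',u')$ of $\Conv(X/\SS)$ one has $(g^{*}_{\conv,\tau}(\widetilde{\mathfrak{T}}))_{\mathfrak{T}'}=\widetilde{\mathfrak{T}}_{\varphi(\mathfrak{T}',u')}$, and since $\widetilde{\mathfrak{T}}$ is the sheaf represented by $\mathfrak{T}$ in $(Y/\SS)_{\conv,\tau}$, the sections over $(\mathfrak{T}',u')$ are the $\SS$-morphisms $\mathfrak{T}'\to\mathfrak{T}$ that are compatible, via $g\circ u'$, with the structural morphism of $\mathfrak{T}$ to $Y$. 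Thus an object of the localised topos $(X/\SS)_{\conv,\tau/g^{*}_{\conv,\tau}(\widetilde{\mathfrak{T}})}$ is, concretely, a sheaf on $\Conv(X/\SS)$ equipped with a map to $g^{*}_{\conv,\tau}(\widetilde{\mathfrak{T}})$, i.e. ``relatively'' an object over $\mathfrak{T}$.

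Next I would set up the comparison functor on the level of sites. Define a functor
\[
\Conv(X_{T_{0}}/\mathfrak{T})\longrightarrow \Conv(X/\SS)_{/g^{*}_{\conv,\tau}(\widetilde{\mathfrak{T}})}
\]
sending $(\mathfrak{T}'\to\mathfrak{T},\ u:T'_{0}\to X_{T_{0}})$ to the object $(\mathfrak{T}',\ \mathrm{pr}_{X}\circ u:T'_{0}\to X)$ of $\Conv(X/\SS)$ together with the structural morphism $\mathfrak{T}'\to\mathfrak{T}$, which is precisely a section of $g^{*}_{\conv,\tau}(\widetilde{\mathfrak{T}})$ over this object by the description above; conversely, given $(\mathfrak{T}',v:T'_{0}\to X)$ over $\mathfrak{T}$, the map to $\widetilde{\mathfrak{T}}$ supplies $\mathfrak{T}'\to\mathfrak{T}$ and hence $T'_{0}\to T_{0}$, and $(g\circ v,\,T'_{0}\to T_{0})$ lands in $Y\times_{Y}T_{0}$-nothing new-but more to the point $v$ together with $T'_{0}\to T_{0}$ induces $T'_{0}\to X\times_{Y}T_{0}=X_{T_{0}}$, giving the inverse assignment. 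One checks this is an equivalence of categories: essential surjectivity and full faithfulness are immediate from the universal property of the fibre product $X_{T_{0}}=X\times_{Y}T_{0}$ (note $T_{0}$ is reduced, so the morphism $T'_{0}\to T_{0}$ factoring the reduced special fibre is unambiguous). Crucially, this equivalence is compatible with the two notions of $\tau$-covering: a family is a Zariski (resp. fppf) covering on either side iff the underlying family of morphisms of formal schemes $\{\mathfrak{T}'_{i}\to\mathfrak{T}'\}$ is (by \ref{def topology Conv}), and this condition is intrinsic to $\mathfrak{T}'$, not to the base; the induced topology on a localised site is by definition the one inherited from the ambient site, so the two topologies match.

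Having an equivalence of sites compatible with topologies, I would conclude that the associated topoi are equivalent; this is the statement that localisation of a topos $\widetilde{\mathscr{C}}$ at a sheaf $F$ is computed as the topos of sheaves on the localised site $\mathscr{C}_{/F}$ (\cite{SGAIV} III 5, IV 5.10), combined with the transport of topoi along an equivalence of sites. Finally I would verify the compatibility of morphisms: the localisation morphism $(X/\SS)_{\conv,\tau/g^{*}_{\conv,\tau}(\widetilde{\mathfrak{T}})}\to (X/\SS)_{\conv,\tau}$ is, under the identification, induced by the forgetful functor $\Conv(X_{T_{0}}/\mathfrak{T})\to\Conv(X/\SS)$, $(\mathfrak{T}',u)\mapsto(\mathfrak{T}',\mathrm{pr}_{X}\circ u)$, which is exactly the functor $\varphi$ of \ref{def functorial functor} defining $\omega_{\mathfrak{T}}$; so the equivalence intertwines the two as claimed. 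The main obstacle I anticipate is purely bookkeeping: tracking carefully that the reduced-special-fibre condition in the definition of $\Conv$ does not obstruct the category-level equivalence (one must use that $X_{T_{0}}$ has special fibre whose reduction receives the relevant map, and that $(X_{T_{0}})_{0}=(X\times_Y T_0)_0$ maps naturally to $T_0$), and that the fppf descent for morphisms of formal $\rW$-schemes (\cite{Ab10} 5.12.1) is indeed what guarantees $\widetilde{\mathfrak{T}}$ is a sheaf so that localisation at it makes sense; none of this is deep, but it is where errors would hide.
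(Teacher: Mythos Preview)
Your proposal is correct and is precisely the argument the paper has in mind: the paper does not spell out a proof but simply says the lemma ``can be verified in the same way as (\cite{Ber} V 3.2.2)'', and what you have written is exactly that verification transported to the convergent setting---identifying $g^{*}_{\conv,\tau}(\widetilde{\mathfrak{T}})$ via \ref{setting functorial morphism}, exhibiting the equivalence of sites between $\Conv(X_{T_{0}}/\mathfrak{T})$ and the slice category $\Conv(X/\SS)_{/g^{*}_{\conv,\tau}(\widetilde{\mathfrak{T}})}$ using the universal property of $X_{T_{0}}=X\times_{Y}T_{0}$, and observing that the forgetful functor matches $\varphi$ and hence $\omega_{\mathfrak{T}}$.
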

The lemma can be verified in the same way as (\cite{Ber} V 3.2.2).

%\begin{proof}
%	Let $\Conv(X/\mathfrak{S})/g^{*}(\widetilde{\mathfrak{T}})$ be the category whose objects are the pairs of an element $\mathfrak{T}'$ of $\Conv(X/\mathfrak{S})$ and a morphism of sheaves $\widetilde{\mathfrak{T}}'\to g^{*}_{\conv,\tau}(\widetilde{\mathfrak{T}})$. 
%	This category is equipped with a topology induced by that of $\Conv(X/\mathfrak{S})$. By (\cite{SGAIV} III 5.4), the topos of sheaves over this category is canonically equivalent to the topos $(X/\mathfrak{S})_{\conv,\tau/g^{*}_{\conv,\tau}(\widetilde{\mathfrak{T}})}$. In the other hand, a morphism $\widetilde{\mathfrak{T}}'\to g^{*}_{\conv,\tau}(\widetilde{\mathfrak{T}})$ is equivalent to a $\mathfrak{S}$-morphism $\mathfrak{T}'\to \mathfrak{T}$ compatible with $g$. Then, we can consider $\mathfrak{T}'$ as an object of $\Conv(X_{T_{0}}/\mathfrak{T})$. In this way, we obtain a canonical isomorphism of sites
%	\begin{displaymath}
%		\Conv(X/\mathfrak{S})/g_{\conv,\tau}^{*}(\widetilde{\mathfrak{T}})\xrightarrow{\sim} \Conv(X_{T_{0}}/\mathfrak{T}).
%	\end{displaymath}
%	Then we deduce an equivalence of topoi \eqref{localisation of conv topos}. In view of the description of inverse image functors, the second assertion follows.
%\end{proof}

\begin{lemma}[\cite{Ber} V 3.2.3] \label{evaluation direct image}
	For any $\mathscr{O}_{X/\mathfrak{S}}[\frac{1}{p}]$-module $E$ of $(X/\mathfrak{S})_{\conv,\tau}$, there exists a canonical isomorphism in $\rD^{+}(\mathfrak{T}_{\tau},\mathscr{O}_{\mathfrak{T}}[\frac{1}{p}])$
	\begin{equation} \label{calcul of higher direct image evaluation}
		(\rR g_{\conv,\tau*}(E))_{\mathfrak{T}} \xrightarrow{\sim} (\rR g_{X/\mathfrak{T},\tau*}(\omega_{\mathfrak{T}}^{*}(E)))_{\mathfrak{T}}.
	\end{equation}
\end{lemma}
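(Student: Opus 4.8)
The plan is to deduce \eqref{calcul of higher direct image evaluation} from a base-change isomorphism for $\rR g_{\conv,\tau*}$ along the localization morphisms supplied by \ref{localisation by a sheaf}, and then to evaluate at the final object of $\Conv(T_{0}/\mathfrak{T})$; this follows the proof of the crystalline analogue (\cite{Ber} V 3.2.3), transported to the convergent topos. First I would record the relevant square of topoi. The sheaf $\widetilde{\mathfrak{T}}$ of $(Y/\mathfrak{S})_{\conv,\tau}$ pulls back to the sheaf $g_{\conv,\tau}^{*}(\widetilde{\mathfrak{T}})$ of $(X/\mathfrak{S})_{\conv,\tau}$. Applying \ref{localisation by a sheaf} to $(Y,\id)$ identifies the localization morphism of $(Y/\mathfrak{S})_{\conv,\tau}$ at $\widetilde{\mathfrak{T}}$ with a morphism $\omega'_{\mathfrak{T}}:(T_{0}/\mathfrak{T})_{\conv,\tau}\to (Y/\mathfrak{S})_{\conv,\tau}$, and applying it to $(X,g)$ identifies the localization morphism of $(X/\mathfrak{S})_{\conv,\tau}$ at $g_{\conv,\tau}^{*}(\widetilde{\mathfrak{T}})$ with $\omega_{\mathfrak{T}}$. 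Since $g_{\conv,\tau}$ is induced by composition with the functor of \eqref{functorial functor}, which carries objects over $g_{\conv,\tau}^{*}(\widetilde{\mathfrak{T}})$ to objects over $\widetilde{\mathfrak{T}}$ compatibly with these identifications, it restricts to $g_{X/\mathfrak{T},\tau}$: that is, $\omega'_{\mathfrak{T}}\circ g_{X/\mathfrak{T},\tau}=g_{\conv,\tau}\circ\omega_{\mathfrak{T}}$, with $\omega_{\mathfrak{T}}^{*}$ and $(\omega'_{\mathfrak{T}})^{*}$ being restriction along the respective localization morphisms. All four morphisms are canonically ringed by the sheaves $\mathscr{O}[\frac{1}{p}]$, since pullback along a localization morphism carries the structure sheaf to the structure sheaf.

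Next I would invoke the general base-change isomorphism along a localization: for a morphism of ringed topoi $h:\mathscr{A}\to\mathscr{B}$, an object $V$ of $\mathscr{B}$, $U=h^{*}(V)$, and the induced morphism $h_{V}:\mathscr{A}_{/U}\to\mathscr{B}_{/V}$ sitting in a commutative square with the localization morphisms $j_{U},j_{V}$, the base-change morphism $j_{V}^{*}\,\rR h_{*}\to\rR h_{V*}\,j_{U}^{*}$ is an isomorphism on $\rD^{+}$. Indeed, for a module $F$ on $\mathscr{A}$ and an object $W\to V$ of $\mathscr{B}_{/V}$ one computes $(j_{V}^{*}h_{*}F)(W\to V)=(h_{*}F)(W)=F(h^{*}W)=(h_{V*}j_{U}^{*}F)(W\to V)$, so $j_{V}^{*}h_{*}=h_{V*}j_{U}^{*}$ before deriving; since $j_{U}^{*}$ is exact and admits the exact left adjoint $j_{U!}$ it preserves injective modules, so evaluating this equality on an injective resolution of $F$ computes both sides of the base-change morphism. (Passing between $\mathscr{O}[\frac{1}{p}]$-modules and abelian sheaves is harmless here, as injective $\mathscr{O}[\frac{1}{p}]$-modules are flasque, hence $h_{*}$-acyclic.)

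Applying this with $h=g_{\conv,\tau}$ and $V=\widetilde{\mathfrak{T}}$ yields, functorially in the $\mathscr{O}_{X/\mathfrak{S}}[\frac{1}{p}]$-module $E$, a canonical isomorphism
\[
	(\omega'_{\mathfrak{T}})^{*}\bigl(\rR g_{\conv,\tau*}(E)\bigr)\xrightarrow{\sim}\rR g_{X/\mathfrak{T},\tau*}\bigl(\omega_{\mathfrak{T}}^{*}(E)\bigr)
\]
in $\rD^{+}((T_{0}/\mathfrak{T})_{\conv,\tau},\mathscr{O}_{T_{0}/\mathfrak{T}}[\frac{1}{p}])$. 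To finish I would evaluate at $\mathfrak{T}$. Since $T_{0}$ is the reduced special fiber of $\mathfrak{T}$, the structural map is the only $T_{0}$-point available, so $(\mathfrak{T},\id)$ is the final object of $\Conv(T_{0}/\mathfrak{T})$ and the morphism $s_{(\mathfrak{T},\id)}:\mathfrak{T}_{\tau}\to(T_{0}/\mathfrak{T})_{\conv,\tau}$ of \eqref{morphism of topoi sT} satisfies $\omega'_{\mathfrak{T}}\circ s_{(\mathfrak{T},\id)}=s_{\mathfrak{T}}$, where $s_{\mathfrak{T}}:\mathfrak{T}_{\tau}\to(Y/\mathfrak{S})_{\conv,\tau}$ is evaluation at the object $\mathfrak{T}$ of $\Conv(Y/\mathfrak{S})$. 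Applying the exact functor $s_{(\mathfrak{T},\id)}^{*}$ to the displayed isomorphism turns its source into $s_{\mathfrak{T}}^{*}(\rR g_{\conv,\tau*}(E))=(\rR g_{\conv,\tau*}(E))_{\mathfrak{T}}$ and its target into $(\rR g_{X/\mathfrak{T},\tau*}(\omega_{\mathfrak{T}}^{*}(E)))_{\mathfrak{T}}$, which is \eqref{calcul of higher direct image evaluation}.

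I expect the only genuinely non-formal ingredient to be the base-change isomorphism along a localization morphism; this is precisely the verification carried out for the crystalline analogue in (\cite{Ber} V 3.2.3), and its proof goes through unchanged. The remaining work is bookkeeping: checking that the identifications of topoi coming from \ref{localisation by a sheaf}, together with the ring structures and the structure morphisms $g_{\conv,\tau}$, $g_{X/\mathfrak{T},\tau}$, $\omega_{\mathfrak{T}}$ and $s_{\mathfrak{T}}$, are all compatible — which I would do exactly as in \cite{Ber}.
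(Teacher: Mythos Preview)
Your proof is correct and follows essentially the same approach as the paper: both arguments use \ref{localisation by a sheaf} to identify $\omega_{\mathfrak{T}}$ and $\omega'_{\mathfrak{T}}$ with localization morphisms, establish the sheaf-level identity $j_{V}^{*}g_{\conv,\tau*}=g_{X/\mathfrak{T},\tau*}\,j_{U}^{*}$, and then pass to the derived category using that $\omega_{\mathfrak{T}}^{*}$, being pullback along a localization, preserves injectives. The only organizational difference is that the paper computes both sides directly at $\mathfrak{T}$ by comparing the presheaves they sheafify, whereas you first obtain the base-change isomorphism on all of $(T_{0}/\mathfrak{T})_{\conv,\tau}$ and then apply $s_{(\mathfrak{T},\id)}^{*}$; this is cosmetic.
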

\begin{proof}
	Let $E$ be an abelian sheaf of $(X/\mathfrak{S})_{\conv,\tau}$ and $f:\mathfrak{T}'\to \mathfrak{T}$ a morphism of $\Conv(Y/\mathfrak{S})$. The morphism $f$ induces a functorial morphism of topoi $\varphi:(X_{T'_{0}}/\mathfrak{T}')_{\conv,\tau}\to (X_{T_{0}}/\mathfrak{T})_{\conv,\tau}$ which fits into the following commutative diagram
	\begin{equation} \label{diag funct morphisms of topoi}
		\xymatrix{
			(X_{T'_{0}}/\mathfrak{T}')_{\conv,\tau} \ar[r]^{\varphi} \ar[d]_{g_{X/\mathfrak{T}',\tau}} & (X_{T_{0}}/\mathfrak{T})_{\conv,\tau} \ar[d]^{g_{X/\mathfrak{T},\tau}} \\
			(T_{0}'/\mathfrak{T}')_{\conv,\tau} \ar[r]^{f_{\conv,\tau}} & (T_{0}/\mathfrak{T})_{\conv,\tau} }
	\end{equation}
	We have $\omega_{\mathfrak{T}'}=\omega_{\mathfrak{T}}\circ\varphi$. By \ref{localisation by a sheaf}, $\varphi$ (resp. $f_{\conv,\tau}$) coincides with the localisation morphism on the sheaf $g_{X/\mathfrak{T},\tau}^{*}(\widetilde{\mathfrak{T}}')$ (resp. $\widetilde{\mathfrak{T}}'$). 
	Then $g_{X/\mathfrak{T},\tau*}(\omega_{\mathfrak{T}}^{*}(E))$ is the sheaf associated to the presheaf on $\Conv(T_{0}/\mathfrak{T})$
	\begin{displaymath}
		(f:\mathfrak{T}'\to \mathfrak{T})\mapsto \Gamma((X_{T'_{0}}/\mathfrak{T}')_{\conv,\tau}, \omega_{\mathfrak{T}'}^{*}(E)).
	\end{displaymath}

%	on $\Fft_{/\mathfrak{T}}$ (resp. $g_{\conv,\zar*}(E)_{\mathfrak{T}}$ on $\Zar_{/\mathfrak{T}}$) 
	The sheaf $(g_{\conv,\tau*}(E))_{\mathfrak{T}}$ is associated to the presheaf 
	\begin{displaymath}
		(f:\mathfrak{T}'\to \mathfrak{T}) \mapsto \Gamma((X/\mathfrak{S})_{\conv,\tau / g_{\conv,\tau}^{*}(\widetilde{\mathfrak{T}}')}, E|_{g_{\conv,\tau}^{*}(\widetilde{\mathfrak{T}}')}).
	\end{displaymath}

%	By \ref{cont cocont func} and \ref{localisation by a sheaf}, for any object $\mathfrak{T}'$ of $\Conv(Y/\mathfrak{S})$, we have 
%	\begin{eqnarray*}
%		g_{\conv,\tau*}(E)(\mathfrak{T}')&=& \Hom_{(X/\mathfrak{S})_{\conv,\tau}}(g_{\conv}^{*}(\widetilde{\mathfrak{T}}'),E) \\
%		&=& \Gamma( (X/\mathfrak{S})_{\conv,\tau/g_{\conv,\tau}^{*}(\widetilde{\mathfrak{T}}')}, E|_{g_{\conv,\tau}^{*}(\widetilde{\mathfrak{T}}')})\\
%		&=& \Gamma( (X_{T'_{0}}/\mathfrak{T}')_{\conv,\tau}, \omega_{\mathfrak{T}}^{*}(E))\\
%		&=& \Gamma(\mathfrak{T}'_{\tau}, g_{X_{T'_{0}}/\mathfrak{T}',\tau}(\omega_{\mathfrak{T}}^{*}(E)))
%	\end{eqnarray*}
%	
	By \ref{localisation by a sheaf}, we deduce a canonical isomorphism of $\mathfrak{T}_{\tau}$
	\begin{equation} \label{calcul of higher direct image evaluation sheaf}
		(g_{\conv,\tau*}(E))_{\mathfrak{T}}\xrightarrow{\sim} (g_{X/\mathfrak{T},\tau*}(\omega_{\mathfrak{T}}^{*}(E)))_{\mathfrak{T}}.
	\end{equation}

	Since $\omega_{\mathfrak{T}}$ coincides with a localisation morphism, if $I^{\bullet}$ is an injective resolution of $E$, $\omega_{\mathfrak{T}}^{*}(I^{\bullet})$ is an injective resolution of $\omega_{\mathfrak{T}}^{*}(E)$. Then the isomorphism \eqref{calcul of higher direct image evaluation} follows from \eqref{calcul of higher direct image evaluation sheaf}.
\end{proof}

\begin{rem} \label{functorial evaluation direct image}
	Keep the notation of \ref{evaluation direct image}. 
	Let $f:\mathfrak{T}'\to \mathfrak{T}$ be a morphism of $\Conv(Y/\mathfrak{S})$. It induces morphisms of topoi \eqref{diag funct morphisms of topoi}. We consider canonical morphisms
	\begin{equation}
		f_{\tau}^{*} (\rR^{i} g_{X/\mathfrak{T},\tau*}(\omega_{\mathfrak{T}}^{*}(E))_{\mathfrak{T}}) 
		\to (f_{\conv,\tau}^{*} (\rR^{i} g_{X/\mathfrak{T},\tau*}(\omega_{\mathfrak{T}}^{*}(E)))_{\mathfrak{T}'}
		\xrightarrow{\sim} \rR^{i} g_{X/\mathfrak{T}',\tau*}(\omega_{\mathfrak{T}'}^{*}(E))_{\mathfrak{T}'}
	\end{equation}
	where the first morphism is the transition morphism of $\rR^{i} g_{X/\mathfrak{T},\tau*}(\omega_{\mathfrak{T}}^{*}(E))$ associated to $f$ and the second one an isomorphism because $\omega_{\mathfrak{T}'}=\omega_{\mathfrak{T}}\circ \varphi,\varphi,f_{\conv,\tau}$ are localisation morphisms (\cite{SGAIV} V 5.1). 
	
	In view of the proof of \ref{evaluation direct image}, via \eqref{calcul of higher direct image evaluation}, the above composition is compatible with the transition morphism of $\rR^{i}g_{\conv,\tau*}(E)$ associated to $f$
	\begin{equation}
		f_{\tau}^{*}((\rR^{i} g_{\conv,\tau*}(E))_{\mathfrak{T}}) \to (\rR^{i} g_{\conv,\tau*}(E))_{\mathfrak{T}'}.
	\end{equation}
\end{rem}

\begin{coro}[\cite{BBM} 1.1.19] \label{coh zar fppf}
	Let $\mathscr{E}$ be a convergent isocrystal of $(X/\mathfrak{S})_{\conv,\fppf}$. We have \eqref{def alpha}
	\begin{equation}
		\rR^{i}\alpha_{*}(\mathscr{E})=0,\qquad \forall~ i\ge 1.
	\end{equation}
\end{coro}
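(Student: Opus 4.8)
The plan is to prove the vanishing $\rR^{i}\alpha_{*}(\mathscr{E})=0$ for $i\ge 1$ by computing stalks of $\rR^{i}\alpha_{*}(\mathscr{E})$ on each object $(\mathfrak{T},u)$ of $\Conv(X/\SS)$ via \ref{evaluation direct image} (applied here with $g$ the identity morphism of $X$, or rather directly to the localisation at $\mathfrak{T}$) together with the localisation equivalence \ref{localisation by a sheaf}, reducing the statement to a purely local computation of $\rR^{i}\alpha_{\mathfrak{T},*}$ on each $\mathfrak{T}_{\fppf}$. More precisely, for a sheaf $\mathscr{F}$ of $(X/\SS)_{\conv,\fppf}$ one has $(\rR^{i}\alpha_{*}(\mathscr{F}))_{\mathfrak{T}} \cong \rR^{i}\alpha_{\mathfrak{T},*}(\mathscr{F}_{\mathfrak{T}})$ where $\alpha_{\mathfrak{T}}:\mathfrak{T}_{\fppf}\to \mathfrak{T}_{\zar}$ is the morphism of \ref{fppf to zar and functorial}; this compatibility follows because $\alpha$ and $\alpha_{\mathfrak{T}}$ are both defined by the identity functor between the two topologies and $s_{\mathfrak{T}}$ intertwines them, so an injective resolution of $\mathscr{F}$ in $(X/\SS)_{\conv,\fppf}$ restricts (via the localisation $s_{\mathfrak{T}}^{*}$, which is exact and preserves injectives as a localisation functor) to one of $\mathscr{F}_{\mathfrak{T}}$ compatibly with $\alpha$ and $\alpha_{\mathfrak{T}}$.

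The main step is then the local assertion: if $\mathscr{E}$ is a convergent isocrystal, so that $\mathscr{E}_{\mathfrak{T}}$ is a coherent $\mathscr{O}_{\mathfrak{T}}[\frac1p]$-module of $\mathfrak{T}_{\fppf}$ (in the sense of \ref{coherent fppf}, i.e. associated to a genuine coherent $\mathscr{O}_{\mathfrak{T}}[\frac1p]$-module), then $\rR^{i}\alpha_{\mathfrak{T},*}(\mathscr{E}_{\mathfrak{T}})=0$ for $i\ge 1$. This is a descent/acyclicity statement for fppf cohomology of coherent sheaves on a formal $\rW$-scheme, and it is where Theorem \ref{descent fppf} (Gabber--Bosch--Görtz faithfully flat descent for coherent sheaves in rigid geometry) enters. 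Concretely, working Zariski-locally on $\mathfrak{T}$ we may assume $\mathfrak{T}=\Spf(A)$ is affine; then $\rR^{i}\alpha_{\mathfrak{T},*}(\mathscr{E}_{\mathfrak{T}})$ is the sheaf associated to $\mathfrak{U}\mapsto \rH^{i}_{\fppf}(\mathfrak{U},\mathscr{E}_{\mathfrak{T}}|_{\mathfrak{U}})$, and by the argument producing \ref{descent fppf} and the finiteness of fppf coverings \eqref{recall fppf covering} the Čech-to-derived-functor spectral sequence degenerates: the higher Čech cohomology of $\mathscr{E}_{\mathfrak{T}}$ for any fppf covering vanishes because descent data are effective and the comparison functor is an equivalence, which is exactly the cohomological content of faithfully flat descent for quasi-coherent modules (the pattern cited as \cite{BBM} 1.1.19). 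One reduces to a single covering $\mathfrak{T}'\to\mathfrak{T}$ faithfully rig-flat (or faithfully flat) and checks that the augmented Čech complex $\mathscr{E}_{\mathfrak{T}'}\to \mathscr{E}_{\mathfrak{T}'\times_{\mathfrak{T}}\mathfrak{T}'}\to\cdots$ is exact, which is Theorem \ref{descent fppf}.

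The hard part will be bookkeeping the passage from the topos-level statement to the local one — in particular verifying that $s_{\mathfrak{T}}^{*}$ (equivalently, localisation at the sheaf represented by $\mathfrak{T}$, cf. \ref{construction descent data} and \ref{localisation by a sheaf}) is exact and carries injectives of $(X/\SS)_{\conv,\fppf}$ to injectives of $\mathfrak{T}_{\fppf}$ compatibly with $\alpha$ and $\alpha_{\mathfrak{T}}$, so that the formation of $\rR^{i}\alpha_{*}$ commutes with evaluation at $\mathfrak{T}$; and then checking that ``coherent'' in the sense of \ref{coherent fppf} is precisely the hypothesis under which Čech-acyclicity holds. Once that is in place, the vanishing is immediate from \ref{descent fppf}. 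I would write the proof as: (1) reduce to showing $\rR^{i}\alpha_{\mathfrak{T},*}(\mathscr{E}_{\mathfrak{T}})=0$ for every $\mathfrak{T}$ using exactness of localisation and the identification of stalks; (2) reduce to $\mathfrak{T}$ affine and to the vanishing of Čech cohomology of $\mathscr{E}_{\mathfrak{T}}$ for fppf coverings; (3) invoke \ref{descent fppf} (with \eqref{recall fppf covering} to pass to finite coverings and \ref{def rigid points} to note flat $\Rightarrow$ faithfully rig-flat) to conclude.
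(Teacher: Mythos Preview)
Your overall strategy—localise, reduce to a Čech computation, and invoke faithfully flat descent \ref{descent fppf}—is exactly the argument the paper has in mind (it defers to \cite{BBM} 1.1.19, and spells out the rigid analogue explicitly in the proof of \ref{coh zar ad iso}). However, your reduction step has a genuine gap.

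The morphism $s_{\mathfrak{T}}:\mathfrak{T}_{\tau}\to (X/\SS)_{\conv,\tau}$ of \ref{construction descent data} is \emph{not} a localisation morphism, so your claim that $s_{\mathfrak{T}}^{*}$ preserves injectives ``as a localisation functor'' is unjustified, and with it the identification $(\rR^{i}\alpha_{*}\mathscr{E})_{\mathfrak{T}}\cong \rR^{i}\alpha_{\mathfrak{T},*}(\mathscr{E}_{\mathfrak{T}})$. The site underlying $\mathfrak{T}_{\fppf}$ is $\Fft_{/\mathfrak{T}}$ (formal schemes \emph{flat over} $\mathfrak{T}$), whereas the slice $(X/\SS)_{\conv,\fppf}/\widetilde{\mathfrak{T}}$ is, by \ref{localisation by a sheaf} applied with $g=\id_{X}$, equivalent to $(T_{0}/\mathfrak{T})_{\conv,\fppf}$, whose objects are formal $\mathfrak{T}$-schemes flat over $\rW$ (not over $\mathfrak{T}$). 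These topoi are different, and it is $\omega_{\mathfrak{T}}$, not $s_{\mathfrak{T}}$, that is the localisation.

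The fix is minor: replace $\mathfrak{T}_{\fppf}$ by $(T_{0}/\mathfrak{T})_{\conv,\fppf}$ throughout. Then $\rR^{i}\alpha_{*}\mathscr{E}$ is the Zariski sheafification of $\mathfrak{T}\mapsto \rH^{i}\bigl((T_{0}/\mathfrak{T})_{\conv,\fppf},\,\omega_{\mathfrak{T}}^{*}\mathscr{E}\bigr)$, and since $\omega_{\mathfrak{T}}$ \emph{is} a localisation, $\omega_{\mathfrak{T}}^{*}$ does preserve injectives. Your steps (2)--(3) then go through verbatim: restrict to affine $\mathfrak{T}$, and for any object $\mathfrak{Z}$ of $\Conv(T_{0}/\mathfrak{T})$ and any fppf covering $\{\mathfrak{Z}_{i}\to\mathfrak{Z}\}$, the crystal property identifies the Čech complex of $\omega_{\mathfrak{T}}^{*}\mathscr{E}$ with that of the coherent $\mathscr{O}_{\mathfrak{Z}}[\frac{1}{p}]$-module $\mathscr{E}_{\mathfrak{Z}}$, which is exact by \ref{descent fppf} (after passing to a finite subcovering via \ref{recall fppf covering}). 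This is precisely the pattern made explicit in the proof of \ref{coh zar ad iso}.
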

The assertion can be verified in the same way as (\cite{BBM} 1.1.19).

%\begin{proof}
%	By \ref{localisation by a sheaf}, the Zariski sheaf $\rR^{i}\alpha_{*}(\mathscr{E})$ on $\Conv(X/\mathfrak{S})$ is the sheaf associated to the presheaf
%	\begin{displaymath}
%		\mathfrak{T}\mapsto \rH^{i}( (T_{0}/\mathfrak{T})_{\conv,\fppf}, \mathscr{E}|_{\widetilde{\mathfrak{T}}}).
%	\end{displaymath}
%	
%	By (\cite{SGAIV} V 4.3 and III 4.1), we can replace $\Conv(X/\mathfrak{S})$ by the full subcategory of objects whose underlying formal scheme is affine, and
%	it suffices to show that for such an object $\mathfrak{T}$ 
%	\begin{equation} \label{fppf cohomology of a coh affine}
%		\rH^{i}((T_{0}/\mathfrak{T})_{\conv,\fppf}, \mathscr{E}|_{\widetilde{\mathfrak{T}}}), \qquad \forall~ i\ge 1.
%	\end{equation}
%	Let $\mathscr{U}=\{\mathfrak{Z}_{i}\to \mathfrak{Z}\}_{i=1}^{m}$ be a fppf covering by affine formal schemes of an affine formal scheme $\mathfrak{Z}$ in $\Conv(T_{0}/\mathfrak{T})$. By fppf descent, the Čech cohomology $\check{\rH}^{i}(\mathscr{U},\mathscr{E}|_{\widetilde{\mathfrak{T}}})$ vanishes. 
%	Since each fppf covering of $\mathfrak{Z}$ admits a refinement by finitely many affine formal schemes, the vanishing of \eqref{fppf cohomology of a coh affine} follows from (\cite{Stacks} 21.11.9).
%\end{proof}

\begin{coro} \label{base change coro}
	Let $\SS'\to \SS$ be a morphism of $\MS^{\diamond}$ \eqref{def cat MS}, $Y'$ an $S'$-scheme and $h:Y'\to Y$ a morphism compatible with $S'\to S$. We set $X'=X\times_{Y}Y'$ and we denote by $g':X'\to Y'$ and $h':X'\to X$ the canonical morphisms: 
	\begin{displaymath}
		\xymatrix{
			X'\ar[r]^{h'} \ar[d]_{g'} \ar@{}[dr]|{\Box} & X \ar[d]^{g}\\
			Y'\ar[r]^{h} & Y
		}
	\end{displaymath}
	Then, for any $\mathscr{O}_{X/\SS}[\frac{1}{p}]$-module $E$ of $(X/\SS)_{\conv,\tau}$, the base change morphism
	\begin{equation} \label{base change morphism conv coh}
		h_{\conv,\tau}^{*}(\rR g_{\conv,\tau*}(E)) \xrightarrow{\sim} \rR g'_{\conv,\tau*}(h'^{*}_{\conv,\tau}(E)),
	\end{equation}
	is an isomorphism.
\end{coro}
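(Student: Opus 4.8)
The plan is to reduce \eqref{base change morphism conv coh} to \ref{evaluation direct image} by evaluating both sides at the objects of $\Conv(Y'/\SS')$. By \ref{descent data sheaf}, a morphism of abelian sheaves of $(Y'/\SS')_{\conv,\tau}$ is an isomorphism once it becomes one after applying the exact functors $(-)_{\mathfrak{T}}=s_{\mathfrak{T}}^{*}(-)$ for $\mathfrak{T}$ running over the objects of $\Conv(Y'/\SS')$; hence the same criterion, tested on cohomology sheaves, governs morphisms in $\rD^{+}((Y'/\SS')_{\conv,\tau},\mathscr{O}_{Y'/\SS'}[\frac{1}{p}])$. Since $h_{\conv,\tau}^{*}$ and $h'^{*}_{\conv,\tau}$ are exact and $h_{\conv,\tau}\circ s_{\mathfrak{T}}=s_{\varphi(\mathfrak{T})}$, where $\varphi(\mathfrak{T})\in\Conv(Y/\SS)$ is the image of $\mathfrak{T}$ under \eqref{functorial functor}, the $\mathfrak{T}$-evaluation of \eqref{base change morphism conv coh} is a morphism $(\rR g_{\conv,\tau*}(E))_{\varphi(\mathfrak{T})}\to(\rR g'_{\conv,\tau*}(h'^{*}_{\conv,\tau}(E)))_{\mathfrak{T}}$ in $\rD^{+}(\mathfrak{T}_{\tau},\mathscr{O}_{\mathfrak{T}}[\frac{1}{p}])$ (using \eqref{description of pullback}), and it suffices to prove that this morphism is an isomorphism for every $\mathfrak{T}$.

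Fix $\mathfrak{T}=(\mathfrak{T},u)$ with $u\colon T_{0}\to Y'$ and set $Z=X\times_{Y}T_{0}$, the fibre product formed through $h\circ u$. The heart of the matter is that the relative-over-$\mathfrak{T}$ situation attached to $g$ at $\varphi(\mathfrak{T})$ and to $g'$ at $\mathfrak{T}$ is literally the same. Indeed, $X'=X\times_{Y}Y'$ yields $X'\times_{Y'}T_{0}=X\times_{Y}T_{0}=Z$ as $T_{0}$-schemes; the convergent topos $(Z/\mathfrak{T})_{\conv,\tau}$ and the morphism of topoi attached to $Z\to T_{0}$ depend only on $\mathfrak{T}$ and on $Z\to T_{0}$, not on the ambient base, so they coincide whether produced from $(X,Y,\SS)$ (where they are $(X_{T_{0}}/\mathfrak{T})_{\conv,\tau}$ and $g_{X/\mathfrak{T},\tau}$) or from $(X',Y',\SS')$ (where they are $(X'_{T_{0}}/\mathfrak{T})_{\conv,\tau}$ and $g'_{X'/\mathfrak{T},\tau}$); and the forgetful morphisms of \ref{basic setting X to Y}, formed for $g$ and for $g'$, satisfy $h'_{\conv,\tau}\circ\omega'_{\mathfrak{T}}=\omega_{\varphi(\mathfrak{T})}$ directly from their definition, whence $(\omega'_{\mathfrak{T}})^{*}(h'^{*}_{\conv,\tau}(E))=\omega_{\varphi(\mathfrak{T})}^{*}(E)$. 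Applying \ref{evaluation direct image} to $g$ at $\varphi(\mathfrak{T})$ and to $g'$ at $\mathfrak{T}$ therefore identifies both $(h_{\conv,\tau}^{*}(\rR g_{\conv,\tau*}(E)))_{\mathfrak{T}}$ and $(\rR g'_{\conv,\tau*}(h'^{*}_{\conv,\tau}(E)))_{\mathfrak{T}}$ with the single object $(\rR g_{X/\mathfrak{T},\tau*}(\omega_{\varphi(\mathfrak{T})}^{*}(E)))_{\mathfrak{T}}$.

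What remains — and this is the only step needing genuine care — is to check that, under these two identifications, the $\mathfrak{T}$-evaluation of \eqref{base change morphism conv coh} becomes the identity of $(\rR g_{X/\mathfrak{T},\tau*}(\omega_{\varphi(\mathfrak{T})}^{*}(E)))_{\mathfrak{T}}$. To do this I would unwind the construction of the base change morphism, which is adjoint to the composite $\rR g_{\conv,\tau*}(E)\to\rR g_{\conv,\tau*}\rR h'_{\conv,\tau*}h'^{*}_{\conv,\tau}(E)=\rR h_{\conv,\tau*}\rR g'_{\conv,\tau*}h'^{*}_{\conv,\tau}(E)$ arising from the unit of $(h'^{*}_{\conv,\tau},\rR h'_{\conv,\tau*})$ and the equality $g_{\conv,\tau}\circ h'_{\conv,\tau}=h_{\conv,\tau}\circ g'_{\conv,\tau}$, and then combine \eqref{description of pullback} with the compatibility of the isomorphism \eqref{calcul of higher direct image evaluation} with transition morphisms recorded in \ref{functorial evaluation direct image}, tracing through the proof of \ref{evaluation direct image}. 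This is a diagram chase of no real mathematical content, but it is where all the bookkeeping concentrates; once it is carried out, \eqref{base change morphism conv coh} is an isomorphism after evaluation at every $\mathfrak{T}$, and the corollary follows.
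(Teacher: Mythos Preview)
Your proposal is correct and follows essentially the same approach as the paper: evaluate both sides of \eqref{base change morphism conv coh} at an object $\mathfrak{T}$ of $\Conv(Y'/\SS')$, use the identification $X'\times_{Y'}T_{0}=X\times_{Y}T_{0}$ to see that the relative-over-$\mathfrak{T}$ situations for $g$ and $g'$ coincide, and then apply \ref{evaluation direct image} on each side. The paper's proof is terser and simply asserts the compatibility that you spell out as a diagram chase via \ref{functorial evaluation direct image}, but the argument is the same.
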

\begin{proof}
	Let $\mathfrak{T}$ be an object of $\Conv(Y'/\SS')$. We denotes abusively the image of $\mathfrak{T}$ in $\Conv(Y/\SS)$ by $\mathfrak{T}$. We set $X_{T_{0}}=T_{0}\times_{Y}X(=T_{0}\times_{Y'}X')$. By applying \ref{evaluation direct image} to $g$ and $g'$, one verifies that the valuation of two sides of \eqref{base change morphism conv coh} at $\mathfrak{T}$ are both isomorphic to $(\rR g_{X/\mathfrak{T},\tau*}(\omega_{\mathfrak{T}}^{*}(E)))_{\mathfrak{T}}$ and that \eqref{base change morphism conv coh} induces an isomorphism between them. Then the assertion follows. 
\end{proof}

%\begin{coro}
%	For any coherent crystal $E$ of $\mathscr{O}_{X/K}$-modules and $i\ge 0$, there exists a canonical isomorphism:
%	\begin{equation}
%		\rR^{i} g_{\conv*}(E)_{\mathfrak{T}}\xrightarrow{\sim} \rR^{i}g_{\mathfrak{T}*}(\omega_{\mathfrak{T}}^{*}(E)).
%	\end{equation}
%\end{coro}
%\begin{proof}
%	
%\end{proof}

\begin{nothing} \label{padic enlargements}
	In the following, we consider the case where $\mathfrak{S}=\Spf(\rW)$ and $X,Y$ are schemes over $S=\Spec(k)$. 
	We denote by $\pConv(X/\rW)$ the full subcategory of $\Conv(X/\rW)$ consisting of objects $(\mathfrak{T},u)$ such that $u$ can be lifted to a $k$-morphism $\widetilde{u}:T\to X$. Given an object $\mathfrak{T}$ of $\pConv(X/\rW)$ and a morphism $f:\mathfrak{T}'\to \mathfrak{T}$ of $\Conv(X/\rW)$, then $\mathfrak{T}'$ is still an object of $\pConv(X/\rW)$.
	Objects of $\pConv(X/\rW)$ are called ``$p$-adic enlargements'' in \cite{Ogus84}.
\end{nothing}

\begin{lemma}\label{crystal base smooth locally free}
	Suppose that $X$ is smooth over $k$. 
	A convergent isocrystal $\mathscr{E}$ of $(X/\rW)_{\conv,\zar}$ is locally projective \eqref{coh fppf descent}. 
\end{lemma}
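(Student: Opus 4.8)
The plan is to reduce the statement to a local computation on a smooth lifting and then invoke the standard fact that a coherent module with integrable connection is locally projective. First I would observe that local projectivity can be checked Zariski-locally on $X$, so I may assume $X$ is affine and admits a smooth lifting $\XX$ over $\rW$ (recall $\SS=\Spf(\rW)$ here, and $X$ is smooth over $k$, so such a lifting exists étale-locally, hence Zariski-locally after shrinking). Given such a lifting, the functor $\iota$ of \ref{convergent to isocry} together with the equivalences of \ref{crystal connection stratification} attaches to $\mathscr{E}$ a coherent $\mathscr{O}_{\XX}[\frac{1}{p}]$-module $\mathscr{E}_{\XX}$ equipped with an $\mathscr{O}_{\PP_{\XX/\rW}}$-stratification, hence by \ref{prop stratification} it comes from a genuine coherent $\mathscr{O}_{\XX}$-module $M^{\circ}$ with integrable connection relative to $\rW$. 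By \ref{connection locally proj}, $M^{\circ}[\frac{1}{p}]=\mathscr{E}_{\XX}$ is a locally projective $\mathscr{O}_{\XX}[\frac{1}{p}]$-module of finite type.

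Next I would propagate this conclusion from the single object $\XX$ to an \emph{arbitrary} object $(\mathfrak{T},u)$ of $\Conv(X/\rW)$. The key point is that, since $\mathscr{E}$ is a crystal, for any object $(\mathfrak{T},u)$ there is, \'etale-locally on $T$, a lifting $\widetilde{u}:\mathfrak{T}\to\XX$ of $u$ — indeed on $\pConv(X/\rW)$ a lift $T\to X$ exists, and then $\mathfrak{T}$ flat over $\rW$ together with smoothness of $\XX\to\Spf(\rW)$ produces $\widetilde{u}$ Zariski-locally on $\mathfrak{T}$; for a general object one first pulls back to its reduced special fiber, which does not change $\Conv$. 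The crystal property gives a transition isomorphism $c_{\widetilde{u}}:\widetilde{u}^{*}(\mathscr{E}_{\XX})\xrightarrow{\sim}\mathscr{E}_{\mathfrak{T}}$, and since $\mathscr{E}_{\XX}$ is a direct summand (locally) of a free $\mathscr{O}_{\XX}[\frac{1}{p}]$-module of finite type, so is its pullback $\mathscr{E}_{\mathfrak{T}}$. This shows $\mathscr{E}_{\mathfrak{T}}$ is locally projective of finite type, which is exactly the condition in \ref{coh fppf descent}.

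The main obstacle is the lifting step in the second paragraph: producing, Zariski-locally on $T$, a morphism $\mathfrak{T}\to\XX$ lifting a given $u:T_{0}\to X$. For objects $\mathfrak{T}$ in $\pConv(X/\rW)$ one already has a lift $\widetilde{u}:T\to X$, and then since $\XX$ is smooth over $\Spf(\rW)$ and $\mathfrak{T}$ is a flat (hence $p$-torsion-free) formal $\rW$-scheme, the infinitesimal lifting criterion applied successively modulo $p^{n}$ yields $\widetilde{u}:\mathfrak{T}\to\XX$ locally. For a general object $(\mathfrak{T},u)$ one reduces to the $\pConv$ case by replacing $\mathfrak{T}$ with $\mathfrak{T}_{T_{0},p}(\mathfrak{T})$ or simply by noting (as in \ref{padic enlargements}) that any morphism $f:\mathfrak{T}'\to\mathfrak{T}$ with $\mathfrak{T}'$ in $\pConv(X/\rW)$ transports the stratification, and that $\pConv(X/\rW)$ is a ``large enough'' subcategory — more precisely, every object of $\Conv(X/\rW)$ is covered, Zariski-locally, by objects of $\pConv(X/\rW)$ once one passes to affines where $u_{0}:T_{0,\red}\to X$ extends over $T$ by smoothness of $X$. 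I expect this covering/lifting bookkeeping to be the only delicate part; everything else is a direct appeal to \ref{connection locally proj}, \ref{prop stratification}, and \ref{convergent to isocry}.
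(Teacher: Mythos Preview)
Your approach is correct and is essentially the same as the paper's proof: reduce to the case where $X$ admits a smooth lifting $\XX$ over $\rW$, use \ref{convergent to isocry} and \ref{connection locally proj} to see that $\mathscr{E}_{\XX}$ is locally projective, and then pull back along a local morphism $\mathfrak{T}\to\XX$ to conclude for an arbitrary object $(\mathfrak{T},u)$.

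The only comment is that you overcomplicate the lifting step. There is no need to pass through $\pConv(X/\rW)$ or to first extend $u:T_{0}\to X$ to $T\to X$. Since $\XX$ is smooth over $\Spf(\rW)$, the composite $T_{0}\to X\hookrightarrow\XX$ lifts Zariski-locally through the successive nilpotent thickenings $T_{0}\hookrightarrow\mathfrak{T}_{1}\hookrightarrow\mathfrak{T}_{2}\hookrightarrow\cdots$ directly by the infinitesimal lifting criterion, yielding a morphism $\mathfrak{T}\to\XX$ of $\Conv(X/\rW)$. This is what the paper means by ``every object $\mathfrak{T}$ of $\Conv(X/\rW)$ locally admits a morphism to $\XX$''; the detour through $\pConv$ and the dilatation $\mathfrak{T}_{T_{0},p}(\mathfrak{T})$ is unnecessary.
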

\begin{proof}
	The question being local, we may assume that $X$ admits a smooth lifting $\XX$ over $\rW$. By \ref{connection locally proj} and \ref{convergent to isocry}, $\mathscr{E}_{\XX}$ is locally projective. Since every object $\mathfrak{T}$ of $\Conv(X/\rW)$ locally admits a morphism to $\XX$, we deduce that $\mathscr{E}$ is locally projective.
\end{proof}

\begin{prop} \label{GM connection}
	Suppose that $Y$ is smooth over $k$ and admits a smooth lifting $\YY$ over $\rW$ and that $g:X\to Y$ is smooth and proper. Let $\mathscr{E}$ be a convergent isocrystal of $(X/\rW)_{\conv,\zar}$. Then there exists an $\mathscr{O}_{\PP_{\YY/\SS}}$-stratification on $(\rR^{i}g_{\conv,\zar*}(\mathscr{E}))_{\YY}$. In particular the later is locally projective of finite type \eqref{def locally proj}. 
\end{prop}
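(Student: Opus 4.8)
The plan is to equip $M:=(\rR^{i}g_{\conv,\zar*}(\mathscr{E}))_{\YY}$ with an $\mathscr{O}_{\PP_{\YY/\SS}}$-stratification by the same device used in \ref{convergent to isocry} for a convergent isocrystal: evaluate $\rR^{i}g_{\conv,\zar*}(\mathscr{E})$ on the auxiliary groupoid $\QQ_{\YY/\SS}=\mathfrak{T}_{Y,p}(\YY^{2})$, which is an object of $\Conv(Y/\rW)$ carrying a formal $\YY$-groupoid structure \eqref{convergent to isocry}, and show that the two transition morphisms associated to its projections are isomorphisms; the local projectivity will then follow formally. The assertion being local on $\YY$, I would assume $\YY$ separated. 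Since $X$ is smooth over $k$, $\mathscr{E}$ is locally projective by \ref{crystal base smooth locally free}, and hence so is $\mathscr{E}':=\omega_{\YY}^{*}(\mathscr{E})$, a convergent isocrystal of $(X/\YY)_{\conv,\zar}$ \eqref{description of pullback}. As $\YY$ is separated and flat over $\rW$ and $g\colon X\to Y$ is smooth and proper, \ref{coh rig coh cristalline} applied over the base $\YY$ identifies $(\rR g_{X/\YY,\zar*}(\mathscr{E}'))_{\YY}$ with $\rR g_{X/\YY,\cris*}(\iota(\mathscr{E}'))$; together with \ref{evaluation direct image} this shows that $M\cong(\rR^{i}g_{X/\YY,\zar*}(\mathscr{E}'))_{\YY}$ is a coherent $\mathscr{O}_{\YY}[\frac{1}{p}]$-module.

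The main point is that the transition morphisms $c_{p_{1}},c_{p_{2}}\colon p_{j,\zar}^{*}(M)\to(\rR^{i}g_{\conv,\zar*}(\mathscr{E}))_{\QQ_{\YY/\SS}}$ \eqref{description of mods} attached to the two projections $p_{1},p_{2}\colon\QQ_{\YY/\SS}\to\YY$ are isomorphisms. By \ref{functorial evaluation direct image}, through \eqref{calcul of higher direct image evaluation} each $c_{p_{j}}$ is identified with the $i$-th cohomology sheaf of the base-change morphism for relative convergent cohomology of $X/\YY$ attached to $p_{j}\colon\QQ_{\YY/\SS}\to\YY$. Because $\mathscr{E}'$ is locally projective and $g$ is smooth and proper, Shiho's base change theorem \ref{coro base change conv} makes this morphism an isomorphism in the derived category of $\mathscr{O}_{\QQ_{\YY/\SS}}[\frac{1}{p}]$-modules after applying $\rL p_{j,\zar}^{*}$; and since the two projections $\QQ_{\YY/\SS}\to\YY$ are flat, the derived pullback agrees with the ordinary one, so the isomorphism survives on each $\rR^{i}$. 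I expect this to be the delicate step: it requires both matching $c_{p_{j}}$ with a genuine base-change morphism (via \ref{functorial evaluation direct image} and \ref{evaluation direct image}) and passing from Shiho's derived base-change statement to one for individual cohomology sheaves, for which the flatness of the projections is essential.

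Granting this, I would set $\varepsilon:=c_{p_{1}}^{-1}\circ c_{p_{2}}\colon p_{2,\zar}^{*}(M)\xrightarrow{\sim}p_{1,\zar}^{*}(M)$. Using $c_{\id}=\id$, the relation $c_{g\circ f}=c_{f}\circ f^{*}_{\zar}(c_{g})$ of \ref{description of mods}, and the formal $\YY$-groupoid structure on $\QQ_{\YY/\SS}$ \eqref{convergent to isocry}, a standard computation as in \ref{convergent to isocry} gives $\iota^{*}(\varepsilon)=\id_{M}$ and the cocycle condition, so $\varepsilon$ is an $\mathscr{O}_{\QQ_{\YY/\SS}}$-stratification on $M$ \eqref{def stratification}. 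Pulling $\varepsilon$ back along the morphism of formal $\YY$-groupoids $\PP_{\YY/\SS}\to\QQ_{\YY/\SS}$ (\cite{Xu} 5.7) produces the desired $\mathscr{O}_{\PP_{\YY/\SS}}$-stratification on $M$.

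For the final assertion I would apply \ref{prop stratification} to find a coherent $\mathscr{O}_{\YY}$-module $M^{\circ}$ with an $\mathscr{O}_{\PP_{\YY/\SS}}$-stratification $\varepsilon^{\circ}$ such that $(M^{\circ}[\frac{1}{p}],\varepsilon^{\circ}\otimes\id)\cong(M,\varepsilon)$; by \ref{crystal connection stratification}, $\varepsilon^{\circ}$ is the same datum as a topologically quasi-nilpotent integrable connection on $M^{\circ}$ relative to $\rW$, so by \ref{connection locally proj} the module $M=M^{\circ}[\frac{1}{p}]$ is locally projective of finite type.
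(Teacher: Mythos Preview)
Your proposal is correct and follows essentially the same route as the paper: evaluate $\rR^{i}g_{\conv,\zar*}(\mathscr{E})$ on $\YY$ and on $\QQ_{\YY/\rW}$, use \ref{evaluation direct image}, \ref{coh rig coh cristalline}, \ref{crystal base smooth locally free}, \ref{coro base change conv}, and \ref{functorial evaluation direct image} to see that the transition maps $c_{p_{1}},c_{p_{2}}$ are isomorphisms, obtain an $\mathscr{O}_{\QQ_{\YY/\rW}}$-stratification, pull it back to $\PP_{\YY/\rW}$, and conclude local projectivity via \ref{connection locally proj}. One small correction: the projections $p_{1},p_{2}\colon\QQ_{\YY/\rW}\to\YY$ are not known to be flat but only \emph{rig-flat} (\cite{Ab10} 5.4.12), since $\QQ_{\YY/\rW}$ is obtained from $\YY^{2}$ by an admissible blow-up followed by an open immersion; this still suffices to identify $\rL p_{j,\zar}^{*}$ with $p_{j,\zar}^{*}$ on coherent $\mathscr{O}_{\YY}[\frac{1}{p}]$-modules, so your passage from the derived base-change isomorphism to individual cohomology sheaves goes through unchanged.
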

\begin{proof}
	We take again the notation of \ref{convergent to isocry} for $\YY\to \Spf(\rW)$ and we set $\ZZ=\QQ_{\YY/\rW}$ that we consider as an object of $\Conv(Y/\rW)$, and $\mathscr{F}=\rR^{i}g_{\conv,\zar*}(\mathscr{E})$. 
	By \ref{evaluation direct image}, we have canonical isomorphisms
	\begin{equation}
		\mathscr{F}_{\mathfrak{Y}}\xrightarrow{\sim} (\rR^{i}g_{X/\mathfrak{Y},\zar*}(\omega_{\mathfrak{Y}}^{*}(\mathscr{E})))_{\mathfrak{Y}}, \quad
		\mathscr{F}_{\mathfrak{Z}}\xrightarrow{\sim} (\rR^{i}g_{X/\mathfrak{Z},\zar*}(\omega_{\mathfrak{Z}}^{*}(\mathscr{E})))_{\mathfrak{Z}}.
	\end{equation}
	By \ref{coh rig coh cristalline}, $\mathscr{F}_{\YY}$ is coherent. 
	The projections $p_{1},p_{2}:\mathfrak{Z}\to \YY$ define two morphisms of $\Conv(Y/\rW)$ and induce two morphisms of topoi
	\begin{eqnarray*}
		(X_{Z_{0}}/\mathfrak{Z})_{\conv,\zar}\xrightarrow{\sim} (X\times_{Y,p_{1}}Z/\mathfrak{Z})_{\conv,\zar}\to (X/\YY)_{\conv,\zar}, \\
		(X_{Z_{0}}/\mathfrak{Z})_{\conv,\zar}\xrightarrow{\sim} (X\times_{Y,p_{2}}Z/\mathfrak{Z})_{\conv,\zar}\to (X/\YY)_{\conv,\zar}.
	\end{eqnarray*}
	Since $X$ is smooth over $k$, $\mathscr{E}$ is locally projective by \ref{crystal base smooth locally free}. 	
	The projections $p_{1},p_{2}$ are rig-flat (\cite{Ab10} 5.4.12). By \ref{coro base change conv} and \ref{functorial evaluation direct image}, $p_{1},p_{2}$ induce isomorphisms
	\begin{equation}
		p_{2}^{*}(\mathscr{F}_{\YY})\xrightarrow[c_{p_{2}}]{\sim} \mathscr{F}_{\mathfrak{Z}} \xleftarrow[c_{p_{1}}]{\sim} p_{1}^{*}(\mathscr{F}_{\YY}). 
	\end{equation}
	By a standard argument, the isomorphism $c_{p_{1}}^{-1}\circ c_{p_{2}}$ defines an $\mathscr{O}_{\QQ_{\YY/\rW}}$-stratification on $\mathscr{F}_{\YY}$. 
	Taking pull-back by $\PP_{\YY/\rW}\to \QQ_{\YY/\rW}$ \eqref{convergent to isocry}, we obtain an an $\mathscr{O}_{\PP_{\YY/\rW}}$-stratification on $\mathscr{F}_{\YY}$. The second assertion follows from \ref{connection locally proj}. 
%	By (thm of Gabber), there exists a coherent $\mathcal{A}$-module $L$ and a stratification $\eta:p_{2}^{*}(L)\xrightarrow{\sim} p_1^{*}(L)$ such that $(L,\eta)\otimes_{\mathbb{Z}_{p}}\mathbb{Q}_{p}\simeq (N,\varepsilon)$. The first assertion follows from \ref{crystal connection stratification}. The second assertion follows from the first assertion and \ref{connection locally free}.
\end{proof}

\begin{prop} \label{p-adic convergent isocry}
	Suppose that $Y$ is smooth over $k$ and that $g:X\to Y$ is smooth and proper.
	Let $\mathscr{E}$ be a convergent isocrystal of $(X/\rW)_{\conv,\tau}$ for $\tau\in \{\zar,\fppf\}$ and $i$ an integer $\ge 0$. We have: 

	\textnormal{(i)} For every object $\mathfrak{T}$ of $\pConv(Y/\rW)$ \eqref{padic enlargements}, $\rR^{i}g_{\conv,\tau*}(\mathscr{E})_{\mathfrak{T}}$ is coherent \eqref{coherent fppf}.

	\textnormal{(ii)} For every morphism $f$ of $\pConv(Y/\rW)$, the associated transition morphism $c_{f}$ of $\rR^{i}g_{\conv,\tau*}(\mathscr{E})$ is an isomorphism.
\end{prop}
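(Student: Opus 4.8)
The plan is to evaluate the sheaf $\rR^{i}g_{\conv,\tau*}(\mathscr{E})$ at a $p$-adic enlargement $\mathfrak{T}$ of $Y$ using \ref{evaluation direct image}, to recognise this evaluation as the crystalline cohomology of a smooth proper family over $\mathfrak{T}$, and then to read off (i) from Shiho's finiteness theorem and (ii) from crystalline base change. First I would treat $\tau=\zar$. Fix an object $\mathfrak{T}=(\mathfrak{T},u)$ of $\pConv(Y/\rW)$ together with a lift $\widetilde{u}\colon T\to Y$ of $u\colon T_{0}\to Y$ (\ref{padic enlargements}), and set $\mathcal{X}_{\mathfrak{T}}:=X\times_{Y,\widetilde{u}}T$, a smooth proper $T$-scheme. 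Since $T_{0}\hookrightarrow T$ is a nilpotent immersion of finite type $k$-schemes, $X_{T_{0}}=\mathcal{X}_{\mathfrak{T}}\times_{T}T_{0}\hookrightarrow\mathcal{X}_{\mathfrak{T}}$ is a nilpotent immersion, hence $\Conv(X_{T_{0}}/\mathfrak{T})\simeq\Conv(\mathcal{X}_{\mathfrak{T}}/\mathfrak{T})$ and similarly $\Conv(T_{0}/\mathfrak{T})\simeq\Conv(T/\mathfrak{T})$; under these identifications $g_{X/\mathfrak{T},\zar}$ becomes the structural morphism of $\mathcal{X}_{\mathfrak{T}}$ over $T=\mathfrak{T}_{1}$, and $\omega_{\mathfrak{T}}^{*}(\mathscr{E})$ becomes a convergent isocrystal $\mathscr{E}'$ of $(\mathcal{X}_{\mathfrak{T}}/\mathfrak{T})_{\conv,\zar}$. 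Thus \ref{evaluation direct image} gives $(\rR^{i}g_{\conv,\zar*}(\mathscr{E}))_{\mathfrak{T}}\cong(\rR^{i}g_{\mathcal{X}_{\mathfrak{T}}/\mathfrak{T},\conv,\zar*}(\mathscr{E}'))_{\mathfrak{T}}$, the fibre on the right being taken at the base object of $\Conv(T/\mathfrak{T})$.

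For (i), coherence is Zariski-local on $\mathfrak{T}$, so we may assume $\mathfrak{T}$ affine, in particular separated. Then \ref{coh rig coh cristalline} identifies $(\rR^{i}g_{\mathcal{X}_{\mathfrak{T}}/\mathfrak{T},\conv,\zar*}(\mathscr{E}'))_{\mathfrak{T}}$ with $\rR^{i}g_{\mathcal{X}_{\mathfrak{T}}/\mathfrak{T},\cris*}(\iota(\mathscr{E}'))$ (using \ref{convergent to isocry}), and since $\mathcal{X}_{\mathfrak{T}}$ is smooth and proper over $\mathfrak{T}_{1}$, \ref{finiteness crystalline coh} shows this is a coherent $\mathscr{O}_{\mathfrak{T}}[\tfrac{1}{p}]$-module. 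Gluing over an affine cover of $\mathfrak{T}$ gives (i).

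For (ii), let $f\colon\mathfrak{T}'\to\mathfrak{T}$ be a morphism of $\pConv(Y/\rW)$. By \ref{functorial evaluation direct image}, the transition morphism $c_{f}$ of $\rR^{i}g_{\conv,\zar*}(\mathscr{E})$ corresponds, through the isomorphism above, to the base-change morphism attached to $f$ for the cohomology of $\mathcal{X}_{\mathfrak{T}}/\mathfrak{T}$. Since $c_{f}$ being an isomorphism is local on $\mathfrak{T}'$, and passing to open formal subschemes of $\mathfrak{T}$ and $\mathfrak{T}'$ is harmless (the transition morphisms of $\rR^{i}g_{\conv,\zar*}(\mathscr{E})$ for such immersions are isomorphisms, \ref{description of mods}(b), and satisfy the cocycle identity \ref{construction descent data}), and since the whole question is local on $Y$, we may assume $\mathfrak{T},\mathfrak{T}'$ affine and that $Y$ lifts to a smooth separated formal $\rW$-scheme $\YY$. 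Then $(\YY,\id_{Y})$ is an object of $\pConv(Y/\rW)$, by formal smoothness of $\YY$ the morphism $\widetilde{u}$ lifts to a morphism $h\colon\mathfrak{T}\to\YY$ of $\Conv(Y/\rW)$, and with $h'=h\circ f$ the cocycle identity gives $c_{f}\circ f_{\zar}^{*}(c_{h})=c_{h'}$; hence it suffices to prove $c_{h}$ and $c_{h'}$ are isomorphisms, i.e.\ to treat morphisms into $\YY$. For such $h$, the convergent isocrystal $\mathscr{E}$ is locally projective because $X$ is smooth over $k$ (\ref{crystal base smooth locally free}), and $\mathscr{F}_{\YY}:=(\rR^{i}g_{\conv,\zar*}(\mathscr{E}))_{\YY}\cong\rR^{i}g_{X/\YY,\cris*}(\iota(\mathscr{E}))$ is locally projective of finite type by \ref{GM connection} (recall $X$ is smooth proper over $\YY_{1}=Y$). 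Therefore, Zariski-locally on $\YY$, the perfect complex $\rR g_{X/\YY,\cris*}(\iota(\mathscr{E}))$ is quasi-isomorphic to the direct sum of its cohomology modules, all of which are locally projective of finite type, so applying $\rL h_{\zar}^{*}$ to it commutes with taking $H^{i}$; combined with the crystalline base-change isomorphism \ref{smooth base change cry coh} for $h$ (and the identification of $h_{\cris}^{*}(\iota(\mathscr{E}))$ with $\iota(\mathscr{E}')$), this shows $c_{h}\colon h_{\zar}^{*}(\mathscr{F}_{\YY})\to\mathscr{F}_{\mathfrak{T}}$ is an isomorphism. Finally, the case $\tau=\fppf$ reduces to $\tau=\zar$: by \ref{coh fppf descent} we may write $\mathscr{E}=\alpha^{*}(\mathscr{E}_{0})$ for a convergent isocrystal $\mathscr{E}_{0}$ of $(X/\rW)_{\conv,\zar}$, and using the commutative square \eqref{functorial map sigma fppf Zar} for $g$ together with the vanishing $\rR^{j}\alpha_{*}(-)=0$ for $j\ge1$ of \ref{coh zar fppf}, one identifies at every object $\mathfrak{T}$ of $\pConv(Y/\rW)$ the sheaf $(\rR^{i}g_{\conv,\fppf*}(\mathscr{E}))_{\mathfrak{T}}$ with the coherent $\mathscr{O}_{\mathfrak{T}}[\tfrac{1}{p}]$-module $(\rR^{i}g_{\conv,\zar*}(\mathscr{E}_{0}))_{\mathfrak{T}}$, compatibly with transition morphisms, whence (i) and (ii) in the fppf case.

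The step I expect to be the main obstacle is the passage from the \emph{derived} base-change isomorphism \ref{smooth base change cry coh} to an honest isomorphism on each individual cohomology sheaf: the transition morphism $c_{f}$ is a priori only the edge map of a derived pullback, and in general $H^{i}$ does not commute with $\rL h_{\zar}^{*}$. This is precisely why one first localises so that $Y$ lifts and then exploits the local projectivity furnished by \ref{GM connection} to split the perfect complex $\rR g_{X/\YY,\cris*}(\iota(\mathscr{E}))$. A secondary technical point is the bookkeeping needed to check that the comparison isomorphisms of \ref{evaluation direct image}, \ref{coh rig coh cristalline} and \ref{smooth base change cry coh} are sufficiently functorial in $\mathfrak{T}$ to be compatible with the transition morphism $c_{f}$ of \ref{functorial evaluation direct image}.
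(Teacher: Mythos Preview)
Your proposal is correct and follows essentially the same route as the paper: evaluate via \ref{evaluation direct image}, use the nilpotent invariance of the convergent site to replace $X_{T_0}$ by the smooth proper $X_T$, invoke \ref{coh rig coh cristalline} for coherence, localise so that $Y$ lifts to $\YY$, reduce via the cocycle identity to morphisms with target $\YY$, and then use local projectivity from \ref{GM connection} to pass from derived base change to base change on each cohomology degree; the fppf case is reduced to the Zariski case via \ref{coh zar fppf}. The only cosmetic difference is that the paper phrases the key step through the convergent base change \ref{coro base change conv} and the degeneration of the hypercohomology spectral sequence $\rE_2^{i-j,j}=\rL_{i-j}h_{\zar}^{*}(\mathscr{G}_{\zar,\YY}^{j})\Rightarrow \mathscr{G}_{\zar,\mathfrak{T}}^{i}$, whereas you route through the crystalline comparison \ref{coh rig coh cristalline} and \ref{smooth base change cry coh} and argue by locally splitting the perfect complex; both amount to the same vanishing of higher $\rL h_{\zar}^{*}$ on locally projective modules.
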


By \ref{crystal base smooth locally free}, $\mathscr{E}$ is locally projective. 
We set $\mathscr{F}^{i}_{\tau}=\rR^{i}g_{\conv,\tau*}(\mathscr{E})$ and $\mathscr{G}^{i}_{\tau}=(\rR^{i}g_{X/\mathfrak{T},\tau*}(\omega_{\mathfrak{T}}^{*}(\mathscr{E})))$ \eqref{basic setting X to Y}. By \ref{evaluation direct image}, we have a canonical isomorphism
	\begin{equation} \label{calcul direct image coh}
		\mathscr{F}^{i}_{\tau,\mathfrak{T}}\xrightarrow{\sim} \mathscr{G}^{i}_{\tau,\mathfrak{T}}.
	\end{equation}

	\begin{nothing} Proof of \ref{p-adic convergent isocry} for Zariski topology. 
	
	(i) Since $(\mathfrak{T},u)$ is an object of $\pConv(Y/\rW)$, we take a lifting $\widetilde{u}:T\to Y$ of $u$ and we set $X_{T}=X\times_{Y}T$. Then, we have a canonical equivalence $(X_{T_{0}}/\mathfrak{T})_{\conv,\tau}\xrightarrow{\sim} (X_{T}/\mathfrak{T})_{\conv,\tau}$ \eqref{def conv cat} and the assertion follows from \ref{coh rig coh cristalline}.

	(ii) The question being local, by \ref{base change coro}, we may therefore assume that $Y$ is affien and admits a smooth lifting $\YY$ over $\rW$. Then $\mathscr{F}^{i}_{\zar,\YY}$ and $\mathscr{G}^{i}_{\zar,\YY}$ are locally projective of finite type by \ref{GM connection}. 
	
	We first prove assertion (ii) for a morphism $h:\mathfrak{T}\to \YY$ of $\pConv(Y/\rW)$ with target $\YY$.
	By \ref{coro base change conv}, we have a spectral sequence:
		\begin{equation}
			\rE^{i-j,j}_{2}=\rL_{i-j}h^{*}_{\zar} (\mathscr{G}_{\zar,\YY}^{j}) \Rightarrow \mathscr{G}_{\zar,\mathfrak{T}}^{i}.
		\end{equation}
		Since each $\mathscr{G}^{j}_{\zar,\YY}$ is locally projective of finite type, we deduce that $\rE^{i-j,j}_{2}=0$ for $i\neq j$. Then the transition morphism of $\mathscr{F}^{i}_{\zar,\YY}$ associated to $f$ is an isomorphism by \ref{functorial evaluation direct image} and \eqref{calcul direct image coh}.

	Since the question is local, for a general morphism $f:(\mathfrak{T}',u')\to (\mathfrak{T},u)$ of $\pConv(Y/\rW)$, we may assume that $u$ can be lifted to a morphism $h: \mathfrak{T}\to \YY$ of $\pConv(Y/\rW)$. By the previous result, $c_{h}$ and $c_{h\circ f}$ are isomorphisms. Then we deduce that $c_{f}$ is an isomorphism by \ref{description of mods}(c). 
\end{nothing}

\begin{nothing} Proof of \ref{p-adic convergent isocry} for fppf topology.
	We consider the presheaf $\mathscr{P}$ on $\Conv(Y/\rW)$ defined by 
	\begin{displaymath}
		(\mathfrak{T},u)\mapsto \rH^{i}( (X_{T_{0}}/\mathfrak{T})_{\conv,\fppf}, \omega_{\mathfrak{T}}^{*}(\mathscr{E})).
	\end{displaymath}
	By \ref{coh zar fppf}, the right hand side is isomorphic to $\rH^{i}( (X_{T_{0}}/\mathfrak{T})_{\conv,\zar}, \omega_{\mathfrak{T}}^{*}(\alpha_{*}(\mathscr{E})))$. We set $\mathscr{F}_{\zar}^{i}=\rR^{i}g_{\conv,\zar*}(\alpha_{*}(\mathscr{E}))$. By \ref{evaluation direct image}, the fppf (resp. Zariski) sheaf associated to $\mathscr{P}$ is $\mathscr{F}_{\fppf}^{i}$ (resp. $\mathscr{F}_{\zar}^{i}$). Then we deduce a canonical isomorphism \eqref{morphism of topoi fppf zar}
	\begin{equation} \label{rel cohomology associated}
		\alpha^{*}(\mathscr{F}_{\zar}^{i})\xrightarrow{\sim} \mathscr{F}_{\fppf}^{i}.
	\end{equation}
	
	Let $\mathfrak{T}$ be an object of $\pConv(Y/\rW)$. 
	By \ref{p-adic convergent isocry} for Zariski topology and fppf descent, we deduce that $\mathscr{F}_{\fppf,\mathfrak{T}}^{i}$ is the fppf sheaf associated to the coherent $\mathscr{O}_{\mathfrak{T}}[\frac{1}{p}]$-module $\mathscr{F}^{i}_{\zar,\mathfrak{T}}$ \eqref{coherent fppf} and hence is coherent. Assertion (i) follows.

	Since $\mathscr{F}_{\fppf,\mathfrak{T}}^{i}$ is the fppf sheaf associated to $\mathscr{F}_{\zar,\mathfrak{T}}^{i}$, assertion (ii) follows from \ref{p-adic convergent isocry}(ii) for Zariski topology and \eqref{rel cohomology associated}.
\end{nothing}

\section{Frobenius descents} \label{Frob descents}
\begin{nothing} \label{basic notation frob}
	In this section, $\SS$ denotes a flat formal $\rW$-scheme of finite type. Suppose that the Frobenius morphism $F_{S_{0}}:S_{0}\to S_{0}$ of the reduced subscheme of $S$ is \textit{flat} (and hence faithfully flat). 
	
	Let $X$ be an $S_{0}$-scheme locally of finite type. 
	We denote by $X'$ the base change of $X$ by $F_{S_{0}}$ and by $F_{X/S_0}:X\to X'$ the relative Frobenius morphism of $X$ relative to $S_{0}$. Then we have a commutative diagram
	\begin{equation}
		\xymatrix{
			X \ar[r]^{F_{X/S_{0}}} \ar[rd] & X' \ar[r] \ar@{}[dr]|{\Box} \ar[d]& X \ar[d] \\
			& S_{0} \ar[r]^{F_{S_{0}}} & S_{0}
		}
	\end{equation}

	We study the functorial morphism of convergent topoi induced by the relative Frobenius morphism $F_{X/S_{0}}:X \to X'$.
	We denote the functor \eqref{functorial functor} induced by $F_{X/S_0}$ by:
	\begin{eqnarray*} \label{functorial functor Frob}
	\rho: \Conv(X/\SS)&\to& \Conv(X'/\SS), \\
	(\mathfrak{T},u) &\mapsto& (\mathfrak{T},F_{X/S_{0}}\circ u).
\end{eqnarray*}
Note that $F_{X/S_{0}}\circ u=u'\circ F_{T_{0}/S_{0}}$.
\end{nothing}

\begin{lemma}\label{lemma full faithful of rho}
	Let $Y$ be a reduced $S_{0}$-scheme, $Z$ an $S_{0}$-scheme and $g_{1},g_{2}:Y\to Z$ two $S_{0}$-morphisms. We put $h_{i}=g'_{i}\circ F_{Y/S_{0}}:Y\to Y'\to Z'$ for $i=1,2$. If $h_{1}=h_{2}$, then $g_{1}=g_{2}$.
\end{lemma}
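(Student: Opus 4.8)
The plan is to reduce the statement to the fact that, when $Y$ is reduced, the absolute Frobenius $F_{Y}\colon Y\to Y$ is an epimorphism of schemes; this in turn comes down to an elementary manipulation in reduced $\mathbb{F}_{p}$-algebras.

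First I would rewrite the hypothesis. Write $\mathrm{pr}_{Y}\colon Y'\to Y$ and $\mathrm{pr}_{Z}\colon Z'\to Z$ for the first projections, where $Y'=Y\times_{S_{0},F_{S_{0}}}S_{0}$ and $Z'=Z\times_{S_{0},F_{S_{0}}}S_{0}$. From the commutativity of the Cartesian square defining $g_{i}'$ one has $\mathrm{pr}_{Z}\circ g_{i}'=g_{i}\circ\mathrm{pr}_{Y}$, and by construction of the relative Frobenius, $\mathrm{pr}_{Y}\circ F_{Y/S_{0}}=F_{Y}$ is the absolute Frobenius of $Y$. Hence $\mathrm{pr}_{Z}\circ h_{i}=\mathrm{pr}_{Z}\circ g_{i}'\circ F_{Y/S_{0}}=g_{i}\circ F_{Y}$ (equivalently $h_{i}=F_{Z/S_{0}}\circ g_{i}$ by functoriality of the relative Frobenius), so $h_{1}=h_{2}$ forces $g_{1}\circ F_{Y}=g_{2}\circ F_{Y}$, and it suffices to prove $g_{1}=g_{2}$ under this equality.

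Next I would compare $g_{1}$ and $g_{2}$ directly. Since $F_{Y}$ is the identity on underlying topological spaces, $g_{1}$ and $g_{2}$ already induce the same continuous map $|Y|\to|Z|$, so only the comorphisms need to be compared, and this can be checked on stalks. Fix $y\in Y$, let $z$ be its common image under $g_{1}$ and $g_{2}$, and let $\phi_{i}\colon\mathscr{O}_{Z,z}\to\mathscr{O}_{Y,y}$ be the stalk at $y$ of the comorphism of $g_{i}$. The stalk of $F_{Y}^{\#}$ at $y$ is the $p$-th power endomorphism of $\mathscr{O}_{Y,y}$, so the stalk at $y$ of the comorphism of $g_{i}\circ F_{Y}$ sends $t$ to $\phi_{i}(t)^{p}$; the equality $g_{1}\circ F_{Y}=g_{2}\circ F_{Y}$ therefore gives $\phi_{1}(t)^{p}=\phi_{2}(t)^{p}$ for all $t\in\mathscr{O}_{Z,z}$.

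Finally comes the only point where the hypothesis that $Y$ is reduced is used: $\mathscr{O}_{Y,y}$ is a reduced $\mathbb{F}_{p}$-algebra, so from $\phi_{1}(t)^{p}=\phi_{2}(t)^{p}$ and the additivity of the $p$-th power map in characteristic $p$ we obtain $(\phi_{1}(t)-\phi_{2}(t))^{p}=0$, whence $\phi_{1}(t)=\phi_{2}(t)$. Thus $\phi_{1}=\phi_{2}$ at every $y$, the comorphisms of $g_{1}$ and $g_{2}$ coincide, and $g_{1}=g_{2}$. The one thing to watch is the direction of the cancellation: one must cancel the absolute Frobenius of the \emph{source} $Y$ on the right, which is exactly where reducedness is needed, rather than the absolute Frobenius of $Z$ on the left (the latter is not a monomorphism in general, already for $\mathbb{A}^{1}_{k}$). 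Everything else is routine bookkeeping with base change and relative Frobenius.
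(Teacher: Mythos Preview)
Your proof is correct and takes a somewhat different, more elementary route than the paper. The paper first cancels the relative Frobenius $F_{Y/S_{0}}$ on the right (using that it is schematically dominant when $Y$ is reduced) to obtain $g_{1}'=g_{2}'$, and then invokes the standing hypothesis that $F_{S_{0}}$ is faithfully flat to conclude that the base-change functor $(-)'$ is faithful, whence $g_{1}=g_{2}$. You instead compose with the projection $\mathrm{pr}_{Z}\colon Z'\to Z$ to reduce to $g_{1}\circ F_{Y}=g_{2}\circ F_{Y}$ and then cancel the \emph{absolute} Frobenius of $Y$ via a direct stalk computation. Your argument is shorter, avoids the appeal to EGA on schematic dominance, and---notably---does not use the flatness of $F_{S_{0}}$ at all, so it proves a slightly stronger statement. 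Both arguments ultimately hinge on the same fact (the $p$-th power map is injective on a reduced $\mathbb{F}_{p}$-algebra), but your packaging is cleaner.
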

\begin{proof}
	Since $F_{Y/S_0}$ is a homeomorphism and $h_{1}=h_{2}$, then $|g_{1}|=|g_{2}|$ on the underlying topological spaces. 
	Since the question is local, we can reduce to the case where $Y,Z,S_{0}$ are affine.

	Since $Y$ is reduced and separated over $S_{0}$, $F_{Y/S_{0}}$ is schematically dominant (\cite{EGAInew} 5.4.2) and we deduce that $g_{1}'=g_{2}'$ (\cite{EGAInew} 5.4.1).
	The Frobenius morphism $F_{S_{0}}$ is faithfully flat. Then the functor $Y\mapsto Y'$ from the category of affine $S_{0}$-schemes to itself is faithful. The lemma follows.
\end{proof}

\begin{lemma} \label{functor rho fully faithful}
	The functor $\rho$ is fully faithful.
\end{lemma}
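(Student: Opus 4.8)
The plan is to establish faithfulness and fullness of $\rho$ separately, the latter being the point where Lemma \ref{lemma full faithful of rho} intervenes. Faithfulness will be formal: by the construction of $\rho$ in \ref{basic notation frob}, a morphism $f\colon(\mathfrak{T}',u')\to(\mathfrak{T},u)$ of $\Conv(X/\SS)$ --- that is, an $\SS$-morphism $f\colon\mathfrak{T}'\to\mathfrak{T}$ with $u\circ f_{0}=u'$ --- is sent to the \emph{same} underlying $\SS$-morphism, now viewed as a morphism between $(\mathfrak{T},F_{X/S_{0}}\circ u)$ and $(\mathfrak{T}',F_{X/S_{0}}\circ u')$; hence $\rho(f_{1})=\rho(f_{2})$ immediately forces $f_{1}=f_{2}$. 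One should also record that $\rho$ is well defined on morphisms, namely that $u\circ f_{0}=u'$ implies $(F_{X/S_{0}}\circ u)\circ f_{0}=F_{X/S_{0}}\circ u'$.

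For fullness, I would start from a morphism $h\colon\rho(\mathfrak{T}',u')\to\rho(\mathfrak{T},u)$ of $\Conv(X'/\SS)$ and unwind it: it is an $\SS$-morphism $h\colon\mathfrak{T}'\to\mathfrak{T}$ whose reduction $h_{0}\colon T_{0}'\to T_{0}$ on special fibres satisfies $(F_{X/S_{0}}\circ u)\circ h_{0}=F_{X/S_{0}}\circ u'$ as morphisms $T_{0}'\to X'$. It then suffices to upgrade this to the equality $u\circ h_{0}=u'$ of $S_{0}$-morphisms $T_{0}'\to X$, for then $h$ is already a morphism $(\mathfrak{T}',u')\to(\mathfrak{T},u)$ of $\Conv(X/\SS)$, and $\rho$ evidently carries it to $h$. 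To obtain this upgrade I would apply Lemma \ref{lemma full faithful of rho} with $Y=T_{0}'$, $Z=X$, $g_{1}=u\circ h_{0}$ and $g_{2}=u'$: the scheme $T_{0}'$ is reduced, being by definition the reduced subscheme of the special fibre of $\mathfrak{T}'$, and, since $g_{i}'\circ F_{T_{0}'/S_{0}}=F_{X/S_{0}}\circ g_{i}$ by commutativity of the relative Frobenius square (as in \ref{basic notation frob}), the equation above is exactly the hypothesis $h_{1}=h_{2}$ of that lemma. The lemma then yields $g_{1}=g_{2}$, as wanted.

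I do not anticipate a genuine obstacle. The only two points requiring care are: that the reducedness hypothesis of Lemma \ref{lemma full faithful of rho} is automatically satisfied because objects of $\Conv(X/\SS)$ carry the reduced special fibre $T_{0}'$; and that one must pass through the identity $F_{X/S_{0}}\circ(u\circ h_{0})=(u\circ h_{0})'\circ F_{T_{0}'/S_{0}}$ in order to see that the datum of the morphism $h$ is precisely the input consumed by that lemma.
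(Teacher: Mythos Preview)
Your proposal is correct and follows essentially the same argument as the paper: faithfulness is immediate from the fact that $\rho$ does not alter the underlying $\SS$-morphism, and fullness is obtained by applying Lemma~\ref{lemma full faithful of rho} to the two $S_{0}$-morphisms $u'\,$ and $u\circ h_{0}$ from the reduced scheme $T_{0}'$ to $X$. Your remark that the naturality square $F_{X/S_{0}}\circ g_{i}=g_{i}'\circ F_{T_{0}'/S_{0}}$ is what converts the compatibility condition on $h$ into the hypothesis $h_{1}=h_{2}$ of that lemma is exactly the content of the commutative diagram the paper draws.
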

\begin{proof}
	The functor $\rho$ is clearly faithful. We prove its fullness.
	Let $(\mathfrak{T}_{1},u_{1})$, $(\mathfrak{T}_{2},u_{2})$ be two objects of $\Conv(X/\SS)$ and $g:\rho(\mathfrak{T}_{1},u_{1})\to \rho(\mathfrak{T}_{2},u_{2})$ a morphism of $\Conv(X'/\SS)$. We set $g_{0}:T_{1,0}\to T_{2,0}$ the induced morphism. To show that the morphism $\mathfrak{T}_{1}\to \mathfrak{T}_{2}$ define a morphism of $\Conv(X/\SS)$ which is sent to $g$ by $\rho$, it suffices to show that $u_{1}=u_{2}\circ g_{0}$. Since $g$ is a morphism of $\pConv(X'/\SS)$, we have a commutative diagram
	\begin{displaymath}
		\xymatrix{
			T_{1,0}\ar[rr]^{g_{0}} \ar[d]_{F_{T_{1,0}/S_0}} &&T_{2,0} \ar[d]^{F_{T_{2,0}/S_0}} \\
			(T_{1,0})' \ar[rd]_{u_{1}'} \ar[rr]^{g_{0}'} && (T_{2,0})' \ar[ld]^{u_{2}'} \\
			& X' &
		}
	\end{displaymath}
	Then the assertion follow from \ref{lemma full faithful of rho} applied to $u_{1}$ and $u_{2}\circ g_{0}$.
\end{proof}

\begin{lemma} \label{lemma condition 4}
	\textnormal{(i)} Let $(\mathfrak{T},u)$ be an object of $\Conv(X'/\SS)$ such that $\mathfrak{T}$ is affine and that $u:T_{0}\to X'$ factor through an affine open subscheme $U'$ of $X'$. Then there exists an object $(\mathfrak{Z},v)$ of $\Conv(X/\SS)$ and a fppf covering $\{f:\rho(\mathfrak{Z},v)\to (\mathfrak{T},u)\}$ in $\Conv(X'/\SS)$.

	\textnormal{(ii)} Keep the assumption and notation of \textnormal{(i)}.
	Let $g:(\mathfrak{T}_{1},u_{1})\to (\mathfrak{T},u)$ be a morphism of $\Conv(X'/\SS)$. Then there exists a morphism $h:(\mathfrak{Z}_{1},v_{1})\to (\mathfrak{Z},v)$ of $\Conv(X/\SS)$ and a fppf covering $\{\varphi:\rho(\mathfrak{Z}_{1},v_{1})\to (\mathfrak{T}_{1},u_{1})\}$ such that the following diagram is Cartesian:
	\begin{equation}\label{diag rho U1U2 pre}
	\xymatrix{
		\rho(\mathfrak{Z}_{1},v_{1})\ar[r]^{\varphi} \ar[d]_{\rho(h)} \ar@{}[dr]|{\Box}& (\mathfrak{T}_{1},u_{1}) \ar[d]^{g}\\
		\rho(\mathfrak{Z},v)\ar[r]^{f} & (\mathfrak{T},u) }
	\end{equation}

	\textnormal{(iii)} Every object of $\Conv(X'/\SS)$ admits a Zariski covering whose objects satisfying conditions of \textnormal{(i)}.
\end{lemma}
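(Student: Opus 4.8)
The plan is to use \ref{lemma condition 4} as the verification of hypothesis (iii) of \ref{lemma adjunction iso} for the functor $\rho$ (which is fully faithful by \ref{functor rho fully faithful} and continuous and cocontinuous by \ref{cont cocont func}), with respect to the fppf topology: assertion (iii) reduces an arbitrary object of $\Conv(X'/\SS)$ to the special objects of (i), assertion (i) produces for each such object a one-element fppf covering by an object lying in the image of $\rho$, and assertion (ii) makes these coverings behave well under base change. Recall that the first projection $\mathrm{pr}\colon X'\to X$ is affine, and is a homeomorphism (since $\mathrm{pr}\circ F_{X/S_{0}}=F_{X}$ with $F_{X}$ and $F_{X/S_{0}}$ homeomorphisms). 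For (iii): fix $(\mathfrak{T},u)\in\Conv(X'/\SS)$; since $|T_{0}|=|\mathfrak{T}|$ and the affine opens $\mathrm{pr}^{-1}(V)$, for $V\subseteq X$ affine open, cover $X'$, we may cover $\mathfrak{T}$ by affine open formal subschemes $\mathfrak{T}_{i}$ with $u(T_{i,0})\subseteq\mathrm{pr}^{-1}(V_{i})$ for suitable affine open $V_{i}\subseteq X$. Then $\{(\mathfrak{T}_{i},u|_{T_{i,0}})\to(\mathfrak{T},u)\}$ is a Zariski covering by objects satisfying the conditions of (i) (with $U'=\mathrm{pr}^{-1}(V_{i})$), and with the extra feature that $\mathrm{pr}\circ u|_{T_{i,0}}$ factors through the affine open $V_{i}\subseteq X$; this feature is retained tacitly below.

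For (i), by (iii) we may assume $U'=\mathrm{pr}^{-1}(V)$ with $V=\Spec B\subseteq X$ affine open lying over an affine open $\Spec A_{0}\subseteq S_{0}$, so that $\bar u:=\mathrm{pr}\circ u\colon T_{0}\to V$ is given by a ring homomorphism $\bar u^{\sharp}\colon B\to C_{0}:=\Gamma(T_{0},\mathscr{O}_{T_{0}})$ which, $u$ being an $S_{0}$-morphism, satisfies $\bar u^{\sharp}(a)=t^{\sharp}(a)^{p}$ for $a\in A_{0}$, where $t\colon T_{0}\to S_{0}$ is the structural morphism (identifying an $S_{0}$-morphism $W\to X'=X\times_{S_{0},F_{S_{0}}}S_{0}$ with a morphism $\bar u\colon W\to X$ satisfying $\mathrm{str}_{X}\circ\bar u=F_{S_{0}}\circ(W\to S_{0})$, under which $F_{X/S_{0}}\circ v$ corresponds to $v\circ F_{W}$ for $v\colon W\to X$). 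Choose finitely many $A_{0}$-algebra generators $b_{1},\dots,b_{n}$ of $B$, say $B=A_{0}[x_{1},\dots,x_{n}]/\mathfrak{r}$, lift each $\bar u^{\sharp}(b_{i})\in C_{0}$ to $a_{i}\in A:=\Gamma(\mathfrak{T},\mathscr{O}_{\mathfrak{T}})$ along $A\twoheadrightarrow A/p\twoheadrightarrow C_{0}$, and set
\[
A_{Z}:=A[x_{1},\dots,x_{n}]/(x_{1}^{p}-a_{1},\dots,x_{n}^{p}-a_{n}),\qquad\mathfrak{Z}:=\Spf(A_{Z}).
\]
Each $x_{i}^{p}-a_{i}$ being monic, $A_{Z}$ is a finite free $A$-module, hence $p$-torsion free and $p$-adically complete; thus $\mathfrak{Z}\in\MS^{\diamond}$ and $\mathfrak{Z}\to\mathfrak{T}$ is finite, free of positive rank, hence faithfully flat, so $\{\mathfrak{Z}\to\mathfrak{T}\}$ is an fppf covering of formal schemes. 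Let $\pi\colon Z_{0}\to T_{0}$ be the induced morphism, $\pi^{\ast}\colon C_{0}\to\Gamma(Z_{0},\mathscr{O}_{Z_{0}})$ the corresponding map, and $\xi_{i}\in\Gamma(Z_{0},\mathscr{O}_{Z_{0}})$ the image of $x_{i}$, so $\xi_{i}^{p}=\pi^{\ast}(\bar u^{\sharp}(b_{i}))$. The polynomial-ring homomorphism $\tilde v\colon A_{0}[x_{1},\dots,x_{n}]\to\Gamma(Z_{0},\mathscr{O}_{Z_{0}})$, $a\mapsto\pi^{\ast}(t^{\sharp}(a))$, $x_{i}\mapsto\xi_{i}$, then satisfies $F_{Z_{0}}^{\ast}\circ\tilde v=\pi^{\ast}\circ\bar u^{\sharp}\circ\mathrm{pr}_{B}$ (on $a\in A_{0}$ use $\bar u^{\sharp}(a)=t^{\sharp}(a)^{p}$, on $x_{i}$ use $\xi_{i}^{p}=\pi^{\ast}\bar u^{\sharp}(b_{i})$), where $\mathrm{pr}_{B}\colon A_{0}[x_{i}]\to B$ is the projection. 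Hence $F_{Z_{0}}^{\ast}(\tilde v(\mathfrak{r}))=\pi^{\ast}(\bar u^{\sharp}(\mathrm{pr}_{B}(\mathfrak{r})))=0$; since $\Gamma(Z_{0},\mathscr{O}_{Z_{0}})$ is reduced, $F_{Z_{0}}^{\ast}$ is injective, so $\tilde v(\mathfrak{r})=0$ and $\tilde v$ descends to $v^{\sharp}\colon B\to\Gamma(Z_{0},\mathscr{O}_{Z_{0}})$, i.e.\ an $S_{0}$-morphism $v\colon Z_{0}\to V\subseteq X$ with $(\mathfrak{Z},v)\in\Conv(X/\SS)$. Pushing the displayed identity through $\mathrm{pr}_{B}$ gives $F_{Z_{0}}^{\ast}\circ v^{\sharp}=\pi^{\ast}\circ\bar u^{\sharp}$, i.e.\ $v\circ F_{Z_{0}}=\bar u\circ\pi$, which by the identification above means $F_{X/S_{0}}\circ v=u\circ\pi$ as morphisms $Z_{0}\to X'$. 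Thus $\mathfrak{Z}\to\mathfrak{T}$ underlies a morphism $f\colon\rho(\mathfrak{Z},v)\to(\mathfrak{T},u)$ of $\Conv(X'/\SS)$, and $\{f\}$ is the required fppf covering.

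For (ii), keep $(\mathfrak{Z},v)$ and $f$ from (i) and let $g\colon(\mathfrak{T}_{1},u_{1})\to(\mathfrak{T},u)$ be any morphism of $\Conv(X'/\SS)$. Set $\mathfrak{Z}_{1}:=\mathfrak{Z}\times_{\mathfrak{T}}\mathfrak{T}_{1}$; as a base change of the finite free morphism $\mathfrak{Z}\to\mathfrak{T}$ it is finite free over $\mathfrak{T}_{1}$, in particular faithfully flat and $\rW$-flat, so $\mathfrak{Z}_{1}\in\MS^{\diamond}$. Let $\pi_{1}\colon Z_{1,0}\to Z_{0}$ be the projection, set $v_{1}:=v\circ\pi_{1}\colon Z_{1,0}\to X$ (an $S_{0}$-morphism) and let $h\colon(\mathfrak{Z}_{1},v_{1})\to(\mathfrak{Z},v)$ be the first projection, a morphism of $\Conv(X/\SS)$. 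Since $\mathfrak{Z}\to\mathfrak{T}$ is flat, \ref{def fibered product} shows that the fibre product of $\rho(\mathfrak{Z},v)\xrightarrow{f}(\mathfrak{T},u)\xleftarrow{g}(\mathfrak{T}_{1},u_{1})$ in $\Conv(X'/\SS)$ is $\mathfrak{Z}_{1}$ with structural morphism $(F_{X/S_{0}}\circ v)\circ\pi_{1}=F_{X/S_{0}}\circ v_{1}$ to $X'$; that is, it equals $\rho(\mathfrak{Z}_{1},v_{1})$, the two projections being $\rho(h)$ and a morphism $\varphi\colon\rho(\mathfrak{Z}_{1},v_{1})\to(\mathfrak{T}_{1},u_{1})$ with underlying faithfully flat $\mathfrak{Z}_{1}\to\mathfrak{T}_{1}$. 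So $\{\varphi\}$ is an fppf covering and \eqref{diag rho U1U2 pre} is Cartesian.

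The crux is part (i): the one substantive step is that adjoining $p$-th roots of lifts of the coordinates $\bar u^{\sharp}(b_{i})$ indeed produces a lift $v$ of $u$ along $F_{X/S_{0}}$. This uses precisely that $Z_{0}$ is reduced---whence $F_{Z_{0}}^{\ast}$ is injective, the choice of roots is forced, and it automatically respects the relations $\mathfrak{r}$---together with the fact that $u$ is the Frobenius twist of a morphism to $X'$, so that $\bar u^{\sharp}$ is already a $p$-th power on the base $A_{0}$ and no roots need be adjoined there. Parts (ii) and (iii) are then essentially formal; the only other point, handled in (iii), is the compatibility of affine charts on $X$ and $X'$, which holds because $\mathrm{pr}\colon X'\to X$ is simultaneously affine and a homeomorphism.
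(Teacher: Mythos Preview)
Your proof is correct and follows essentially the same approach as the paper. The paper packages the construction in~(i) geometrically---embedding $U=F_{X/S_{0}}^{-1}(U')$ into $Y_{0}=\mathbb{A}^{d}_{S_{0}}$, lifting to the formal affine space $\YY$ with its Frobenius lift $F\colon T_{i}\mapsto T_{i}^{p}$, and setting $\mathfrak{Z}=\mathfrak{T}\times_{\YY,F}\YY$---whereas you write out the same object directly as $\Spf\bigl(A[x_{i}]/(x_{i}^{p}-a_{i})\bigr)$; your check that $v$ is well-defined via injectivity of $F_{Z_{0}}^{\ast}$ on the reduced ring is exactly the content of the paper's identification $U_{0}\xrightarrow{\sim}(U'\times_{Y_{0},F}Y_{0})_{0}$, and your arguments for (ii) and (iii) match the paper's (the paper invokes \ref{cont cocont func}(i) for (ii), which unwinds to precisely your explicit description of the fibre product).
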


\begin{proof}
	(i) We set $U=F_{X/S_0}^{-1}(U')$ which is an affine $S_{0}$-scheme of finite type and we take a closed $S_{0}$-immersion $\iota_{0}: U\to Y_{0}=\Spec(\mathscr{O}_{S_{0}}[T_{1},\cdots,T_{d}])$.
	We denote by $\YY=\Spf(\mathscr{O}_{\SS}\{T_{1},\cdots,T_{d}\})$ and $F:\YY\to \YY$ the $\SS$-morphism defined by sending each $T_{i}$ to $T_{i}^{p}$. 
	
	Note that $Y_{0}'=Y_{0}$ and the restriction of $F$ on $Y_{0}$ is same the relative Frobenius morphism $F_{Y_{0}/S_{0}}$. 
	We have a commutative diagram 
	\begin{equation}
		\xymatrix{
			U\ar[r]^{\iota_{0}} \ar[d]_{F_{U/S_{0}}} & Y_{0} \ar[d]^{F|_{Y_{0}}} \\
			U' \ar[r]^{\iota_{0}'} & Y_{0}
		}
		\label{diagram iota Frob}
	\end{equation}
	and a canonical morphism $U\to U'\times_{Y_{0},F}Y_{0}$. 
	We denote the composition of $\iota_{0}':U'\to Y_{0}$ and $Y_{0}\to \YY$ by $\iota'$.
	Since $\YY$ is smooth over $\SS$, there exists an $\SS$-morphism $\tau:\mathfrak{T}\to \YY$ lifting $\iota'\circ u:T_{0}\to \YY$. 
We consider the following commutative diagram:
\begin{equation} \label{diag lifting Frob}
\xymatrix{
	&& (T_{0}\times_{Y_{0},F}Y_{0})_{0}  \ar[rr]\ar'[d][dd] \ar[ld] && \mathfrak{T}\times_{\YY,F}\YY \ar[dd] \ar[ld]\\
	&T_{0} \ar[rr]\ar[dd]
	&& \mathfrak{T}\ar[dd]\\
	U_{0} \ar'[r][rr] \ar[rd] && (U'\times_{Y_{0},F}Y_{0})_{0} \ar[ld] \ar'[r][rr]
	&& \YY\ar[ld]^{F}\\
	&U'  \ar[rr]^{\iota'}&& \YY}
\end{equation}
where $U_{0}\to U'$ is induced by $F_{U/S_{0}}$. 

If the ideal sheaf associated to $\iota_{0}:U\hookrightarrow Y_{0}$ is locally generated by polynomials $\{f_{1},\cdots,f_{n}\}$ of $\mathscr{O}_{S_{0}}[T_{1},\cdots,T_{d}]$, the ideal sheaf associated to $\iota'_{0}\times_{Y_{0},F}Y_{0}: U'\times_{Y_{0},F}Y_{0}\hookrightarrow Y_{0}$ \eqref{diagram iota Frob} is locally generated by $\{f_{1}^{p},\cdots,f_{n}^{p}\}$. 
	Then the canonical morphism $U\to U'\times_{Y_{0},F}Y_{0}$ induces an isomorphism 
\begin{displaymath}
	U_{0}\xrightarrow{\sim} (U'\times_{Y_{0},F}Y_{0})_{0}.
\end{displaymath}
By \eqref{diag lifting Frob} and (\cite{EGAInew} 4.5.11), we obtain an object $(\mathfrak{T}\times_{\YY,F}\YY,v)$ of $\Conv(X/\SS)$ and a morphism $f:\rho(\mathfrak{T}\times_{\YY,F}\YY,v)\to (\mathfrak{T},u)$ of $\Conv(X'/\SS)$. Since the reduction modulo $p$ of $F$ is faithfully flat of finite type (\cite{Hodge} 3.2), so is $F$ (cf. \cite{Xu} 7.2). Then, $f$ is a fppf covering \eqref{def topology Conv}.

(ii) We denote by $(\mathfrak{Z}_{1},w)$ the fibered product $\rho(\mathfrak{Z},v)\times_{(\mathfrak{T},u)}(\mathfrak{T}_{1},u_{1})$ in $\Conv(X'/\SS)$. By applying \ref{cont cocont func} to the projection $(\mathfrak{Z}_{1},w)\to \rho(\mathfrak{Z},v)$, we obtain the Cartesian diagram \eqref{diag rho U1U2 pre}. Since $\varphi$ is the base change of $f$, $\varphi$ is a fppf covering.

(iii) Let $(\mathfrak{T},u)$ be an object of $\Conv(X'/\SS)$ and $U'$ an affine open subscheme of $X'$. We denote by $\mathfrak{T}_{U'}$ the open formal subscheme of $\mathfrak{T}$ associated to the open subset $u^{-1}(|U'|)$ of $|T_{0}|=|T|$. The assertion follows by taking an affine covering of $\mathfrak{T}_{U'}$ for every $U'$.
\end{proof}

\begin{lemma} \label{lemma rho 4 comp}
	Let $\mathfrak{T}$ an object of $\Conv(X'/\SS)$, $\mathfrak{Z}$ an object of $\Conv(X/\SS)$ and $\{\rho(\mathfrak{Z})\to \mathfrak{T}\}$ a morphism of $\Conv(X'/\SS)$. Then there exists an object $\mathfrak{Z}\times_{\mathfrak{T}}\mathfrak{Z}$ of $\Conv(X/\SS)$ and two morphisms $p_{1},p_{2}:\mathfrak{Z}\times_{\mathfrak{T}}\mathfrak{Z}\to \mathfrak{Z}$ of $\Conv(X/\SS)$ such that $\rho(\mathfrak{Z}\times_{\mathfrak{T}}\mathfrak{Z})=\rho(\mathfrak{Z})\times_{\mathfrak{T}}\rho(\mathfrak{Z})$ and that $\rho(p_{1})$ (resp. $\rho(p_{2})$) is the projection $\rho(\mathfrak{Z})\times_{\mathfrak{T}}\rho(\mathfrak{Z})\to \rho(\mathfrak{Z})$ on the first (resp. second) component.
\end{lemma}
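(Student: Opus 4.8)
The plan is to exhibit $\rho(\mathfrak{Z})\times_{\mathfrak{T}}\rho(\mathfrak{Z})$ as the image under $\rho$ of a suitable object of $\Conv(X/\SS)$. Write $f\colon\rho(\mathfrak{Z})\to\mathfrak{T}$ for the given morphism and $v\colon Z_{0}\to X$ for the structure morphism of $\mathfrak{Z}$. First I would form the fibered product $\mathfrak{Z}\times_{\mathfrak{T}}\mathfrak{Z}$ of formal $\SS$-schemes and let $\mathfrak{W}$ be the closed formal subscheme cut out by the ideal of $p$-torsion elements of $\mathscr{O}_{\mathfrak{Z}\times_{\mathfrak{T}}\mathfrak{Z}}$; by \ref{def fibered product}, $\mathfrak{W}$ is flat over $\rW$ and represents $\rho(\mathfrak{Z})\times_{\mathfrak{T}}\rho(\mathfrak{Z})$ in $\Conv(X'/\SS)$, its structure morphism $w'\colon W_{0}\to X'$ being the composite $W_{0}\to Z_{0}\times_{T_{0}}Z_{0}\to X'$ built from $F_{X/S_{0}}\circ v$ on both factors. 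I would denote the two projections by $\widetilde{p}_{1},\widetilde{p}_{2}\colon\mathfrak{W}\to\mathfrak{Z}$ and by $\zeta_{i}\colon W_{0}\to Z_{0}$ the morphisms they induce on reduced special fibres, so that $w'=F_{X/S_{0}}\circ v\circ\zeta_{1}=F_{X/S_{0}}\circ v\circ\zeta_{2}$ by construction.

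The key step is to descend the structure morphism along $F_{X/S_{0}}$. Set $g_{i}:=v\circ\zeta_{i}\colon W_{0}\to X$ for $i=1,2$. Since $W_{0}$ is reduced and the relative Frobenius is functorial, $g_{i}'\circ F_{W_{0}/S_{0}}=F_{X/S_{0}}\circ g_{i}=w'$ for both $i$, so \ref{lemma full faithful of rho} applied to $g_{1},g_{2}$ yields $g_{1}=g_{2}$. Denoting this common morphism by $w\colon W_{0}\to X$, the pair $(\mathfrak{W},w)$ is an object of $\Conv(X/\SS)$ and $\rho(\mathfrak{W},w)=(\mathfrak{W},F_{X/S_{0}}\circ w)=(\mathfrak{W},w')=\rho(\mathfrak{Z})\times_{\mathfrak{T}}\rho(\mathfrak{Z})$.

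It remains to descend the two projections. Since $v\circ(\widetilde{p}_{i})_{0}=v\circ\zeta_{i}=w$, each $\widetilde{p}_{i}$ is compatible with $w$ and $v$ and hence defines a morphism $p_{i}\colon(\mathfrak{W},w)\to(\mathfrak{Z},v)$ of $\Conv(X/\SS)$ with $\rho(p_{i})=\widetilde{p}_{i}$ equal to the $i$-th projection $\rho(\mathfrak{Z})\times_{\mathfrak{T}}\rho(\mathfrak{Z})\to\rho(\mathfrak{Z})$. Taking $\mathfrak{Z}\times_{\mathfrak{T}}\mathfrak{Z}:=(\mathfrak{W},w)$ then gives the statement. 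I expect the only genuinely non-formal point to be the equality $g_{1}=g_{2}$: this is exactly where the faithful flatness of $F_{S_{0}}$ (through \ref{lemma full faithful of rho}) is used to recover a morphism to $X$ from one to $X'$ that a priori agrees only after postcomposition with the relative Frobenius; everything else is bookkeeping with reduced special fibres and with the description of fibered products in the convergent category.
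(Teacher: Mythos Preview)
Your proof is correct and is essentially an unpacking of the paper's argument. The paper proceeds more tersely: it applies \ref{cont cocont func}(i) to the first projection $\rho(\mathfrak{Z})\times_{\mathfrak{T}}\rho(\mathfrak{Z})\to\rho(\mathfrak{Z})$ to obtain the object $\mathfrak{Z}\times_{\mathfrak{T}}\mathfrak{Z}$ together with $p_{1}$, and then invokes the fullness of $\rho$ (\ref{functor rho fully faithful}) to obtain $p_{2}$. Since the proof of \ref{functor rho fully faithful} is itself just an application of \ref{lemma full faithful of rho}, both arguments bottom out in the same place; your version simply treats the two projections symmetrically and makes the use of \ref{lemma full faithful of rho} explicit rather than routing through the packaged lemmas.
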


\begin{proof}
	By applying \ref{cont cocont func}(i) to the projection $\rho(\mathfrak{Z})\times_{\mathfrak{T}}\rho(\mathfrak{Z})\to \rho(\mathfrak{Z})$ on the first component, we obtain an object $\mathfrak{Z}\times_{\mathfrak{T}}\mathfrak{Z}$ of $\Conv(X/\SS)$ and a morphism $p_{1}:\mathfrak{Z}\times_{\mathfrak{T}}\mathfrak{Z}\to \mathfrak{Z}$ as in the proposition. The existence of $p_{2}$ follows from the fullness of $\rho$ \eqref{functor rho fully faithful}.
\end{proof}

\begin{theorem} \label{Frob descent fppf topos}
	Suppose that the Frobenius morphism $F_{S_{0}}:S_{0}\to S_{0}$ flat.
	For every $S_{0}$-scheme locally of finite type $X$, the relative Frobenius morphism $F_{X/S_{0}}$ induces an equivalence of topoi.
	\begin{equation} \label{rel Frob functorial}
		F_{X/S_{0},\conv,\fppf}:(X/\SS)_{\conv,\fppf}\xrightarrow{\sim} (X'/\SS)_{\conv,\fppf}.
	\end{equation}
\end{theorem}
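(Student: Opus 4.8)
The plan is to deduce the statement from Oyama's criterion \ref{lemma adjunction iso}, applied to the functor $\rho:\Conv(X/\SS)\to\Conv(X'/\SS)$ of \ref{basic notation frob}, with both sites equipped with the fppf topology. First I would record that this is the correct setup: by \ref{setting functorial morphism}, applied with base $g=\id_{\SS}$ and the morphism $F_{X/S_{0}}:X\to X'$, the morphism of topoi $F_{X/S_{0},\conv,\fppf}$ is the one attached to the cocontinuous functor $\rho$ via \ref{generality morphism topos}; since $\rho$ is moreover continuous, its inverse image functor is $\widehat{\rho}^{*}$ with no sheafification needed, so $F_{X/S_{0},\conv,\fppf}$ is precisely the morphism $g$ appearing in \ref{lemma adjunction iso}. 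As the fppf topology on $\Conv(X'/\SS)$ is defined by a pretopology \eqref{def topology Conv}, it then suffices to verify hypotheses (i)--(iii) of \ref{lemma adjunction iso} for $u=\rho$.

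Hypotheses (i) and (ii) are already in hand: full faithfulness of $\rho$ is \ref{functor rho fully faithful}, and continuity and cocontinuity of $\rho$ for the fppf topology is \ref{cont cocont func}(ii). The remaining point, hypothesis (iii), is that every object $(\mathfrak{T},u)$ of $\Conv(X'/\SS)$ admits an fppf covering of the form $\{\rho(\mathfrak{Z}_{j},v_{j})\to(\mathfrak{T},u)\}_{j}$ with $(\mathfrak{Z}_{j},v_{j})\in\Conv(X/\SS)$. I would first use \ref{lemma condition 4}(iii) to pass to a Zariski covering $\{(\mathfrak{T}_{j},u_{j})\to(\mathfrak{T},u)\}_{j}$ by objects with $\mathfrak{T}_{j}$ affine and $u_{j}$ factoring through an affine open of $X'$, and then apply \ref{lemma condition 4}(i) to each $(\mathfrak{T}_{j},u_{j})$ to obtain an object $(\mathfrak{Z}_{j},v_{j})$ of $\Conv(X/\SS)$ together with an fppf covering $\{\rho(\mathfrak{Z}_{j},v_{j})\to(\mathfrak{T}_{j},u_{j})\}$. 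Since a Zariski covering is an fppf covering and fppf coverings compose \eqref{def topology Conv}, the composite family $\{\rho(\mathfrak{Z}_{j},v_{j})\to(\mathfrak{T},u)\}_{j}$ is an fppf covering of the shape required in \ref{lemma adjunction iso}(iii), and \ref{lemma adjunction iso} then delivers the asserted equivalence.

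The only genuine content here, and the step I expect to be the main obstacle, is hypothesis (iii) — that is, \ref{lemma condition 4}(i) — which is exactly where the fppf topology is indispensable: the relative Frobenius $F_{X/S_{0}}$ is not locally split by Zariski maps on the convergent site, so one must exhibit a genuinely flat local model for it. The idea is to embed $U=F_{X/S_{0}}^{-1}(U')$ into a relative affine space, choose a smooth formal lift $\YY$ of that affine space, and lift $\iota'\circ u$ to an $\SS$-morphism $\mathfrak{T}\to\YY$; then $F_{X/S_{0}}$ is dominated by the coordinatewise $p$-power map $F:\YY\to\YY$, which is faithfully flat of finite type, so that $\{\rho(\mathfrak{T}\times_{\YY,F}\YY)\to(\mathfrak{T},u)\}$ is an fppf (but not Zariski) covering. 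I would also note in passing that the smallness hypothesis on the source site in \ref{lemma adjunction iso} is handled in the usual way, by restricting to an essentially small full subcategory of $\Conv(X/\SS)$ stable under the operations used above.
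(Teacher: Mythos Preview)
Your proposal is correct and follows exactly the paper's approach: the paper's proof consists of the single sentence ``The theorem follows from \ref{lemma adjunction iso}, \ref{cont cocont func}, \ref{functor rho fully faithful} and \ref{lemma condition 4},'' and you have spelled out precisely how these four ingredients feed into the three hypotheses of \ref{lemma adjunction iso}. Your expansion of hypothesis~(iii) via \ref{lemma condition 4}(iii) followed by \ref{lemma condition 4}(i), and the remark on where the fppf topology is essential, are accurate elaborations of what the paper leaves implicit.
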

\begin{proof}
	The theorem follows from \ref{lemma adjunction iso}, \ref{cont cocont func}, \ref{functor rho fully faithful} and \ref{lemma condition 4}.
\end{proof}

\begin{prop} \label{Frob descent isocrystals}
	Keep the assumption of \ref{Frob descent fppf topos}.
	The inverse image and the direct image functors of $F_{X/S_0,\conv,\zar}$ induce equivalences of categories quasi-inverse to each other \eqref{def convergent crystals}
	\begin{equation} \label{equi crystal cat}
		\Iso^{\dagger}(X/\SS)\rightleftarrows\Iso^{\dagger}(X'/\SS).
	\end{equation}
\end{prop}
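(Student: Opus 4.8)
The plan is to transfer the fppf equivalence of topoi \ref{Frob descent fppf topos} to convergent isocrystals, and then descend to the Zariski topology by means of the comparison \ref{coh fppf descent} and the commutative square \eqref{functorial map sigma fppf Zar}.

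First I would observe that $F_{X/S_0,\conv,\fppf}$ is an equivalence of \emph{ringed} topoi: by \eqref{description of pullback}, for every object $\mathfrak T$ of $\Conv(X/\SS)$ one has $(F^{*}_{X/S_0,\conv,\fppf}\mathscr O_{X'/\SS}[\frac1p])_{\mathfrak T}=(\mathscr O_{X'/\SS}[\frac1p])_{\rho(\mathfrak T)}=\Gamma(\mathfrak T,\mathscr O_{\mathfrak T}[\frac1p])=(\mathscr O_{X/\SS}[\frac1p])_{\mathfrak T}$, compatibly with transition morphisms, so $F^{*}_{X/S_0,\conv,\fppf}(\mathscr O_{X'/\SS}[\frac1p])=\mathscr O_{X/\SS}[\frac1p]$. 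Combined with \ref{Frob descent fppf topos}, this shows that $F^{*}_{X/S_0,\conv,\fppf}$ and $F_{X/S_0,\conv,\fppf*}$ are quasi-inverse equivalences between the category of $\mathscr O_{X'/\SS}[\frac1p]$-modules of $(X'/\SS)_{\conv,\fppf}$ and that of $\mathscr O_{X/\SS}[\frac1p]$-modules of $(X/\SS)_{\conv,\fppf}$.

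The crux is to show that these equivalences preserve coherent crystals (i.e. convergent isocrystals of the fppf topoi, \ref{coh fppf descent}). That $F^{*}_{X/S_0,\conv,\fppf}$ does so is already recorded after \ref{def convergent crystals}; by the previous paragraph it therefore suffices to prove: if $\mathscr G$ is an $\mathscr O_{X'/\SS}[\frac1p]$-module of $(X'/\SS)_{\conv,\fppf}$ such that $\mathscr F:=F^{*}_{X/S_0,\conv,\fppf}(\mathscr G)$ is a coherent crystal, then $\mathscr G$ is a coherent crystal. Since $\mathscr F_{\mathfrak T}=\mathscr G_{\rho(\mathfrak T)}$ and the transition morphisms of $\mathscr F$ at a morphism $h$ coincide with those of $\mathscr G$ at $\rho(h)$ \eqref{description of pullback}, one only has to control $\mathscr G$ on objects and morphisms of $\Conv(X'/\SS)$ that become, after an fppf covering, images under $\rho$ --- which is exactly what \ref{lemma condition 4} and \ref{lemma rho 4 comp} supply. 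Concretely: for coherence of $\mathscr G_{\mathfrak T}$ one reduces by \ref{lemma condition 4}(iii) to $(\mathfrak T,u)$ of the type of \ref{lemma condition 4}(i), picks an fppf covering $f\colon\rho(\mathfrak Z)\to(\mathfrak T,u)$ as in \ref{lemma condition 4}(i), and uses \ref{lemma rho 4 comp} (which gives $\rho(\mathfrak Z\times_{\mathfrak T}\mathfrak Z)=\rho(\mathfrak Z)\times_{\mathfrak T}\rho(\mathfrak Z)$) to read off, from the crystal isomorphisms of $\mathscr F$ over the two projections $\mathfrak Z\times_{\mathfrak T}\mathfrak Z\rightrightarrows\mathfrak Z$, a descent datum on the coherent $\mathscr O_{\mathfrak Z}[\frac1p]$-module $\mathscr F_{\mathfrak Z}$, the cocycle condition coming from \ref{description of mods}(c); by the Gabber--Bosch--G\"ortz descent \ref{descent fppf} this descent datum is effective, so $\mathscr G_{\mathfrak T}$ is the fppf sheaf attached to a coherent $\mathscr O_{\mathfrak T}[\frac1p]$-module and $c_f$ is an isomorphism. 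The crystal condition for $\mathscr G$ is handled similarly: for a morphism $g$ of $\Conv(X'/\SS)$ one reduces Zariski-locally on the target to $(\mathfrak T,u)$ as above, covers $g$ by the Cartesian square of \ref{lemma condition 4}(ii) over a morphism $\rho(h)$ with $h$ in $\Conv(X/\SS)$, and deduces that $c_g$ is an isomorphism from \ref{description of mods}(c), using that the transition morphism of $\mathscr G$ at $\rho(h)$ equals that of $\mathscr F$ at $h$ (an isomorphism, $\mathscr F$ being a crystal) and that the transition morphisms of $\mathscr G$ along the covering maps $f$ and $\varphi$ are isomorphisms by the coherence step.

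Granting this, I would conclude as follows. By \ref{coh fppf descent}, $(\alpha_{*},\alpha^{*})$ and $(\alpha'_{*},\alpha'^{*})$ are quasi-inverse equivalences between $\Iso^{\dagger}(X/\SS)$ (resp. $\Iso^{\dagger}(X'/\SS)$) and the category of coherent crystals of $(X/\SS)_{\conv,\fppf}$ (resp. of $(X'/\SS)_{\conv,\fppf}$), and \eqref{functorial map sigma fppf Zar} yields $\alpha_{*}\circ F^{*}_{X/S_0,\conv,\fppf}=F^{*}_{X/S_0,\conv,\zar}$ on coherent crystals (and likewise $\alpha'_{*}\circ F_{X/S_0,\conv,\fppf*}\circ\alpha^{*}=F_{X/S_0,\conv,\zar*}$ on $\Iso^{\dagger}(X/\SS)$, since $\alpha_{*}\alpha^{*}=\id$ there). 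Transporting the quasi-inverse pair $(F^{*}_{X/S_0,\conv,\fppf},F_{X/S_0,\conv,\fppf*})$ on fppf coherent crystals through these equivalences then shows that $F^{*}_{X/S_0,\conv,\zar}$ and $F_{X/S_0,\conv,\zar*}$ preserve coherent crystals and are quasi-inverse equivalences $\Iso^{\dagger}(X/\SS)\rightleftarrows\Iso^{\dagger}(X'/\SS)$, as desired. The main obstacle I expect is the converse half of the key lemma --- descending coherence and the crystal property along the Frobenius fppf-coverings of \ref{lemma condition 4} --- for which one must combine the structural lemmas \ref{lemma condition 4} and \ref{lemma rho 4 comp} with faithfully flat descent for coherent modules \ref{descent fppf}; the rest (compatibility of the morphisms of topoi and bookkeeping of transition morphisms) is routine.
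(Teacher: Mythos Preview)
Your proposal is correct and follows essentially the same approach as the paper: both work in the fppf topology via \ref{coh fppf descent}, use the equivalence \ref{Frob descent fppf topos} together with \ref{lemma condition 4} and \ref{lemma rho 4 comp} to produce an fppf covering $\rho(\mathfrak Z)\to\mathfrak T$ on which the module in question is known to be a coherent crystal, and then apply Gabber--Bosch--G\"ortz descent \ref{descent fppf} to conclude coherence on $\mathfrak T$, with the crystal property handled by the same covering trick and \ref{description of mods}(c). The only cosmetic difference is that the paper phrases the key step as ``$F_{X/S_0*}$ preserves coherent crystals'' (invoking $F^{*}F_{*}\simeq\id$ to identify $(F_{*}\mathscr F)_{\rho(\mathfrak Z)}$ with $\mathscr F_{\mathfrak Z}$), whereas you phrase it as ``$F^{*}_{X/S_0}$ reflects coherent crystals''; these are equivalent formulations once the topos equivalence is in place, and the descent argument is identical.
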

\begin{proof}
	By \ref{coh fppf descent}, convergent isocrystals are sheaves for fppf topology and we work with fppf topology in this proof. We write simply \eqref{rel Frob functorial} for $F_{X/S_0}$ and we will show that the direct image and inverse image functors of $F_{X/S_0}$ send coherent crystals of $\mathscr{O}_{X/\SS}[\frac{1}{p}]$-modules to coherent crystals of $\mathscr{O}_{X'/\SS}[\frac{1}{p}]$-modules. The assertion for inverse image follows from \eqref{description of pullback} and we will prove it for direct image.

Let $\mathscr{F}$ be coherent crystal of $\mathscr{O}_{X/\SS}[\frac{1}{p}]$-modules and $(\mathfrak{T},u)$ an object of $\Conv(X'/\SS)$. We first show that $(F_{X/S_0*}(\mathscr{F}))_{\mathfrak{T}}$ is coherent.
By \ref{lemma condition 4}(iii), we may assume that $(\mathfrak{T},u)$ satisifies conditions of \ref{lemma condition 4}(i).
Then, by \ref{lemma condition 4}(ii) and \ref{lemma rho 4 comp}, there exist objects $\mathfrak{Z}$ and $\mathfrak{Z}\times_{\mathfrak{T}}\mathfrak{Z}$ of $\Conv(X/\SS)$, a fppf covering $\{f:\rho(\mathfrak{Z})\to \mathfrak{T}\}$ and two morphisms $p_{1},p_{2}:\mathfrak{Z}\times_{\mathfrak{T}}\mathfrak{Z}\to \mathfrak{Z}$ such that $\rho(\mathfrak{Z}\times_{\mathfrak{T}}\mathfrak{Z})=\rho(\mathfrak{Z})\times_{\mathfrak{T}}\rho(\mathfrak{Z})$ and that $\rho(p_{1})$ and $\rho(p_{2})$ are the canonical projections of $\rho(\mathfrak{Z})\times_{\mathfrak{T}}\rho(\mathfrak{Z})$. In particular, the morphism of formal schemes $\mathfrak{Z}\times_{\mathfrak{T}}\mathfrak{Z}\to \mathfrak{Z}$ attached to $p_{1}$ (resp. $p_{2}$) is the projection on the first (resp. second) component.

Since the adjunction morphism $F_{X/S_0}^{*}F_{X/S_0*}\to \id$ is an isomorphism \eqref{Frob descent fppf topos}, we have \eqref{description of pullback}
\begin{equation}\label{C-1 calcul app}
	(F_{X/S_0*}(\mathscr{F}))_{\rho(\mathfrak{Z})}=\mathscr{F}_{\mathfrak{Z}}, \qquad (F_{X/S_0*}(\mathscr{F}))_{\rho(\mathfrak{Z}\times_{\mathfrak{T}}\mathfrak{Z})}=\mathscr{F}_{\mathfrak{Z}\times_{\mathfrak{T}}\mathfrak{Z}}.
\end{equation}

Since $\mathscr{F}$ is a crystal, we have $\mathscr{O}_{\mathfrak{Z}\times_{\mathfrak{T}}\mathfrak{Z}}$-linear isomorphisms
\begin{equation}
	p_{2}^{*}(\mathscr{F}_{\mathfrak{Z}})\xrightarrow[\sim]{c_{p_{2}}} \mathscr{F}_{\mathfrak{Z}\times_{\mathfrak{T}}\mathfrak{Z}}\xleftarrow[\sim]{c_{p_{1}}} p_{1}^{*}(\mathscr{F}_{\mathfrak{Z}}).
\end{equation}
Then we obtain a descent datum $(\mathscr{F}_{\mathfrak{Z}},c_{p_{1}}^{-1}\circ c_{p_{2}})$ for the fppf covering $\{f:\mathfrak{Z}\to \mathfrak{T}\}$.
By \ref{descent fppf}, there exists a coherent $\mathscr{O}_{\mathfrak{T}}[\frac{1}{p}]$-module $\mathscr{M}$ and a canonical $\mathscr{O}_{\mathfrak{Z}}$-linear isomorphism $f^{*}(\mathscr{M})\xrightarrow{\sim} \mathscr{F}_{\mathfrak{Z}}$.

On the other hand, since $F_{X/S_0*}(\mathscr{F})$ is a sheaf fppf topology, there exists an exact sequence
\begin{equation}
	0\to (F_{X/S_0*}(\mathscr{F}))(\mathfrak{T})\to (F_{X/S_0*}(\mathscr{F}))(\rho(\mathfrak{Z}))\to (F_{X/S_0*}(\mathscr{F}))(\rho(\mathfrak{Z}\times_{\mathfrak{T}}\mathfrak{Z})).
\end{equation}
By \eqref{C-1 calcul app}, we deduce an $\mathscr{O}_{\mathfrak{T}}$-linear isomorphism $\mathscr{M}\xrightarrow{\sim}(F_{X/S_0*}(\mathscr{F}))_{\mathfrak{T}}$. In particular, $(F_{X/S_0*}(\mathscr{F}))_{\mathfrak{T}}$ is coherent. Hence $F_{X/S_0*}(\mathscr{F})$ is coherent.

Following the same argument as in the second part of the proof of (\cite{Xu} 8.15), we show that for every morphism $g$ of $\Conv(X/\SS)$, the transition morphism $c_{g}$ associated to $F_{X/S_0*}(\mathscr{F})$ is an isomorphism, i.e. $F_{X/S_0*}(\mathscr{F})$ is a crystal.
\end{proof}

\begin{prop} \label{thm comp coh}
	We consider the following diagram \eqref{def uXS conv}:
	\begin{equation}
		\xymatrixcolsep{4pc}\xymatrix{
			(X/\SS)_{\conv,\zar} \ar[r]^{F_{X/S_0,\conv,\zar}} \ar[d]_{u_{X/\SS}} & (X'/\SS)_{\conv,\zar} \ar[d]^{u_{X'/\SS}} \\
			X_{\zar}\ar[r]^{F_{X/S_0}}& X'_{\zar} }
	\end{equation}
	Let $\mathscr{E}$ be a convergent isocrystal of $(X/\rW)_{\conv,\zar}$ and denote the structure morphism $X\to S_{0}$ by $f$. Then there exists a canonical isomorphism in the derived category $\rD(X_{\zar},f^{-1}(\mathscr{O}_{\SS}))$
	\begin{equation}
		F_{X/S_0*}(\rR u_{X/\SS *}(\mathscr{E}))\xrightarrow{\sim} \rR u_{X'/\SS *}(F_{X/S_0\conv,\zar*}(\mathscr{E})).
	\end{equation}
\end{prop}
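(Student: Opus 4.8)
The plan is to argue entirely at the level of topoi, with no lifting of $X$ involved (the statement does not assume $X$ smooth). The leverage is that, contrary to its Zariski analogue, the relative Frobenius morphism $F_{X/S_{0},\conv,\fppf}$ of \emph{fppf} convergent topoi is an \emph{equivalence}, by Theorem~\ref{Frob descent fppf topos}. Throughout I would abbreviate $F=F_{X/S_{0}}$ and $F_{\conv,\tau}=F_{X/S_{0},\conv,\tau}$ for $\tau\in\{\zar,\fppf\}$, and use the comparison morphisms $\alpha\colon(X/\SS)_{\conv,\fppf}\to(X/\SS)_{\conv,\zar}$ and $\alpha'\colon(X'/\SS)_{\conv,\fppf}\to(X'/\SS)_{\conv,\zar}$ \eqref{morphism of topoi fppf zar}, together with the commutativity $\alpha'\circ F_{\conv,\fppf}=F_{\conv,\zar}\circ\alpha$ \eqref{functorial map sigma fppf Zar}; combined with the commutativity $u_{X'/\SS}\circ F_{\conv,\zar}=F\circ u_{X/\SS}$ of the square in the statement, this gives $u_{X'/\SS}\circ\alpha'\circ F_{\conv,\fppf}=F\circ u_{X/\SS}\circ\alpha$.

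First I would descend both sides to the fppf convergent topos. The sheaf $\alpha^{*}(\mathscr{E})$ is a convergent isocrystal of $(X/\SS)_{\conv,\fppf}$ by \ref{coh fppf descent}, so $\rR^{i}\alpha_{*}(\alpha^{*}\mathscr{E})=0$ for $i\ge1$ by Corollary~\ref{coh zar fppf}, while $\alpha_{*}\alpha^{*}\mathscr{E}=\mathscr{E}$ again by \ref{coh fppf descent}; hence the unit $\mathscr{E}\to\rR\alpha_{*}(\alpha^{*}\mathscr{E})$ is an isomorphism, and applying $\rR u_{X/\SS*}$ yields a canonical isomorphism $\rR u_{X/\SS*}(\mathscr{E})\xrightarrow{\sim}\rR(u_{X/\SS}\circ\alpha)_{*}(\alpha^{*}\mathscr{E})$. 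Since $F_{\conv,\zar*}(\mathscr{E})$ is again a convergent isocrystal by Corollary~\ref{Frob descent isocrystals}, the same argument on the $X'$-side (applied to $\alpha'^{*}F_{\conv,\zar*}(\mathscr{E})$) gives $\rR u_{X'/\SS*}(F_{\conv,\zar*}\mathscr{E})\xrightarrow{\sim}\rR(u_{X'/\SS}\circ\alpha')_{*}(\alpha'^{*}F_{\conv,\zar*}\mathscr{E})$.

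The core point — the step I expect to be the main obstacle, since it is where the Zariski and fppf forms of the Frobenius descent must be matched compatibly — is to produce a canonical isomorphism $\alpha'^{*}(F_{\conv,\zar*}\mathscr{E})\cong F_{\conv,\fppf*}(\alpha^{*}\mathscr{E})$. Since $F_{\conv,\fppf}$ is an equivalence of topoi, $F_{\conv,\fppf*}$ is quasi-inverse to $F_{\conv,\fppf}^{*}$, hence exact and, by Corollary~\ref{Frob descent isocrystals} (whose proof treats the fppf topology), it carries convergent isocrystals to convergent isocrystals; thus $F_{\conv,\fppf*}(\alpha^{*}\mathscr{E})$ is a convergent isocrystal of $(X'/\SS)_{\conv,\fppf}$. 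Its image under $\alpha'_{*}$ is $\alpha'_{*}F_{\conv,\fppf*}(\alpha^{*}\mathscr{E})=F_{\conv,\zar*}(\alpha_{*}\alpha^{*}\mathscr{E})=F_{\conv,\zar*}(\mathscr{E})$, using $\alpha'\circ F_{\conv,\fppf}=F_{\conv,\zar}\circ\alpha$; and since $\alpha'_{*}$ and $\alpha'^{*}$ are quasi-inverse equivalences on convergent isocrystals (\ref{coh fppf descent}), this forces $F_{\conv,\fppf*}(\alpha^{*}\mathscr{E})\cong\alpha'^{*}(F_{\conv,\zar*}\mathscr{E})$. Here one must take care that the isomorphism is natural enough (hence canonical) for the eventual composite to be well defined.

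Finally I would assemble the pieces. From the above, $\rR u_{X'/\SS*}(F_{\conv,\zar*}\mathscr{E})\cong\rR(u_{X'/\SS}\circ\alpha')_{*}(F_{\conv,\fppf*}\alpha^{*}\mathscr{E})$; because $F_{\conv,\fppf}$ is an equivalence, $F_{\conv,\fppf*}$ is exact and $\rR(u_{X'/\SS}\circ\alpha'\circ F_{\conv,\fppf})_{*}=\rR(u_{X'/\SS}\circ\alpha')_{*}\circ F_{\conv,\fppf*}$, so this equals $\rR(u_{X'/\SS}\circ\alpha'\circ F_{\conv,\fppf})_{*}(\alpha^{*}\mathscr{E})=\rR(F\circ u_{X/\SS}\circ\alpha)_{*}(\alpha^{*}\mathscr{E})$ by the commutativity recalled at the start. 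As the composite of $F=F_{X/S_{0}}$ with the projection $X'\to X$ (a base change of the absolute Frobenius of $S_{0}$, hence a homeomorphism on underlying spaces) is the absolute Frobenius of $X$, which is the identity on underlying spaces, $F$ is a homeomorphism on underlying spaces; thus $F\colon X_{\zar}\to X'_{\zar}$ is an equivalence of topoi and $F_{*}$ is exact, so the last expression equals $F_{*}\bigl(\rR(u_{X/\SS}\circ\alpha)_{*}(\alpha^{*}\mathscr{E})\bigr)\cong F_{X/S_{0}*}(\rR u_{X/\SS*}\mathscr{E})$ by the descent of the second paragraph. Since all the functors above are taken between the relevant ringed topoi (over $f^{-1}(\mathscr{O}_{\SS})$, using $f=f'\circ F$), the resulting isomorphism lives in $\rD(X_{\zar},f^{-1}(\mathscr{O}_{\SS}))$, as required.
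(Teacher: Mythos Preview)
Your proposal is correct and follows essentially the same approach as the paper's own proof: pass to the fppf convergent topos via $\alpha$ using $\rR^{i}\alpha_{*}(\mathscr{E})=0$ for $i\ge1$ (Corollary~\ref{coh zar fppf}), then invoke that $F_{X/S_{0},\conv,\fppf}$ is an equivalence of topoi (Theorem~\ref{Frob descent fppf topos}) together with the fact that $F_{X/S_{0}}\colon X_{\zar}\to X'_{\zar}$ is an equivalence. The paper compresses all of this into three lines, whereas you have written out the intermediate identification $\alpha'^{*}(F_{\conv,\zar*}\mathscr{E})\cong F_{\conv,\fppf*}(\alpha^{*}\mathscr{E})$ and the chain of derived-functor isomorphisms explicitly; but the ingredients and the logical flow are the same.
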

\begin{proof}
	We consider $\mathscr{E}$ as a coherent crystal of $\mathscr{O}_{X/\SS}[\frac{1}{p}]$-module of $(X/\SS)_{\conv,\fppf}$. Then, $\alpha_{*}(\mathscr{E})$ and $\mathscr{E}$ are equal as presheaves and $\rR^{i}\alpha_{*}(\mathscr{E})=0$ for $i\ge 1$ \eqref{coh zar fppf}. 
	The assertion follows from \ref{Frob descent fppf topos} and the fact that $F_{X/S_{0}}:X_{\zar}\to X'_{\zar}$ is an equivalence of topoi.
\end{proof}
\begin{coro} \label{Comp dR complexes}
	Keep the assumption of \ref{thm comp coh} and suppose that there exists smooth liftings $\XX$ of $X$ and $\XX'$ of $X'$ over $\SS$. 
	Let $f:X\to S_{0}$ be the canonical morphism.
	Then there exists a canonical isomorphism between the de Rham compelexes of $\mathscr{E}$ and of $F_{X/S_{0},\conv,\zar*}(\mathscr{E})$ in $\rD(X'_{\zar},f^{-1}(\mathscr{O}_{\SS}))$
	\begin{equation}
		F_{X/S_{0}*}(\mathscr{E}_{\XX}\otimes_{\mathscr{O}_{\XX}}\widehat{\Omega}_{\XX/\SS}^{\bullet}) \xrightarrow{\sim} (F_{X/S_{0},\conv,\zar*}\mathscr{E})_{\XX'}\otimes_{\mathscr{O}_{\XX'}}\widehat{\Omega}_{\XX'/\SS}^{\bullet}.
	\end{equation}
\end{coro}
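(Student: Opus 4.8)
The plan is to deduce this corollary by combining the de Rham description of $\rR u_{X/\SS *}$ recalled in \ref{def uXS conv} with the comparison isomorphism of \ref{thm comp coh}. Since the asserted isomorphism lives in the derived category of sheaves on $X'_{\zar}$ and the relative Frobenius $F_{X/S_{0}}$ is a universal homeomorphism, so that $F_{X/S_{0}*}\colon X_{\zar}\to X'_{\zar}$ is an equivalence of topoi, the statement is local on $\SS$; I would first reduce to the case where $\SS$ is affine, hence separated, which is precisely the hypothesis under which the isomorphism \eqref{direct image u dR} of \ref{def uXS conv} is available.

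Granting that, the argument is a short diagram chase. First I would apply \eqref{direct image u dR} to the smooth lifting $\XX$ of $X$ over $\SS$, obtaining a canonical isomorphism $\rR u_{X/\SS *}(\mathscr{E})\xrightarrow{\sim}\mathscr{E}_{\XX}\otimes_{\mathscr{O}_{\XX}}\widehat{\Omega}_{\XX/\SS}^{\bullet}$, and push it forward along the equivalence $F_{X/S_{0}*}$. Next I would apply \eqref{direct image u dR} to the smooth lifting $\XX'$ of $X'$ over $\SS$ and to the convergent isocrystal $F_{X/S_{0},\conv,\zar*}(\mathscr{E})$ — which is again a convergent isocrystal by \ref{Frob descent isocrystals} — obtaining a canonical isomorphism $\rR u_{X'/\SS *}(F_{X/S_{0},\conv,\zar*}\mathscr{E})\xrightarrow{\sim}(F_{X/S_{0},\conv,\zar*}\mathscr{E})_{\XX'}\otimes_{\mathscr{O}_{\XX'}}\widehat{\Omega}_{\XX'/\SS}^{\bullet}$. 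Splicing these two identifications onto the canonical isomorphism $F_{X/S_{0}*}(\rR u_{X/\SS *}(\mathscr{E}))\xrightarrow{\sim}\rR u_{X'/\SS *}(F_{X/S_{0},\conv,\zar*}(\mathscr{E}))$ of \ref{thm comp coh} produces the desired isomorphism; since each arrow in the chain is canonical, so is the composite.

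I do not expect a genuine obstacle here. The only points to watch are the separatedness hypothesis behind \eqref{direct image u dR}, which is handled by the localization in the first step, and the bookkeeping of base rings: the de Rham complexes are complexes of $\mathscr{O}_{\XX}[\frac{1}{p}]$- (resp. $\mathscr{O}_{\XX'}[\frac{1}{p}]$-) modules, hence of $f^{-1}(\mathscr{O}_{\SS})$-modules, so the comparison makes sense in $\rD(X'_{\zar},f^{-1}(\mathscr{O}_{\SS}))$, and the isomorphisms furnished by \ref{def uXS conv} and \ref{thm comp coh} are compatible with this forgetting of structure. I would emphasize that no lift of the relative Frobenius $X\to X'$ to a morphism $\XX\to\XX'$ intervenes: all the content is concentrated in \ref{thm comp coh}, which itself rests only on the equivalence of fppf convergent topoi \ref{Frob descent fppf topos}.
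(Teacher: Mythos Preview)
Your proposal is correct and follows the same approach as the paper: the paper's proof is the single line ``It follows from \eqref{direct image u dR} and \ref{thm comp coh},'' and you have spelled out exactly that chain, including the extra care about the separatedness hypothesis on $\SS$ needed to invoke \eqref{direct image u dR}.
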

\begin{proof}
	It follows from \eqref{direct image u dR} and \ref{thm comp coh}.
\end{proof}

\begin{theorem} \label{higher direct image of con Fiso weak}
	Let $g:X\to Y$ a smooth proper morphism of smooth $k$-schemes and $\mathscr{E}$ be a convergent isocrystal of $\Conv(X/\rW)_{\conv,\tau}$. Then $\rR^{i}g_{\conv,\tau*}(\mathscr{E})$ is a convergent isocrystal of $\Conv(Y/\rW)_{\conv,\tau}$ for every $i\ge 0$. 
\end{theorem}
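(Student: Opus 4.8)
The plan is to promote Proposition \ref{p-adic convergent isocry}, which already exhibits $\rR^{i}g_{\conv,\tau*}(\mathscr{E})$ as a coherent crystal over the subcategory $\pConv(Y/\rW)$ of $p$-adic enlargements, to a statement over the whole category $\Conv(Y/\rW)$, by means of the Frobenius descent (\ref{Frob descent fppf topos}, \ref{Frob descent isocrystals}) and a Dwork-type trick. First I would reduce to $\tau=\fppf$: the case $\tau=\zar$ will then follow exactly as the fppf case of \ref{p-adic convergent isocry} follows from its Zariski case (cf. \ref{coh zar fppf}, \eqref{rel cohomology associated}, \ref{coh fppf descent}). Writing $\mathscr{F}^{i}=\rR^{i}g_{\conv,\fppf*}(\mathscr{E})$ (an $\mathscr{O}_{Y/\rW}[\frac{1}{p}]$-module of $(Y/\rW)_{\conv,\fppf}$), by \ref{def convergent crystals} and \ref{coh fppf descent} it suffices to show that $\mathscr{F}^{i}_{\mathfrak{T}}$ is coherent for every object $(\mathfrak{T},u)$ of $\Conv(Y/\rW)$ and that the transition morphism $c_{f}$ of $\mathscr{F}^{i}$ is an isomorphism for every morphism $f$ of $\Conv(Y/\rW)$.

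Next I would set up the iteration. Since $k$ is perfect, $F_{S_{0}}$ is an isomorphism, so \ref{Frob descent fppf topos} and \ref{Frob descent isocrystals} apply and can be iterated: for $m\ge 1$ put $X^{(p^{m})}=X\times_{k,F_{k}^{m}}k$, $Y^{(p^{m})}=Y\times_{k,F_{k}^{m}}k$ (still smooth over $k$), let $g^{(p^{m})}:X^{(p^{m})}\to Y^{(p^{m})}$ be the base change of $g$ (still smooth proper), and let $\rho_{X}^{(m)}:\Conv(X/\rW)\to\Conv(X^{(p^{m})}/\rW)$, $\rho_{Y}^{(m)}:\Conv(Y/\rW)\to\Conv(Y^{(p^{m})}/\rW)$ be the $m$-fold iterates of the functor $\rho$ of \ref{basic notation frob}, with the same notation for the induced morphisms of fppf topoi, which by \ref{Frob descent fppf topos} are equivalences. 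Using the functoriality of the relative Frobenius (\ref{basic notation frob}), the square of functors relating $\rho_{X}^{(m)}$, $\rho_{Y}^{(m)}$ and the functors induced by $g$ and $g^{(p^{m})}$ commutes, hence so does the corresponding square of topoi; since $\rho_{X}^{(m)}$ and $\rho_{Y}^{(m)}$ are equivalences, their direct image functors are exact, and applying $\rR^{i}$ to the identity $\rR g^{(p^{m})}_{\conv,\fppf*}\circ\rho_{X*}^{(m)}=\rho_{Y*}^{(m)}\circ\rR g_{\conv,\fppf*}$ evaluated on $\mathscr{E}$ will give a canonical isomorphism $\rho_{Y*}^{(m)}(\mathscr{F}^{i})\xrightarrow{\sim}\rR^{i}g^{(p^{m})}_{\conv,\fppf*}(\mathscr{E}^{(m)})$, where $\mathscr{E}^{(m)}:=\rho_{X*}^{(m)}(\mathscr{E})$ is a convergent isocrystal of $(X^{(p^{m})}/\rW)_{\conv,\fppf}$ by \ref{Frob descent isocrystals}. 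By \ref{p-adic convergent isocry} applied to $g^{(p^{m})}$, the module $\rho_{Y*}^{(m)}(\mathscr{F}^{i})$ is then coherent at every object of $\pConv(Y^{(p^{m})}/\rW)$ and has isomorphic transition morphisms over $\pConv(Y^{(p^{m})}/\rW)$.

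The heart of the argument is the Dwork trick: for $m$ large, $\rho_{Y}^{(m)}$ should carry any prescribed object or morphism of $\Conv(Y/\rW)$ into $\pConv(Y^{(p^{m})}/\rW)$. Given $(\mathfrak{T},u)\in\Conv(Y/\rW)$, the special fibre $T=\mathfrak{T}_{1}$ is of finite type over $k$, so the nilradical $\mathscr{N}$ of $\mathscr{O}_{T}$ satisfies $\mathscr{N}^{n}=0$ for some $n\ge 1$; I would take $m$ with $p^{m}\ge n$, so that the $m$-fold absolute Frobenius $F_{T}^{m}$ annihilates $\mathscr{N}$ and thus factors as $T\xrightarrow{\pi}T_{0}\xrightarrow{\iota}T$ with $\pi\circ\iota=F_{T_{0}}^{m}$. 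Combining $F_{Y}^{m}\circ u=u\circ F_{T_{0}}^{m}$ with the fact that the projection $Y^{(p^{m})}\to Y$ is an isomorphism ($k$ perfect), the structural morphism $T_{0}\to Y^{(p^{m})}$ of $\rho_{Y}^{(m)}(\mathfrak{T},u)$ extends along $\iota$ to $T\to Y^{(p^{m})}$, so $\rho_{Y}^{(m)}(\mathfrak{T},u)\in\pConv(Y^{(p^{m})}/\rW)$. Since $\rho_{Y}^{(m)}$ is an equivalence of topoi, $\mathscr{F}^{i}\cong\rho_{Y}^{(m)*}(\rho_{Y*}^{(m)}\mathscr{F}^{i})$, so by \eqref{description of pullback} and \ref{setting functorial morphism} one gets $\mathscr{F}^{i}_{\mathfrak{T}}\cong(\rho_{Y*}^{(m)}\mathscr{F}^{i})_{\rho_{Y}^{(m)}(\mathfrak{T},u)}$, compatibly with transition morphisms; the preceding paragraph then gives that $\mathscr{F}^{i}_{\mathfrak{T}}$ is coherent. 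For a morphism $f:(\mathfrak{T}_{1},u_{1})\to(\mathfrak{T}_{2},u_{2})$ of $\Conv(Y/\rW)$, taking $m$ large enough for both $\mathfrak{T}_{1}$ and $\mathfrak{T}_{2}$ makes $\rho_{Y}^{(m)}(f)$ a morphism of $\pConv(Y^{(p^{m})}/\rW)$, and $c_{f}$ is identified with the transition morphism of $\rho_{Y*}^{(m)}(\mathscr{F}^{i})$ attached to $\rho_{Y}^{(m)}(f)$, hence an isomorphism. This will show that $\mathscr{F}^{i}$ is a coherent crystal, i.e. a convergent isocrystal, of $(Y/\rW)_{\conv,\fppf}$. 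The main obstacle I expect is this Dwork trick — checking carefully that iterating the relative Frobenius enough times turns an arbitrary enlargement of $Y$ into a $p$-adic enlargement, and that higher direct images are compatible with the Frobenius-descent equivalence together with its transition data; the remaining steps are formal consequences of \ref{p-adic convergent isocry}, \ref{Frob descent fppf topos} and \ref{Frob descent isocrystals}.
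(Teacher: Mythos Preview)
Your proposal is correct and follows essentially the same strategy as the paper: reduce to the fppf topology, then use Frobenius descent (\ref{Frob descent fppf topos}, \ref{Frob descent isocrystals}) together with Dwork's trick to reduce to the $p$-adic enlargement case already handled by \ref{p-adic convergent isocry}. The only difference is organizational: the paper introduces the filtration $\Conv^{(n)}(Y/\rW)$ (objects where $u$ lifts to the subscheme cut out by $\{x:x^{p^{n}}=0\}$), proves that $\rho$ sends $\Conv^{(n+1)}$ to $\Conv^{(n)}$, and runs an induction on $n$ one Frobenius at a time, whereas you pick $m$ large enough for a given object or morphism and apply the $m$-fold Frobenius descent in one shot; the content is the same.
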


Inspired by Ogus' arguments in \cite{Ogus84}, we use \ref{p-adic convergent isocry} and Dwork's trick to prove \ref{higher direct image of con Fiso weak}. To do this, we introduce certain subcategories of $\Conv(X/\rW)$.

\begin{definition} \label{n p-adic enlargements}
	(i) Let $n$ be an integer $\ge 0$ and $T$ a $k$-scheme. 
	We denote by $T^{(n)}$ the closed subscheme of $T$ defined by the ideal sheaf $\{x\in \mathscr{O}_{T}| x^{p^{n}}=0\}$.

	(ii) We denote by $\Conv^{(n)}(X/\rW)$ the full subcategory of $\Conv(X/\rW)$ consisting of objects $(\mathfrak{T},u)$ such that $u:T_{0}\to X$ can be lifted to a $k$-morphism $\widetilde{u}:T^{(n)}\to X$.
\end{definition}

	Given an object $(\mathfrak{T},u)$ of $\Conv^{(n)}(X/\rW)$ and a morphism $(\mathfrak{T}',u')\to (\mathfrak{T},u)$ of $\Conv(X/\rW)$, then $(\mathfrak{T}',u')$ is also an object of $\Conv^{(n)}(X/\rW)$. In particular, $T^{(0)}=T$ and $\Conv^{(0)}(X/\rW)$ coincides with $\pConv(X/\rW)$ \eqref{padic enlargements}. 

\begin{lemma} \label{lemma rho Dwork's trick}
	The functor $\rho$ \eqref{functorial functor Frob} sends $\Conv^{(n+1)}(X/\rW)$ to $\Conv^{(n)}(X'/\rW)$.
\end{lemma}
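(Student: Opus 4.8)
The statement concerns only the special fibre $T=\mathfrak{T}_{1}$ and the relative Frobenius $F_{X/S_{0}}$, so the plan is as follows. Given an object $(\mathfrak{T},u)$ of $\Conv^{(n+1)}(X/\rW)$ --- that is, a $k$-morphism $u\colon T_{0}\to X$ admitting a lift $\widetilde u\colon T^{(n+1)}\to X$ of $u$ along the closed immersion $T_{0}\hookrightarrow T^{(n+1)}$ --- I will construct a $k$-morphism $\widetilde v\colon T^{(n)}\to X'$ lifting $F_{X/S_{0}}\circ u$ along $T_{0}\hookrightarrow T^{(n)}$. By \ref{n p-adic enlargements}, this exhibits $\rho(\mathfrak{T},u)=(\mathfrak{T},F_{X/S_{0}}\circ u)$ as an object of $\Conv^{(n)}(X'/\rW)$, which is the claim.

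The key geometric input is a canonical morphism $\psi_{n}\colon T^{(n)}\to T^{(n+1)}$. Write $\iota_{m}\colon T^{(m)}\hookrightarrow T$ for the canonical closed immersion and $F_{T}\colon T\to T$ for the absolute Frobenius. Since $x^{p^{n+1}}=0$ forces $(x^{p})^{p^{n}}=0$, the $p$-th power endomorphism of $\mathscr{O}_{T}$ carries the ideal sheaf $\{x\mid x^{p^{n+1}}=0\}$ defining $T^{(n+1)}$ into the ideal sheaf $\{x\mid x^{p^{n}}=0\}$ defining $T^{(n)}$; equivalently, $F_{T}\circ\iota_{n}$ factors uniquely through the closed immersion $\iota_{n+1}$, and I take $\psi_{n}$ to be the morphism with $\iota_{n+1}\circ\psi_{n}=F_{T}\circ\iota_{n}$. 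Two consequences, both formal once the naturality of the absolute Frobenius is invoked, will be used: (a) the composite $T^{(n)}\xrightarrow{\psi_{n}}T^{(n+1)}\to S_{0}$ equals the structure morphism $T^{(n)}\to S_{0}$ followed by $F_{S_{0}}$, because $F_{T}$ covers $F_{S_{0}}$; (b) the composite $T_{0}\hookrightarrow T^{(n)}\xrightarrow{\psi_{n}}T^{(n+1)}$ equals $T_{0}\xrightarrow{F_{T_{0}}}T_{0}\hookrightarrow T^{(n+1)}$, because $F_{T}$ restricts on $T_{0}$ to $F_{T_{0}}$ followed by $T_{0}\hookrightarrow T$.

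Finally I would set $h=\widetilde u\circ\psi_{n}\colon T^{(n)}\to X$. Since $\widetilde u$ is a morphism over $S_{0}$, consequence (a) shows that $h$ followed by $X\to S_{0}$ equals the structure morphism $T^{(n)}\to S_{0}$ followed by $F_{S_{0}}$; by the universal property of $X'=X\times_{S_{0},F_{S_{0}}}S_{0}$, such an $h$ arises, via the first projection $p_{1}\colon X'\to X$, from a unique $S_{0}$-morphism $\widetilde v\colon T^{(n)}\to X'$, so that $p_{1}\circ\widetilde v=h$. Restricting to $T_{0}$ and using consequence (b) together with $\widetilde u\circ(T_{0}\hookrightarrow T^{(n+1)})=u$ gives $p_{1}\circ(\widetilde v|_{T_{0}})=h|_{T_{0}}=u\circ F_{T_{0}}$; on the other hand $p_{1}\circ F_{X/S_{0}}=F_{X}$ and $F_{X}\circ u=u\circ F_{T_{0}}$, so $F_{X/S_{0}}\circ u$ has the same first-projection component $u\circ F_{T_{0}}$. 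By the uniqueness in the universal property, $\widetilde v|_{T_{0}}=F_{X/S_{0}}\circ u$, so $\widetilde v$ is the sought lift. I expect the main (indeed only) subtlety to be the bookkeeping of the Frobenius twists --- the morphism $\psi_{n}$ is not over $S_{0}$ but covers $F_{S_{0}}$, and it is precisely this feature that makes $\widetilde u\circ\psi_{n}$ factor through $X'$ rather than through $X$ --- together with the elementary verification that $\psi_{n}$ is well defined as a morphism of schemes; once these are in hand, everything is formal.
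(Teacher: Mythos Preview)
Your proof is correct and follows essentially the same approach as the paper: both hinge on the observation that $F_{T}\circ\iota_{n}\colon T^{(n)}\to T$ factors through $T^{(n+1)}$ (your $\psi_{n}$), and then compose with $\widetilde{u}$ and pass to $X'$ via the Frobenius twist. The only cosmetic difference is that the paper organizes the passage to $X'$ through the intermediate object $(T^{(n+1)})'$ and the base-changed map $\widetilde{u}'$, whereas you go directly from $\widetilde{u}\circ\psi_{n}$ to $X'$ via the universal property; these produce the same morphism $\widetilde{v}$.
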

\begin{proof}
	Let $(\mathfrak{T},u)$ be an object of $\Conv^{(n+1)}(X/\rW)$ and $\widetilde{u}:T^{(n+1)}\to X$ a lifting of $u$. The absolute Frobenius morphism $T^{(n)}\to T^{(n)}$ factors through the closed subscheme $T^{(n+1)}$ and then the relative Frobenius morphism $F_{T^{(n)}/k}$ factors through $(T^{(n+1)})'$. We have a commutative diagram
	\begin{equation}
		\xymatrixcolsep{4pc}\xymatrix{
			& T_{0} \ar[ld]_{u} \ar[d] \ar[rd]& \\
			X \ar[d]_{F_{X/k}} & T^{(n+1)} \ar[l]_{\widetilde{u}} \ar@{^{(}->}[r] \ar[d]_{F_{T^{(n+1)}/k}} &
			T^{(n)} \ar[d]^{F_{T^{(n)}/k}} \ar[ld]\\
			X' & (T^{(n+1)})' ~\ar[l]_{\widetilde{u}'} \ar@{^{(}->}[r] & (T^{(n)})' } \label{diagram rho}
	\end{equation}
	Then the morphism $F_{X/k}\circ u$ can be lifted to a $k$-morphism $T^{(n)}\to X'$ and the lemma follows.
\end{proof}

\begin{nothing} \label{pf of higher direct image of con Fiso weak}
	\textit{Proof of \ref{higher direct image of con Fiso weak}.}
	By \ref{coh zar fppf}, it suffices to prove the assertion for fppf topology. 
	There exists a convergent isocrystal $\mathscr{G}$ on $\Conv(X'/\rW)$ with $F_{X/k,\conv,\fppf}^{*}(\mathscr{G})\simeq \mathscr{E}$ \eqref{Frob descent isocrystals}. 
	If we set $\mathscr{F}=\rR^{i}g_{\conv,\fppf*}(\mathscr{E})$, $g'=g\otimes_{k,\sigma}k$ and $\mathscr{H}=\rR^{i}g'_{\conv,\fppf*}(\mathscr{G})$, then we have $F_{Y/k,\conv,\fppf}^{*}(\mathscr{H})\simeq \mathscr{F}$ by \ref{Frob descent fppf topos}. 

Each object (resp. morphism) of $\Conv(Y/\rW)$ belongs to a subcategory $\Conv^{(n)}(Y/\rW)$ \eqref{n p-adic enlargements} for some integer $n$. We prove the following assertions by induction:

\textnormal{(i)} For every object $\mathfrak{T}$ of $\Conv^{(n)}(Y/\rW)$, $\mathscr{F}_{\mathfrak{T}}$ is coherent.

\textnormal{(ii)} For every morphism $f$ of $\Conv^{(n)}(Y/\rW)$, the transition morphism $c_{f}$ associated to $\mathscr{F}$ is an isomorphism.

Assertions for $n=0$ are proved in \ref{p-adic convergent isocry}. Suppose that assertions hold for $n\ge 0$ and we prove them for $n+1$. 
Let $(\mathfrak{T},u)$ be an object of $\Conv^{(n+1)}(Y/\rW)$. By \eqref{description of pullback}, we deduce that
\begin{displaymath}
	\mathscr{H}_{\rho(\mathfrak{T})} \xrightarrow{\sim} \mathscr{F}_{\mathfrak{T}}.
\end{displaymath}
By induction hypotheses, for any object $\mathfrak{Z}$ of $\Conv^{(n)}(X'/\rW)$, $\mathscr{H}_{\mathfrak{Z}}$ is coherent. 
Then assertion (i) follows from \ref{lemma rho Dwork's trick} and the induction hypotheses.

Assertion (ii) can be verified in the same way by \ref{setting functorial morphism} and \ref{lemma rho Dwork's trick}. 
\end{nothing}

\begin{nothing} \label{def F-isocrystal}
	We denote by $\sigma:\rW\to \rW$ the Frobenius homomorphism.
	It induces a morphism of topoi $(X'/\rW)_{\conv,\tau}\to (X/\rW)_{\conv,\tau}$ by functoriality. For any sheaf $\mathscr{E}$ of $(X/\rW)_{\conv,\tau}$, we denote by $\mathscr{E}'$ the inverse image of $\mathscr{E}$ to $(X'/\rW)_{\conv,\tau}$. 

	For $\tau\in \{\zar,\fppf\}$, \textit{a convergent $F$-isocrystal of $\Conv(X/\rW)_{\conv,\tau}$} is a pair $(\mathscr{E},\varphi)$ consisting of a convergent isocrystal $\mathscr{E}$ of $(X/\rW)_{\conv,\tau}$ and an isomorphism, called \textit{Frobenius structure}
	\begin{equation}
		\varphi:F_{X/k,\conv,\tau}^{*}(\mathscr{E}')\xrightarrow{\sim} \mathscr{E}.
	\end{equation}
\end{nothing}
\begin{coro} \label{higher direct image of con Fiso weak Frob}
	Keep the assumption of \ref{higher direct image of con Fiso weak} and let $\varphi$ be a Frobenius structure on $\mathscr{E}$. Then, for any $i\ge 0$, the pair $(\rR^{i}g_{\conv,\tau*}(\mathscr{E}),\rR^{i}g_{\conv,\tau*}(\varphi))$ is a convergent $F$-isocrystal of $\Conv(Y/\rW)_{\conv,\tau}$.
\end{coro}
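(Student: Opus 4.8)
The content of the statement beyond \ref{higher direct image of con Fiso weak} is the construction of a Frobenius structure on $\mathscr{F}:=\rR^{i}g_{\conv,\tau*}(\mathscr{E})$; the plan is to produce it by applying $\rR^{i}g_{\conv,\tau*}$ to $\varphi$ and then identifying the source of the resulting isomorphism with $F_{Y/k,\conv,\tau}^{*}(\mathscr{F}')$, where $\mathscr{F}'$ is the inverse image of $\mathscr{F}$ to $(Y'/\rW)_{\conv,\tau}$ along $\sigma$ \ref{def F-isocrystal}. Write $g'=g\otimes_{k,\sigma}k:X'\to Y'$ as in \ref{pf of higher direct image of con Fiso weak}, and let $\sigma_{X,\conv,\tau}:(X'/\rW)_{\conv,\tau}\to (X/\rW)_{\conv,\tau}$ and $\sigma_{Y,\conv,\tau}:(Y'/\rW)_{\conv,\tau}\to (Y/\rW)_{\conv,\tau}$ be the functorial morphisms attached to $\sigma:\rW\to\rW$, so that $\mathscr{E}'=\sigma_{X,\conv,\tau}^{*}(\mathscr{E})$ and $\mathscr{F}'=\sigma_{Y,\conv,\tau}^{*}(\mathscr{F})$. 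The square of $k$-schemes given by $g$, $g'$ and the relative Frobenius morphisms $F_{X/k}$, $F_{Y/k}$, and the one given by $g$, $g'$ and the projections $X'\to X$, $Y'\to Y$, are both commutative, the second being moreover Cartesian since $X'=X\times_{Y}Y'$; they induce commutative squares of convergent topoi by functoriality \ref{setting functorial morphism}.

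The key point is a canonical isomorphism $F_{Y/k,\conv,\tau}^{*}(\mathscr{F}')\xrightarrow{\sim}\rR^{i}g_{\conv,\tau*}(F_{X/k,\conv,\tau}^{*}(\mathscr{E}'))$, obtained by traversing the two squares above. Applying the base change isomorphism \ref{base change coro} to the Cartesian square $X'=X\times_{Y}Y'$ over $\sigma:\Spf(\rW)\to\Spf(\rW)$, and passing to the $i$-th cohomology sheaf (legitimate because $\sigma$ is flat, so $\sigma_{Y,\conv,\tau}^{*}$ is exact), gives $\mathscr{F}'=\sigma_{Y,\conv,\tau}^{*}(\rR^{i}g_{\conv,\tau*}(\mathscr{E}))\xrightarrow{\sim}\rR^{i}g'_{\conv,\tau*}(\mathscr{E}')$. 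It then suffices to construct a canonical isomorphism $F_{Y/k,\conv,\tau}^{*}(\rR^{i}g'_{\conv,\tau*}(\mathscr{G}))\xrightarrow{\sim}\rR^{i}g_{\conv,\tau*}(F_{X/k,\conv,\tau}^{*}(\mathscr{G}))$ for every convergent isocrystal $\mathscr{G}$ of $(X'/\rW)_{\conv,\tau}$. For $\tau=\fppf$ this follows from \ref{Frob descent fppf topos}: there $F_{X/k,\conv,\fppf}$ and $F_{Y/k,\conv,\fppf}$ are equivalences of topoi, so commutativity of the square of topoi yields $\rR g'_{\conv,\fppf*}\circ F_{X/k,\conv,\fppf*}\xrightarrow{\sim}F_{Y/k,\conv,\fppf*}\circ\rR g_{\conv,\fppf*}$; substituting $\mathscr{G}\xrightarrow{\sim}F_{X/k,\conv,\fppf*}(F_{X/k,\conv,\fppf}^{*}(\mathscr{G}))$, applying $F_{Y/k,\conv,\fppf}^{*}$, and passing to $\rR^{i}$ gives the assertion. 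For $\tau=\zar$ one argues in the same way, using instead the equivalences of categories of convergent isocrystals of \ref{Frob descent isocrystals} together with \ref{higher direct image of con Fiso weak} (so that $\rR^{i}g_{\conv,\zar*}$ and $\rR^{i}g'_{\conv,\zar*}$ preserve convergent isocrystals); alternatively the $\zar$ case may be deduced from the $\fppf$ one via $\alpha$, using the vanishing $\rR^{j}\alpha_{*}(\mathscr{E})=0$ for $j\ge1$ \ref{coh zar fppf}, the equivalence \ref{coh fppf descent}, and the compatibility \eqref{functorial map sigma fppf Zar} of $\alpha$ with $F$ and with $\sigma$.

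Granting this identification, applying $\rR^{i}g_{\conv,\tau*}$ to the isomorphism $\varphi:F_{X/k,\conv,\tau}^{*}(\mathscr{E}')\xrightarrow{\sim}\mathscr{E}$ gives an isomorphism $\rR^{i}g_{\conv,\tau*}(F_{X/k,\conv,\tau}^{*}(\mathscr{E}'))\xrightarrow{\sim}\mathscr{F}$, whose composite with the identification of the previous paragraph is an isomorphism $F_{Y/k,\conv,\tau}^{*}(\mathscr{F}')\xrightarrow{\sim}\mathscr{F}$ which, by construction, is $\rR^{i}g_{\conv,\tau*}(\varphi)$; since $\mathscr{F}$ is a convergent isocrystal of $(Y/\rW)_{\conv,\tau}$ by \ref{higher direct image of con Fiso weak}, the pair $(\mathscr{F},\rR^{i}g_{\conv,\tau*}(\varphi))$ is then a convergent $F$-isocrystal, which proves the corollary. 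The argument is essentially formal once the earlier results are in place, and I expect the only genuinely delicate point — the main obstacle — to be the bookkeeping that keeps the two base change isomorphisms mutually compatible and natural, so that the Frobenius structure thus produced is indeed the one denoted $\rR^{i}g_{\conv,\tau*}(\varphi)$ and is independent of the auxiliary choices; verifying that the derived base change isomorphism of \ref{base change coro} may be read off cohomology sheaf by cohomology sheaf is part of this, and is harmless because $\sigma$ is flat.
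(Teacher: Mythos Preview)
Your proposal is correct and follows essentially the same line as the paper's proof: apply $\rR^{i}g_{\conv,\tau*}$ to $\varphi$, then use the base change isomorphism of \ref{base change coro} (for the Cartesian square over $\sigma$) together with the Frobenius descent equivalence \ref{Frob descent fppf topos} to identify the source with $F_{Y/k,\conv,\fppf}^{*}((\rR^{i}g_{\conv,\fppf*}(\mathscr{E}))')$. The paper treats only $\tau=\fppf$ explicitly and leaves the Zariski case implicit via \ref{coh fppf descent}; your more careful handling of both topologies and of the compatibility of the two base change steps is fine but not strictly necessary.
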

\begin{proof}
	Consider the isomorphism
	\begin{equation}
		\rR^{i}g_{\conv,\fppf*}(\varphi): \rR^{i}g_{\conv,\fppf*}(F_{X/k,\conv,\fppf}^{*}(\mathscr{E}'))\xrightarrow{\sim} \rR^{i}g_{\conv,\fppf*}(\mathscr{E}).
	\end{equation}
	By \ref{base change coro} and \ref{Frob descent fppf topos}, the left hand side is isomorphic to $F_{Y/k,\conv,\fppf}^{*}( (\rR^{i}g_{\conv,\fppf*}(\mathscr{E}))')$. Then the assertion follows.
\end{proof}

\section{Review on rigid geometry} \label{review rig}

\begin{nothing}\label{def category R}
	Recall that $\MS$ denotes the category of formal $\rW$-scheme of finite type whose morphisms are $\rW$-morphisms of finite type \eqref{def cat MS}. 
	The set $\MB$ of admissible blow-ups in $\MS$ forms a right multiplicative system in $\MS$ (\cite{Ab10} 4.1.4). We denote by $\MR$ the localized category of $\MS$ relative to $\MB$. % and we note $\QQ:\MS\to \MR$ the localization functor (\cite{Ab10} 4.1.6). 
	Objects of $\MR$ are called \textit{coherent rigid spaces (over $K=\rW[\frac{1}{p}]$)}. For any object $\XX$ (resp. morphism $f$) of $\MS$, its image in $\MR$ is denoted by $\XX^{\rig}$ (resp. $f^{\rig}$).

	For any object $\XX$ of $\MS$, we set $\MB_{\XX}$ the full subcategory of $\MS_{/\XX}$ consisting of admissible blowups.
\end{nothing}
\begin{nothing} \label{S diamond}
	Recall that $\MS^{\diamond}$ denotes the full subcategory of $\MS$ consisting of \textit{flat} formal $\rW$-schemes of finite type \eqref{def cat MS}. For any object $\XX$ of $\MS^{\diamond}$ and any admissible blow-up $\varphi:\XX'\to \XX$, $\XX'$ is still an object of $\MS^{\diamond}$ (\cite{Ab10} 3.1.4). Then the set $\MB^{\diamond}$ of admissible blow-ups in $\MS^{\diamond}$ forms a right multiplicative system in $\MS^{\diamond}$. By (\cite{Ab10} 4.1.15(iii)), the canonical functor $\MS^{\diamond}\to \MR$ is essentially surjective and hence induces an equivalence of categories between the localized category of $\MS^{\diamond}$ relative to $\MB^{\diamond}$ and $\MR$.
\end{nothing}

\begin{nothing} \label{def ad covering}
	%An admissible blow-up $\XX'\to \XX$ in $\MS$ induces an isomorphism on rigid points $\langle\XX'\rangle\xrightarrow{\sim} \langle\XX\rangle$ 
	%The functor $\MS\to \Set$ defined by $\XX \mapsto \langle\XX\rangle$ \eqref{def rigid points}, transforms admissible blow-ups to isomorphisms (\cite{Ab10} 3.3.5). Then it induces a functor
	%\begin{equation}
	%	\MR\to \Set,
	%\end{equation}
	%that we denote by $\mathcal{X}\mapsto \langle\mathcal{X}\rangle$. 
	%We call $\langle\mathcal{X}\rangle$ the set of \textit{rigid points of $\mathcal{X}$}. 
	Let $\mathcal{X}$ be a coherent rigid space. We denote by $\langle\mathcal{X}\rangle$ the set of rigid points of $\mathcal{X}$ (\cite{Ab10} 4.3.1). 
	We say that a family of morphisms $(f_{i}: \mathcal{X}_{i}\to \mathcal{X})_{i\in I}$ is \textit{a covering for rigid points} if $\bigcup_{i\in I} f_{i}(\langle\mathcal{X}_{i}\rangle)=\langle\mathcal{X}\rangle$. 
	
	Recall (\cite{Ab10} 4.3.8) that a family $(\mathcal{X}_{i}\to \mathcal{X})_{i\in I}$ of open immersions of coherent rigid spaces (\cite{Ab10} 4.2.1) is an \textit{admissible covering} if it admits a finite sub-covering for rigid points. 
	We denote by $\Ad_{/\mathcal{X}}$ the full subcategory of $\MR_{/\mathcal{X}}$ consisting of open immersions to $\mathcal{X}$ and by $\mathcal{X}_{\ad}$ the topos of sheaves of sets on $\Ad_{/\mathcal{X}}$ for the admissible topology.
	%Admissible coverings form a pretopology on $\Ad_{/\mathcal{X}}$ (\cite{Ab10} 4.4.1). We call \textit{admissible topology} the topology generated by this pretopology and we denote by $\mathcal{X}_{\ad}$ the topos of sheaves of sets on $\Ad_{/\mathcal{X}}$. 
\end{nothing}

\begin{nothing} \label{def rhoX}
	In the following of this section, $\XX$ denotes an object of $\MS$. 
	The functor $\Zar_{/\XX}\to \Ad_{/\XX^{\rig}}$ defined by $\UU\mapsto \UU^{\rig}$ is continuous and left exact and induces a morphism of topoi (\cite{Ab10} 4.5.2)
\begin{equation}
	\rho_{\XX}:\XX_{\ad}^{\rig} \to \XX_{\zar}.
\end{equation}
	For any object $(\XX',\varphi)$ of $\MB_{\XX}$, we denote by $\mu_{\varphi}$ the composition
\begin{equation}
		\mu_{\varphi}:\XX_{\ad}^{\rig}\xrightarrow{\sim} \XX'^{\rig}_{\ad} \xrightarrow{\rho_{\XX'}} \XX'_{\zar}.
\end{equation}

Let $\mathscr{F}$ be an $\mathscr{O}_{\XX}$-module. We denote by $\mathscr{F}^{\rig}$ the rigid fiber associated to $\mathscr{F}$ (\cite{Ab10} 4.7.4) which is a sheaf of $\XX_{\ad}^{\rig}$. We have a functorial isomorphism (\cite{Ab10} 4.7.4.2)
\begin{equation}
	\mathscr{F}^{\rig} \xrightarrow{\sim} \varinjlim_{(\XX',\varphi)\in \MB_{\XX}^{\circ}} \mu_{\varphi}^{*}( (\varphi_{\zar}^{*}(\mathscr{F}))[\frac{1}{p}]).
\end{equation}

	In particular, $(\mathscr{O}_{\XX})^{\rig}$ is a ring that we also denote by $\mathscr{O}_{\XX^{\rig}}$. 
	We have canonical morphisms of ringed topoi (\cite{Ab10} 4.7.5)
	\begin{equation}
		\rho_{\XX}:(\XX_{\ad}^{\rig},\mathscr{O}_{\XX^{\rig}})\to (\XX_{\zar},\mathscr{O}_{\XX}[\frac{1}{p}]), \qquad
		\mu_{\varphi}:(\XX_{\ad}^{\rig},\mathscr{O}_{\XX^{\rig}}) \xrightarrow{\rho_{\XX'}} (\XX'_{\zar},\mathscr{O}_{\XX'}[\frac{1}{p}]) ).
	\end{equation}
	
	If $\mathscr{F}$ is moreover coherent, we have a canonical isomorphism $\rho_{\XX}^{*}(\mathscr{F}[\frac{1}{p}])\xrightarrow{\sim} \mathscr{F}^{\rig}$ (\cite{Ab10} 4.7.2.8). 
%	The morphism $\rho_{\XX}$ is ringed by the ring $\mathscr{O}_{\XX^{\rig}}$ of $\XX^{\rig}_{\ad}$ and $\mathscr{O}_{\XX}[\frac{1}{p}]$ of $\XX_{\zar}$.
%	We denote by $\mathscr{C}_{\XX}$ the category whose objects are triples $(\XX',\UU,\varphi)$. 
%	We equip $\mathscr{C}_{\XX}$ the topology () and we denote by $\mathbb{T}_{\XX}$ the total topos (?). 
%
%	The canonical functor $\mathscr{C}_{\XX}\to \Ad_{/\XX^{\rig}}$ defined by $(\XX',\UU,\varphi)\mapsto \UU^{\rig}$ is a morphism of site. We denote by
%	\begin{equation}
%		\varpi:\XX_{\ad}^{\rig}\to \MT_{\XX}.
%	\end{equation}
%	the associated morphism of topoi. For a sheaf $F=\{F_{\varphi},\gamma_{f}\}$ of $\MT_{\XX}$, there exists a functorial isomorphism
%	\begin{equation}
%		\varpi^{*}(F)\xrightarrow{\sim} \varinjlim_{(\XX',\varphi)\in \MB_{\XX}^{\circ}} \mu_{\varphi}^{*}(F_{\varphi}).
%	\end{equation}
\end{nothing}

\begin{prop}\label{rhoXX coh equi}
	Let $\Coh(\mathscr{O}_{\XX^{\rig}})$ be the category of coherent $\mathscr{O}_{\XX^{\rig}}$-module over $\XX_{\ad}^{\rig}$ \textnormal{(\cite{Ab10} 4.8.16)}. 	
	The inverse image and direct image functors of $\rho_{\XX}$ induces equivalences of categories quasi-inverse to each other
	\begin{equation}
		\Coh(\mathscr{O}_{\XX}[\frac{1}{p}])\rightleftharpoons\Coh(\mathscr{O}_{\XX^{\rig}}).
	\end{equation}
\end{prop}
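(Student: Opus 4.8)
The plan is to deduce the equivalence from the adjunction $\rho_{\XX}^{*}\dashv\rho_{\XX*}$ together with the standard theory of coherent modules on coherent rigid spaces (Raynaud, Gruson, Abbes \cite{Ab10} 4.7--4.8). First I would check that $\rho_{\XX}^{*}$ sends $\Coh(\mathscr{O}_{\XX}[\frac{1}{p}])$ into $\Coh(\mathscr{O}_{\XX^{\rig}})$: by \eqref{equi coh mod inverse p} every coherent $\mathscr{O}_{\XX}[\frac{1}{p}]$-module has the form $\mathscr{F}[\frac{1}{p}]$ with $\mathscr{F}$ a coherent $\mathscr{O}_{\XX}$-module, and then $\rho_{\XX}^{*}(\mathscr{F}[\frac{1}{p}])\xrightarrow{\sim}\mathscr{F}^{\rig}$ by the isomorphism recalled in \ref{def rhoX}, while the rigid fibre $\mathscr{F}^{\rig}$ of a coherent formal module is coherent over $\XX_{\ad}^{\rig}$.

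Next I would prove that $\rho_{\XX}^{*}$ is fully faithful, the crux being that the adjunction unit $\mathscr{G}\to\rho_{\XX*}\rho_{\XX}^{*}(\mathscr{G})$ is an isomorphism for coherent $\mathscr{G}$, i.e.\ $\rho_{\XX*}(\mathscr{F}^{\rig})\xrightarrow{\sim}\mathscr{F}[\frac{1}{p}]$. Using the colimit description $\mathscr{F}^{\rig}\simeq\varinjlim_{(\XX',\varphi)\in\MB_{\XX}^{\circ}}\mu_{\varphi}^{*}((\varphi_{\zar}^{*}\mathscr{F})[\frac{1}{p}])$ and the compatibility of $\rho_{\XX}$ with the various $\rho_{\XX'}$, this reduces to the statement that for an admissible blow-up $\varphi\colon\XX'\to\XX$ the canonical map $\mathscr{F}[\frac{1}{p}]\to\varphi_{\zar*}((\varphi_{\zar}^{*}\mathscr{F})[\frac{1}{p}])$ is an isomorphism, compatibly in $\varphi$ --- a coherence/flattening statement of \cite{Ab10}. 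To pass from the unit isomorphism to $\Hom$-sets I would use that the internal $\FHom$ of two coherent modules is again coherent and is compatible with $\rho_{\XX}^{*}$, so that $\Hom(\mathscr{G}_{1},\mathscr{G}_{2})$ is computed by global sections of $\FHom(\mathscr{G}_{1},\mathscr{G}_{2})$ on both sides and the unit applied to $\FHom(\mathscr{G}_{1},\mathscr{G}_{2})$ yields full faithfulness.

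For essential surjectivity, let $\mathscr{H}$ be a coherent $\mathscr{O}_{\XX^{\rig}}$-module. By the definition of coherence on $\XX_{\ad}^{\rig}$, after an admissible covering $\mathscr{H}$ admits finite presentations; pulling this covering back to suitable admissible blow-ups of $\XX$ and invoking the local construction of formal models for finitely presented rigid modules (\cite{Ab10} 4.8), $\mathscr{H}$ is locally the rigid fibre of a coherent formal module. After a further admissible blow-up --- and using \ref{S diamond} to keep everything flat over $\rW$ --- these local models agree on overlaps up to a descent datum, and faithfully rig-flat descent \ref{descent fppf}, applied to the blow-up and the attached covering, produces a coherent $\mathscr{O}_{\XX}[\frac{1}{p}]$-module $\mathscr{G}$ with $\rho_{\XX}^{*}(\mathscr{G})\simeq\mathscr{H}$; equivalently the counit $\rho_{\XX}^{*}\rho_{\XX*}(\mathscr{H})\to\mathscr{H}$ is an isomorphism. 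Since unit and counit are then isomorphisms on the respective subcategories, $\rho_{\XX}^{*}$ restricts to an adjoint equivalence $\Coh(\mathscr{O}_{\XX}[\frac{1}{p}])\to\Coh(\mathscr{O}_{\XX^{\rig}})$ whose quasi-inverse is, up to natural isomorphism, the restriction of the right adjoint $\rho_{\XX*}$; in particular $\rho_{\XX*}$ preserves coherence.

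The substantive input, and the step I expect to be the main obstacle, is precisely the existence of coherent formal models for coherent rigid modules together with the compatible computation $\rho_{\XX*}(\mathscr{F}^{\rig})\simeq\mathscr{F}[\frac{1}{p}]$ --- the Raynaud--Gruson flattening technique as developed in \cite{Ab10}. Granting these, everything else is a formal manipulation of the colimit formula for rigid fibres, the adjunction $\rho_{\XX}^{*}\dashv\rho_{\XX*}$, and the descent theorem \ref{descent fppf}.
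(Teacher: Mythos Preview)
Your proposal is correct and follows essentially the same strategy as the paper: show that $\rho_{\XX}^{*}$ preserves coherence, is fully faithful, and is essentially surjective, then conclude that $\rho_{\XX*}$ is a quasi-inverse. The paper's proof is much terser because it cites the needed facts directly from \cite{Ab10}: coherence preservation by $\rho_{\XX*}$ is \cite{Ab10} 4.7.8.1, full faithfulness of $\rho_{\XX}^{*}$ is \cite{Ab10} 4.7.8.2 and 4.7.29.2, and essential surjectivity is \cite{Ab10} 4.8.18 (existence of a global formal model after an admissible blow-up) combined with \eqref{equi coh mod inverse p}. Two small remarks: once the unit $\mathscr{G}\to\rho_{\XX*}\rho_{\XX}^{*}(\mathscr{G})$ is an isomorphism for all coherent $\mathscr{G}$, full faithfulness of the left adjoint $\rho_{\XX}^{*}$ is automatic, so your $\FHom$ detour is unnecessary; and for essential surjectivity the result \cite{Ab10} 4.8.18 already produces a coherent $\mathscr{O}_{\XX'}$-module on a single admissible blow-up $\XX'\to\XX$ with the correct rigid fibre, so your local-gluing-plus-descent outline, while workable, is more elaborate than needed.
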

\begin{proof}
	We write simply $\rho$ for $\rho_{\XX}$. By (\cite{Ab10} 4.7.8.1), $\rho_{*}$ send coherent $\mathscr{O}_{\XX^{\rig}}$-modules to coherent $\mathscr{O}_{\XX}[\frac{1}{p}]$-modules.
	By \eqref{equi coh mod inverse p} and (\cite{Ab10} 4.8.18), the inverse image functor $\rho^{*}$ is essentially surjective. By (\cite{Ab10} 4.7.8.2 and 4.7.29.2), $\rho^{*}$ is fully faithful. 
	In view of the canonical isomorphisms $\rho^{*}\xrightarrow{\sim} \rho^{*}\rho_{*}\rho^{*}$ and $\rho^{*}\rho_{*}\rho^{*}\xrightarrow{\sim} \rho^{*}$, we deduce that $\rho_{*}$ is a quasi-inverse. 
\end{proof}

\begin{nothing}
	Let $f:\XX\to \YY$ be a morphism of $\MS$. It induces a morphism of ringed topoi $f_{\ad}^{\rig}:(\XX^{\rig}_{\ad},\mathscr{O}_{\XX^{\rig}}) \to (\YY^{\rig}_{\ad},\mathscr{O}_{\YY^{\rig}})$ (\cite{Ab10} 4.7.2.1). The diagram 
	\begin{equation}
		\xymatrix{
			(\XX^{\rig}_{\ad},\mathscr{O}_{\XX^{\rig}}) \ar[d]_{\rho_{\XX}} \ar[r]^{f^{\rig}_{\ad}} & (\YY^{\rig}_{\ad},\mathscr{O}_{\YY^{\rig}}) \ar[d]^{\rho_{\YY}} \\
			(\XX_{\zar},\mathscr{O}_{\XX}[\frac{1}{p}]) \ar[r]^{f_{\zar}} & (\YY_{\zar},\mathscr{O}_{\YY}[\frac{1}{p}]) }
	\end{equation}
	is commutative up to canonical isomorphisms (\cite{Ab10} 4.7.24.2). 
	
	Let $\mathscr{F}$ be a coherent $\mathscr{O}_{\YY^{\rig}}$-module. By \ref{rhoXX coh equi}, there exists canonical isomorphisms
	\begin{displaymath}
		\rho_{\XX}^{*}(f_{\zar}^{*}(\rho_{\YY_{*}}(\mathscr{F}))) \xrightarrow{\sim} f^{\rig*}_{\ad}(\rho_{\YY}^{*}(\rho_{\YY*}(\mathscr{F})))\xrightarrow{\sim} f^{\rig*}_{\ad}(\mathscr{F}).
	\end{displaymath}
	Then we deduce that the following base change morphism is an isomorphism
	\begin{equation} \label{base change rho coherent}
		f_{\zar}^{*}\rho_{\YY*}(\mathscr{F})\xrightarrow{\sim} \rho_{\XX*}f_{\ad}^{\rig*}(\mathscr{F}).
	\end{equation}
\end{nothing}

\begin{nothing}\label{def rfppf covering}
	Following (\cite{Ab10} 5.10.1), we say that a morphism $f:\mathcal{X}\to \mathcal{Y}$ of coherent rigid spaces is \textit{flat} if the associated morphism of ringed topoi $f_{\ad}:(\mathcal{X}_{\ad},\mathscr{O}_{\mathcal{X}})\to (\mathcal{Y}_{\ad},\mathscr{O}_{\mathcal{Y}})$ is flat (\cite{SGAIV} V 1.7).
	We say that $f$ is \textit{faithfully flat} if $f$ is flat and $f(\langle\mathcal{X}\rangle)=\langle\mathcal{Y}\rangle$ (\cite{Ab10} 4.3.1 and 5.10.11).

	A morphism $f:\XX\to \YY$ of $\MS$ is rig-flat \eqref{def rigid points} if and only if $f^{\rig}$ is flat (\cite{Ab10} 5.5.8). 

	We say that a family $(\mathcal{X}_{i}\to \mathcal{X})_{i\in I}$ of flat morphisms of $\MR$ is a \textit{fppf covering} if it admits a finite sub-covering for rigid points \eqref{def ad covering}.
	In view of (\cite{Ab10} 5.10.12), fppf coverings are stable by composition and by base change in $\MR$. 

	Let $\mathcal{X}$ be a coherent rigid space. We denote by $\Rf_{/\mathcal{X}}$ the full subcategory of $\MR_{/\mathcal{X}}$ consisting of flat morphisms to $\mathcal{X}$. We call \textit{fppf topology} the topology on $\Rf_{/\mathcal{X}}$ generated by the pretopology for which coverings are fppf coverings. We denote by $\mathcal{X}_{\fppf}$ the topos of sheaves of sets on this site.

	By fppf descent of morphisms (\cite{Ab10} 5.12.4), the fppf topology on $\Rf_{/\mathcal{X}}$ is sub-canonical, i.e. the presheaf associated to each object of $\Rf_{/\mathcal{X}}$ is a sheaf for the fppf topology. 
\end{nothing}

\begin{nothing} \label{fppf to ad and functorial}
	The canonical functor $\Ad_{/\mathcal{X}}\to \Rf_{/\mathcal{X}}$ is continuous and left exact. Then it induces a morphism of topoi
	\begin{equation}
		\alpha_{\mathcal{X}}:\mathcal{X}_{\fppf}\to \mathcal{X}_{\ad}.
	\end{equation}

	Given a morphism $f:\mathcal{X}'\to \mathcal{X}$ of $\MR$, the canonical functor $\Rf_{/\mathcal{X}}\to \Rf_{/\mathcal{X}'}$ (resp. $\Ad_{/\mathcal{X}}\to \Ad_{/\mathcal{X}'}$) defined by $\mathcal{Y}\mapsto \mathcal{Y}\times_{\mathcal{X}}\mathcal{X}'$ is continuous and left exact. It induces morphisms of topoi 
	\begin{equation} \label{functorial ftau rig}
		f_{\tau}:\mathcal{X}'_{\tau}\to \mathcal{X}_{\tau}, \quad \tau\in \{\ad,\fppf\},
	\end{equation}
	compatible with $\alpha_{\mathcal{X}}$ and $\alpha_{\mathcal{X}'}$.
	If $f$ is a morphism of $\Rf_{/\mathcal{X}}$ (resp. $\Ad_{/\mathcal{X}}$), in view of the description of direct image functors, one verifies that the above morphism coincides with the localization morphism at $\mathcal{X}'$.
\end{nothing}

%\begin{theorem}[\cite{Ab10} 5.11.11] \label{descent fppf rig}
%	Let $f:\mathcal{X}'\to \mathcal{X}$ be a faithfully flat morphism of $\MR$. Then, the canonical functor from $\Coh(\mathscr{O}_{\mathcal{X}})$ to the category of coherent $\mathscr{O}_{\mathcal{X}'}$-modules endowed with a descent data in $\Coh(\mathscr{O}_{\mathcal{X}'\times_{\mathcal{X}}\mathcal{X}'})$ is an equivalence.
%\end{theorem}
%Let $\mathscr{U}=\{\mathcal{X}_{i}\to \mathcal{X}\}_{i\in I}$ be a fppf covering. 
%Since $\mathscr{U}$ admits a finite sub-covering $\mathcal{X}$, we deduce that any descent datum on coherent $\mathscr{O}_{\mathcal{X}_{i}}$-modules for $\mathscr{U}$ is effective. 
%Moreover, the functor from $\Coh(\mathscr{O}_{\mathcal{X}})$ to the category of descent data with respect to $\mathscr{U}$ is fully faithful.

\begin{nothing} \label{coherent fppf rig}
	Let $\mathscr{F}$ be a coherent $\mathscr{O}_{\mathcal{X}}$-module. The presheaf on $\Rf_{/\mathcal{X}}$
	\begin{displaymath}
		(f:\mathcal{X}'\to \mathcal{X}) \mapsto \Gamma(\mathcal{X}',f^{*}_{\ad}(\mathscr{F}))
	\end{displaymath}
	is a sheaf for the fppf topology by fppf descent for coherent modules on rigid spaces (\cite{Ab10} 5.11.11). In particular, $\mathscr{O}_{\mathcal{X}}$ defines a sheaf of rings of $\mathcal{X}_{\fppf}$ that we still denote by $\mathscr{O}_{\mathcal{X}}$. We call abusively \textit{coherent $\mathscr{O}_{\mathcal{X}}$-module of $\mathcal{X}_{\fppf}$} a sheaf of $\mathcal{X}_{\fppf}$ associated to a coherent $\mathscr{O}_{\mathcal{X}}$-module of $\mathcal{X}_{\ad}$.

	For $\tau\in \{\ad,\fppf\}$, the morphism of topoi $f_{\tau}$ \eqref{functorial ftau rig} is ringed by $\mathscr{O}_{\mathcal{X}}$ and $\mathscr{O}_{\mathcal{X}'}$. 
	For any $\mathscr{O}_{\mathcal{X}}$-module $\mathscr{M}$ of $\mathcal{X}_{\tau}$, we use $f^{-1}_{\tau}(\mathscr{M})$ to denote the inverse image in the sense of sheaves and we keep $f_{\tau}^{*}(\mathscr{M})$ for the inverse image in the sense of modules.
\end{nothing}

%\begin{prop}[\cite{Ab10} 5.4.7] \label{rig-flat open}
%	Let $f:\XX\to \YY$ be a rig-flat morphism. Then there exists an open formal sub-scheme $\UU$ of $\YY$ such that $f$ factors through a faithfully rig-flat morphism $\XX\to \UU$.
%\end{prop}

%\begin{prop}[\cite{Ab10} 5.8.9] \label{base change rig-flat}
%	Let $\YY,\YY'$ be locally of finite type formal $\rW$-schemes (quasi-compact for $\YY$ ?), $f:\XX\to \YY$ a finite type morphism and $g:\YY'\to \YY$ an $\rW$-morphism. Set $\XX'=\XX\times_{\YY}\YY'$ and let $f':\XX'\to \YY'$ be the canonical morphism.
%	If $f$ is rig-flat (resp. faithfully rig-flat), so is $f'$.
%\end{prop}

\section{Rigid convergent topos and convergent isocrystals} \label{rig conv topos}
\begin{nothing}
	In this section, $\mathfrak{S}$ denotes a flat formal $\rW$-scheme of finite type and $X$ an $S$-scheme. 
	
	We will introduce a full subcategory of $(X/\SS)_{\conv,\zar}$ consisting of sheaves $\mathscr{F}=\{\mathscr{F}_{\mathfrak{T}},\beta_{f}\}$ \eqref{construction descent data} such that the morphism $\beta_{f}$ is an isomorphism if the underlying morphism of formal schemes of $f$ is an admissible blowup. 
	It turns out that this category forms a topos $(X/\SS)_{\rconv,\ad}$ (\ref{def RConv rconv}, \ref{rconv description}) and admits a canonical morphism to $(X/\SS)_{\conv,\zar}$ \eqref{rho XS rconv}. 
	Convergent isocrystals lie in $(X/\SS)_{\rconv,\ad}$ and their cohomologies remain unchanged in this topos (\ref{rho conv coh}, \ref{coh zar ad iso}). 
	
	We begin by introducing $(X/\SS)_{\rconv,\ad}$ and its fppf variant.
\end{nothing}

	\begin{lemma} \label{right multiplicative in Conv}
	We denote by $\MB_{X/\mathfrak{S}}$ the set of morphisms in $\Conv(X/\mathfrak{S})$ \eqref{def conv cat} whose underlying morphism on formal schemes is an admissible blow-up. Then, it forms a right multiplicative system in $\Conv(X/\SS)$. 
	\end{lemma}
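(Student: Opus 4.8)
The plan is to verify the defining conditions of a right multiplicative system for $\MB_{X/\mathfrak{S}}$ one at a time, reducing each to the corresponding fact for the set $\MB^{\diamond}$ of admissible blow-ups in $\MS^{\diamond}$, which is a right multiplicative system by \ref{S diamond} (\cite{Ab10} 4.1.15; see also 4.1.4). The bridge is the forgetful functor $P\colon\Conv(X/\mathfrak{S})\to\MS^{\diamond}$, $(\mathfrak{T},u)\mapsto\mathfrak{T}$. It is faithful — a morphism of $\Conv(X/\mathfrak{S})$ is nothing but a structure-map-compatible morphism of the underlying formal schemes — and a morphism $f$ of $\Conv(X/\mathfrak{S})$ lies in $\MB_{X/\mathfrak{S}}$ precisely when $P(f)$ lies in $\MB^{\diamond}$. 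Moreover $P$ lifts the constructions we shall need: if $(\mathfrak{T},u)$ is an object of $\Conv(X/\mathfrak{S})$ and $\psi\colon\mathfrak{T}'\to\mathfrak{T}$ an admissible blow-up, then $\mathfrak{T}'$ is again flat over $\rW$ by \ref{S diamond}, so $(\mathfrak{T}',u\circ\psi_{0})$ is an object of $\Conv(X/\mathfrak{S})$ and $\psi$ becomes a morphism of $\MB_{X/\mathfrak{S}}$. Since $\mathrm{id}_{\mathfrak{T}}$ is the admissible blow-up of the unit ideal, a composite of admissible blow-ups is again one in $\MS^{\diamond}$, and composition in $\Conv(X/\mathfrak{S})$ is computed on underlying formal schemes, the two conditions that $\MB_{X/\mathfrak{S}}$ contains all identities and is stable under composition are immediate.

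For the Ore condition, let $f\colon(\mathfrak{Z},w)\to(\mathfrak{T},u)$ be a morphism of $\Conv(X/\mathfrak{S})$ and $s\colon(\mathfrak{T}',u')\to(\mathfrak{T},u)$ a morphism in $\MB_{X/\mathfrak{S}}$. The Ore condition for $\MB^{\diamond}$ applied to $P(f)$ and $P(s)$ yields an admissible blow-up $a\colon\mathfrak{Z}'\to\mathfrak{Z}$ and a morphism $b\colon\mathfrak{Z}'\to\mathfrak{T}'$ with $P(s)\circ b=P(f)\circ a$; here one may take $\mathfrak{Z}'$ to be the admissible blow-up of the ideal of $\mathfrak{Z}$ pulled back from $\mathfrak{T}'\to\mathfrak{T}$, so that $a$, hence also $b$, are $\mathfrak{S}$-morphisms. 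Put $w'=w\circ a_{0}$, which makes $(\mathfrak{Z}',w')$ an object of $\Conv(X/\mathfrak{S})$. Taking reduced special fibres of $P(s)\circ b=P(f)\circ a$ gives $s_{0}\circ b_{0}=f_{0}\circ a_{0}$, whence
\[
u'\circ b_{0}=u\circ s_{0}\circ b_{0}=u\circ f_{0}\circ a_{0}=w\circ a_{0}=w',
\]
so that $a$ and $b$ are morphisms $(\mathfrak{Z}',w')\to(\mathfrak{Z},w)$ and $(\mathfrak{Z}',w')\to(\mathfrak{T}',u')$ of $\Conv(X/\mathfrak{S})$, the former belonging to $\MB_{X/\mathfrak{S}}$, with $s\circ b=f\circ a$. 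The cancellation condition is handled the same way: if $f,g\colon(\mathfrak{Z},w)\rightrightarrows(\mathfrak{T},u)$ and $s\in\MB_{X/\mathfrak{S}}$ satisfy $s\circ f=s\circ g$, then the cancellation condition for $\MB^{\diamond}$ applied to $P(f),P(g),P(s)$ produces an admissible blow-up $a\colon\mathfrak{Z}'\to\mathfrak{Z}$ with $P(f)\circ a=P(g)\circ a$; it lifts, via $w'=w\circ a_{0}$, to a morphism $a\colon(\mathfrak{Z}',w')\to(\mathfrak{Z},w)$ of $\MB_{X/\mathfrak{S}}$, and $f\circ a=g\circ a$ in $\Conv(X/\mathfrak{S})$ because $P$ is faithful.

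The only delicate point — and the one I expect to require the most care in the write-up, though it is not a real obstacle — is the bookkeeping of the structure maps: at each step one must confirm that the morphisms produced over $\MS^{\diamond}$ respect the maps to $X$ (and to $\mathfrak{S}$), so that they are honest morphisms of $\Conv(X/\mathfrak{S})$ lying in $\MB_{X/\mathfrak{S}}$; as the displayed identity shows, this is forced automatically by the relations already valid in $\MS^{\diamond}$ together with \ref{def conv cat}. One further remark worth recording: it would be a mistake to try to realise $\mathfrak{Z}'$ as a fibered product inside $\Conv(X/\mathfrak{S})$, since by \ref{def fibered product} that fibered product is the $p$-torsion-free quotient of the naive one; transporting the solution already furnished by $\MS^{\diamond}$, as above, sidesteps this entirely.
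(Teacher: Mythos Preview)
Your proof is correct and follows essentially the same approach as the paper: both reduce the claim to the fact that admissible blow-ups form a right multiplicative system in $\MS^{\diamond}$ (or $\MS^{\diamond}_{/\mathfrak{S}}$) via the faithful forgetful functor $\Conv(X/\mathfrak{S})\to\MS^{\diamond}_{/\mathfrak{S}}$, together with the observation that admissible blow-ups lift canonically to $\Conv(X/\mathfrak{S})$. The paper's proof is a two-sentence sketch of this, whereas you have spelled out each axiom and the bookkeeping of structure maps explicitly; your remark about avoiding the fibered product in $\Conv(X/\mathfrak{S})$ is a nice clarification not present in the paper.
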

	\begin{proof}
	For any object $(\mathfrak{T},u)$ of $\Conv(X/\SS)$, we have a canonical functor $s_{\mathfrak{T}}:\MB_{\mathfrak{T}}\to \Conv(X/\SS)$ sending $(\mathfrak{T}',\varphi)$ to $(\mathfrak{T}',u\circ \varphi_{0})$.
	Then the assertion follow from the facts that admissible blow-ups form a right multiplicative system in $\MS^{\diamond}_{/\mathfrak{S}}$ \eqref{S diamond} and that the canonical functor $\Conv(X/\SS)\to \MS_{\SS}^{\diamond}$ is faithful.
	\end{proof}

	\begin{nothing} \label{def RConv rconv}
	We denote by $\RConv(X/\mathfrak{S})$ the localized category of $\Conv(X/\mathfrak{S})$ relative to $\mathbf{B}_{X/\mathfrak{S}}$. 
	More precisely, objects of $\RConv(X/\mathfrak{S})$ are same as those of $\Conv(X/\mathfrak{S})$. For two objects $(\mathfrak{Z},v),(\mathfrak{T},u)$ of $\RConv(X/\mathfrak{S})$, we have
	\begin{equation}\label{Hom RConv}
	\Hom_{\RConv(X/\mathfrak{S})}( (\mathfrak{Z},v),(\mathfrak{T},u))=\varinjlim_{ (\mathfrak{Z}',\varphi)\in \mathbf{B}_{\mathfrak{Z}}^{\circ}} \Hom_{\Conv(X/\mathfrak{S})}( (\mathfrak{Z}',v\circ \varphi_{0}),(\mathfrak{T},u)).
	\end{equation}

	We denote by $\QQ_{X/\mathfrak{S}}$ the canonical functor
	\begin{equation} \label{functor QXS}
		\QQ_{X/\mathfrak{S}}: \Conv(X/\mathfrak{S})\to \RConv(X/\mathfrak{S}).
	\end{equation}
	For an object $\mathfrak{T}$ (resp. a morphism $f$) of $\Conv(X/\SS)$, we write $\mathfrak{T}^{\rig}=\QQ_{X/\SS}(\mathfrak{T})$ (resp. $f^{\rig}=\QQ_{X/\SS}(f)$), if there is no risk of confusion.
\end{nothing}	
\begin{nothing}
	We denote by $\widehat{\Conv}(X/\mathfrak{S})$ (resp. $\widehat{\RConv}(X/\mathfrak{S})$) the category of presheaves on $\Conv(X/\mathfrak{S})$ (resp. $\RConv(X/\mathfrak{S})$) and by $\QQ_{X/\mathfrak{S}}^{*}:\widehat{\RConv}(X/\mathfrak{S}) \to \widehat{\Conv}(X/\mathfrak{S})$ the functor defined by $\mathscr{F}\mapsto \mathscr{F}\circ \QQ_{X/\mathfrak{S}}$.
	The functor $\QQ_{X/\SS*}$ admits a left adjoint $\QQ_{X/\mathfrak{S}!}$ (\cite{SGAIV} I 5.1) defined as follows. 
	
	For any object $\mathfrak{T}^{\rig}$ of $\RConv(X/\SS)$, we denote by $I_{\QQ}^{\mathfrak{T}^{\rig}}$ the category whose object are pairs $(\mathfrak{Z},g)$ consisting of an object $\mathfrak{Z}$ of $\Conv(X/\SS)$ and a morphism $g:\mathfrak{T}^{\rig}\to \mathfrak{Z}^{\rig}$ of $\RConv(X/\SS)$. A morphism $(\mathfrak{Z}',g')\to (\mathfrak{Z},g)$ is given by a morphism $\mu:\mathfrak{Z}'\to \mathfrak{Z}$ of $\Conv(X/\SS)$ such that $g=\mu^{\rig}\circ g'$. 
	Then we have (\cite{SGAIV} I 5.1.1) 
	\begin{equation} \label{QXSS!}
		\QQ_{X/\SS!}(\mathscr{F})(\mathfrak{T}^{\rig})=\varinjlim_{(\mathfrak{Z},g)\in (I_{\QQ}^{\mathfrak{T}^{\rig}})^{\circ}} \mathscr{F}(\mathfrak{Z}).
	\end{equation}

	Moreover, we have a commutative diagram (\cite{SGAIV} I 1.5.4)
	\begin{equation} \label{commutative diagram Q!}
		\xymatrix{
			\Conv(X/\mathfrak{S}) \ar[r]^{\QQ_{X/\mathfrak{S}}} \ar[d] & \RConv(X/\mathfrak{S}) \ar[d] \\
			\widehat{\Conv}(X/\mathfrak{S}) \ar[r]^{\QQ_{X/\mathfrak{S}!}} & \widehat{\RConv}(X/\mathfrak{S}) 
		}
	\end{equation}
	where the vertical functors are the canonical functors. 
\end{nothing}

\begin{prop} \label{QXSS! exact}
	\textnormal{(i)} The category  $(I_{\QQ}^{\mathfrak{T}^{\rig}})^{\circ}$ is filtered \textnormal{(\cite{SGAIV} I 2.7)}.

	\textnormal{(ii)} The functor $\QQ_{X/\mathfrak{S}!}$ is left exact (and hence is exact).

	\textnormal{(iii)} Fiber products are representable in $\RConv(X/\SS)$ and $\QQ_{X/\SS}$ commutes with fiber products.
\end{prop}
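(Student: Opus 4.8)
The plan is to prove the three assertions of Proposition~\ref{QXSS! exact} together, since (i) is the engine that drives (ii) and (iii). First I would establish (i): I claim that for a fixed object $\mathfrak{T}^{\rig}$ of $\RConv(X/\SS)$, the opposite category $(I_{\QQ}^{\mathfrak{T}^{\rig}})^{\circ}$ is filtered. The key input is Lemma~\ref{right multiplicative in Conv}: $\MB_{X/\SS}$ is a right multiplicative system, so morphisms in $\RConv(X/\SS)$ admit the usual calculus-of-fractions description~\eqref{Hom RConv}, i.e. a morphism $\mathfrak{T}^{\rig}\to \mathfrak{Z}^{\rig}$ is represented by a genuine morphism $(\mathfrak{T}',v\circ\varphi_{0})\to (\mathfrak{Z},w)$ of $\Conv(X/\SS)$ for some admissible blow-up $\varphi:\mathfrak{T}'\to \mathfrak{T}$. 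To check that $(I_{\QQ}^{\mathfrak{T}^{\rig}})^{\circ}$ is cofiltered, I must verify: (a) it is nonempty — take $\mathfrak{Z}=\mathfrak{T}$ with $g=\id^{\rig}$; (b) any two objects $(\mathfrak{Z}_{1},g_{1}),(\mathfrak{Z}_{2},g_{2})$ are dominated by a common one — form the fiber product $\mathfrak{Z}_1\times_{X/\SS}\mathfrak{Z}_2$ in $\Conv(X/\SS)$ (which exists, cf.~\ref{def fibered product}) after spreading $g_1,g_2$ out over a common admissible blow-up $\mathfrak{T}'$ of $\mathfrak{T}$ using that $\MB_{\mathfrak{T}}$ is filtered (two admissible blow-ups are dominated by a third), and use the universal property to get a map $\mathfrak{T}^{\rig}\to (\mathfrak{Z}_1\times \mathfrak{Z}_2)^{\rig}$; (c) any two parallel morphisms $(\mathfrak{Z}',g')\rightrightarrows (\mathfrak{Z},g)$ are equalized after precomposition — here one uses again that equalities in $\RConv$ hold after passing to a further admissible blow-up of the source, so some admissible blow-up $\mathfrak{Z}''\to \mathfrak{Z}'$ equalizes the two maps $\mathfrak{Z}''\to\mathfrak{Z}$ and still receives a map from $\mathfrak{T}^{\rig}$.

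Once (i) is in hand, assertion (ii) follows formally: by~\eqref{QXSS!} the functor $\QQ_{X/\SS!}$ is a filtered colimit of evaluation functors $\mathscr{F}\mapsto \mathscr{F}(\mathfrak{Z})$, each of which is exact on presheaves, and filtered colimits of abelian groups are exact; hence $\QQ_{X/\SS!}$ preserves finite limits. (I will phrase this so as to include preservation of the terminal object and of fiber products of presheaves, which is all that "left exact" requires.) For assertion (iii), the cleanest route is to use the commutative square~\eqref{commutative diagram Q!}: the Yoneda embeddings $\Conv(X/\SS)\hookrightarrow \widehat{\Conv}(X/\SS)$ and $\RConv(X/\SS)\hookrightarrow \widehat{\RConv}(X/\SS)$ are fully faithful and create limits. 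Given a diagram $\mathfrak{Z}_1^{\rig}\to \mathfrak{T}^{\rig}\leftarrow \mathfrak{Z}_2^{\rig}$ in $\RConv(X/\SS)$, after spreading the structure maps over admissible blow-ups I may assume it comes from a diagram $\mathfrak{Z}_1\to \mathfrak{T}\leftarrow \mathfrak{Z}_2$ in $\Conv(X/\SS)$; form $\mathfrak{W}=\mathfrak{Z}_1\times_{\mathfrak{T}}\mathfrak{Z}_2$ there. Applying $\QQ_{X/\SS!}$ to the representable presheaf $h_{\mathfrak{W}}=h_{\mathfrak{Z}_1}\times_{h_{\mathfrak{T}}} h_{\mathfrak{Z}_2}$ and using the exactness from (ii) together with the commutativity of~\eqref{commutative diagram Q!}, I get $h_{\mathfrak{W}^{\rig}}\xrightarrow{\sim} h_{\mathfrak{Z}_1^{\rig}}\times_{h_{\mathfrak{T}^{\rig}}} h_{\mathfrak{Z}_2^{\rig}}$ in $\widehat{\RConv}(X/\SS)$; by full faithfulness of Yoneda this says $\mathfrak{W}^{\rig}$ represents the fiber product in $\RConv(X/\SS)$ and $\QQ_{X/\SS}$ preserves it.

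The main obstacle I anticipate is the bookkeeping in step (i)(c) and in the "spreading out" arguments of (iii): one must repeatedly replace a source object by an admissible blow-up and check that the relevant morphisms and equalities in $\Conv(X/\SS)$ can be arranged simultaneously, which relies on the filteredness of $\MB_{\mathfrak{T}}^{\circ}$ for each $\mathfrak{T}$ and on the compatibility of admissible blow-ups with the fiber products of~\ref{def fibered product} (base change of an admissible blow-up along an adic morphism is again admissible, and the $p$-torsion-free fiber product construction interacts well with this). None of these steps is deep — they are the standard verification that localization at a right multiplicative system is exact and compatible with finite limits when the ambient category has them — but writing them carefully is where the work lies. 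I would isolate the filteredness of $(I_{\QQ}^{\mathfrak{T}^{\rig}})^{\circ}$ as the single substantive lemma and let (ii) and (iii) be short formal consequences.
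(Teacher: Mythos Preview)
Your overall strategy is the same as the paper's, and your treatment of (ii) and (iii) is correct and matches the paper's one-line deductions from (i) via \eqref{QXSS!} and \eqref{commutative diagram Q!}.

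There is, however, a genuine gap in your step (i)(c). Two parallel morphisms $\mu_{1},\mu_{2}:(\mathfrak{Z}',g')\rightrightarrows(\mathfrak{Z},g)$ in $I_{\QQ}^{\mathfrak{T}^{\rig}}$ satisfy only $\mu_{1}^{\rig}\circ g'=g=\mu_{2}^{\rig}\circ g'$, \emph{not} $\mu_{1}^{\rig}=\mu_{2}^{\rig}$; so there is no reason an admissible blow-up $\mathfrak{Z}''\to\mathfrak{Z}'$ should equalize $\mu_{1}$ and $\mu_{2}$. The equalizing object must instead be built from $\mathfrak{T}$: represent $g'$ by a genuine morphism $\mathfrak{g}:\mathfrak{T}'\to\mathfrak{Z}'$ of $\Conv(X/\SS)$ for some $(\mathfrak{T}',\varphi)\in\MB_{\mathfrak{T}}$; then $\mu_{1}\circ\mathfrak{g}$ and $\mu_{2}\circ\mathfrak{g}$ have the same image in $\RConv(X/\SS)$, so by the right-multiplicative-system calculus one may (after a further blow-up of $\mathfrak{T}'$) arrange $\mu_{1}\circ\mathfrak{g}=\mu_{2}\circ\mathfrak{g}$ already in $\Conv(X/\SS)$. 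The object $(\mathfrak{T}',\varphi^{\rig})$ together with $\mathfrak{g}:(\mathfrak{T}',\varphi^{\rig})\to(\mathfrak{Z}',g')$ then equalizes $\mu_{1},\mu_{2}$ in $I_{\QQ}^{\mathfrak{T}^{\rig}}$. This is exactly what the paper does.

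A smaller point: in (i)(b) the ``fiber product $\mathfrak{Z}_{1}\times_{X/\SS}\mathfrak{Z}_{2}$'' is neither needed nor obviously defined (there is no terminal object in $\Conv(X/\SS)$ in general). Once you have spread $g_{1},g_{2}$ out to morphisms $\mathfrak{g}_{i}:\mathfrak{T}'\to\mathfrak{Z}_{i}$ from a common blow-up $\mathfrak{T}'$ of $\mathfrak{T}$, the object $(\mathfrak{T}',\varphi^{\rig})$ itself already dominates both $(\mathfrak{Z}_{i},g_{i})$ via $\mathfrak{g}_{i}$; no product is required. (The paper uses a fiber product only in its verification of (PS1), where there \emph{is} a common target $\mathfrak{Z}_{0}$ to fiber over.)
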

\begin{proof} We verify following conditions of (\cite{SGAIV} I 2.7) for $(I_{\QQ}^{\mathfrak{T}^{\rig}})^{\circ}$:

	(PS1) Given two morphisms $u:(\ZZ_{1},g_{1})\to (\ZZ_{0},g_{0})$ and $v:(\ZZ_{2},g_{2})\to (\ZZ_{0},g_{0})$ of $I_{\QQ}^{\mathfrak{T}^{\rig}}$, by \eqref{Hom RConv} and \eqref{right multiplicative in Conv}, there exists an admissible blow-up $\mathfrak{T}'$ of $\mathfrak{T}$ and morphisms $\mathfrak{g}_{i}:\mathfrak{T}'\to \ZZ_{i}$ of $\Conv(X/\SS)$ such that $\mathfrak{g}_{i}^{\rig}=g_{i}$ and that $u\circ \mathfrak{g}_{1}=\mathfrak{g}_{0}=v\circ \mathfrak{g}_{2}$. 
	Then we obtain a morphism $\mathfrak{h}:\mathfrak{T}'\to \ZZ_{1}\times_{\ZZ_{0}}\ZZ_{2}$ of $\Conv(X/\SS)$ \eqref{def fibered product} and an object $(\ZZ_{1}\times_{\ZZ_{0}}\ZZ_{2},\mathfrak{h}^{\rig})$ of $I_{\QQ}^{\mathfrak{T}^{\rig}}$ dominant $(\ZZ_{i},g_{i})$ for $i=1,2$. The diagram
	\begin{displaymath}
		\xymatrix{
			(\ZZ_{1}\times_{\ZZ_{0}}\ZZ_{2},\mathfrak{h}^{\rig}) \ar[r] \ar[d] & (\ZZ_{2},g_{2}) \ar[d]\\
			(\ZZ_{1},g_{1}) \ar[r] & (\ZZ_{0},g_{0})
		}
	\end{displaymath}
	commutes. 
	Then condition (PS1) follows.

	(PS2) Let $u,v:(\mathfrak{Y},g)\to (\mathfrak{Z},h)$ be two morphisms of $I_{\QQ}^{\mathfrak{T}^{\rig}}$. There exists an admissible blow-up $(\mathfrak{T}',\varphi)$ of $\mathfrak{T}$ and a morphism $\mathfrak{g}:\mathfrak{T}'\to \YY$ of $\Conv(X/\SS)$ such that 
	$u\circ \mathfrak{g}=v\circ \mathfrak{g}$ in $\Conv(X/\SS)$, denoted by $\mathfrak{h}$.
	We have $\mathfrak{g}^{\rig}=g, \mathfrak{h}^{\rig}=h$.
	Then $(\mathfrak{T}',\varphi^{\rig})$ defines an object of $I_{\QQ}^{\mathfrak{T}^{\rig}}$ and $\mathfrak{h}$ (resp. $\mathfrak{g}$) defines morphism from $(\mathfrak{T}',\varphi^{\rig})$ to $(\mathfrak{Y},g)$ (resp. $(\mathfrak{Z},h)$). 
	Condition (PS2) follows from $u\circ \mathfrak{g}=v\circ \mathfrak{g}=\mathfrak{h}$. 

	It is clear that $I_{\QQ}^{\mathfrak{T}^{\rig}}$ is non-empty. Given two objects $(\mathfrak{Z}_{1},g_{1})$ and $(\mathfrak{Z}_{2},g_{2})$, there exists an admissible blow-up $\mathfrak{T}'$ of $\mathfrak{T}$ and morphisms $\mathfrak{g}_{i}:\mathfrak{T}'\to \ZZ_{i}$ of $\Conv(X/\SS)$ for $i=1,2$ such that $\mathfrak{g}_{i}^{\rig}=g_{i}$. Hence, $I_{\QQ}^{\mathfrak{T}^{\rig}}$ is connected. Then assertion (i) follows. 

Assertion (ii) follows from (i). Assertion (iii) follows from (ii), \eqref{commutative diagram Q!} and the fact that fiber product is representable in $\Conv(X/\mathfrak{S})$.
\end{proof}

\begin{nothing} \label{def fibered product rig}	
	The canonical functor $\Conv(X/\mathfrak{S})\to \mathbf{S}^{\diamond}_{/\mathfrak{S}}$ defined by $(\mathfrak{T},u)\mapsto \mathfrak{T}$ induces a functor
	\begin{equation}
		\RConv(X/\mathfrak{S})\to \mathbf{R}_{/\mathfrak{S}^{\rig}}.
	\end{equation}
	In view of the definition of fiber product in $\mathbf{R}_{/\mathfrak{S}^{\rig}}$ (\cite{Ab10} 4.1.13), the above functor commutes with fiber products. 
%	We denote an object $(\mathfrak{T},u)$ of $\RConv(X/\mathfrak{S})$ simply by $\mathfrak{T}^{\rig}$, if there is no risk of confusion.
\end{nothing}

\begin{nothing} \label{def topology Conv rig}
	We say that a family of morphisms  $\{(\mathfrak{T}_{i},u_{i})^{\rig}\to (\mathfrak{T},u)^{\rig}\}_{i\in I}$ of $\RConv(X/\mathfrak{S})$ is an admissible (resp. fppf) covering if its image in its image $\{\mathfrak{T}_{i}^{\rig}\to \mathfrak{T}^{\rig}\}_{i\in I}$ in $\MR$ is an admissible (resp. fppf) covering (\ref{def ad covering}, \ref{def rfppf covering}).
	By \ref{def rfppf covering} and \ref{def fibered product rig}, admissible (resp. fppf) coverings form a pretopology. 
	For $\tau=\ad$ (resp. $\fppf$), we call \textit{rigid convergent topos of $X$ over $\SS$} and denote by $(X/\SS)_{\rconv,\tau}$ the topos of sheaves of sets on $\RConv(X/\SS)$, equipped with the topology associated to the pretopology defined by admissible (resp. fppf) coverings.
\end{nothing}

\begin{nothing} \label{descent datum rig}
	Let $(\mathfrak{T},u)$ be an object of $\Conv(X/\mathfrak{S})$. The canonical functor \eqref{S diamond}
	\begin{displaymath}
		r_{\mathfrak{T}}: \mathbf{S}_{/\mathfrak{T}}^{\diamond} \to \Conv(X/\mathfrak{S}) \qquad (f:\mathfrak{T}'\to \mathfrak{T})\mapsto (\mathfrak{T}',u\circ f_{0}).
	\end{displaymath}
%	\begin{eqnarray}
%		\mathbf{S}_{/\mathfrak{T}} &\to& \Conv(X/\mathfrak{S}) \label{ST to Conv} \\
%		(f:\mathfrak{T}'\to \mathfrak{T}) &\mapsto& (\mathfrak{T}',u\circ f_{0})
%	\end{eqnarray}
	sends admissible blow-ups to $\mathbf{B}_{X/\mathfrak{S}}$ and hence induces a functor 
	\begin{equation} \label{RT to RConv}
		r_{\mathfrak{T}^{\rig}}: \mathbf{R}_{/\mathfrak{T}^{\rig}} \to \RConv(X/\mathfrak{S}).
	\end{equation}

	The restriction of \eqref{RT to RConv} to $\Ad_{/\mathfrak{T}^{\rig}}$ (resp. $\Rf_{/\mathfrak{T}^{\rig}}$) is cocontinuous for the admissible (resp. fppf) topology and it induces a morphism of topoi
	\begin{equation}
		s_{\mathfrak{T}^{\rig}}: \mathfrak{T}^{\rig}_{\tau} \to (X/\mathfrak{S})_{\conv,\tau}, \qquad \tau\in \{\ad,\fppf\}. 
	\end{equation}

	For any sheaf $\mathscr{F}$ of $(X/\mathfrak{S})_{\conv,\tau}$, we set $\mathscr{F}_{\mathfrak{T}^{\rig}}=s_{\mathfrak{T}^{\rig}}^{*}(\mathscr{F})$.
	For any morphism $f:\mathfrak{T}'^{\rig}\to \mathfrak{T}^{\rig}$ of $\RConv(X/\mathfrak{S})$, we have a canonical morphism 
	\begin{equation}
		\mathscr{F}_{\mathfrak{T}^{\rig}}\to f_{\tau*}(\mathscr{F}_{\mathfrak{T}'^{\rig}})
	\end{equation}
	and we denote its adjoint by
	\begin{equation} \label{transition gamma f rig}
		\gamma_{f}:f_{\tau}^{*}(\mathscr{F}_{\mathfrak{T}^{\rig}}) \to \mathscr{F}_{\mathfrak{T}'^{\rig}},
	\end{equation}
	where $f_{\tau}:\mathfrak{T}'^{\rig}_{\tau}\to \mathfrak{T}^{\rig}_{\tau}$ denotes the functorial morphism for $\tau$-topology \eqref{functorial ftau rig}. 
	If the morphism of underlying rigid spaces of $f$ belongs to $\Ad_{/\mathfrak{T}^{\rig}}$ (resp. $\Rf_{/\mathfrak{T}^{\rig}}$), $f_{\tau}$ is the localisation morphism at $\mathfrak{T}'$ \eqref{fppf to ad and functorial} and then $\gamma_{f}$ is an isomorphism. 
	If $g:\mathfrak{T}''^{\rig}\to \mathfrak{T}'^{\rig}$ is another morphism of $\RConv(X/\mathfrak{S})$, one verifies that $\gamma_{g\circ f}=\gamma_{f}\circ f^{*}_{\tau}(\gamma_{g})$.
	
	By repeating the proof of \ref{descent data sheaf}, we have the following description for a sheaf of $(X/\mathfrak{S})_{\rconv,\tau}$.
\end{nothing}

\begin{prop} \label{descent data sheaf rig}
	For $\tau\in \{\ad,\fppf\}$, a sheaf $\mathscr{F}$ of $(X/\mathfrak{S})_{\rconv,\tau}$ is equivalent to the following data:

	\textnormal{(i)} For every object $\mathfrak{T}^{\rig}$ of $\RConv(X/\mathfrak{S})$, a sheaf $\mathscr{F}_{\mathfrak{T}^{\rig}}$ of $\mathfrak{T}^{\rig}_{\tau}$,

	\textnormal{(ii)} For every morphism $f:\mathfrak{T}'^{\rig}\to \mathfrak{T}^{\rig}$ of $\RConv(X/\SS)$, a morphism $\gamma_{f}$ \eqref{transition gamma f rig},

	subject to the following conditions

	\textnormal{(a)} If $f$ is the identity morphism of $(\mathfrak{T},u)$, $\gamma_{f}$ is the identity morphism.

	\textnormal{(b)} If the underlying morphism $f:\mathfrak{T}'^{\rig}\to \mathfrak{T}^{\rig}$ of coherent rigid spaces is a morphism of $\Ad_{/\mathfrak{T}^{\rig}}$ (resp. $\Rf_{/\mathfrak{T}^{\rig}}$), then $\gamma_{f}$ is an isomorphism.

	\textnormal{(c)} If $f$ and $g$ are two composable morphisms, then we have $\gamma_{g\circ f}=\gamma_{f}\circ f_{\tau}^{*}(\gamma_{g})$.
\end{prop}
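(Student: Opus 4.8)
The plan is to mirror the proof of Proposition \ref{descent data sheaf}, replacing the convergent site by its localization $\RConv(X/\mathfrak{S})$ and using the infrastructure set up in \ref{descent datum rig}. First I would check that the two constructions are mutually inverse. Given a sheaf $\mathscr{F}$ of $(X/\mathfrak{S})_{\rconv,\tau}$, the pullback functors $s_{\mathfrak{T}^{\rig}}^{*}$ produce the family $\{\mathscr{F}_{\mathfrak{T}^{\rig}}\}$ on the topoi $\mathfrak{T}^{\rig}_{\tau}$, and the adjunction maps together with \eqref{transition gamma f rig} produce the transition morphisms $\gamma_{f}$; the relations (a), (b), (c) were already verified in \ref{descent datum rig}. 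Conversely, given a datum $\{\mathscr{F}_{\mathfrak{T}^{\rig}},\gamma_{f}\}$ as in the statement, I set $\mathscr{F}(\mathfrak{T}^{\rig})=\mathscr{F}_{\mathfrak{T}^{\rig}}(\mathfrak{T}^{\rig})$; a morphism $f:\mathfrak{T}'^{\rig}\to\mathfrak{T}^{\rig}$ induces by $\gamma_{f}$ (after taking global sections over $\mathfrak{T}'^{\rig}$) a map $\mathscr{F}_{\mathfrak{T}^{\rig}}(\mathfrak{T}^{\rig})\to\mathscr{F}_{\mathfrak{T}'^{\rig}}(\mathfrak{T}'^{\rig})$, and conditions (a) and (c) make this functorial, so $\mathscr{F}$ is a presheaf on $\RConv(X/\mathfrak{S})$. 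Condition (b) is exactly what is needed to check that this presheaf is a sheaf for the $\tau$-topology: for an admissible (resp. fppf) covering $\{\mathfrak{T}_{i}^{\rig}\to\mathfrak{T}^{\rig}\}$ the morphisms of underlying rigid spaces lie in $\Ad_{/\mathfrak{T}^{\rig}}$ (resp. $\Rf_{/\mathfrak{T}^{\rig}}$), so $\gamma_{f}$ and $\gamma$ over the double fiber products are isomorphisms, and the sheaf condition for $\mathscr{F}$ reduces to the sheaf condition for the local sheaf $\mathscr{F}_{\mathfrak{T}^{\rig}}$ on $\mathfrak{T}^{\rig}_{\tau}$, which holds by hypothesis (i).

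Next I would verify that the two passages are quasi-inverse. Starting from a sheaf $\mathscr{F}$, reconstructing the presheaf $\mathfrak{T}^{\rig}\mapsto\mathscr{F}_{\mathfrak{T}^{\rig}}(\mathfrak{T}^{\rig})$ and comparing with $\mathscr{F}$ itself: since $s_{\mathfrak{T}^{\rig}}$ is the morphism of topoi attached to the cocontinuous functor $r_{\mathfrak{T}^{\rig}}$, one has $\mathscr{F}_{\mathfrak{T}^{\rig}}(\mathfrak{T}^{\rig})=\mathscr{F}(\mathfrak{T}^{\rig})$ naturally, exactly as in the unlocalized case. Conversely, starting from a datum $\{\mathscr{F}_{\mathfrak{T}^{\rig}},\gamma_{f}\}$ and forming the associated sheaf $\mathscr{F}$, one must recover $\mathscr{F}_{\mathfrak{T}^{\rig}}$ as $s_{\mathfrak{T}^{\rig}}^{*}(\mathscr{F})$ and the $\gamma$'s. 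This is where the bookkeeping is heaviest: one checks that for a morphism $\mathfrak{T}'^{\rig}\to\mathfrak{T}^{\rig}$ of $\Rf_{/\mathfrak{T}^{\rig}}$ (resp. $\Ad_{/\mathfrak{T}^{\rig}}$), $s_{\mathfrak{T}^{\rig}}^{*}(\mathscr{F})(\mathfrak{T}'^{\rig})=\mathscr{F}(\mathfrak{T}'^{\rig})=\mathscr{F}_{\mathfrak{T}'^{\rig}}(\mathfrak{T}'^{\rig})$ using the fact (recorded in \ref{fppf to ad and functorial}, \ref{descent datum rig}) that $f_{\tau}$ is then a localization morphism and $\gamma_{f}$ an isomorphism; this identifies $s_{\mathfrak{T}^{\rig}}^{*}(\mathscr{F})$ with $\mathscr{F}_{\mathfrak{T}^{\rig}}$ on a basis of the topology of $\mathfrak{T}^{\rig}_{\tau}$, hence everywhere.

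The main obstacle I anticipate is bookkeeping rather than anything conceptually new: one must be careful that $\RConv(X/\mathfrak{S})$ has the fiber products needed to speak of coverings and descent (this is \ref{QXSS! exact}(iii)), that $r_{\mathfrak{T}^{\rig}}$ really is cocontinuous for both topologies, and that the transition morphism $\gamma_{g\circ f}$ decomposes correctly — the subtlety being that morphisms in $\RConv$ are equivalence classes represented by genuine morphisms out of an admissible blow-up, so one repeatedly has to choose such representatives and check independence of the choice. Since all of this is the rigid-localized analogue of \ref{descent data sheaf}, I would simply write: \emph{the proof is identical to that of \ref{descent data sheaf}, replacing $\Conv(X/\mathfrak{S})$, $\Zar_{/\mathfrak{T}}$, $\Fft_{/\mathfrak{T}}$, $s_{\mathfrak{T}}$ by $\RConv(X/\mathfrak{S})$, $\Ad_{/\mathfrak{T}^{\rig}}$, $\Rf_{/\mathfrak{T}^{\rig}}$, $s_{\mathfrak{T}^{\rig}}$ respectively, and invoking \ref{QXSS! exact} for the existence of fiber products and \ref{descent datum rig} for the compatibilities (a), (b), (c).}
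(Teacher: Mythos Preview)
Your proposal is correct and matches the paper's approach exactly: the paper simply states that the proof is obtained by repeating the proof of Proposition~\ref{descent data sheaf}, and your write-up spells out precisely that transposition using the infrastructure of \ref{descent datum rig}. Your additional remarks about fiber products (\ref{QXSS! exact}(iii)) and independence of representatives are the natural bookkeeping points, but nothing beyond the parallel argument is needed.
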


\begin{nothing}
	Note that the fppf topology on $\RConv(X/\SS)$ is finer than the admissible topology. Equipped with the fppf topology on the source and the admissible topology on the target, the identical functor $\id:\RConv(X/\SS)\to \RConv(X/\SS)$ is cocontinuous. By \ref{generality morphism topos}, it induces a morphism of topoi
	\begin{equation} \label{alpha ad fppf}
		\alpha_{r}: (X/\SS)_{\rconv,\fppf}\to (X/\SS)_{\rconv,\ad}.
	\end{equation}
If $\mathscr{F}$ is a sheaf of $(X/\SS)_{\rconv,\fppf}$, $\alpha_{r*}(\mathscr{F})$ is equal to $\mathscr{F}$ as presheaves. If $\mathscr{G}$ is a sheaf of $(X/\SS)_{\rconv,\ad}$, then $\alpha_{r}^{*}(\mathscr{G})$ is the sheafification of $\mathscr{G}$ with respect to the fppf topology.
\end{nothing}

\begin{nothing} \label{rho XS rconv}
	Equipped with the Zariski topology on the source and the admissible on the target, the canonical functor $\QQ_{X/\mathfrak{S}}$ \eqref{functor QXS} is clearly continuous. Since the functor $\QQ_{X/\mathfrak{S}!}$ and the shefification functor are exact \eqref{def fibered product rig}, then we have a morphism of topoi
	\begin{equation}
		\rho_{X/\mathfrak{S}}: (X/\mathfrak{S})_{\rconv,\ad} \to (X/\mathfrak{S})_{\conv,\zar}
	\end{equation}
	defined by $\rho_{X/\mathfrak{S}*}=\QQ^{*}_{X/\mathfrak{S}}$ and $\rho_{X/\mathfrak{S}}^{*}=a\circ \QQ_{X/\mathfrak{S}!}$ \eqref{def fibered product rig}, where $a$ denotes the sheafification functor.
	
	For any object $\mathfrak{T}$ of $\Conv(X/\mathfrak{S})$ and any sheaf $\mathscr{F}$ of $(X/\mathfrak{S})_{\rconv,\ad}$, we have \eqref{def rhoX}
	\begin{equation} \label{description pushforward rho}
		(\rho_{X/\mathfrak{S}*}(\mathscr{F}))_{\mathfrak{T}}=\rho_{\mathfrak{T}*}(\mathscr{F}_{\mathfrak{T}^{\rig}}).
	\end{equation}
	Let $f:\mathfrak{Z}\to \mathfrak{T}$ be a morphism of $\Conv(X/\mathfrak{S})$ and $\beta_{f^{\rig}},\gamma_{f^{\rig}}$ (resp. $\beta_{f}$, $\gamma_{f}$) transition morphisms of $\mathscr{F}$ associated to $f^{\rig}$ (resp. $\rho_{X/\mathfrak{S}*}(\mathscr{F})$ associated to $f$) (\ref{construction descent data}, \ref{descent datum rig}). 
	Via \eqref{description pushforward rho}, we have $\beta_{f}=\rho_{\mathfrak{T}*}(\beta_{f^{\rig}})$. 
	Then we deduce that $\gamma_{f}$ coincides with the composition 
	\begin{equation} \label{description pushforward rho gamma}
	f^{*}_{\zar}(\rho_{\mathfrak{T}*}(\mathscr{F}_{\mathfrak{T}^{\rig}}))\to \rho_{\mathfrak{Z}*}(f^{\rig*}_{\ad}(\mathscr{F}_{\mathfrak{T}^{\rig}})) \xrightarrow{\rho_{\mathfrak{Z}*}(\gamma_{f^{\rig}})} \rho_{\mathfrak{Z}*}(\mathscr{F}_{\mathfrak{Z}^{\rig}}). 
\end{equation}
\end{nothing}

\begin{prop} \label{lemma pullback rhoXS}
	Let $\mathscr{F}$ be a sheaf of $(X/\mathfrak{S})_{\conv,\zar}$ and $(\mathfrak{T},u)$ an object of $\Conv(X/\mathfrak{S})$. There exists a canonical isomorphism
	\begin{equation} \label{cal pullback by rho}
		(\rho_{X/\mathfrak{S}}^{*}(\mathscr{F}))_{\mathfrak{T}^{\rig}}
		\xrightarrow{\sim}	
		\varinjlim_{(\mathfrak{T}',\varphi)\in \MB_{\mathfrak{T}}^{\circ}} \mu_{\varphi}^{*}(\mathscr{F}_{\mathfrak{T}'}).
	\end{equation}
\end{prop}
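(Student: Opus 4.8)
The plan is to unwind both sides of the asserted isomorphism into filtered colimits of presheaves on the site $\Ad_{/\mathfrak{T}^{\rig}}$ and compare them on quasi-compact opens. First I would reduce the left-hand side. By \ref{rho XS rconv} one has $\rho_{X/\mathfrak{S}}^{*}=a\circ\QQ_{X/\mathfrak{S}!}$, where $\QQ_{X/\mathfrak{S}!}$ is exact (\ref{QXSS! exact}(ii)) and is computed by the filtered colimit \eqref{QXSS!}, while $(-)_{\mathfrak{T}^{\rig}}=s_{\mathfrak{T}^{\rig}}^{*}$ is induced by the functor $r_{\mathfrak{T}^{\rig}}\colon\Ad_{/\mathfrak{T}^{\rig}}\to\RConv(X/\mathfrak{S})$ of \ref{descent datum rig}. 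This functor is cocontinuous (\ref{descent datum rig}) and also continuous: it sends admissible coverings to admissible coverings (\ref{def topology Conv rig}) and commutes with fibre products (\ref{def fibered product rig}). Hence restriction of presheaves along $r_{\mathfrak{T}^{\rig}}$ commutes with sheafification, so $(\rho_{X/\mathfrak{S}}^{*}(\mathscr{F}))_{\mathfrak{T}^{\rig}}$ is the sheaf on $\mathfrak{T}^{\rig}_{\ad}$ associated to the presheaf $P\colon\mathcal{U}\mapsto\QQ_{X/\mathfrak{S}!}(\mathscr{F})(r_{\mathfrak{T}^{\rig}}(\mathcal{U}))$.

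Next I would compute $P$ on quasi-compact opens. By Raynaud's theory (\cite{Ab10}) such a $\mathcal{U}$ has the form $\mathfrak{U}^{\rig}$ with $\mathfrak{U}$ an open formal subscheme of an admissible blow-up of $\mathfrak{T}$, which I regard as an object of $\Conv(X/\mathfrak{S})$; then $r_{\mathfrak{T}^{\rig}}(\mathcal{U})=\mathfrak{U}^{\rig}$ in $\RConv(X/\mathfrak{S})$. The key point is that the functor $\MB_{\mathfrak{U}}^{\circ}\to(I_{\QQ}^{\mathfrak{U}^{\rig}})^{\circ}$, $(\mathfrak{U}',\psi)\mapsto((\mathfrak{U}',u\circ\psi_{0}),(\psi^{\rig})^{-1})$, is cofinal: by \eqref{Hom RConv} every morphism $\mathfrak{U}^{\rig}\to\mathfrak{Z}^{\rig}$ of $\RConv(X/\mathfrak{S})$ is represented, after an admissible blow-up $\psi\colon\mathfrak{U}'\to\mathfrak{U}$, by an honest morphism $\mathfrak{U}'\to\mathfrak{Z}$ of $\Conv(X/\mathfrak{S})$, and the cofiltering conditions are checked exactly as in the proof of \ref{QXSS! exact}(i). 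Combined with \eqref{QXSS!} and the filteredness of $(I_{\QQ}^{\mathfrak{U}^{\rig}})^{\circ}$, this gives $P(\mathcal{U})\cong\varinjlim_{(\mathfrak{U}',\psi)\in\MB_{\mathfrak{U}}^{\circ}}\mathscr{F}(\mathfrak{U}')$; since the functor $\Zar_{/\mathfrak{U}'}\to\Conv(X/\mathfrak{S})$ of \ref{construction descent data} is continuous, $\mathscr{F}(\mathfrak{U}')$ coincides with $\mathscr{F}_{\mathfrak{U}'}(\mathfrak{U}')$.

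Then I would compare with the right-hand side. The colimit $\varinjlim_{(\mathfrak{T}',\varphi)\in\MB_{\mathfrak{T}}^{\circ}}\mu_{\varphi}^{*}(\mathscr{F}_{\mathfrak{T}'})$ is a filtered colimit of sheaves on the coherent site $\mathfrak{T}^{\rig}_{\ad}$, hence itself a sheaf, and I would evaluate it on a quasi-compact $\mathcal{U}=\mathfrak{U}^{\rig}$ as $\varinjlim_{\varphi}(\mu_{\varphi}^{*}(\mathscr{F}_{\mathfrak{T}'}))(\mathfrak{U}^{\rig})$. Using that $\mu_{\varphi}$ factors through $\rho_{\mathfrak{T}'}$ and the canonical isomorphism $\mathfrak{T}^{\rig}_{\ad}\xrightarrow{\sim}\mathfrak{T}'^{\rig}_{\ad}$ (\ref{def rhoX}), together with Abbes' computation of rigid fibres (\cite{Ab10} 4.7.4) and the facts that admissible blow-ups of $\mathfrak{T}$ are cofiltered and that $\mathfrak{U}$ is cut out inside the special fibre of one of them, I would identify this value once more with $\varinjlim_{(\mathfrak{U}',\psi)\in\MB_{\mathfrak{U}}^{\circ}}\mathscr{F}(\mathfrak{U}')$, compatibly with the evident morphism of presheaves $P\to\varinjlim_{\varphi}\mu_{\varphi}^{*}(\mathscr{F}_{\mathfrak{T}'})$. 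Since quasi-compact opens generate the admissible topology on $\mathfrak{T}^{\rig}_{\ad}$ and the target is already a sheaf, this morphism exhibits $\varinjlim_{\varphi}\mu_{\varphi}^{*}(\mathscr{F}_{\mathfrak{T}'})$ as the sheaf associated to $P$, which yields the asserted isomorphism; canonicity and compatibility with the transition morphisms (\ref{descent data sheaf}, \ref{descent data sheaf rig}) would then follow by tracing through the constructions.

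I expect the third step to be the main obstacle: reconciling the large filtered system of all admissible blow-ups $\mathfrak{T}'$ of $\mathfrak{T}$ occurring on the right-hand side with the small system of blow-ups of $\mathfrak{U}$ produced by $\QQ_{X/\mathfrak{S}!}$, that is, checking that the two systems are mutually cofinal on quasi-compact opens and that no residual sheafification survives on the right. This is a relative, site-theoretic version of Abbes' rigid-fibre computation, and the bookkeeping of the nested colimits — over $\varphi$, over opens $\mathfrak{U}\subseteq\mathfrak{T}'^{\rig}$, and over admissible blow-ups of $\mathfrak{U}$ — is where the real work lies.
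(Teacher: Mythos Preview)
Your proposal is correct and follows essentially the same route as the paper: both sides are recognized as sheafifications of presheaves on $\Ad_{/\mathfrak{T}^{\rig}}$ given by filtered colimits, and the comparison reduces to a cofinality statement between the indexing system $(I_{\QQ}^{\mathcal{U}})^{\circ}$ coming from $\QQ_{X/\mathfrak{S}!}$ and the system of formal models of $\mathcal{U}$ inside admissible blow-ups of $\mathfrak{T}$.

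The only organizational difference is in your third step. Where you restrict to quasi-compact $\mathcal{U}=\mathfrak{U}^{\rig}$, index by $\MB_{\mathfrak{U}}^{\circ}$, and invoke the fact that filtered colimits of sheaves on the coherent site $\Ad_{/\mathfrak{T}^{\rig}}$ are already sheaves, the paper instead introduces for \emph{every} $\mathcal{U}$ an auxiliary category $J_{\QQ}^{\mathcal{U}}$ of quadruples $(\mathfrak{T}',\varphi,\mathfrak{U},g)$ (an admissible blow-up of $\mathfrak{T}$, an open formal subscheme, and an open immersion $g\colon\mathcal{U}\to\mathfrak{U}^{\rig}$), fibered over $\MB_{\mathfrak{T}}$. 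This category packages the nested colimits you worry about: each $\mu_{\varphi}^{*}(\mathscr{F}_{\mathfrak{T}'})$ is the sheafification of $\mathcal{U}\mapsto\varinjlim_{J_{\QQ,\mathfrak{T}'}^{\mathcal{U}}}\mathscr{F}(r_{\mathfrak{T}}(\mathfrak{U}))$, so the right-hand side becomes the sheafification of $\varinjlim_{J_{\QQ}^{\mathcal{U}}}\mathscr{F}(r_{\mathfrak{T}}(\mathfrak{U}))$. The paper then checks that the full subcategory $J\subset J_{\QQ}^{\mathcal{U}}$ where $g$ is an isomorphism is cofinal on both sides, which is exactly your cofinality of $\MB_{\mathfrak{U}}^{\circ}\to(I_{\QQ}^{\mathfrak{U}^{\rig}})^{\circ}$ rephrased. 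Your coherence shortcut is a genuine simplification the paper does not use; the paper's fibered-category bookkeeping is what makes your ``main obstacle'' routine.
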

\begin{proof}
%	Let $\mathscr{F}$ be a sheaf of $(X/\mathfrak{S})_{\conv,\zar}$ and $(\mathfrak{T},u)$ an object of $\RConv(X/\mathfrak{S})$. 
%	By () and (), we have
%	\begin{equation}
%		\varpi^{*}(t_{\mathfrak{T}}^{*}(\mathscr{F}))=\varinjlim_{(\mathfrak{T}',\varphi)\in \mathbf{B}_{/\mathfrak{T}}} \mu_{\varphi}^{*}(\mathscr{F}_{\mathfrak{T}'}).
%	\end{equation}
	Let $\mathcal{U}$ be an object of $\Ad_{/\mathfrak{T}^{\rig}}$ that we consider as an object of $\RConv(X/\SS)$ via $r_{\mathfrak{T}^{\rig}}$ \eqref{RT to RConv}. By \ref{QXSS!}, $(\rho_{X/\mathfrak{S}}^{*}(\mathscr{F}))_{\mathfrak{T}^{\rig}}$ is the sheaf associated to the presheaf on $\Ad_{/\mathfrak{T}^{\rig}}$
	\begin{equation}
		\mathcal{U}\mapsto \varinjlim_{(\mathfrak{Z},g)\in (I^{\mathcal{U}}_{\QQ})^{\circ}} \mathscr{F}(\mathfrak{Z}).
	\end{equation}

	We denote by $J_{\QQ}^{\mathcal{U}}$ the category of quadruple $(\mathfrak{T}',\varphi,\UU,g)$ consisting of an admissible blow-up $(\mathfrak{T}',\varphi)$ of $\mathfrak{T}$, an open formal subscheme $\UU$ of $\mathfrak{T}'$ and an open immersion $g:\mathcal{U}\to \UU^{\rig}$ over $\mathfrak{T}^{\rig}$. 
	A morphism $(\mathfrak{T}'_{1},\varphi_{1},\UU_{1},g_{1})$ to $(\mathfrak{T}'_{2},\varphi_{2},\UU_{2},g_{2})$ is a morphism $\mathfrak{T}'_{1}\to \mathfrak{T}'_{2}$ of $\MB_{\mathfrak{T}}$ sending $\UU_{1}$ to $\UU_{2}$ compatible with $g_{1},g_{2}$. 
	The category $J_{\QQ}^{\mathcal{U}}$ is clearly fibered over $\MB_{\mathfrak{T}}$:
	\begin{displaymath}
		J_{\QQ}^{\mathcal{U}}\to \MB_{\mathfrak{T}},\qquad (\mathfrak{T}',\varphi,\UU,g)\mapsto (\mathfrak{T}',\varphi).
	\end{displaymath}
	For any admissible blow-up $(\mathfrak{T}',\varphi)$ of $\mathfrak{T}$, we denotes its fiber by $J_{\QQ,\mathfrak{T}'}^{\mathcal{U}}$. The sheaf $\mu_{\varphi}^{*}(\mathscr{F}_{\mathfrak{T}'})$ is associated to the presheaf on $\Ad_{/\mathfrak{T}^{\rig}}$ 
	\begin{equation}
		\mathcal{U}\mapsto \varinjlim_{(\UU,g)\in (J_{\QQ,\mathfrak{T}'}^{\mathcal{U}})^{\circ}} \mathscr{F}(r_{\mathfrak{T}}(\UU)).
	\end{equation}
	
	Then, the left hand side of \eqref{cal pullback by rho} is the sheaf on $\Ad_{/\mathfrak{T}^{\rig}}$ associated to the presheaf
	\begin{equation}
		\mathcal{U}\mapsto \varinjlim_{(\mathfrak{T}',\varphi,\UU,g)\in (J_{\QQ}^{\mathcal{U}})^{\circ}} \mathscr{F}(r_{\mathfrak{T}}(\UU)).
	\end{equation}

	We have a canonical functor \eqref{descent datum rig}
	\begin{displaymath}
		r: J_{\QQ}^{\mathcal{U}}\to I_{\QQ}^{\mathcal{U}} \qquad (\mathfrak{T}',\varphi,\UU,g)\mapsto (r_{\mathfrak{T}}(\UU),r_{\mathfrak{T}^{\rig}}(g))
	\end{displaymath}
	We denote by $J$ the full subcategory of $J_{\QQ}^{\mathcal{U}}$ consisting of objects such that $g$ is an isomorphism. Then each morphism of $J$ is Cartesian. 
	Each category $(J_{\QQ,\mathfrak{T}'}^{\mathcal{U}})^{\circ}$ is filtered by (\cite{SGAIV} I 5.2). We deduce that $(J_{\QQ}^{\mathcal{U}})^{\circ}$ is filtered. 
	It is clear that $J^{\circ}$ is cofinal in $(J_{\QQ}^{\mathcal{U}})^{\circ}$ and hence is filtered (\cite{SGAIV} I 8.1.3 a).
	
	To prove the assertion, it suffices to show that the induced functor $r:J^{\circ}\to (I_{\QQ}^{\mathcal{U}})^{\circ}$ is cofinal in the sense of (\cite{SGAIV} I 8.1.1). 
	By (\cite{Ab10} 4.2.2), for any object $(\mathfrak{Z},g:\mathcal{U}\to \mathfrak{Z}^{\rig})$ of $I_{\QQ}^{\mathcal{U}}$, there exists a morphism $h:\UU \to \mathfrak{Z}$ of $\Conv(X/\mathfrak{S})$ with an open formal subscheme $\UU$ of some admissible blow-up $\mathfrak{T}'$ of $\mathfrak{T}$, such that $g=h^{\rig}$, i.e. condition (F1) of (\cite{SGAIV} I 8.1.3) is satisfied.
	Given an object $(\mathfrak{Z},g)$ of $I_{\QQ}^{\mathcal{U}}$, an object $(\mathfrak{T}',\varphi,\mathfrak{U},h)$ of $J$ and two morphisms $f_{1},f_{2}:(r_{\mathfrak{T}}(\UU),r_{\mathfrak{T}^{\rig}}(h)) \to (\mathfrak{Z},g)$, then $f_{1}^{\rig}=f_{2}^{\rig}$ in $\RConv(X/\SS)$ since $h$ is an isomorphism. By (\cite{Ab10} 3.5.9), we deduce that $f_{1}=f_{2}$, i.e. condition (F2) of (\cite{SGAIV} I 8.1.3) is satisfied. Then, the assertion follows from (\cite{SGAIV} I 8.1.3 b).
\end{proof}

\begin{coro} \label{rconv description}
		\textnormal{(i)} The canonical morphism $\rho_{X/\SS}^{*}\rho_{X/\SS*}\to \id$ is an isomorphism.

		\textnormal{(ii)} The functor $\rho_{X/\SS*}$ is fully faithful and its essential image consists of sheaves $\mathscr{F}=\{\mathscr{F}_{\mathfrak{T}},\beta_{f}\}$ such that $\beta_{f}$ is an isomorphism for all morphism $f$ of $\MB_{X/\SS}$.  
\end{coro}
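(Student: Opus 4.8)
The plan is to prove first that $\rho_{X/\mathfrak{S}*}$ is fully faithful — from which (i) is formal — and then to identify its essential image. For full faithfulness, note that by construction $\rho_{X/\mathfrak{S}*}=\QQ_{X/\mathfrak{S}}^{*}$ and that $\QQ_{X/\mathfrak{S}}$ is continuous for the Zariski topology on the source and the admissible one on the target \eqref{rho XS rconv}, so $\rho_{X/\mathfrak{S}*}$ is merely the restriction to sheaves of the precomposition functor $\QQ_{X/\mathfrak{S}}^{*}\colon\widehat{\RConv}(X/\mathfrak{S})\to\widehat{\Conv}(X/\mathfrak{S})$. Since $\RConv(X/\mathfrak{S})$ is the localization of $\Conv(X/\mathfrak{S})$ at the right multiplicative system $\MB_{X/\mathfrak{S}}$ \eqref{right multiplicative in Conv} and $\QQ_{X/\mathfrak{S}}$ is the localization functor, $\QQ_{X/\mathfrak{S}}^{*}$ is fully faithful on presheaves, its essential image being the presheaves that invert every morphism of $\MB_{X/\mathfrak{S}}$; as morphisms of sheaves are the same as morphisms of their underlying presheaves, $\rho_{X/\mathfrak{S}*}$ is fully faithful. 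For the adjoint pair $\rho_{X/\mathfrak{S}}^{*}\dashv\rho_{X/\mathfrak{S}*}$, full faithfulness of the right adjoint is exactly the statement that the counit $\rho_{X/\mathfrak{S}}^{*}\rho_{X/\mathfrak{S}*}\to\id$ is an isomorphism, i.e. (i). (Alternatively, (i) may be checked by evaluation: for $\mathscr{G}$ a sheaf of $(X/\mathfrak{S})_{\rconv,\ad}$ and $\mathfrak{T}$ in $\Conv(X/\mathfrak{S})$, \eqref{description pushforward rho} together with \ref{lemma pullback rhoXS} give $(\rho_{X/\mathfrak{S}}^{*}\rho_{X/\mathfrak{S}*}\mathscr{G})_{\mathfrak{T}^{\rig}}\simeq\varinjlim_{(\mathfrak{T}',\varphi)\in\MB_{\mathfrak{T}}^{\circ}}\mu_{\varphi}^{*}\rho_{\mathfrak{T}'*}(\mathscr{G}_{\mathfrak{T}'^{\rig}})$; for $\varphi\in\MB_{\mathfrak{T}}$ the morphism $\varphi^{\rig}$ is an isomorphism in $\RConv(X/\mathfrak{S})$, so the transition map of $\mathscr{G}$ along it is invertible \eqref{descent data sheaf rig} and the $\varphi$-term equals $\mu_{\varphi}^{*}\mu_{\varphi*}(\mathscr{G}_{\mathfrak{T}^{\rig}})$; the colimit then recovers $\mathscr{G}_{\mathfrak{T}^{\rig}}$ because the $\mu_{\varphi}$ exhibit $\mathfrak{T}^{\rig}_{\ad}$ as the projective limit of the topoi $\mathfrak{T}'_{\zar}$ over $\MB_{\mathfrak{T}}$, the underlying cofinality being exactly the one proved inside \ref{lemma pullback rhoXS}.)

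For the essential image I would treat the two inclusions separately. If $\mathscr{F}\simeq\rho_{X/\mathfrak{S}*}\mathscr{G}$, then by \ref{rho XS rconv} one has $\beta_{f}=\rho_{\mathfrak{T}*}(\beta_{f^{\rig}})$ for every morphism $f\colon\mathfrak{Z}\to\mathfrak{T}$ of $\Conv(X/\mathfrak{S})$; when $f\in\MB_{X/\mathfrak{S}}$ its image $f^{\rig}$ is an isomorphism of $\RConv(X/\mathfrak{S})$ (its underlying morphism of coherent rigid spaces is an admissible blow-up, hence invertible in $\MR$, hence an open immersion), so $\beta_{f^{\rig}}$ is an isomorphism by \ref{descent data sheaf rig} and therefore so is $\beta_{f}$. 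Conversely, suppose $\mathscr{F}=\{\mathscr{F}_{\mathfrak{T}},\beta_{f}\}$ is a sheaf of $(X/\mathfrak{S})_{\conv,\zar}$ with $\beta_{f}$ invertible for every $f\in\MB_{X/\mathfrak{S}}$. Then, regarded as a presheaf on $\Conv(X/\mathfrak{S})$, $\mathscr{F}$ sends every morphism of $\MB_{X/\mathfrak{S}}$ to a bijection (on global sections), hence factors uniquely as $\mathscr{F}=\QQ_{X/\mathfrak{S}}^{*}\mathscr{F}_{0}$ for a presheaf $\mathscr{F}_{0}$ on $\RConv(X/\mathfrak{S})$. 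It remains to see that $\mathscr{F}_{0}$ is a sheaf for the admissible topology, for then $\mathscr{F}=\QQ_{X/\mathfrak{S}}^{*}\mathscr{F}_{0}=\rho_{X/\mathfrak{S}*}\mathscr{F}_{0}$ lies in the essential image. Here I would use that every admissible covering of an object $\mathfrak{T}^{\rig}$ of $\RConv(X/\mathfrak{S})$ is refined by one induced from a finite Zariski open covering $\{\UU_{i}\}$ of an admissible blow-up $\mathfrak{T}'$ of $\mathfrak{T}$ (Raynaud's theory, cf. \cite{Ab10} 4.2.2 and the proof of \ref{lemma pullback rhoXS}): since $\QQ_{X/\mathfrak{S}}(\mathfrak{T}')=\QQ_{X/\mathfrak{S}}(\mathfrak{T})$, since $\QQ_{X/\mathfrak{S}}$ commutes with fibered products \eqref{QXSS! exact}, and since $\UU_{i}^{\rig}\times_{\mathfrak{T}'^{\rig}}\UU_{j}^{\rig}=(\UU_{i}\cap\UU_{j})^{\rig}$, the sheaf axiom for $\mathscr{F}_{0}$ against such a covering is literally the sheaf axiom for the Zariski sheaf $\mathscr{F}$ against $\{\UU_{i}\to\mathfrak{T}'\}$, which holds; the same computation also yields separatedness.

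The routine parts are the adjunction and localization bookkeeping and the first inclusion of the essential image. I expect the main obstacle to be the two places where one must leave pure category theory and feed in rigid geometry: in (i), that $\mathfrak{T}^{\rig}_{\ad}$ is the projective limit of the Zariski topoi of the admissible blow-ups of $\mathfrak{T}$ (equivalently, that $\varinjlim_{\varphi}\mu_{\varphi}^{*}\mu_{\varphi*}\to\id$ is an isomorphism), and in the converse inclusion, the comparison of admissible coverings with Zariski coverings of admissible blow-ups. Both are contained in Raynaud's theory as presented in \cite{Ab10}, and the precise cofinality required is essentially the one already established in the proof of \ref{lemma pullback rhoXS}; so the difficulty is chiefly one of citing and packaging these inputs, together with careful bookkeeping of the identifications $\mathfrak{T}^{\rig}=\mathfrak{T}'^{\rig}$ and of the transition maps along admissible blow-ups.
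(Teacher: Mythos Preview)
Your proposal is correct and essentially complete. The route, however, differs from the paper's in two places worth noting.

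For (i), the paper argues in the opposite order: it checks the counit directly by evaluation, using \eqref{description pushforward rho} and \ref{lemma pullback rhoXS} to identify $(\rho_{X/\SS}^{*}\rho_{X/\SS*}\mathscr{F})_{\mathfrak{T}^{\rig}}$ with $\varinjlim_{\varphi}\mu_{\varphi}^{*}\mu_{\varphi*}(\mathscr{F}_{\mathfrak{T}^{\rig}})$ and then invokes the projective-limit description of $\mathfrak{T}^{\rig}_{\ad}$ (\cite{Ab10} 4.5.27, 4.5.28); full faithfulness of $\rho_{X/\SS*}$ is then deduced from (i). Your primary argument---that $\QQ_{X/\SS}^{*}$ is fully faithful on presheaves because $\QQ_{X/\SS}$ is a localization, hence $\rho_{X/\SS*}$ is fully faithful, hence the counit is an isomorphism---is more elementary and avoids the rigid-geometry input entirely at this step. (Your parenthetical ``alternative'' argument for (i) is precisely the paper's.)

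For the converse inclusion in (ii), the paper does not factor through a presheaf on $\RConv(X/\SS)$ and check the sheaf axiom; instead it shows directly that the unit $\mathscr{G}\to\rho_{X/\SS*}\rho_{X/\SS}^{*}(\mathscr{G})$ is an isomorphism at each object, again via \eqref{description pushforward rho}, \ref{lemma pullback rhoXS}, and \cite{Ab10} 4.5.22, 4.5.27. Your approach is equally valid but requires the extra observation that the images under $\QQ_{X/\SS}$ of Zariski coverings of admissible blow-ups generate the admissible topology (so that the sheaf condition for $\mathscr{F}_{0}$ reduces to the Zariski sheaf condition for $\mathscr{F}$); this is indeed contained in Raynaud's theory as you indicate, and is morally the same rigid-geometry input the paper uses, just packaged differently. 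The paper's unit-map argument is slightly slicker here since it sidesteps the explicit sheaf check.
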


\begin{proof}
	(i) Let $\mathscr{F}$ be a sheaf of $(X/\SS)_{\rconv,\ad}$. Via \eqref{description pushforward rho} and \eqref{cal pullback by rho}, we consider the evaluation of $\rho_{X/\SS}^{*}\rho_{X/\SS*}(\mathscr{F}) \to \mathscr{F}$ at an object $\mathfrak{T}^{\rig}$ of $\RConv(X/\SS)$
	\begin{equation} \label{adjonction rho T}
		\varinjlim_{(\mathfrak{T}',\varphi)\in \MB_{\mathfrak{T}}^{\circ}} \mu_{\varphi}^{*}(\mu_{\varphi*}(\mathscr{F}_{\mathfrak{T}^{\rig}})) \to
		\mathscr{F}_{\mathfrak{T}^{\rig}}.
	\end{equation}
	In view of the proof of \ref{lemma pullback rhoXS}, the morphism $\mu_{\varphi}^{*}(\mu_{\varphi*}(\mathscr{F}_{\mathfrak{T}^{\rig}})) \to \mathscr{F}_{\mathfrak{T}^{\rig}}$ deduced from \eqref{adjonction rho T} is nothing but the adjunction morphism. Then the assertion follows from (\cite{Ab10} 4.5.27 and 4.5.28).

	(ii) By (i), the functor $\rho_{X/\SS*}$ is fully faithful. By \eqref{rho XS rconv}, the essential image of $\rho_{X/\SS*}$ has the desired property. 
	Let $\mathscr{G}$ be a sheaf of $(X/\SS)_{\conv,\zar}$ satisfying the desired property. By (\cite{Ab10} 4.5.22 and 4.5.27), we deduce that the evaluation of the canonical morphism
	\begin{equation} \label{adjoint map rho XSS G}
		\mathscr{G}\to \rho_{X/\SS*}\rho_{X/\SS}^{*}(\mathscr{G})
	\end{equation}
	at each object of $\RConv(X/\SS)$, is an isomorphism. Then the assertion follows.
\end{proof}

\begin{nothing} \label{setting functorial functor rig}
	Let $g:\SS'\to \SS$ be a morphism of $\MS^{\diamond}$, $X'$ an $S'$-scheme and $f:X'\to X$ a morphism compatible with $g$ as in \ref{def functorial functor}. 
	The canonical functor $\varphi:\Conv(X'/\SS')\to \Conv(X/\SS)$ defined by $(\mathfrak{T},u)\mapsto (\mathfrak{T},f\circ u)$ \eqref{functorial functor}, sends admissible blow-ups to admissible blow-ups. Then, $\varphi$ induces a functor that we denote by
	\begin{equation} \label{functorial functor rig}
		\psi: \RConv(X'/\SS')\to \RConv(X/\SS).
	\end{equation}
	
	Since $\varphi$ commutes with fiber products, the same holds for $\psi$ by \ref{def fibered product rig}.
	In view of \ref{cont cocont func}(i) and \eqref{Hom RConv}, one verifies that the functor $\psi$ is continuous and cocontinuous for admissible (resp. fppf) topology in the same way as in \ref{cont cocont func}.

	By \ref{generality morphism topos}, the functor $\psi$ \eqref{functorial functor rig} induces morphisms of topoi
	\begin{eqnarray}
		\label{morphism of topoi functorial rig} f_{\rconv,\tau}:(X'/\SS')_{\rconv,\tau}\to (X/\SS)_{\rconv,\tau}, \qquad \tau\in \{\ad,\fppf\}
%		\label{morphism of topoi functorial fppf} &f_{\conv,\fppf}&:(X'/\SS')^{\fppf}_{\conv}\to (X/\SS)^{\fppf}_{\conv},
	\end{eqnarray}
	such that the pullback functor is induced by the composition with $\psi$. For a sheaf $\mathscr{F}$ of $(X/\mathfrak{S})_{\rconv,\tau}$ and an object $\mathfrak{T}$ of $\RConv(X'/\mathfrak{S}')$, we have
	\begin{equation}
		(f_{\rconv,\tau}^{*}(\mathscr{F}))_{\mathfrak{T}^{\rig}}=\mathscr{F}_{\psi(\mathfrak{T}^{\rig})}.\label{description of pullback rig}
	\end{equation}
	For any morphism $g$ of $\RConv(X'/\mathfrak{S}')$, the transition morphism of $f_{\rconv,\tau}^{*}(\mathscr{F})$ associated to $g$ is equal to the transition morphism of $\mathscr{F}$ associated to $\psi(g)$.

	In view of the description of inverse image functors, we deduce the following result.
\end{nothing}

\begin{coro} \label{compatibility functorial morphism rho}
	Keep the assumption and notation of \ref{def functorial functor} and of \ref{setting functorial functor rig}. The diagram
	\begin{displaymath}
		\xymatrix{
			(X'/\SS')_{\rconv,\ad} \ar[r]^{f_{\rconv,\ad}} \ar[d]_{\rho_{X'/\SS'}} & (X/\SS)_{\rconv,\ad} \ar[d]^{\rho_{X/\SS}} \\
			(X'/\SS')_{\conv,\zar} \ar[r]^{f_{\conv,\zar}} & (X/\SS)_{\conv,\zar}
		}
	\end{displaymath}
	is commutative up to canonical isomorphisms.
\end{coro}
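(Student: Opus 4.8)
The plan is to verify the commutativity at the level of inverse image functors. A $2$-commutative square of morphisms of topoi is the same datum as a natural isomorphism between the two composite pullback functors (the statement for direct images then follows by adjunction), so it suffices to construct a canonical, functorial isomorphism
\[
\rho_{X'/\SS'}^{*}\circ f_{\conv,\zar}^{*}\;\xrightarrow{\sim}\;f_{\rconv,\ad}^{*}\circ\rho_{X/\SS}^{*}
\]
of functors from $(X/\SS)_{\conv,\zar}$ to $(X'/\SS')_{\rconv,\ad}$.

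The one structural input I would isolate first is that the square of site functors strictly commutes, $\QQ_{X/\SS}\circ\varphi=\psi\circ\QQ_{X'/\SS'}$, where $\varphi$ and $\psi$ are the functorial functors of \ref{def functorial functor} and \ref{setting functorial functor rig}. This is immediate from the construction of $\psi$: both sides agree with $\varphi$ on objects and send a morphism of $\Conv(X'/\SS')$ to its class in the localized category; in particular $\psi(\mathfrak{T}^{\rig})=(\varphi(\mathfrak{T}))^{\rig}$ for every object $\mathfrak{T}$ of $\Conv(X'/\SS')$. Since $\varphi$ leaves the underlying formal $\SS$-scheme of an object unchanged, it also restricts to an isomorphism of categories $\MB_{\mathfrak{T}}\xrightarrow{\sim}\MB_{\varphi(\mathfrak{T})}$, and this identification is compatible with the morphisms of rigid-space topoi $\mu$ appearing in \ref{lemma pullback rhoXS}, because those depend only on the underlying formal schemes and not on the morphism to $X$ or $X'$.

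With this in place I would evaluate both composites at a sheaf $\mathscr{G}$ of $(X/\SS)_{\conv,\zar}$ and an object $\mathfrak{T}^{\rig}$ of $\RConv(X'/\SS')$, then compare, using \ref{descent data sheaf rig} to reconstruct the sheaves from their evaluations. On one side, \eqref{description of pullback rig}, the identity $\psi(\mathfrak{T}^{\rig})=(\varphi(\mathfrak{T}))^{\rig}$, and \ref{lemma pullback rhoXS} give
\[
\bigl(f_{\rconv,\ad}^{*}\rho_{X/\SS}^{*}(\mathscr{G})\bigr)_{\mathfrak{T}^{\rig}}
=\bigl(\rho_{X/\SS}^{*}(\mathscr{G})\bigr)_{(\varphi(\mathfrak{T}))^{\rig}}
\xrightarrow{\sim}\varinjlim_{\mathfrak{Z}\in\MB_{\varphi(\mathfrak{T})}^{\circ}}\mu^{*}(\mathscr{G}_{\mathfrak{Z}}),
\]
while on the other, \ref{lemma pullback rhoXS} for $X'/\SS'$ followed by \eqref{description of pullback} give
\[
\bigl(\rho_{X'/\SS'}^{*}f_{\conv,\zar}^{*}(\mathscr{G})\bigr)_{\mathfrak{T}^{\rig}}
\xrightarrow{\sim}\varinjlim_{\mathfrak{T}'\in\MB_{\mathfrak{T}}^{\circ}}\mu^{*}\bigl((f_{\conv,\zar}^{*}(\mathscr{G}))_{\mathfrak{T}'}\bigr)
=\varinjlim_{\mathfrak{T}'\in\MB_{\mathfrak{T}}^{\circ}}\mu^{*}(\mathscr{G}_{\varphi(\mathfrak{T}')}).
\]
Because $\varphi$ carries an admissible blow-up $\mathfrak{T}'$ of $\mathfrak{T}$, viewed as an object of $\Conv(X'/\SS')$, to the corresponding object of $\Conv(X/\SS)$ over $\varphi(\mathfrak{T})$, the two filtered colimits agree term by term and on transition morphisms; this yields the isomorphism on evaluations, and the remaining task is to check it is compatible with the transition data $\gamma_{f}$ of \ref{descent data sheaf rig} (so that it glues to an isomorphism of sheaves of $(X'/\SS')_{\rconv,\ad}$) and functorial in $\mathscr{G}$.

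The only non-formal point — and hence the main obstacle — is exactly this last piece of bookkeeping: confirming that passing from $\mathfrak{T}$ to $\varphi(\mathfrak{T})$ genuinely changes nothing at the level of admissible blow-ups, of the morphisms $\mu$, and of the restricted sheaves $\mathscr{G}_{\mathfrak{Z}}$, and that the induced identification of the colimits of \ref{lemma pullback rhoXS} respects the transition morphisms of \ref{descent data sheaf rig}. Once that is granted, the rest is the formal calculus of inverse image functors and filtered colimits.
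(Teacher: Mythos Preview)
Your proposal is correct and follows essentially the same approach as the paper, which dispatches the corollary in a single sentence (``In view of the description of inverse image functors, we deduce the following result''); you have simply unwound what that sentence means, using the strict equality $\QQ_{X/\SS}\circ\varphi=\psi\circ\QQ_{X'/\SS'}$ together with the explicit formula of \ref{lemma pullback rhoXS} and the fact that $\varphi$ leaves the underlying formal scheme unchanged (so $\MB_{\mathfrak{T}}=\MB_{\varphi(\mathfrak{T})}$ and the $\mu$'s match). The bookkeeping you flag about transition morphisms is routine for exactly the reason you give, and the paper does not spell it out either.
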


%\begin{lemma} \label{cont cocont func rig}
%	\textnormal{(i)} Let $(\mathfrak{T},u)$ be an object of $\RConv(X'/\SS')$ and $\mu:(\mathfrak{Z},w)\to \psi(\mathfrak{T},u)$ a morphism of $\RConv(X/\SS)$. Then there exists an object $(\mathfrak{Z},v)$ of $\RConv(X'/\SS')$ and a morphism $\nu:(\mathfrak{Z},v)\to (\mathfrak{T},u)$ of $\RConv(X'/\SS')$ such that $\mu=\psi(\nu)$.
%
%	\textnormal{(ii)} Equipped with the admissible topology (resp. fppf topology) on both sides, the functor $\psi$ is continuous and cocontinuous.
%\end{lemma}

\begin{nothing}
	We set $\mathscr{O}_{X/\mathfrak{S}}^{\rig}=\rho_{X/\mathfrak{S}}^{*}(\mathscr{O}_{X/\mathfrak{S}}[\frac{1}{p}])$. By \ref{lemma pullback rhoXS}(i), for any object $\mathfrak{T}^{\rig}$ of $\RConv(X/\mathfrak{S})$, we have a canonical isomorphism
	\begin{equation}
		(\mathscr{O}_{X/\mathfrak{S}}^{\rig})_{\mathfrak{T}^{\rig}}\xrightarrow{\sim} \mathscr{O}_{\mathfrak{T}^{\rig}}.
	\end{equation}
	Then by fppf descent (\cite{Ab10} 5.11.11), the presheaf $\mathscr{O}_{X/\mathfrak{S}}^{\rig}$ is also a sheaf for the fppf (resp. admissible) topology.
	For $\tau\in \{\ad,\fppf\}$, if $\mathscr{F}$ is an $\mathscr{O}_{X/\mathfrak{S}}^{\rig}$-module of $(X/\mathfrak{S})_{\rconv,\tau}$, $\mathscr{F}_{\mathfrak{T}^{\rig}}$ is an $\mathscr{O}_{\mathfrak{T}^{\rig}}$-module. For any morphism $f:\mathfrak{T}'^{\rig}\to \mathfrak{T}^{\rig}$ of $\RConv(X/\mathfrak{S})$, the transition morphism $\gamma_{f}$ \eqref{descent data sheaf rig} extends to an $\mathscr{O}_{\mathfrak{T}'^{\rig}}$-linear morphism \eqref{fppf to ad and functorial}
	\begin{equation} \label{lin transition morphism rig}
		c_{f}: f^{*}_{\tau}(\mathscr{F}_{\mathfrak{T}^{\rig}})\to \mathscr{F}_{\mathfrak{T}'^{\rig}}.
	\end{equation}
	
	In view of \ref{descent data sheaf rig}, we deduce the following description for $\mathscr{O}_{X/\SS}^{\rig}$-modules.
%	Explain $\mathscr{O}_{X/\mathfrak{S}}^{\rig}$ is a sheaf for fppf topology.
\end{nothing}

\begin{prop} \label{description of mods rig}
		For $\tau\in\{\zar,\fppf\}$, an $\mathscr{O}_{X/\mathfrak{S}}^{\rig}$-module of $(X/\mathfrak{S})_{\rconv,\tau}$ is equivalent to the following data:

		\textnormal{(i)} For every object $\mathfrak{T}^{\rig}$ of $\RConv(X/\mathfrak{S})$, an $\mathscr{O}_{\mathfrak{T}^{\rig}}$-module $\mathscr{F}_{\mathfrak{T}}$ of $\mathfrak{T}_{\tau}$,

		\textnormal{(ii)} For every morphism $f:\mathfrak{T}'^{\rig}\to \mathfrak{T}^{\rig}$ of $\RConv(X/\mathfrak{S})$, an $\mathscr{O}_{\mathfrak{T}'^{\rig}}$-linear morphism $c_{f}$ \eqref{lin transition morphism rig}.

	which is subject to the following conditions

	\textnormal{(a)} If $f$ is the identity morphism, then $c_{f}$ is the identity.

	\textnormal{(b)} If the underlying morphism $f:\mathfrak{T}'^{\rig}\to \mathfrak{T}^{\rig}$ of coherent rigid spaces is a morphism of $\Ad_{/\mathfrak{T}^{\rig}}$ (resp. $\Rf_{/\mathfrak{T}^{\rig}}$), then $c_{f}$ is an isomorphism.

	\textnormal{(c)} If $f$ and $g$ are two composable morphisms, then we have $c_{g\circ f}=c_{f}\circ f^{*}_{\tau}(c_{g})$.
\end{prop}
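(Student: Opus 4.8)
The plan is to obtain the proposition from the description of sheaves of $(X/\mathfrak{S})_{\rconv,\tau}$ given in \ref{descent data sheaf rig}, by exactly the same reasoning that deduces \ref{description of mods} from \ref{descent data sheaf}. First I would note that, by the computation immediately preceding the statement, $(\mathscr{O}_{X/\mathfrak{S}}^{\rig})_{\mathfrak{T}^{\rig}}$ is canonically $\mathscr{O}_{\mathfrak{T}^{\rig}}$ and that the transition morphisms of $\mathscr{O}_{X/\mathfrak{S}}^{\rig}$ attached to a morphism of $\RConv(X/\mathfrak{S})$ are the canonical ones. Hence giving an $\mathscr{O}_{X/\mathfrak{S}}^{\rig}$-module amounts to giving a sheaf $\mathscr{F}=\{\mathscr{F}_{\mathfrak{T}^{\rig}},\gamma_f\}$ of $(X/\mathfrak{S})_{\rconv,\tau}$ together with, for each $\mathfrak{T}^{\rig}$, an $\mathscr{O}_{\mathfrak{T}^{\rig}}$-module structure on $\mathscr{F}_{\mathfrak{T}^{\rig}}$ such that for every morphism $f:\mathfrak{T}'^{\rig}\to\mathfrak{T}^{\rig}$ the transition morphism $\gamma_f$ of \eqref{transition gamma f rig} is linear over the canonical map $f_\tau^{-1}(\mathscr{O}_{\mathfrak{T}^{\rig}})\to\mathscr{O}_{\mathfrak{T}'^{\rig}}$.

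Next I would translate the $\gamma_f$ into the $c_f$ of \eqref{lin transition morphism rig} by extension of scalars, as in \ref{coherent fppf rig}: the $f_\tau^{-1}(\mathscr{O}_{\mathfrak{T}^{\rig}})$-linear map $\gamma_f$ corresponds bijectively to the $\mathscr{O}_{\mathfrak{T}'^{\rig}}$-linear map $c_f:f_\tau^{*}(\mathscr{F}_{\mathfrak{T}^{\rig}})\to\mathscr{F}_{\mathfrak{T}'^{\rig}}$. Under this dictionary, condition (a) of \ref{descent data sheaf rig} becomes (a); condition (b) -- $\gamma_f$ an isomorphism when the underlying morphism of rigid spaces lies in $\Ad_{/\mathfrak{T}^{\rig}}$ (resp. $\Rf_{/\mathfrak{T}^{\rig}}$), $f_\tau$ then being the localization morphism at $\mathfrak{T}'^{\rig}$ by \eqref{functorial ftau rig} -- becomes (b); and the cocycle identity $\gamma_{g\circ f}=\gamma_f\circ f_\tau^{*}(\gamma_g)$ of \ref{descent datum rig} becomes $c_{g\circ f}=c_f\circ f_\tau^{*}(c_g)$ after applying the canonical isomorphism $(g\circ f)_\tau^{*}\simeq f_\tau^{*}\circ g_\tau^{*}$ and using that the morphisms $f_\tau$ are ringed by the structure sheaves \eqref{coherent fppf rig}. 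Conversely, a datum $\{\mathscr{F}_{\mathfrak{T}^{\rig}},c_f\}$ satisfying (a)--(c) gives, upon replacing each $c_f$ by its adjoint $\gamma_f$ and forgetting the module structures, a datum as in \ref{descent data sheaf rig}, hence a sheaf $\mathscr{F}$ of $(X/\mathfrak{S})_{\rconv,\tau}$ which is canonically an $\mathscr{O}_{X/\mathfrak{S}}^{\rig}$-module; the two constructions are mutually inverse.

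Since every ingredient above is literally the same as in the already-established convergent (non-rigid) case, I expect no real obstacle; the only point worth spelling out is the compatibility of the passage $\gamma_f\leftrightarrow c_f$ with composition, i.e. that the cocycle conditions in the sheaf-theoretic and the module-theoretic formulations genuinely match. This rests on the transitivity of $f_\tau^{*}$ and on the fact recorded in \ref{coherent fppf rig} that $f_\tau$ is a morphism of ringed topoi for $(\mathscr{O}_{\mathfrak{T}^{\rig}},\mathscr{O}_{\mathfrak{T}'^{\rig}})$, and it is handled verbatim as in the deduction of \ref{description of mods} from \ref{descent data sheaf}. Accordingly I would present the proof simply as: the proposition follows from \ref{descent data sheaf rig} by the same argument that deduces \ref{description of mods} from \ref{descent data sheaf}, with at most a sentence of indication.
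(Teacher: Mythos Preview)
Your proposal is correct and matches the paper's approach exactly: the paper simply records, just before the statement, that ``In view of \ref{descent data sheaf rig}, we deduce the following description for $\mathscr{O}_{X/\SS}^{\rig}$-modules'' and gives no further argument, precisely as you suggest in your final sentence. Your detailed unpacking of the $\gamma_f\leftrightarrow c_f$ dictionary is more than the paper provides, but entirely in the intended spirit.
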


\begin{definition} \label{def coh crystal rconv}
	Let $\mathscr{F}$ be an $\mathscr{O}_{X/\mathfrak{S}}^{\rig}$-module of $(X/\mathfrak{S})_{\rconv,\tau}$.

	\textnormal{(i)} We say that $\mathscr{F}$ is \textit{coherent} if for every object $\mathfrak{T}^{\rig}$ of $\RConv(X/\mathfrak{S})$, $\mathscr{F}_{\mathfrak{T}^{\rig}}$ is coherent \eqref{coherent fppf rig}.

	\textnormal{(ii)} We say that $\mathscr{F}$ is a \textit{crystal} if for every morphism $f$ of $\RConv(X/\mathfrak{S})$, $c_{f}$ is an isomorphism.	
\end{definition}

By fppf descent (\cite{Ab10} 5.11.11), the direct image and inverse image functors of $\alpha_{r}$ induce equaivalences of categories quais-inverse to each others between the category of coherent crystals of $\mathscr{O}_{X/\SS}^{\rig}$-modules of $(X/\mathfrak{S})_{\rconv,\ad}$ and of $(X/\mathfrak{S})_{\rconv,\fppf}$.

\begin{prop} \label{rho conv coh}
	The direct image and inverse image functors of $\rho_{X/\mathfrak{S}}$ induce equivalences of categories quasi-inverse to each other between the category of coherent crystals of $\mathscr{O}_{X/\mathfrak{S}}[\frac{1}{p}]$-modules of $(X/\mathfrak{S})_{\conv,\zar}$ and that of coherent crystals of $\mathscr{O}_{X/\mathfrak{S}}^{\rig}$-modules of $(X/\mathfrak{S})_{\rconv,\ad}$.
\end{prop}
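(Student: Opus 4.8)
The plan is to reduce the assertion to the equivalence \ref{rhoXX coh equi} at the level of a single formal scheme, by computing the values of $\rho_{X/\SS*}(\mathscr{F})$ and $\rho_{X/\SS}^{*}(\mathscr{E})$ at each object of $\Conv(X/\SS)$ and by tracking transition morphisms. First I would treat the direct image. Let $\mathscr{F}$ be a coherent crystal of $\mathscr{O}_{X/\SS}^{\rig}$-modules of $(X/\SS)_{\rconv,\ad}$ \ref{def coh crystal rconv}. For every object $\mathfrak{T}$ of $\Conv(X/\SS)$ one has $(\rho_{X/\SS*}(\mathscr{F}))_{\mathfrak{T}}=\rho_{\mathfrak{T}*}(\mathscr{F}_{\mathfrak{T}^{\rig}})$ by \eqref{description pushforward rho}; since $\mathscr{F}_{\mathfrak{T}^{\rig}}$ is a coherent $\mathscr{O}_{\mathfrak{T}^{\rig}}$-module, this is a coherent $\mathscr{O}_{\mathfrak{T}}[\frac{1}{p}]$-module by \ref{rhoXX coh equi}, so $\rho_{X/\SS*}(\mathscr{F})$ is coherent. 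For a morphism $f:\mathfrak{Z}\to\mathfrak{T}$ of $\Conv(X/\SS)$, the $\mathscr{O}$-linearization of \eqref{description pushforward rho gamma} presents the transition morphism $c_{f}$ of $\rho_{X/\SS*}(\mathscr{F})$ as the composite of the base-change morphism $f_{\zar}^{*}\rho_{\mathfrak{T}*}(\mathscr{F}_{\mathfrak{T}^{\rig}})\to\rho_{\mathfrak{Z}*}(f^{\rig*}_{\ad}(\mathscr{F}_{\mathfrak{T}^{\rig}}))$ with $\rho_{\mathfrak{Z}*}$ of the transition morphism of $\mathscr{F}$ attached to $f^{\rig}$; the former is an isomorphism by \eqref{base change rho coherent} (as $\mathscr{F}_{\mathfrak{T}^{\rig}}$ is coherent), the latter because $\mathscr{F}$ is a crystal. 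Hence $\rho_{X/\SS*}(\mathscr{F})$ is a coherent crystal of $\mathscr{O}_{X/\SS}[\frac{1}{p}]$-modules of $(X/\SS)_{\conv,\zar}$ \ref{def convergent crystals}.

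Next I would treat the inverse image. Let $\mathscr{E}$ be a coherent crystal of $\mathscr{O}_{X/\SS}[\frac{1}{p}]$-modules of $(X/\SS)_{\conv,\zar}$ and $\mathfrak{T}$ an object of $\Conv(X/\SS)$. For an admissible blow-up $\varphi:\mathfrak{T}'\to\mathfrak{T}$ the crystal property gives an isomorphism $\varphi_{\zar}^{*}(\mathscr{E}_{\mathfrak{T}})\xrightarrow{\sim}\mathscr{E}_{\mathfrak{T}'}$, while the compatibility of rigid fibres with functoriality (\cite{Ab10} 4.7.24.2) gives $\varphi_{\zar}\circ\mu_{\varphi}=\rho_{\mathfrak{T}}$; together these yield $\mu_{\varphi}^{*}(\mathscr{E}_{\mathfrak{T}'})\cong\mu_{\varphi}^{*}\varphi_{\zar}^{*}(\mathscr{E}_{\mathfrak{T}})\cong\rho_{\mathfrak{T}}^{*}(\mathscr{E}_{\mathfrak{T}})$, functorially in $(\mathfrak{T}',\varphi)$. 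As $\MB_{\mathfrak{T}}^{\circ}$ is filtered, \ref{lemma pullback rhoXS} then gives a canonical isomorphism
\[
	(\rho_{X/\SS}^{*}(\mathscr{E}))_{\mathfrak{T}^{\rig}}\;\cong\;\varinjlim_{(\mathfrak{T}',\varphi)\in\MB_{\mathfrak{T}}^{\circ}}\mu_{\varphi}^{*}(\mathscr{E}_{\mathfrak{T}'})\;\cong\;\rho_{\mathfrak{T}}^{*}(\mathscr{E}_{\mathfrak{T}}),
\]
which is a coherent $\mathscr{O}_{\mathfrak{T}^{\rig}}$-module by \ref{rhoXX coh equi}. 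For the crystal property I note that by \eqref{Hom RConv} every morphism of $\RConv(X/\SS)$ becomes, after composition with the rigid image of an admissible blow-up (which is invertible in $\RConv(X/\SS)$, hence carries an invertible transition morphism), of the form $f^{\rig}$ for a morphism $f:\mathfrak{Z}\to\mathfrak{T}$ of $\Conv(X/\SS)$; the same compatibility, now in the form $f^{\rig*}_{\ad}\circ\rho_{\mathfrak{T}}^{*}\cong\rho_{\mathfrak{Z}}^{*}\circ f_{\zar}^{*}$, identifies the transition morphism of $\rho_{X/\SS}^{*}(\mathscr{E})$ attached to $f^{\rig}$ with $\rho_{\mathfrak{Z}}^{*}$ of the transition morphism of $\mathscr{E}$ attached to $f$, which is an isomorphism. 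Thus $\rho_{X/\SS}^{*}(\mathscr{E})$ is a coherent crystal of $\mathscr{O}_{X/\SS}^{\rig}$-modules of $(X/\SS)_{\rconv,\ad}$.

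Finally I would check that the two functors are quasi-inverse. The adjunction morphism $\rho_{X/\SS}^{*}\rho_{X/\SS*}\to\id$ is an isomorphism on all of $(X/\SS)_{\rconv,\ad}$ by \ref{rconv description}(i), hence in particular on coherent crystals; and for a coherent crystal $\mathscr{E}$ of $(X/\SS)_{\conv,\zar}$ one gets $(\rho_{X/\SS*}\rho_{X/\SS}^{*}(\mathscr{E}))_{\mathfrak{T}}=\rho_{\mathfrak{T}*}\rho_{\mathfrak{T}}^{*}(\mathscr{E}_{\mathfrak{T}})\cong\mathscr{E}_{\mathfrak{T}}$ from \eqref{description pushforward rho}, the fibre computation above and \ref{rhoXX coh equi}, compatibly with transition morphisms, so the unit $\mathscr{E}\to\rho_{X/\SS*}\rho_{X/\SS}^{*}(\mathscr{E})$ is an isomorphism. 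The one genuinely non-formal point — beyond the inputs \ref{rhoXX coh equi} and the base-change isomorphism \eqref{base change rho coherent} — is the collapse of the colimit in \ref{lemma pullback rhoXS} forced by the crystal property, which is exactly what makes $(\rho_{X/\SS}^{*}\mathscr{E})_{\mathfrak{T}^{\rig}}$ computable as $\rho_{\mathfrak{T}}^{*}(\mathscr{E}_{\mathfrak{T}})$; everything else is bookkeeping with transition morphisms and the commutation of $\rho$ with pullbacks.
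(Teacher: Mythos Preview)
Your proof is correct and follows essentially the same approach as the paper: both reduce to the objectwise equivalence \ref{rhoXX coh equi}, use \eqref{description pushforward rho} and \eqref{description pushforward rho gamma} together with \eqref{base change rho coherent} for the direct image, and invoke \ref{lemma pullback rhoXS} for the inverse image. The one packaging difference is in the unit $\mathscr{E}\to\rho_{X/\SS*}\rho_{X/\SS}^{*}(\mathscr{E})$: the paper verifies it by showing (via Tate's acyclicity, \cite{Ab10} 3.5.5) that a coherent crystal satisfies the criterion of \ref{rconv description}(ii) and hence lies in the essential image of $\rho_{X/\SS*}$, whereas you compute the unit directly at each object as $\rho_{\mathfrak{T}*}\rho_{\mathfrak{T}}^{*}(\mathscr{E}_{\mathfrak{T}})\cong\mathscr{E}_{\mathfrak{T}}$ via \ref{rhoXX coh equi}; these are equivalent, and your explicit collapse of the colimit in \ref{lemma pullback rhoXS} using the crystal property is exactly what the paper's terse citation of that lemma is gesturing at.
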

\begin{proof}
	Let $\mathscr{F}$ be a coherent crystal of $\mathscr{O}_{X/\SS}^{\rig}$-modules of $(X/\mathfrak{S})_{\rconv,\ad}$.
	By \ref{rhoXX coh equi} and \eqref{description pushforward rho}, $\rho_{X/\mathfrak{S}*}(\mathscr{F})$ is coherent. In view of \eqref{base change rho coherent} and \eqref{description pushforward rho gamma}, we deduce that it is also a crystal. By \ref{rconv description}(i), $\rho_{X/\SS}^{*}\rho_{X/\SS*}(\mathscr{F})\to \mathscr{F}$ is an isomorphism.

	Let $\mathscr{G}$ be a coherent crystal of $\mathscr{O}_{X/\mathfrak{S}}[\frac{1}{p}]$-modules of $(X/\mathfrak{S})_{\conv,\zar}$ and $\mathscr{H}=\rho_{X/\SS}^{*}(\mathscr{G})$. 
	By Tate's acyclicity (\cite{Ab10} 3.5.5), $\mathscr{G}$ is contained in the essential image of $\rho_{X/\SS*}$ (\ref{rconv description}(ii)). Then we have a canonical isomorphism $\mathscr{G}\xrightarrow{\sim}\rho_{X/\SS*}(\mathscr{H})$ \eqref{adjoint map rho XSS G}. By \eqref{lemma pullback rhoXS}, \eqref{base change rho coherent} and \eqref{description pushforward rho gamma}, we deduce that $\mathscr{H}$ is coherent and is a crystal. 
	Then the assertion follows.
\end{proof}

%\begin{nothing}
%	For $\tau\in \{\fppf,\zar\}$, the inverse image functor $(-)_{\mathfrak{S}}:(S/\mathfrak{S})_{\conv,\tau}\to \mathfrak{S}_{\tau}$ is exact. For $i\ge 0$ and an abelian sheaf $\mathscr{E}$ of $(X/\mathfrak{S})_{\conv,\tau}$, we set
%	\begin{equation}
%		\rR^{i}g_{X/\mathfrak{S},\tau}(\mathscr{E})=(\rR^{i}g_{\conv,\tau*}(\mathscr{E}))_{\mathfrak{S}} \in \mathfrak{S}_{\tau},\qquad \rR g_{X/\mathfrak{S},\tau}(\mathscr{E})=(\rR g_{\conv,\tau*}(\mathscr{E}))_{\mathfrak{S}}\in \rD(\mathfrak{S})
%	\end{equation}
%	We call $\rR^{i}g_{X/\mathfrak{S},\tau}(\mathscr{E})$ the $i$-th relative convergent cohomology of $\mathscr{E}$.
%\end{nothing}

\begin{nothing} \label{setting XtoY}
	Let $g:X\to Y$ be a morphism of $S$-schemes, $\mathfrak{T}$ be an object of $\Conv(Y/\mathfrak{S})$ and $\mathfrak{T}^{\rig}$ its image in $\RConv(Y/\SS)$. 
	By fppf descent for morphisms of coherent rigid spaces (\cite{Ab10} 5.12.1), the presheaf associated to $\mathfrak{T}^{\rig}$ is a sheaf for the fppf (resp. Zariski) topology that we denote by $\widetilde{\mathfrak{T}}^{\rig}$.
	We set $X_{T_{0}}=X\times_{Y}T_{0}$ and for $\tau\in \{\ad,\fppf\}$, we denote by 
	\begin{eqnarray*}
		&g_{X/\mathfrak{T},\tau}:&(X_{T_{0}}/\mathfrak{T})_{\rconv,\tau}\to (T_{0}/\mathfrak{T})_{\rconv,\tau},\\
		&\omega_{\mathfrak{T}^{\rig}}:&(X_{T_{0}}/\mathfrak{T})_{\rconv,\tau} \to (X/\mathfrak{S})_{\rconv,\tau}
	\end{eqnarray*}
	the functorial morphisms of topoi \eqref{morphism of topoi functorial rig}. 

	By repeating arguments of \S~\ref{Higher direct image}, we prove the following results in the rigid convergent topos.
\end{nothing}

\begin{lemma}[\ref{localisation by a sheaf}] \label{localisation by a sheaf rig}
	Keep the notation of \ref{setting XtoY}. There exists a canonical equivalence of topoi:
	\begin{equation} \label{localisation of rconv topos}
		(X/\mathfrak{S})_{\rconv,\tau/g^{*}_{\rconv,\tau}(\widetilde{\mathfrak{T}}^{\rig})}\xrightarrow{\sim} (X_{T_{0}}/\mathfrak{T})_{\rconv,\tau}
	\end{equation}
	which identifies the localisation morphism and $\omega_{\mathfrak{T}^{\rig}}$.
\end{lemma}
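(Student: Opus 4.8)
The plan is to follow the proof of \ref{localisation by a sheaf} (which in turn follows \cite{Ber} V~3.2.2), adapting it to the bookkeeping with admissible blow-ups that is characteristic of the rigid convergent site. Two ingredients drive the argument: an explicit computation of the inverse image of a representable sheaf under $g_{\rconv,\tau}$, and the general description of a localised topos by a localised site (\cite{SGAIV} III; compare the model \cite{Ber} V~3.2.2).

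First I would make the sheaf $\mathscr{A}:=g^{*}_{\rconv,\tau}(\widetilde{\mathfrak{T}}^{\rig})$ explicit. Recall from \ref{setting functorial functor rig} that $g_{\rconv,\tau}$ is induced by the functor $\psi_{g}\colon\RConv(X/\mathfrak{S})\to\RConv(Y/\mathfrak{S})$, $(\mathfrak{Z},u)\mapsto(\mathfrak{Z},g\circ u)$, which is continuous and cocontinuous for the $\tau$-topology. Since $\widetilde{\mathfrak{T}}^{\rig}$ is a sheaf (by fppf descent for morphisms of coherent rigid spaces, \cite{Ab10} 5.12.1) and $\psi_{g}$ is continuous, $\mathscr{A}$ is computed with no sheafification: for every object $\mathfrak{Z}$ of $\Conv(X/\mathfrak{S})$,
\begin{displaymath}
	\mathscr{A}(\mathfrak{Z}^{\rig})=\Hom_{\RConv(Y/\mathfrak{S})}(\psi_{g}(\mathfrak{Z}^{\rig}),\mathfrak{T}^{\rig}).
\end{displaymath}
By \eqref{Hom RConv}, such a section is represented by an admissible blow-up $\mathfrak{Z}'\to\mathfrak{Z}$ together with a morphism $\mathfrak{Z}'\to\mathfrak{T}$ of formal $\mathfrak{S}$-schemes whose reduction $Z'_{0}\to T_{0}$ is compatible with the $Y$-structures; since $X_{T_{0}}=X\times_{Y}T_{0}$, the induced maps $Z'_{0}\to X$ and $Z'_{0}\to T_{0}$ factor through a canonical $T_{0}$-morphism $Z'_{0}\to X_{T_{0}}$, so that $(\mathfrak{Z}',Z'_{0}\to X_{T_{0}})$ is an object of $\Conv(X_{T_{0}}/\mathfrak{T})$, hence of $\RConv(X_{T_{0}}/\mathfrak{T})$.

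Next I would use this to construct an equivalence of sites
\begin{displaymath}
	\Phi\colon\RConv(X_{T_{0}}/\mathfrak{T})\xrightarrow{\sim}\RConv(X/\mathfrak{S})_{/\mathscr{A}},
\end{displaymath}
the target being the site of $\mathscr{A}$-pointed objects $(\mathfrak{Z}^{\rig},s)$ with $s\in\mathscr{A}(\mathfrak{Z}^{\rig})$, equipped with the topology induced from $\RConv(X/\mathfrak{S})$, whose topos of sheaves is $(X/\mathfrak{S})_{\rconv,\tau/\mathscr{A}}$. The functor $\Phi$ sends $(\mathfrak{W},v)$ to $\psi(\mathfrak{W},v)^{\rig}=(\mathfrak{W},W_{0}\xrightarrow{v}X_{T_{0}}\to X)^{\rig}$ equipped with the section of $\mathscr{A}$ given by the structure morphism $\mathfrak{W}\to\mathfrak{T}$; composing $\Phi$ with the forgetful functor to $\RConv(X/\mathfrak{S})$ recovers the functor underlying $\omega_{\mathfrak{T}^{\rig}}$ (\ref{setting XtoY}). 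The quasi-inverse is the construction of the previous paragraph, applied to a representative of a given section. That the two are mutually quasi-inverse, and that $\Phi$ is fully faithful --- the Hom-sets on both sides being filtered colimits over admissible blow-ups by \eqref{Hom RConv} --- will follow from the universal property of admissible blow-ups, concretely from the facts that a morphism of coherent rigid spaces into $\mathfrak{Z}^{\rig}$ lifts, after an admissible blow-up, to a morphism of formal schemes to an open formal subscheme (\cite{Ab10} 4.2.2) and that such a lift is essentially unique (\cite{Ab10} 3.5.9), exactly as in the proof of \ref{lemma pullback rhoXS}. Since an object of $\RConv(X_{T_{0}}/\mathfrak{T})$ and its image under $\Phi$ share the same underlying coherent rigid space, and likewise for morphisms, $\Phi$ matches the admissible (resp.\ fppf) coverings on both sides with the topology induced on $\RConv(X/\mathfrak{S})_{/\mathscr{A}}$; hence $\Phi$ is an equivalence of sites, giving an equivalence of topoi $(X_{T_{0}}/\mathfrak{T})_{\rconv,\tau}\xrightarrow{\sim}(X/\mathfrak{S})_{\rconv,\tau/\mathscr{A}}$ under which, by the factorisation just noted, $\omega_{\mathfrak{T}^{\rig}}$ corresponds to the localisation morphism --- which is the assertion.

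As in \ref{lemma pullback rhoXS}, I expect the only genuine difficulty to be the bookkeeping forced by morphisms in $\RConv$ being defined only up to admissible blow-up: one must repeatedly pass to suitable admissible blow-ups to realise rigid morphisms by formal-scheme data and check that the resulting choices are compatible and essentially unique. Everything else --- that the relevant composites reduce to identities, and that coverings correspond --- is routine and runs parallel to the non-rigid case \ref{localisation by a sheaf}.
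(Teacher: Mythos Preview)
Your proposal is correct and follows exactly the approach the paper intends: the paper gives no proof beyond the remark ``by repeating arguments of \S\ref{Higher direct image}'' (and \ref{localisation by a sheaf} itself is only a pointer to \cite{Ber} V~3.2.2), so your explicit site-theoretic equivalence between $\RConv(X_{T_{0}}/\mathfrak{T})$ and the localised site $\RConv(X/\mathfrak{S})_{/\mathscr{A}}$ is precisely what is meant. One small clarification: the reason $g_{\rconv,\tau}^{*}(\widetilde{\mathfrak{T}}^{\rig})$ is computed without sheafification is that $\psi_{g}$ is both continuous and cocontinuous (so $\widehat{\psi}_{g}^{*}$ already preserves sheaves, cf.\ \ref{generality morphism topos} and \ref{setting functorial functor rig}), not continuity alone.
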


\begin{lemma}[\ref{evaluation direct image}] \label{evaluation direct image rig}
	For any $\mathscr{O}_{X/\mathfrak{S}}^{\rig}$-module $E$ of $(X/\mathfrak{S})_{\rconv,\tau}$, there exists a canonical isomorphism in $\rD^{+}(\mathfrak{T}^{\rig}_{\tau},\mathscr{O}_{\mathfrak{T}^{\rig}})$
	\begin{equation} \label{calcul of higher direct image evaluation rig}
		(\rR g_{\rconv,\tau*}(E))_{\mathfrak{T}^{\rig}} \xrightarrow{\sim} (\rR g_{X/\mathfrak{T},\tau*}(\omega_{\mathfrak{T}^{\rig}}^{*}(E)))_{\mathfrak{T}^{\rig}}.
	\end{equation}
\end{lemma}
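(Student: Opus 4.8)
The plan is to transcribe the proof of \ref{evaluation direct image} into the rigid convergent setting, substituting the localisation equivalence \ref{localisation by a sheaf rig} for \ref{localisation by a sheaf}. First I would fix a morphism $f:\mathfrak{T}'^{\rig}\to \mathfrak{T}^{\rig}$ of $\RConv(Y/\SS)$ and note that, by \ref{setting functorial functor rig}, it induces a functorial morphism of topoi $\psi:(X_{T'_{0}}/\mathfrak{T}')_{\rconv,\tau}\to (X_{T_{0}}/\mathfrak{T})_{\rconv,\tau}$ fitting in a commutative square with $g_{X/\mathfrak{T}',\tau}$, $g_{X/\mathfrak{T},\tau}$ and $f_{\rconv,\tau}:(T'_{0}/\mathfrak{T}')_{\rconv,\tau}\to (T_{0}/\mathfrak{T})_{\rconv,\tau}$, and such that $\omega_{\mathfrak{T}'^{\rig}}=\omega_{\mathfrak{T}^{\rig}}\circ \psi$. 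By \ref{localisation by a sheaf rig}, $\psi$ (resp. $f_{\rconv,\tau}$) is identified with the localisation morphism of $(X_{T_{0}}/\mathfrak{T})_{\rconv,\tau}$ (resp. $(T_{0}/\mathfrak{T})_{\rconv,\tau}$) at the sheaf $g_{X/\mathfrak{T},\tau}^{*}(\widetilde{\mathfrak{T}}'^{\rig})$ (resp. $\widetilde{\mathfrak{T}}'^{\rig}$); in particular $\omega_{\mathfrak{T}^{\rig}}$ is a localisation morphism, so $\omega_{\mathfrak{T}^{\rig}}^{*}$ is exact and preserves injective objects.

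Next I would identify the two sides at the level of sheaves. Unwinding the definition of the pushforward, $g_{X/\mathfrak{T},\tau*}(\omega_{\mathfrak{T}^{\rig}}^{*}(E))$ is the sheaf on $(T_{0}/\mathfrak{T})_{\rconv,\tau}$ associated to the presheaf $f\mapsto \Gamma((X_{T'_{0}}/\mathfrak{T}')_{\rconv,\tau},\omega_{\mathfrak{T}'^{\rig}}^{*}(E))$, while $(g_{\rconv,\tau*}(E))_{\mathfrak{T}^{\rig}}=s_{\mathfrak{T}^{\rig}}^{*}(g_{\rconv,\tau*}(E))$ is the sheaf on $\mathfrak{T}^{\rig}_{\tau}$ associated to the presheaf $f\mapsto \Gamma((X/\SS)_{\rconv,\tau/g_{\rconv,\tau}^{*}(\widetilde{\mathfrak{T}}'^{\rig})},E|_{g_{\rconv,\tau}^{*}(\widetilde{\mathfrak{T}}'^{\rig})})$. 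Using \ref{localisation by a sheaf rig} to identify $(X/\SS)_{\rconv,\tau/g_{\rconv,\tau}^{*}(\widetilde{\mathfrak{T}}'^{\rig})}$ with $(X_{T'_{0}}/\mathfrak{T}')_{\rconv,\tau}$ in a way carrying $E|_{g_{\rconv,\tau}^{*}(\widetilde{\mathfrak{T}}'^{\rig})}$ to $\omega_{\mathfrak{T}'^{\rig}}^{*}(E)$, these two presheaves become canonically isomorphic, and hence so do their sheafifications; this yields a canonical isomorphism $(g_{\rconv,\tau*}(E))_{\mathfrak{T}^{\rig}}\xrightarrow{\sim}(g_{X/\mathfrak{T},\tau*}(\omega_{\mathfrak{T}^{\rig}}^{*}(E)))_{\mathfrak{T}^{\rig}}$ of $\mathscr{O}_{\mathfrak{T}^{\rig}}$-modules of $\mathfrak{T}^{\rig}_{\tau}$.

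Finally I would promote this to the derived category. Choosing an injective resolution $E\to I^{\bullet}$, the complex $\omega_{\mathfrak{T}^{\rig}}^{*}(I^{\bullet})$ is again an injective resolution, now of $\omega_{\mathfrak{T}^{\rig}}^{*}(E)$, because $\omega_{\mathfrak{T}^{\rig}}$ is a localisation morphism; hence $\rR g_{\rconv,\tau*}(E)$ and $\rR g_{X/\mathfrak{T},\tau*}(\omega_{\mathfrak{T}^{\rig}}^{*}(E))$ are represented by $g_{\rconv,\tau*}(I^{\bullet})$ and $g_{X/\mathfrak{T},\tau*}(\omega_{\mathfrak{T}^{\rig}}^{*}(I^{\bullet}))$ respectively, and applying the sheaf-level isomorphism of the previous paragraph termwise (after $s_{\mathfrak{T}^{\rig}}^{*}$) produces \eqref{calcul of higher direct image evaluation rig}; its independence of the chosen resolution is checked as usual.

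Since all the topos-theoretic ingredients---exactness of localisation, preservation of injectives, and the presheaf description of a pushforward---are formal, the argument is a direct transcription of the proof of \ref{evaluation direct image}, and the only genuine point of attention (hence the main, if mild, obstacle) is bookkeeping: a morphism of $\RConv(Y/\SS)$ is an equivalence class, so the base change $X_{T'_{0}}=X\times_{Y}T'_{0}$ and the functorial morphism $\psi$ must be formed from a chosen representative and are well defined by \ref{setting functorial functor rig}, and one must make sure that the indexing categories used to describe the two presheaves above coincide, which is exactly the content of \ref{localisation by a sheaf rig}.
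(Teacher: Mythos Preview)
Your proposal is correct and is precisely what the paper intends: it does not give a separate proof of \ref{evaluation direct image rig} but simply states (just before \ref{localisation by a sheaf rig}) that ``By repeating arguments of \S~\ref{Higher direct image}, we prove the following results in the rigid convergent topos,'' so the proof is exactly the transcription of the proof of \ref{evaluation direct image} that you have written out, using \ref{localisation by a sheaf rig} in place of \ref{localisation by a sheaf}. Your remark about bookkeeping for representatives of morphisms in $\RConv(Y/\SS)$ is a fair point of attention specific to the rigid setting, but causes no difficulty.
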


\begin{coro}[\ref{base change coro}] \label{base change coro rig}
Let $\SS'\to \SS$ be a morphism of $\MS^{\diamond}$, $Y'$ an $S'$-scheme and $h:Y'\to Y$ a morphism compatible with $S'\to S$. We set $X'=X\times_{Y}Y'$ and we denote by $g':X'\to Y'$ and $h':X'\to X$ the canonical morphisms: 
	\begin{displaymath}
		\xymatrix{
			X'\ar[r]^{h'} \ar[d]_{g'} & X \ar[d]^{g}\\
			Y'\ar[r]^{h} & Y
		}
	\end{displaymath}
	Then, for any $\mathscr{O}_{X/\SS}[\frac{1}{p}]$-module $E$ of $(X/\SS)_{\conv,\tau}$, the base change morphism
	\begin{equation}
		h_{\rconv,\tau}^{*}(\rR g_{\rconv,\tau*}(E)) \xrightarrow{\sim} \rR g'_{\rconv,\tau*}(h'^{*}_{\rconv,\tau}(E)),
	\end{equation}
	is an isomorphism.
\end{coro}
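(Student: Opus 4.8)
The plan is to transport the proof of \ref{base change coro} to the rigid convergent topos, substituting \ref{localisation by a sheaf rig} and \ref{evaluation direct image rig} for \ref{localisation by a sheaf} and \ref{evaluation direct image}. By \ref{descent data sheaf rig} a morphism of sheaves of $(X/\SS)_{\rconv,\tau}$ is an isomorphism as soon as its valuation at every object of $\RConv(X/\SS)$ is one, and, since the valuation functors are exact, the same holds for a morphism of $\rD^{+}$ after passing to cohomology sheaves. So it suffices to fix an object $\mathfrak{T}$ of $\Conv(Y'/\SS')$ with image $\mathfrak{T}^{\rig}$ in $\RConv(Y'/\SS')$ and to show that the base change morphism becomes an isomorphism after valuation at $\mathfrak{T}^{\rig}$.

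First I would write $\mathfrak{T}$ (resp. $\mathfrak{T}^{\rig}$) also for the image of $\mathfrak{T}$ in $\Conv(Y/\SS)$ (resp. $\RConv(Y/\SS)$) under the functor \eqref{functorial functor rig} attached to $h$ and $\SS'\to\SS$, and set $X_{T_{0}}=T_{0}\times_{Y}X$. Since $X'=X\times_{Y}Y'$ and $T_{0}$ lies over $Y'$, there is a canonical identification $X_{T_{0}}=T_{0}\times_{Y'}X'=X'_{T_{0}}$ compatible with the projections to $X$ and $X'$; under it $g_{X/\mathfrak{T},\tau}=g'_{X'/\mathfrak{T},\tau}$, and the composite $h'_{\rconv,\tau}\circ\omega'_{\mathfrak{T}^{\rig}}$ equals $\omega_{\mathfrak{T}^{\rig}}$ (both being the functorial morphism for $X_{T_{0}}\to X$, $\mathfrak{T}\to\SS$), whence $\omega'^{*}_{\mathfrak{T}^{\rig}}(h'^{*}_{\rconv,\tau}(E))=\omega^{*}_{\mathfrak{T}^{\rig}}(E)$.

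Now \ref{evaluation direct image rig} applied to $g$ over $Y$, combined with \eqref{description of pullback rig}, identifies $(h^{*}_{\rconv,\tau}(\rR g_{\rconv,\tau*}(E)))_{\mathfrak{T}^{\rig}}=(\rR g_{\rconv,\tau*}(E))_{\mathfrak{T}^{\rig}}$ with $(\rR g_{X/\mathfrak{T},\tau*}(\omega^{*}_{\mathfrak{T}^{\rig}}(E)))_{\mathfrak{T}^{\rig}}$, while \ref{evaluation direct image rig} applied to $g'$ over $Y'$, together with the identifications just made, identifies $(\rR g'_{\rconv,\tau*}(h'^{*}_{\rconv,\tau}(E)))_{\mathfrak{T}^{\rig}}$ with the same object $(\rR g_{X/\mathfrak{T},\tau*}(\omega^{*}_{\mathfrak{T}^{\rig}}(E)))_{\mathfrak{T}^{\rig}}$. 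It then remains to check that the base change morphism is carried to the identity under these two identifications. This is the rigid analogue of the assertion, in the proof of \ref{base change coro}, that \eqref{base change morphism conv coh} induces an isomorphism between the two valuations, and it is verified by unwinding the construction of \ref{evaluation direct image rig}: by \ref{localisation by a sheaf rig} the morphisms $\omega_{\mathfrak{T}^{\rig}}$ and $\omega'_{\mathfrak{T}^{\rig}}$ are localisation morphisms, hence exact and compatible with injective resolutions, and $h_{\rconv,\tau}$, $h'_{\rconv,\tau}$ act accordingly, so that the base change map reduces to the tautological comparison of the two descriptions of $(\rR g_{X/\mathfrak{T},\tau*}(\omega^{*}_{\mathfrak{T}^{\rig}}(E)))_{\mathfrak{T}^{\rig}}$.

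The step needing the most care is this last verification, since the base change morphism is a priori only a morphism and not obviously an isomorphism; but, as in the non-rigid case, it requires no input beyond the rigid counterparts \ref{localisation by a sheaf rig} and \ref{evaluation direct image rig} already established. Once it is done, the valuation of the base change morphism at $\mathfrak{T}^{\rig}$ is an isomorphism for every $\mathfrak{T}$, and the corollary follows.
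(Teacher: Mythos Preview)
Your proposal is correct and matches the paper's approach: the paper simply states that the results \ref{localisation by a sheaf rig}--\ref{base change coro rig} are obtained ``by repeating arguments of \S\ref{Higher direct image}'' in the rigid convergent topos, and your argument is precisely the transport of the proof of \ref{base change coro} via \ref{localisation by a sheaf rig} and \ref{evaluation direct image rig}.
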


\begin{coro}[\ref{coh zar fppf}] \label{fppf ad coh}
	Let $\mathscr{E}$ be a coherent crystal of $\mathscr{O}_{X/\SS}^{\rig}$-modules of $(X/\mathfrak{S})_{\rconv,\fppf}$. Then we have \eqref{alpha ad fppf}
	\begin{equation}
		\rR^{i}\alpha_{r*}(\mathscr{E})=0 \qquad \forall~ i\ge 1.
	\end{equation}
\end{coro}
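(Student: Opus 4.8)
The plan is to transpose the proof of \ref{coh zar fppf} — that is, of \cite{BBM} 1.1.19 — to the rigid-analytic setting; the only substantive input will be faithfully flat descent for coherent modules on coherent rigid spaces (\cite{Ab10} 5.11.11, cf. \ref{coherent fppf rig}). The first step is to unwind the left-hand side. Since $\alpha_{r*}$ is the identity on presheaves \eqref{alpha ad fppf}, for an injective resolution $\mathscr{E}\to\mathscr{I}^{\bullet}$ in $(X/\SS)_{\rconv,\fppf}$ the complex $\alpha_{r*}\mathscr{I}^{\bullet}$ is a complex of admissible sheaves whose underlying presheaf sends an object $\mathfrak{T}^{\rig}$ of $\RConv(X/\SS)$ to $\mathscr{I}^{\bullet}(\mathfrak{T}^{\rig})$; hence $\rR^{i}\alpha_{r*}(\mathscr{E})$ is the admissible sheaf associated to the presheaf $\mathfrak{T}^{\rig}\mapsto\rH^{i}(\mathscr{I}^{\bullet}(\mathfrak{T}^{\rig}))$. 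Arguing exactly as in \ref{localisation by a sheaf rig} and \ref{evaluation direct image rig} — using that $s_{\mathfrak{T}^{\rig}}$ is essentially a localisation morphism for each of the two topologies, so that it is exact and preserves injective objects — one identifies $\rH^{i}(\mathscr{I}^{\bullet}(\mathfrak{T}^{\rig}))$ with $\rH^{i}(\mathfrak{T}^{\rig}_{\fppf},\mathscr{E}_{\mathfrak{T}^{\rig}})$, where $\mathscr{E}_{\mathfrak{T}^{\rig}}$ is a coherent $\mathscr{O}_{\mathfrak{T}^{\rig}}$-module of $\mathfrak{T}^{\rig}_{\fppf}$ by \ref{description of mods rig} and \ref{def coh crystal rconv}.

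As the objects $\mathfrak{T}^{\rig}$ with $\mathfrak{T}$ affine form a generating family of $(X/\SS)_{\rconv,\ad}$, it therefore suffices to prove that $\rH^{i}(\mathcal{X}_{\fppf},\mathscr{M})=0$ for every $i\ge1$, every coherent rigid space $\mathcal{X}$ of the form $\Spf(A)^{\rig}$ and every coherent $\mathscr{O}_{\mathcal{X}}$-module $\mathscr{M}$. By the \v{C}ech-to-derived-functor spectral sequence and a standard cofinality argument (\cite{SGAIV} V), this follows from the vanishing of $\check{\rH}^{i}(\mathfrak{V},\mathscr{M})$ for $i\ge1$, $\mathfrak{V}$ ranging over a cofinal system of fppf coverings of $\mathcal{X}$. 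By \ref{def rfppf covering} we may take $\mathfrak{V}=\{\mathcal{V}_{j}\to\mathcal{X}\}_{j\in J}$ finite with each $\mathcal{V}_{j}$ of the form $\Spf(B_{j})^{\rig}$; setting $\mathcal{V}=\coprod_{j\in J}\mathcal{V}_{j}$, which is faithfully flat over $\mathcal{X}$, we are reduced to the exactness of the \v{C}ech (Amitsur) complex
\begin{displaymath}
  0\to\mathscr{M}(\mathcal{X})\to\mathscr{M}(\mathcal{V})\to\mathscr{M}(\mathcal{V}\times_{\mathcal{X}}\mathcal{V})\to\cdots,
\end{displaymath}
which is precisely (a form of) faithfully flat descent for coherent modules on coherent rigid spaces (\cite{Ab10} 5.11.11). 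Alternatively, one combines (\cite{Ab10} 5.11.11) with Tate acyclicity (\cite{Ab10} 3.5.5) to identify $\rH^{i}(\mathcal{X}_{\fppf},\mathscr{M})$ with the admissible cohomology $\rH^{i}(\mathcal{X}_{\ad},\mathscr{M})$, which vanishes in positive degrees over an affinoid.

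Thus the hard input — faithfully flat descent in rigid geometry — is already at our disposal, and the remainder is bookkeeping. The one point to be carried out with care is the identification, in the first paragraph, of the value of $\rR^{i}\alpha_{r*}(\mathscr{E})$ at an object of $\RConv(X/\SS)$ with a local fppf cohomology group; this is done exactly as in the proofs of \ref{localisation by a sheaf rig} and \ref{evaluation direct image rig}, by checking that $s_{\mathfrak{T}^{\rig}}$ coincides with a localisation morphism for both the admissible and the fppf topology, so that pulling back along it is exact and sends injective resolutions to injective resolutions.
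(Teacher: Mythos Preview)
Your approach is the same as the paper's (which simply defers to \cite{BBM} 1.1.19, transposed to the rigid setting), and the substance of your second paragraph is correct. One point in the first paragraph needs fixing, however: the morphism $s_{\mathfrak{T}^{\rig}}:\mathfrak{T}^{\rig}_{\tau}\to (X/\SS)_{\rconv,\tau}$ is \emph{not} a localisation morphism. By \ref{localisation by a sheaf rig} (with $g=\id_{X}$), the localisation of $(X/\SS)_{\rconv,\tau}$ at $\widetilde{\mathfrak{T}}^{\rig}$ is $(T_{0}/\mathfrak{T})_{\rconv,\tau}$, whose underlying category consists of formal $\mathfrak{T}$-schemes flat over $\rW$ --- not only those flat over $\mathfrak{T}$. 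Thus $s_{\mathfrak{T}^{\rig}}^{*}$ is not a priori known to preserve injectives, and the identification $\rH^{i}(\mathscr{I}^{\bullet}(\mathfrak{T}^{\rig}))\simeq\rH^{i}(\mathfrak{T}^{\rig}_{\fppf},\mathscr{E}_{\mathfrak{T}^{\rig}})$ is unjustified as written.

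The repair is immediate and keeps your argument intact: compute \v{C}ech cohomology directly in $\RConv(X/\SS)$ (equivalently in $(T_{0}/\mathfrak{T})_{\rconv,\fppf}$) rather than in $\mathfrak{T}^{\rig}_{\fppf}$. For an fppf covering $\{\mathfrak{Z}_{j}^{\rig}\to\mathfrak{T}^{\rig}\}$ in $\RConv(X/\SS)$, fibre products have the expected underlying rigid spaces by \ref{QXSS! exact} and \ref{def fibered product rig}, and since $\mathscr{E}$ is a coherent crystal its \v{C}ech complex coincides termwise with the \v{C}ech complex of the coherent $\mathscr{O}_{\mathfrak{T}^{\rig}}$-module $\mathscr{E}_{\mathfrak{T}^{\rig}}$ for the underlying fppf covering of rigid spaces. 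The latter is exact in positive degrees by faithfully flat descent (\cite{Ab10} 5.11.11), exactly as you argue; one then concludes via the \v{C}ech-to-derived spectral sequence as in the proof of \ref{coh zar ad iso}.
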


\begin{coro} \label{coh zar ad iso}
	Let $\mathscr{E}$ be a coherent crystal of $\mathscr{O}_{X/\SS}^{\rig}$-modules of $(X/\mathfrak{S})_{\rconv,\ad}$. Then we have
	\begin{equation}
		\rR^{i}\rho_{X/\mathfrak{S}*}(\mathscr{E})=0 \qquad \forall~ i\ge 1.
	\end{equation}
\end{coro}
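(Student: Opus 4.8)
The plan is to verify that the sheaf $\rR^{i}\rho_{X/\mathfrak{S}*}(\mathscr{E})$ of $(X/\mathfrak{S})_{\conv,\zar}$ vanishes for $i\ge 1$ by evaluating it on objects of $\Conv(X/\mathfrak{S})$, reducing it to an acyclicity statement at the level of formal schemes. First, a sheaf $\mathscr{G}$ of $(X/\mathfrak{S})_{\conv,\zar}$ is zero as soon as $\mathscr{G}(\mathfrak{T})=0$ for every \emph{affine} object $(\mathfrak{T},u)$ of $\Conv(X/\mathfrak{S})$: for an arbitrary object $\mathfrak{Z}$, the sheaf $\mathscr{G}_{\mathfrak{Z}}=s_{\mathfrak{Z}}^{*}(\mathscr{G})$ then vanishes on the affine opens of $\mathfrak{Z}$ (each of which is itself an object of $\Conv(X/\mathfrak{S})$), hence is $0$, and $\mathscr{G}(\mathfrak{Z})=\Gamma(\mathfrak{Z}_{\zar},\mathscr{G}_{\mathfrak{Z}})=0$. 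Second, for an affine object $(\mathfrak{T},u)$, the standard description of a higher direct image evaluated at a representable object, combined with $\rho_{X/\mathfrak{S}}^{*}(\widetilde{\mathfrak{T}})=\widetilde{\mathfrak{T}}^{\rig}$ (a consequence of \eqref{commutative diagram Q!} and the sub-canonicity of the topologies involved) and with the localisation equivalence \ref{localisation by a sheaf rig} applied to $g=\id_{X}$ (so that $X_{T_{0}}=T_{0}$), yields a canonical isomorphism
\[
\big(\rR^{i}\rho_{X/\mathfrak{S}*}(\mathscr{E})\big)(\mathfrak{T})\ \xrightarrow{\ \sim\ }\ \rH^{i}\big((T_{0}/\mathfrak{T})_{\rconv,\ad},\ \omega_{\mathfrak{T}^{\rig}}^{*}(\mathscr{E})\big),
\]
where $\omega_{\mathfrak{T}^{\rig}}^{*}(\mathscr{E})$ is again a coherent crystal of $\mathscr{O}^{\rig}$-modules by \ref{setting functorial functor rig} and \eqref{description of pullback rig}. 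Thus the corollary is reduced to the following local statement: \emph{for $\mathfrak{T}$ an affine flat formal $\rW$-scheme of finite type with reduced special fibre $T_{0}$, and $\mathscr{F}$ a coherent crystal of $\mathscr{O}^{\rig}$-modules of $(T_{0}/\mathfrak{T})_{\rconv,\ad}$, one has $\rH^{i}((T_{0}/\mathfrak{T})_{\rconv,\ad},\mathscr{F})=0$ for all $i\ge 1$.}

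To prove this reduced statement I would use that $(\mathfrak{T},\id_{T_{0}})$ is the terminal object of $\Conv(T_{0}/\mathfrak{T})$ — every object receives a unique structural morphism to it — and that $\id\colon\mathfrak{T}\to\mathfrak{T}$ is a smooth lifting of $T_{0}$ over $\mathfrak{T}$ with $\widehat{\Omega}^{1}_{\mathfrak{T}/\mathfrak{T}}=0$. Running, with rigid coefficients, the construction of \ref{def uXS conv} and the rigid-analytic counterpart of the de Rham comparison \eqref{direct image u dR} (i.e. of \cite{Shi08} 2.33) gives a morphism of topoi $u^{\rig}_{T_{0}/\mathfrak{T}}\colon (T_{0}/\mathfrak{T})_{\rconv,\ad}\to\mathfrak{T}^{\rig}_{\ad}$ and a canonical isomorphism $\rR u^{\rig}_{T_{0}/\mathfrak{T}*}(\mathscr{F})\xrightarrow{\sim}\mathscr{F}_{\mathfrak{T}^{\rig}}$ in $\rD(\mathfrak{T}^{\rig}_{\ad},\mathscr{O}_{\mathfrak{T}^{\rig}})$, the associated de Rham complex degenerating to its degree-$0$ term because there are no relative differentials. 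Hence $\rR\Gamma((T_{0}/\mathfrak{T})_{\rconv,\ad},\mathscr{F})\xrightarrow{\sim}\rR\Gamma(\mathfrak{T}^{\rig}_{\ad},\mathscr{F}_{\mathfrak{T}^{\rig}})$; and since $\mathfrak{T}$ is affine, $\mathfrak{T}^{\rig}$ is an affinoid rigid space and $\mathscr{F}_{\mathfrak{T}^{\rig}}$ a coherent $\mathscr{O}_{\mathfrak{T}^{\rig}}$-module, so the right-hand side is concentrated in degree $0$ by Tate's acyclicity theorem (\cite{Ab10} 3.5.5 and its consequences for coherent modules). This yields the required vanishing. (When $\mathfrak{T}_{1}$ is reduced one even has $\QQ_{\mathfrak{T}/\mathfrak{T}}=\mathfrak{T}$, so that $\mathscr{F}$ is ``constant'' with trivial stratification and the collapse is transparent; in general the relevant dilatation is only an infinitesimal thickening of $T_{0}$, with the same outcome.)

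The one non-formal ingredient is the rigid-convergent version of the de Rham comparison \eqref{direct image u dR} used in the second paragraph — equivalently, one may realise the same step as a base-change isomorphism $s_{\mathfrak{T}}^{*}\rR^{i}\rho_{X/\mathfrak{S}*}(\mathscr{E})\xrightarrow{\sim}\rR^{i}\rho_{\mathfrak{T}*}(\mathscr{E}_{\mathfrak{T}^{\rig}})$ for the rigid-fibre morphism $\rho_{\mathfrak{T}}\colon\mathfrak{T}^{\rig}_{\ad}\to\mathfrak{T}_{\zar}$, followed by the vanishing of $\rR^{i}\rho_{\mathfrak{T}*}$ on coherent $\mathscr{O}_{\mathfrak{T}^{\rig}}$-modules (itself a consequence of Tate acyclicity, affine formal schemes having affinoid rigid fibres) — and this is where the actual work sits. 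The remaining steps (conservativity of the functors $s_{\mathfrak{T}}^{*}$, the localisation equivalence \ref{localisation by a sheaf rig}, and the reduction to affine objects) are routine given the results of \S\ref{rig conv topos}; via \ref{rho conv coh} the statement then lets one compute the cohomology of a coherent crystal indifferently in $(X/\mathfrak{S})_{\conv,\zar}$ or in $(X/\mathfrak{S})_{\rconv,\ad}$.
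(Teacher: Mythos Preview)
Your reduction to the local statement --- showing that $\rH^{i}((T_{0}/\mathfrak{T})_{\rconv,\ad},\mathscr{F})=0$ for every affine object $(\mathfrak{T},u)$ and every coherent crystal $\mathscr{F}$ --- is correct and matches the first step of the paper's proof (the paper phrases it as: $\rR^{i}\rho_{X/\mathfrak{S}*}(\mathscr{E})$ is the Zariski sheafification of $\mathfrak{T}\mapsto\rH^{i}((T_{0}/\mathfrak{T})_{\rconv,\ad},\mathscr{E}|_{\widetilde{\mathfrak{T}}^{\rig}})$, and one may restrict to the cofinal family of objects with affinoid rigid fibre). One small imprecision: the displayed formula is the value of the \emph{presheaf}, not of the sheaf $\rR^{i}\rho_{X/\mathfrak{S}*}(\mathscr{E})$; but since you only need vanishing, this is harmless.

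The genuine gap is the second step. You invoke a rigid-convergent analogue of the de Rham comparison \eqref{direct image u dR} (equivalently, a base-change isomorphism $s_{\mathfrak{T}}^{*}\rR^{i}\rho_{X/\mathfrak{S}*}(\mathscr{E})\simeq\rR^{i}\rho_{\mathfrak{T}*}(\mathscr{E}_{\mathfrak{T}^{\rig}})$), and you yourself flag this as ``where the actual work sits.'' Neither statement is available in the paper, and in the degenerate case $\mathfrak{T}/\mathfrak{T}$ the content of such a comparison is \emph{precisely} the vanishing you are trying to prove: it says that the rigid-convergent cohomology of a coherent crystal over $(T_{0}/\mathfrak{T})$ is computed by its evaluation at the final object $\mathfrak{T}^{\rig}$. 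So the argument is circular unless you supply an independent proof of that comparison, and the natural way to do so is exactly the \v{C}ech argument below.

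The paper bypasses the de Rham machinery entirely. Once reduced to showing $\rH^{i}((T_{0}/\mathfrak{T})_{\rconv,\ad},\mathscr{F})=0$ for $\mathfrak{T}^{\rig}$ affinoid, it uses Cartan's criterion (\cite{Stacks} 21.11.9): every admissible covering in $\RConv(T_{0}/\mathfrak{T})$ of an object with affinoid rigid fibre can be refined by one whose members again have affinoid rigid fibres, so it suffices to show that for any such covering $\mathscr{U}=\{\mathfrak{Z}_{i}^{\rig}\to\mathfrak{Z}^{\rig}\}_{i=1}^{m}$ the \v{C}ech cohomology $\check{\rH}^{i}(\mathscr{U},\mathscr{F}|_{\widetilde{\mathfrak{Z}}^{\rig}})$ vanishes. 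But because $\mathscr{F}$ is a \emph{crystal}, the \v{C}ech complex of $\mathscr{F}$ for $\mathscr{U}$ in the rigid convergent site coincides termwise with the \v{C}ech complex of the coherent $\mathscr{O}_{\mathfrak{Z}^{\rig}}$-module $\mathscr{F}_{\mathfrak{Z}^{\rig}}$ for the admissible covering $\{\mathfrak{Z}_{i}^{\rig}\}$ of the affinoid $\mathfrak{Z}^{\rig}$; this is acyclic by Kiehl's theorem (\cite{Ab10} 4.8.26). Thus the ultimate analytic input --- Tate/Kiehl acyclicity on affinoids --- is the same one you cite, but the paper feeds it in through \v{C}ech cohomology directly rather than through an (unproven) de Rham comparison.
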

\begin{proof}
	By \ref{localisation by a sheaf rig}, the Zariski sheaf $\rR^{i}\rho_{X/\SS*}(\mathscr{E})$ on $\RConv(X/\mathfrak{S})$ is associated to the presheaf
	\begin{displaymath}
		\mathfrak{T}^{\rig}\mapsto \rH^{i}( (T_{0}/\mathfrak{T})_{\rconv,\ad}, \mathscr{E}|_{\widetilde{\mathfrak{T}}^{\rig}}).
	\end{displaymath}
	
	By (\cite{SGAIV} V 4.3 and III 4.1), we can replace $\RConv(X/\mathfrak{S})$ by the full subcategory of objects whose underlying rigid space is affinoid, and it suffices to show that for such an object $\mathfrak{T}^{\rig}$ 
	\begin{equation} \label{ad cohomology of a coh affine}
		\rH^{i}((T_{0}/\mathfrak{T})_{\conv,\ad}, \mathscr{E}|_{\widetilde{\mathfrak{T}}})
	\end{equation}
	vanishes for $i\ge 1$. 
	Let $\mathscr{U}=\{\mathfrak{Z}_{i}^{\rig}\to \mathfrak{Z}^{\rig}\}_{i=1}^{m}$ be an admissible covering by affinoids of an affinoids $\mathfrak{Z}^{\rig}$ in $\MR_{/\mathfrak{T}^{\rig}}$. The Čech cohomology $\check{\rH}^{i}(\mathscr{U},\mathscr{E}|_{\widetilde{\mathfrak{Z}}^{\rig}})$ is isomorphic to the cohomology $\rH^{i}(\mathfrak{Z}^{\rig}_{\ad},\mathscr{E}_{\mathfrak{Z}^{\rig}})$ which vanishes by (\cite{Ab10} 4.8.26). 
	Since each admissible covering of $\mathfrak{Z}^{\rig}$ admits a refinement by finitely many affinoids, the vanishing of \eqref{ad cohomology of a coh affine} follows from (\cite{Stacks} 21.11.9).
\end{proof}

\section{Higher direct images of a convergent isocrystal} \label{final sec}
\begin{nothing}
	Let $X$ be a $k$-scheme locally of finite type. 
	For $\tau\in \{\ad,\fppf\}$, the Frobenius homomorphism $\sigma:\rW\to \rW$ induces a morphism of topoi $(X'/\rW)_{\rconv,\tau}\to (X/\rW)_{\rconv,\tau}$ \eqref{morphism of topoi functorial rig}. For any sheaf $\mathscr{E}$ of $(X/\rW)_{\rconv,\tau}$, we denote by $\mathscr{E}'$ the inverse image of $\mathscr{E}$ to $(X'/\rW)_{\rconv,\tau}$. 

	As in \ref{def F-isocrystal}, we call \textit{convergent $F$-isocrystal of $(X/\rW)_{\rconv,\tau}$} a pair $(\mathscr{E},\varphi)$ consisting of a coherent crystal of $\mathscr{O}_{X/\SS}^{\rig}$-modules $\mathscr{E}$ of $(X/\rW)_{\rconv,\tau}$ \eqref{def coh crystal rconv} and an isomorphism
	\begin{equation}
		\varphi:F_{X/k,\rconv,\tau}^{*}(\mathscr{E}')\xrightarrow{\sim} \mathscr{E}.
	\end{equation}

	In this section, we prove the following result about the higher direct image of a convergent ($F$-)isocrystal of rigid convergent topos.
\end{nothing}

\begin{theorem} \label{main theorem}
	Let $g:X\to Y$ be a smooth proper morphism of $k$-schemes locally of finite type and $\mathscr{E}$ (resp. $(\mathscr{E},\varphi)$) a convergent isocrystal (resp. $F$-isocrystal) of $(X/\rW)_{\rconv,\tau}$. Then, $\rR^{i}g_{\rconv,\tau*}(\mathscr{E})$ (resp. $(\rR^{i}g_{\rconv,\tau*}(\mathscr{E}),\rR^{i}g_{\rconv,\tau*}(\varphi))$) is a convergent isocrystal (resp. $F$-isocrystal) of $(Y/\rW)_{\rconv,\tau}$. 
\end{theorem}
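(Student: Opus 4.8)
First I would dispose of the formal reductions. Since $\Conv(Y/\rW)$, and therefore $\RConv(Y/\rW)$, depends only on $Y_{\red}$, I may assume $Y$ reduced. By \ref{fppf ad coh} and the equivalence between coherent crystals of $\mathscr{O}_{X/\rW}^{\rig}$-modules for the admissible and the fppf topologies recalled after \ref{def coh crystal rconv}, it is enough to treat $\tau=\ad$. Finally, granting the statement for convergent isocrystals, the Frobenius structure on $\rR^{i}g_{\rconv,\ad*}(\mathscr{E})$ is produced exactly as in \ref{higher direct image of con Fiso weak Frob}: one applies $\rR^{i}g_{\rconv,\ad*}$ to $\varphi$ and rewrites the source by the base change isomorphism \ref{base change coro rig} for the Cartesian Frobenius square together with the (rigid form of the) Frobenius descent \ref{Frob descent fppf topos}. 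Thus it remains to prove that $\mathscr{F}^{i}:=\rR^{i}g_{\rconv,\ad*}(\mathscr{E})$ is a coherent crystal of $\mathscr{O}_{Y/\rW}^{\rig}$-modules.

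\textbf{Step 1: $Y$ smooth.} When $Y$ is smooth over $k$, so is $X$, and the proof of \ref{higher direct image of con Fiso weak} carries over to the rigid convergent topos: one runs the arguments of \S\ref{Higher direct image} and \S\ref{Frob descents} using \ref{localisation by a sheaf rig}, \ref{evaluation direct image rig}, \ref{base change coro rig}, \ref{fppf ad coh}, \ref{coh zar ad iso} in place of their classical counterparts, Dwork's trick being formal. Alternatively, writing $\mathscr{E}=\rho_{X/\rW}^{*}(\mathscr{E}_{0})$ for the convergent isocrystal $\mathscr{E}_{0}\in\Iso^{\dagger}(X/\rW)$ attached to $\mathscr{E}$ by \ref{rho conv coh}, the compatibility \ref{compatibility functorial morphism rho} of $\rho$ with $g$ together with the evaluations \ref{evaluation direct image}, \ref{evaluation direct image rig} and the vanishing \ref{coh zar ad iso} yield a canonical isomorphism
\[
	\rho_{Y/\rW}^{*}(\rR g_{\conv,\zar*}(\mathscr{E}_{0})) \xrightarrow{\sim} \rR g_{\rconv,\ad*}(\mathscr{E}),
\]
so that $\mathscr{F}^{i}\cong\rho_{Y/\rW}^{*}(\rR^{i}g_{\conv,\zar*}(\mathscr{E}_{0}))$ is a coherent crystal by \ref{higher direct image of con Fiso weak} and \ref{rho conv coh}. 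In either case, $\mathscr{F}^{i}$ is a convergent isocrystal of $(Y/\rW)_{\rconv,\ad}$ whenever $Y$ is smooth and $g$ smooth proper.

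\textbf{Step 2: general $Y$.} Here I would invoke Ogus' proper surjective descent. By de Jong's alteration theorem, applied iteratively in the usual way, choose a proper hypercovering $a_{\bullet}\colon Y_{\bullet}\to Y$ all of whose terms $Y_{n}$ are smooth over $k$. Forming the Cartesian squares with $X_{n}=X\times_{Y}Y_{n}$ and $g_{n}\colon X_{n}\to Y_{n}$ (smooth and proper) and letting $\mathscr{E}_{n}$ be the pullback of $\mathscr{E}$ to $(X_{n}/\rW)_{\rconv,\ad}$, the base change isomorphism \ref{base change coro rig} identifies the restriction of $\mathscr{F}^{i}$ along $Y_{n}\to Y$ with $\rR^{i}g_{n,\rconv,\ad*}(\mathscr{E}_{n})$, which is a convergent isocrystal of $(Y_{n}/\rW)_{\rconv,\ad}$ by Step 1; by functoriality of \ref{base change coro rig}, these restrictions with the descent data inherited from $\mathscr{F}^{i}$ assemble into a descent datum of convergent isocrystals along $Y_{\bullet}$. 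By the proper surjective descent of Ogus (\cite{Ogus84} 4.6) and Shiho (\cite{Shi08II} 7.3) --- which, being descent for the stack of convergent isocrystals, propagates to proper hypercoverings, and which by the constructions of \S\ref{rig conv topos} is available in the rigid convergent topos --- this descent datum is effective, and since its pullback to $Y_{\bullet}$ coincides with that of $\mathscr{F}^{i}$, the object it produces is $\mathscr{F}^{i}$ itself. Hence $\mathscr{F}^{i}$ is a convergent isocrystal of $(Y/\rW)_{\rconv,\ad}$, and tracing back through the reductions of the first paragraph, $\rR^{i}g_{\rconv,\tau*}(\mathscr{E})$ (resp. with its Frobenius structure) is a convergent isocrystal (resp. $F$-isocrystal) for $\tau\in\{\ad,\fppf\}$.

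\textbf{Main obstacle.} The crux is the last step: one must verify that Ogus' and Shiho's proper surjective descent is valid in the rigid convergent topos and propagates from a single proper surjective morphism to a proper hypercovering, and --- most importantly --- that the coherent crystal property can be tested after restriction along a proper hypercovering with smooth terms, so that the termwise information of Step 1 genuinely upgrades to a statement on $Y$. The existence of such a hypercovering rests on de Jong's alteration theorem. A more routine point is the base change isomorphism of Step 1, whose verification amounts to matching the colimit over admissible blow-ups in \ref{lemma pullback rhoXS} against the evaluations \ref{evaluation direct image} and \ref{evaluation direct image rig}.
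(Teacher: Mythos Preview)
Your Step 1 is essentially the paper's Proposition \ref{main theorem weak}; your ``alternatively'' argument via $\rho_{Y/\rW}^{*}$ and \ref{coh zar ad iso} is exactly how the paper does it.

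The gap is in Step 2. Proper surjective descent in the sense of Ogus and Shiho is an equivalence between $\Iso^{\dagger}(Y/\rW)$ and descent data of convergent isocrystals along $Y_{\bullet}\to Y$. Granting effectivity, you obtain a convergent isocrystal $\mathscr{G}$ on $Y$ together with an isomorphism $a_{0}^{*}\mathscr{G}\cong a_{0}^{*}\mathscr{F}^{i}$ compatible with the descent data. But $\mathscr{F}^{i}$ is, at this point, only an $\mathscr{O}_{Y/\rW}^{\rig}$-module, not an object of $\Iso^{\dagger}$; the descent equivalence says nothing about full faithfulness of $a_{0}^{*}$ on \emph{arbitrary} sheaves, and a proper surjective morphism does not give a covering in the admissible (or Zariski) topology. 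So the sentence ``since its pullback to $Y_{\bullet}$ coincides with that of $\mathscr{F}^{i}$, the object it produces is $\mathscr{F}^{i}$ itself'' is exactly the unjustified step.

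The paper closes this gap not by strengthening the descent statement but by arranging that the relevant covering is a genuine covering \emph{in the site}, so that the sheaf axiom for $\mathscr{F}^{i}$ itself applies. This is the reason for working in $\tau=\fppf$ (not $\tau=\ad$) on $\RConv$: after reducing to $Y$ reduced, separated, and of finite type, one uses de Jong together with Chow's lemma to produce a \emph{projective} surjective $f:Z\to Y$ with $Z$ dominated by a smooth scheme. For each object $(\mathfrak{T},u)$ of $\Conv(Y/\rW)$ one then invokes Ogus' theorem (here \ref{thm proper descent Ogus}) that the dilatation $\mathfrak{R}=\mathfrak{T}_{T_{Z},n}(\mathbb{P}^{N}_{\mathfrak{T}})\to\mathfrak{T}$ is \emph{faithfully rig-flat} for $n\gg0$; hence $\{\mathfrak{R}^{\rig}\to\mathfrak{T}^{\rig}\}$ is an fppf covering in $\RConv(Y/\rW)$, with $\mathfrak{R}$ an object of $\Conv(Z/\rW)$. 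Since $f_{\rconv,\fppf}^{*}\mathscr{F}^{i}$ is a coherent crystal by Step 1 and \ref{base change coro rig}, the values $\mathscr{F}^{i}_{\mathfrak{R}^{\rig}}$, $\mathscr{F}^{i}_{\mathfrak{R}^{(1),\rig}}$ form a descent datum of coherent modules for this fppf covering; by \cite{Ab10} 5.11.11 it descends to a coherent $\mathscr{O}_{\mathfrak{T}^{\rig}}$-module $\mathscr{M}$, and the fppf \emph{sheaf} condition for $\mathscr{F}^{i}$ on the covering $\{\mathfrak{R}^{\rig}\to\mathfrak{T}^{\rig}\}$ identifies $\mathscr{F}^{i}_{\mathfrak{T}^{\rig}}$ with $\mathscr{M}$. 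The crystal property for transition morphisms is handled the same way using functoriality of the dilatation construction. In short, your reduction to $\tau=\ad$ goes in the wrong direction: the fppf topology on $\RConv$ is precisely what turns Ogus' rig-flat construction into an honest site covering and replaces the appeal to categorical descent by the sheaf axiom.
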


By \ref{coh zar ad iso}, \ref{main theorem} and arguments of \ref{pf of higher direct image of con Fiso weak}, we deduce the variant for the convergent topos. 

\begin{coro}\label{coro main thm}
	Keep the assumption of \ref{main theorem}. The higher direct image of a convergent isocrystal (resp. $F$-isocrystal) of $(X/\rW)_{\conv,\zar}$ \eqref{def F-isocrystal} is a convergent isocrystal (resp. $F$-isocrystal) of $(Y/\rW)_{\conv,\zar}$.
\end{coro}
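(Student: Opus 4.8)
The plan is to transfer \ref{main theorem} from the rigid convergent topos to the ordinary convergent topos through the morphism $\rho_{X/\rW}$ of \ref{rho XS rconv}, as indicated after \ref{main theorem}. First I would fix a convergent isocrystal $\mathscr{E}$ of $(X/\rW)_{\conv,\zar}$ and set $\mathscr{E}^{\rig}=\rho_{X/\rW}^{*}(\mathscr{E})$. By \ref{rho conv coh} this is a coherent crystal of $\mathscr{O}_{X/\rW}^{\rig}$-modules of $(X/\rW)_{\rconv,\ad}$, i.e. a convergent isocrystal in the sense of \S\ref{rig conv topos}, and $\rho_{X/\rW*}(\mathscr{E}^{\rig})\cong\mathscr{E}$; moreover \ref{coh zar ad iso} gives $\rR^{i}\rho_{X/\rW*}(\mathscr{E}^{\rig})=0$ for $i\geq 1$, so $\rR\rho_{X/\rW*}(\mathscr{E}^{\rig})\cong\mathscr{E}$ in the derived category.

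Next I would apply \ref{compatibility functorial morphism rho} to $g:X\to Y$ over $\id_{\Spf(\rW)}$: this produces a square of morphisms of topoi relating $\rho_{X/\rW}$, $\rho_{Y/\rW}$, $g_{\conv,\zar}$ and $g_{\rconv,\ad}$, commutative up to canonical isomorphism, hence a canonical isomorphism $\rR g_{\conv,\zar*}\circ\rR\rho_{X/\rW*}\xrightarrow{\sim}\rR\rho_{Y/\rW*}\circ\rR g_{\rconv,\ad*}$. Combined with the previous step this gives $\rR g_{\conv,\zar*}(\mathscr{E})\cong\rR\rho_{Y/\rW*}(\rR g_{\rconv,\ad*}(\mathscr{E}^{\rig}))$. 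By \ref{main theorem} each $\rR^{q}g_{\rconv,\ad*}(\mathscr{E}^{\rig})$ is a convergent isocrystal of $(Y/\rW)_{\rconv,\ad}$, in particular a coherent crystal of $\mathscr{O}_{Y/\rW}^{\rig}$-modules, so \ref{coh zar ad iso} applies to it; therefore the spectral sequence $\rE_{2}^{p,q}=\rR^{p}\rho_{Y/\rW*}(\rR^{q}g_{\rconv,\ad*}(\mathscr{E}^{\rig}))\Rightarrow\rR^{p+q}g_{\conv,\zar*}(\mathscr{E})$ has $\rE_{2}^{p,q}=0$ for $p\geq 1$, degenerates, and yields $\rR^{i}g_{\conv,\zar*}(\mathscr{E})\cong\rho_{Y/\rW*}(\rR^{i}g_{\rconv,\ad*}(\mathscr{E}^{\rig}))$. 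By \ref{rho conv coh} the right-hand side is a convergent isocrystal of $(Y/\rW)_{\conv,\zar}$, which settles the isocrystal case.

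For the $F$-isocrystal case I would first promote the Frobenius structure: applying $\rho_{X/\rW}^{*}$ to $\varphi\colon F_{X/k,\conv,\zar}^{*}(\mathscr{E}')\xrightarrow{\sim}\mathscr{E}$ and using the compatibility of $\rho$ with the functorial morphisms attached to $F_{X/k}$ and to $\sigma\colon\rW\to\rW$ — both special cases of \ref{compatibility functorial morphism rho} — yields a Frobenius structure $\varphi^{\rig}\colon F_{X/k,\rconv,\ad}^{*}((\mathscr{E}^{\rig})')\xrightarrow{\sim}\mathscr{E}^{\rig}$. By the $F$-isocrystal part of \ref{main theorem}, the pair $(\rR^{i}g_{\rconv,\ad*}(\mathscr{E}^{\rig}),\rR^{i}g_{\rconv,\ad*}(\varphi^{\rig}))$ is a convergent $F$-isocrystal of $(Y/\rW)_{\rconv,\ad}$. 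Pushing forward by $\rho_{Y/\rW*}$ and transporting along the isomorphism $\rR^{i}g_{\conv,\zar*}(\mathscr{E})\cong\rho_{Y/\rW*}(\rR^{i}g_{\rconv,\ad*}(\mathscr{E}^{\rig}))$ — again via the compatibilities of $\rho_{Y/\rW}$ with the morphisms induced by $F_{Y/k}$ and $\sigma$, together with the base-change isomorphisms \ref{base change coro} and \ref{base change coro rig} — endows $\rR^{i}g_{\conv,\zar*}(\mathscr{E})$ with a Frobenius structure, which by naturality of all the identifications coincides with $\rR^{i}g_{\conv,\zar*}(\varphi)$; this is the role played by the arguments of \ref{pf of higher direct image of con Fiso weak}.

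The main obstacle is the bookkeeping of this last paragraph: one must verify that the various compatibility and base-change isomorphisms ($\rho$ against $F_{X/k}$ and $F_{Y/k}$, $\rho$ against base change along $\sigma$, and the base-change morphism for $g$) assemble into a single commutative diagram, so that the Frobenius structure obtained on $\rR^{i}g_{\conv,\zar*}(\mathscr{E})$ by transport genuinely is $\rR^{i}g_{\conv,\zar*}(\varphi)$ and not merely some Frobenius structure. By contrast, the degeneration of the spectral sequence is harmless once one knows, from \ref{main theorem}, that the intermediate sheaves $\rR^{q}g_{\rconv,\ad*}(\mathscr{E}^{\rig})$ are coherent crystals, so that \ref{coh zar ad iso} is available for them.
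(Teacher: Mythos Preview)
Your argument is correct and follows the route the paper has in mind: pass to the rigid convergent topos via $\rho_{X/\rW}^{*}$, invoke \ref{main theorem} there, and come back via $\rho_{Y/\rW*}$ using the vanishing of \ref{coh zar ad iso} together with \ref{rho conv coh}. Your use of the commutative square \ref{compatibility functorial morphism rho} and the Leray spectral sequence is a clean way to obtain the isomorphism $\rR^{i}g_{\conv,\zar*}(\mathscr{E})\cong\rho_{Y/\rW*}(\rR^{i}g_{\rconv,\ad*}(\mathscr{E}^{\rig}))$; the paper would phrase the same comparison via the presheaf argument already used in the proof of \ref{main theorem weak} (and in the fppf paragraph of the proof of \ref{p-adic convergent isocry}), but the content is identical.

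One small misreading: the phrase ``arguments of \ref{pf of higher direct image of con Fiso weak}'' in the paper is not pointing to the $F$-isocrystal bookkeeping you describe at the end, but to the comparison technique just mentioned (reducing from one topology to another via a vanishing result and a presheaf computation, as in the first line and the fppf part of \ref{pf of higher direct image of con Fiso weak}). The $F$-structure is handled separately, exactly as you do it, by transporting $\varphi$ along the compatibilities of $\rho$ with $F_{X/k}$, $F_{Y/k}$, and $\sigma$ and invoking the base-change isomorphisms; this is the analogue of \ref{higher direct image of con Fiso weak Frob} rather than of \ref{pf of higher direct image of con Fiso weak}. Your concern about the coherence of the resulting diagram is legitimate but routine: all the morphisms involved are instances of \ref{compatibility functorial morphism rho} and of the base-change morphisms \ref{base change coro}, \ref{base change coro rig}, and their compatibility is formal from the description of inverse images in \eqref{description of pullback} and \eqref{description of pullback rig}.
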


\begin{prop} \label{main theorem weak}
	Keep the notation and assumption of \ref{main theorem}. 
	If $Y$ is moreover smooth over $k$, then $\rR^{i}g_{\rconv,\tau*}(\mathscr{E})$ is a coherent crystal of $\mathscr{O}_{Y/\rW}^{\rig}$-modules.
\end{prop}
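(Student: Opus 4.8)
The plan is to deduce \ref{main theorem weak} from its Zariski‑convergent‑topos counterpart \ref{higher direct image of con Fiso weak} by transferring along the morphisms $\rho$ of \S\ref{rig conv topos}. First, by \ref{fppf ad coh} and fppf descent, the direct and inverse image functors of $\alpha_{r}$ identify coherent crystals of $\mathscr{O}_{X/\rW}^{\rig}$-modules of $(X/\rW)_{\rconv,\fppf}$ with those of $(X/\rW)_{\rconv,\ad}$ and are compatible with higher direct images along $g$; so I may assume $\tau=\ad$. By \ref{rho conv coh} there is a convergent isocrystal $\mathscr{E}_{0}$ of $(X/\rW)_{\conv,\zar}$ with $\mathscr{E}\simeq\rho_{X/\rW}^{*}(\mathscr{E}_{0})$, and by \ref{higher direct image of con Fiso weak} each $\mathscr{F}_{i}:=\rR^{i}g_{\conv,\zar*}(\mathscr{E}_{0})$ is a convergent isocrystal of $(Y/\rW)_{\conv,\zar}$; set $\mathscr{H}_{i}:=\rho_{Y/\rW}^{*}(\mathscr{F}_{i})$, a coherent crystal of $\mathscr{O}_{Y/\rW}^{\rig}$-modules by \ref{rho conv coh}.

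Using the commutative square of \ref{compatibility functorial morphism rho} for $g$ (so $\rho_{Y/\rW}\circ g_{\rconv,\ad}\simeq g_{\conv,\zar}\circ\rho_{X/\rW}$) and the exactness of $\rho_{Y/\rW}^{*}$ (it equals $a\circ\QQ_{Y/\rW!}$ with $\QQ_{Y/\rW!}$ exact), the associated base‑change morphism gives
\begin{equation*}
	\theta_{i}:\mathscr{H}_{i}=\rho_{Y/\rW}^{*}(\rR^{i}g_{\conv,\zar*}(\mathscr{E}_{0}))\longrightarrow \rR^{i}g_{\rconv,\ad*}(\rho_{X/\rW}^{*}(\mathscr{E}_{0}))=\rR^{i}g_{\rconv,\ad*}(\mathscr{E}).
\end{equation*}
Since the source is already a coherent crystal, it suffices to show $\theta_{i}$ is an isomorphism, which I check after evaluation at $\mathfrak{T}^{\rig}$ for each object $\mathfrak{T}$ of $\Conv(Y/\rW)$. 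On the source, \ref{lemma pullback rhoXS}, the crystal property of $\mathscr{F}_{i}$ and the rigidification formula of \ref{def rhoX} give $(\mathscr{H}_{i})_{\mathfrak{T}^{\rig}}\simeq\rho_{\mathfrak{T}}^{*}((\mathscr{F}_{i})_{\mathfrak{T}})$; on the target, \ref{evaluation direct image rig} gives $(\rR^{i}g_{\rconv,\ad*}(\mathscr{E}))_{\mathfrak{T}^{\rig}}\simeq(\rR^{i}g_{X/\mathfrak{T},\ad*}(\omega_{\mathfrak{T}^{\rig}}^{*}(\mathscr{E})))_{\mathfrak{T}^{\rig}}$, while \ref{evaluation direct image} identifies $(\mathscr{F}_{i})_{\mathfrak{T}}$ with $(\rR^{i}g_{X/\mathfrak{T},\zar*}(\omega_{\mathfrak{T}}^{*}(\mathscr{E}_{0})))_{\mathfrak{T}}$. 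Thus everything is reduced to a relative statement over the single formal scheme $\mathfrak{T}$.

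To treat this relative statement I apply \ref{compatibility functorial morphism rho} to $g_{X/\mathfrak{T}}$ and to $\omega$, which yields $\omega_{\mathfrak{T}^{\rig}}^{*}(\mathscr{E})\simeq\rho_{X_{T_{0}}/\mathfrak{T}}^{*}(\omega_{\mathfrak{T}}^{*}(\mathscr{E}_{0}))$, a coherent crystal of $(X_{T_{0}}/\mathfrak{T})_{\rconv,\ad}$; by \ref{coh zar ad iso} (and \ref{rho conv coh}) one gets $\rR\rho_{X_{T_{0}}/\mathfrak{T}*}(\omega_{\mathfrak{T}^{\rig}}^{*}(\mathscr{E}))\simeq\omega_{\mathfrak{T}}^{*}(\mathscr{E}_{0})$, so, passing to $\rR g_{X/\mathfrak{T}*}$ and evaluating at the tautological object $\mathfrak{T}$ (using $(\rho_{T_{0}/\mathfrak{T}*}(-))_{\mathfrak{T}}=\rho_{\mathfrak{T}*}((-)_{\mathfrak{T}^{\rig}})$ from \ref{rho XS rconv}),
\begin{equation*}
	\rR\rho_{\mathfrak{T}*}\big((\rR g_{X/\mathfrak{T},\ad*}(\omega_{\mathfrak{T}^{\rig}}^{*}(\mathscr{E})))_{\mathfrak{T}^{\rig}}\big)\;\simeq\;(\rR g_{X/\mathfrak{T},\zar*}(\omega_{\mathfrak{T}}^{*}(\mathscr{E}_{0})))_{\mathfrak{T}}.
\end{equation*}
The right‑hand side has coherent cohomology $(\mathscr{F}_{i})_{\mathfrak{T}}$ by \ref{higher direct image of con Fiso weak} (equivalently \ref{coh rig coh cristalline}). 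Granting that the left‑hand complex $L:=(\rR g_{X/\mathfrak{T},\ad*}(\omega_{\mathfrak{T}^{\rig}}^{*}(\mathscr{E})))_{\mathfrak{T}^{\rig}}$ also has coherent $\mathscr{O}_{\mathfrak{T}^{\rig}}$-cohomology, the equivalence \ref{rhoXX coh equi}, together with the vanishing of $\rR^{j}\rho_{\mathfrak{T}*}$ on coherent modules, forces $\mathcal{H}^{i}(L)\simeq\rho_{\mathfrak{T}}^{*}((\mathscr{F}_{i})_{\mathfrak{T}})$ compatibly with $\theta_{i}$, giving the desired isomorphism; the crystal property of $\rR^{i}g_{\rconv,\ad*}(\mathscr{E})$ is then inherited from that of $\mathscr{F}_{i}$ through $\rho_{Y/\rW}^{*}$ by \ref{def coh crystal rconv} and \ref{description of mods rig}.

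The main obstacle is exactly the coherence of $L$, i.e. a finiteness theorem for the higher rigid relative convergent cohomology of the smooth proper morphism $X_{T_{0}}\to T_{0}$ over the $p$-adic base $\mathfrak{T}$. I expect to obtain it, locally on $\mathfrak{T}$ (coherence being local), by combining the Zariski finiteness already available (\ref{coh rig coh cristalline}) with the $\rho_{\mathfrak{T}*}$-acyclicity of coherent modules on the affinoid $\mathfrak{T}^{\rig}$ (Tate acyclicity, as used in the proof of \ref{coh zar ad iso}) and a standard spectral‑sequence argument reflecting coherence through $\rR\rho_{\mathfrak{T}*}$; alternatively, over opens where $X_{T_{0}}$ lifts to a smooth formal $\mathfrak{T}$-scheme, by identifying $L$ with the rigidified relative de Rham complex and invoking Kiehl's coherence theorem for proper morphisms of rigid spaces. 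Carrying this out uniformly in $\mathfrak{T}$ is the technical heart; the remainder is a formal diagram chase through the results of \S\ref{rig conv topos}.
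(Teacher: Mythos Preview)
Your overall strategy---transfer \ref{higher direct image of con Fiso weak} across $\rho$ via the base-change map $\theta_{i}$---is the right one, and the paper follows exactly this line. The difference is in how one proves that $\theta_{i}$ is an isomorphism, and here your argument has a genuine gap that you yourself flag: you make the comparison \emph{after evaluation at} $\mathfrak{T}^{\rig}$, which forces you to know in advance that the cohomology sheaves of $L=(\rR g_{X/\mathfrak{T},\ad*}(\omega_{\mathfrak{T}^{\rig}}^{*}\mathscr{E}))_{\mathfrak{T}^{\rig}}$ are coherent $\mathscr{O}_{\mathfrak{T}^{\rig}}$-modules. That is essentially the conclusion you are trying to prove, and neither of your proposed workarounds (reflecting coherence back through $\rR\rho_{\mathfrak{T}*}$, or invoking Kiehl after choosing local lifts) is straightforward to carry out uniformly.

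The paper avoids this circularity by comparing \emph{before} evaluating, namely after applying $\rho_{Y/\rW*}$ rather than $\rho_{Y/\rW}^{*}$. Concretely, one looks at
\[
\rR^{i}g_{\conv,\zar*}(\mathscr{E}_{0})\;\xrightarrow{\sim}\;\rho_{Y/\rW*}\rho_{Y/\rW}^{*}\bigl(\rR^{i}g_{\conv,\zar*}(\mathscr{E}_{0})\bigr)\;\longrightarrow\;\rho_{Y/\rW*}\bigl(\rR^{i}g_{\rconv,\ad*}(\mathscr{E})\bigr),
\]
where the first arrow is an isomorphism because the source is already a convergent isocrystal (\ref{rho conv coh}, \ref{rconv description}). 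By \ref{evaluation direct image} and \ref{evaluation direct image rig} together with the description of $\rho_{Y/\rW*}$ in \eqref{description pushforward rho}, both outer terms are Zariski sheafifications of the presheaves
\[
\mathfrak{T}\;\longmapsto\;\rH^{i}\bigl((X_{T_{0}}/\mathfrak{T})_{\conv,\zar},\,\omega_{\mathfrak{T}}^{*}(\mathscr{E}_{0})\bigr)
\quad\text{and}\quad
\mathfrak{T}\;\longmapsto\;\rH^{i}\bigl((X_{T_{0}}/\mathfrak{T})_{\rconv,\ad},\,\omega_{\mathfrak{T}^{\rig}}^{*}(\mathscr{E})\bigr)
\]
on $\Conv(Y/\rW)$. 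These are \emph{global} cohomology groups, not sheaf-valued evaluations, and \ref{coh zar ad iso} (applied over $(X_{T_{0}}/\mathfrak{T})$) makes the two presheaves isomorphic term by term---no coherence hypothesis on $L$ is needed. Hence the displayed composite is an isomorphism, and since $\rho_{Y/\rW*}$ is fully faithful on the relevant subcategory one deduces $\rho_{Y/\rW}^{*}(\rR^{i}g_{\conv,\zar*}(\mathscr{E}_{0}))\xrightarrow{\sim}\rR^{i}g_{\rconv,\ad*}(\mathscr{E})$, which is your $\theta_{i}$. In short: push forward by $\rho_{Y/\rW}$ first, compare presheaves of global sections via \ref{coh zar ad iso}, and only then pull back; this removes your ``main obstacle'' entirely.
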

\begin{proof}
	We first prove the assertion for the admissible topology. The sheaf $\mathscr{F}=\rho_{X/\rW*}(\mathscr{E})$ is a coherent crystal of $\mathscr{O}_{X/\rW}^{\rig}$-modules of $(X/\rW)_{\conv,\zar}$ and $\rho_{X/\rW}^{*}(\mathscr{F})\xrightarrow{\sim} \mathscr{E}$ \eqref{rho conv coh}. 
	By \ref{higher direct image of con Fiso weak}, $\rR^{i}g_{\conv,\zar*}(\mathscr{F})$ is a coherent crystal of $\mathscr{O}_{Y/\rW}^{\rig}$-modules of $(Y/\rW)_{\conv,\zar}$. 
	We consider the canonical morphisms
	\begin{equation} \label{composition base change rel coh zar ad}
		\rR^{i}g_{\conv,\zar*}(\mathscr{F})\xrightarrow{\sim} \rho_{Y/\rW*}\rho_{Y/\rW}^{*}(\rR^{i}g_{\conv,\zar*}(\mathscr{F})) \to \rho_{Y/\rW*}(\rR^{i}g_{\rconv,\ad*}(\mathscr{E}))
	\end{equation}
	where the first arrow is an isomorphism by \ref{rho conv coh} and second arrow is induced by the base change morphism. 
	By \ref{evaluation direct image} and \ref{evaluation direct image rig}, $\rR^{i}g_{\conv,\zar*}(\mathscr{F})$ (resp. $\rho_{Y/\rW*}(\rR^{i}g_{\rconv,\ad*}(\mathscr{E}))$) is the sheaf associated to the presheaf on $\Conv(Y/\rW)$
	\begin{eqnarray*}
		&\mathfrak{T} \mapsto& \rH^{i}( (X_{T_{0}}/\mathfrak{T})_{\conv,\zar}, \omega_{\mathfrak{T}}^{*}(\mathscr{F})), \\
		\textnormal{(resp.} &\mathfrak{T} \mapsto& \rH^{i}( (X_{T_{0}}/\mathfrak{T})_{\rconv,\ad}, \omega_{\mathfrak{T}^{\rig}}^{*}(\mathscr{E})) ).
	\end{eqnarray*}
	
	By \ref{coh zar ad iso}, the canonical morphism
	\begin{displaymath}
		\rH^{i}( (X_{T_{0}}/\mathfrak{T})_{\conv,\zar}, \omega_{\mathfrak{T}}^{*}(\mathscr{F})) \xrightarrow{\sim}
		\rH^{i}( (X_{T_{0}}/\mathfrak{T})_{\rconv,\ad}, \omega_{\mathfrak{T}^{\rig}}^{*}(\mathscr{E}))
	\end{displaymath}
	is an isomorphism.
	The composition \eqref{composition base change rel coh zar ad} is induced by above morphisms and hence is an isomorphism. In view of the definition of $\rho_{Y/\rW*}$ \eqref{rho XS rconv}, we deduce that $\rho_{Y/\rW}^{*}(\rR^{i}g_{\conv,\zar*}(\mathscr{F}))\xrightarrow{\sim} \rR^{i}g_{\rconv,\ad*}(\mathscr{E})$ by \eqref{composition base change rel coh zar ad}. Then the assertion for admissible topology follows from \ref{rho conv coh}.

	Using \ref{fppf ad coh}, one verifies the proposition for fppf topology by comparing $\rR^{i}g_{\rconv,\ad}(-)$ and $\rR^{i}g_{\rconv,\fppf}(-)$ in a similar way as above.
\end{proof}

\begin{nothing}\label{pre proper descent}
	To prove \ref{main theorem}, we use the proper descent for convergent isocrystals developed by Ogus in \cite{Ogus84}. 
	Let $\mathfrak{T}$ be a formal $\rW$-scheme of finite type and $f:Z\to T_{0}$ a projective and surjective morphism. Then $f$ factors through a closed immersion $Z\to \mathbb{P}_{T_{0}}^{N}$ for some integer $N\ge 1$. 
	We denote by $\mathbb{P}_{\mathfrak{T}}^{N}$ the formal $\rW$-scheme associated to the inductive system $(\mathbb{P}_{\mathfrak{T}_{n}}^{N})_{n\ge 1}$.
	By \ref{def dilatation}, we can construct a family of adic formal $\mathbb{P}^{N}_{\mathfrak{T}}$-schemes $\{\mathfrak{T}_{Z,n}(\mathbb{P}_{\mathfrak{T}}^{N})\}_{n\ge 0}$. 
	Based on the following result, Ogus showed that a proper surjective $k$-morphism satisfies descent for convergent isocrystals (\cite{Ogus84} 4.6).
\end{nothing}

\begin{theorem}[\cite{Ogus84} 4.7, 4.8] \label{thm proper descent Ogus}
	For $n$ large enough, the morphism $\mathfrak{T}_{Z,n}(\mathbb{P}^{N}_{\mathfrak{T}})\to \mathfrak{T}$ is faithfully rig-flat \eqref{def rigid points}. 
\end{theorem}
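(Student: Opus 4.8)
The plan is to check separately the two conditions defining a faithfully rig-flat morphism (\ref{def rigid points}): rig-flatness, which will hold for \emph{every} $n\ge 1$, and surjectivity on rigid points, which is where the hypothesis ``$n$ large enough'' is needed. Write $\mathscr{I}\subset\mathscr{O}_{\mathbb{P}^N_\mathfrak{T}}$ for the ideal of $Z$ in $\mathbb{P}^N_\mathfrak{T}$ and $g_n\colon\mathfrak{T}_{Z,n}(\mathbb{P}^N_\mathfrak{T})\to\mathfrak{T}$ for the morphism in question.

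\emph{Rig-flatness.} By \ref{def dilatation}, $\mathfrak{T}_{Z,n}(\mathbb{P}^N_\mathfrak{T})$ is an open formal subscheme of the admissible blow-up $\mathfrak{P}$ of the open ideal $\mathscr{I}^n+p\mathscr{O}_{\mathbb{P}^N_\mathfrak{T}}$, and the morphism $\mathfrak{P}\to\mathbb{P}^N_\mathfrak{T}$ is rig-flat. Since open immersions and smooth morphisms are flat, hence rig-flat, and rig-flat morphisms are stable under composition, the composite $\mathfrak{T}_{Z,n}(\mathbb{P}^N_\mathfrak{T})\hookrightarrow\mathfrak{P}\to\mathbb{P}^N_\mathfrak{T}\to\mathfrak{T}$, i.e. $g_n$, is rig-flat for every $n$. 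On rigid fibers this says that $\mathfrak{T}_{Z,n}(\mathbb{P}^N_\mathfrak{T})^{\rig}$ is a quasi-compact admissible open of $(\mathbb{P}^N_\mathfrak{T})^{\rig}$, flat over $\mathfrak{T}^{\rig}$; consequently its image $\mathcal{U}_n\subset\mathfrak{T}^{\rig}$ is a quasi-compact admissible open (image of a flat quasi-compact morphism of rigid spaces), and $\mathcal{U}_1\subseteq\mathcal{U}_2\subseteq\cdots$ is an increasing chain, because $\mathfrak{T}_{Z,n}(\mathbb{P}^N_\mathfrak{T})^{\rig}\subseteq\mathfrak{T}_{Z,n+1}(\mathbb{P}^N_\mathfrak{T})^{\rig}$ by construction.

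\emph{Surjectivity on rigid points.} I would first show $\bigcup_n\langle\mathcal{U}_n\rangle=\langle\mathfrak{T}^{\rig}\rangle$. Let $x$ be a rigid point of $\mathfrak{T}^{\rig}$, presented by an adic morphism $\Spf(V)\to\mathfrak{T}$ with $V$ a complete valuation ring; reducing modulo the maximal ideal of $V$ gives $\Spec(\kappa)\to T_0$. As $f\colon Z\to T_0$ is surjective, $Z\times_{T_0}\Spec(\kappa)$ is non-empty; pick a closed point $z$ of it, with residue field $\kappa'$ finite over $\kappa$, and then a complete valuation ring $V'$ dominating $V$ whose residue field contains $\kappa'$. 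Lift the induced $\kappa'$-valued point of $\mathbb{P}^N$ to a section $\Spf(V')\to\mathbb{P}^N_\mathfrak{T}$ over $\mathfrak{T}$ (e.g. by lifting a system of homogeneous coordinates of $z$). Its generic point is a rigid point $y$ of $(\mathbb{P}^N_\mathfrak{T})^{\rig}$ lying over $x$ whose specialization lies in $Z$. Since $\bigcup_n\mathfrak{T}_{Z,n}(\mathbb{P}^N_\mathfrak{T})^{\rig}$ is Berthelot's tube of $Z$ in $\mathbb{P}^N_\mathfrak{T}$ (\ref{def dilatation} and its footnote), $y$ lies in $\mathfrak{T}_{Z,n}(\mathbb{P}^N_\mathfrak{T})^{\rig}$ for some $n$, whence $x\in\mathcal{U}_n$.

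\emph{Conclusion and the hard point.} The coherent rigid space $\mathfrak{T}^{\rig}$ is quasi-compact, so the open covering $\{\mathcal{U}_n\}_n$ of $\mathfrak{T}^{\rig}$ admits a finite subcovering; since the chain is increasing, $\mathcal{U}_{n_0}=\mathfrak{T}^{\rig}$ for some $n_0$, and $g_n$ is surjective on rigid points for all $n\ge n_0$. Combined with rig-flatness, $g_n$ is faithfully rig-flat for $n\ge n_0$. The main obstacle is precisely this uniformity in $n$: the lifting construction only produces, for each rigid point individually, an integer depending on that point, so passing to a single $n_0$ forces one to invoke the quasi-compactness of $\mathfrak{T}^{\rig}$ together with the nontrivial fact that the images $\mathcal{U}_n$ of the flat quasi-compact morphisms $g_n^{\rig}$ are admissible open subspaces. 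The remaining ingredients — rig-flatness, the extension of $V$ to $V'$ within the class of rigid points and the comparison of specializations, and the identification of $\bigcup_n\mathfrak{T}_{Z,n}(\mathbb{P}^N_\mathfrak{T})^{\rig}$ with the tube — are either recorded in \ref{def dilatation} or standard.
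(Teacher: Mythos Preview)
The paper does not prove this statement; it is quoted from \cite{Ogus84}~4.7--4.8 without argument. Your two main ingredients---rig-flatness of $g_n$ for every $n$ via the factorization through an admissible blow-up and the smooth projection $\mathbb{P}^N_\mathfrak{T}\to\mathfrak{T}$, and the lifting of an individual rigid point of $\mathfrak{T}$ into some $\mathfrak{T}_{Z,n}(\mathbb{P}^N_\mathfrak{T})$ using the valuative criterion for $\mathbb{P}^N$ together with surjectivity of $Z\to T_0$---are correct and match the core of Ogus's own argument.

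There is, however, a genuine gap in your passage from pointwise to uniform surjectivity. You have $\bigcup_n\langle\mathcal{U}_n\rangle=\langle\mathfrak{T}\rangle$ and want a single $n_0$ with $\langle\mathcal{U}_{n_0}\rangle=\langle\mathfrak{T}\rangle$; you appeal to quasi-compactness of $\mathfrak{T}^{\rig}$. But in the framework recalled in \ref{def ad covering}, an admissible covering is \emph{by definition} a family of open immersions already admitting a finite subcover on rigid points, so the appeal is circular as written. What actually makes the argument go through is quasi-compactness of the underlying Zariski--Riemann space $|\mathfrak{T}^{\rig}|$, whose points are valuations of arbitrary rank; rigid points are only the rank-$1$ ones, and an open set containing all of them need not be the whole space. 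The repair is painless: your lifting construction works verbatim for an arbitrary valuation ring $V$ (the valuative criterion for $\mathbb{P}^N$ carries no rank restriction, and one may take $V'$ to be any valuation ring dominating $V$ with suitably enlarged residue field), so running it for every point of $|\mathfrak{T}^{\rig}|$ shows that the sets $\mathcal{U}_n$---open by the flatness input you already flag---cover the quasi-compact topological space $|\mathfrak{T}^{\rig}|$, whence a single $n_0$ suffices.
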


A variant of \ref{Frob descent fppf topos} holds for rigid convergent topos:

\begin{prop} \label{Frob descent fppf topos rig}
	For every locally of finite type $k$-scheme $X$, the morphism
	\begin{equation}
		F_{X/k,\rconv,\fppf}:(X/\rW)_{\rconv,\fppf}\to (X'/\rW)_{\rconv,\fppf}
	\end{equation}
	is an equivalence of topoi.
\end{prop}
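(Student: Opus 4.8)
The plan is to mimic the proof of Theorem \ref{Frob descent fppf topos} by verifying that the functor
\[
\psi_{\mathrm{Frob}}:\RConv(X/\rW)\to \RConv(X'/\rW)
\]
induced by the functor $\rho$ of \ref{basic notation frob} on localized categories satisfies the three hypotheses of \ref{lemma adjunction iso}, now applied to the fppf topology on $\RConv$. Concretely, I would reduce everything to statements about $\rho:\Conv(X/\rW)\to \Conv(X'/\rW)$ that have already been proved in section \ref{Frob descents}, together with the fact (\ref{right multiplicative in Conv}, \ref{def fibered product rig}) that passing to $\RConv$ only localizes at admissible blow-ups and is compatible with fiber products and coverings.

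First I would record that $\psi_{\mathrm{Frob}}$ is continuous and cocontinuous for the fppf topology: by \ref{cont cocont func} this holds for $\rho$ at the level of $\Conv$, and since admissible blow-ups are sent to admissible blow-ups and fppf coverings of $\RConv$ are by definition those coming from fppf coverings in $\MR$ (\ref{def topology Conv rig}), the same argument as in \ref{setting functorial functor rig} gives continuity and cocontinuity of $\psi_{\mathrm{Frob}}$. Second, $\psi_{\mathrm{Frob}}$ is fully faithful: from \eqref{Hom RConv} a morphism in $\RConv$ is represented by a morphism in $\Conv$ after an admissible blow-up of the source, and $\rho$ is fully faithful by \ref{functor rho fully faithful}; since an admissible blow-up of $\mathfrak{T}$ is carried by $\rho$ to the same admissible blow-up (the underlying formal scheme is unchanged), fullness and faithfulness of $\psi_{\mathrm{Frob}}$ follow by passing to the colimit over $\MB_{\mathfrak{T}}^{\circ}$. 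Third — the ``covering'' hypothesis (iii) of \ref{lemma adjunction iso} — I would deduce from \ref{lemma condition 4}: given an object $\mathfrak{T}^{\rig}$ of $\RConv(X'/\rW)$, part (iii) of \ref{lemma condition 4} provides a Zariski (in particular admissible, hence fppf) covering of $\mathfrak{T}$ by objects satisfying the hypotheses of \ref{lemma condition 4}(i), and part (i) then supplies for each of these a further fppf covering of the form $\{\rho(\mathfrak{Z},v)\to (\mathfrak{T},u)\}$ with $(\mathfrak{Z},v)\in \Conv(X/\rW)$; composing and passing to $\RConv$ yields an fppf covering of $\mathfrak{T}^{\rig}$ of the form $\{\psi_{\mathrm{Frob}}(\mathfrak{Z}^{\rig})\to \mathfrak{T}^{\rig}\}$, which is exactly condition (iii).

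With (i)--(iii) of \ref{lemma adjunction iso} in hand (the fppf topology on $\RConv(X'/\rW)$ is defined by a pretopology, \ref{def topology Conv rig}), the proposition follows: the morphism of topoi $F_{X/k,\rconv,\fppf}$ induced by $\psi_{\mathrm{Frob}}$ is an equivalence. The main obstacle I anticipate is bookkeeping rather than conceptual: one must be careful that the various reductions in \ref{lemma condition 4} — which are phrased for $\Conv$ and for Zariski/fppf coverings of formal schemes — survive the passage to $\RConv$ and its fppf topology, in particular that an fppf covering of formal schemes $\{\rho(\mathfrak{Z})\to \mathfrak{T}\}$ gives an fppf covering of rigid spaces $\{\rho(\mathfrak{Z})^{\rig}\to \mathfrak{T}^{\rig}\}$ (this uses that a faithfully flat finite-type morphism of formal schemes is faithfully rig-flat, \ref{def rigid points}, hence flat and surjective on rigid points, i.e.\ an fppf covering in $\MR$ by \ref{def rfppf covering}). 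Once this compatibility is noted, no genuinely new argument beyond those of section \ref{Frob descents} is required.
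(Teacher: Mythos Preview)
Your proposal is correct and follows essentially the same route as the paper: the paper's proof invokes \ref{functor rho fully faithful} together with \eqref{Hom RConv} for full faithfulness of the induced functor on $\RConv$, and then appeals to \ref{lemma adjunction iso} and \ref{lemma condition 4} (plus the compatibility of $\QQ_{X/\SS}$ with topologies implicit in \ref{rho XS rconv} and \ref{setting functorial functor rig}) to conclude. Your more explicit bookkeeping---in particular the observation that a faithfully flat morphism of formal schemes yields an fppf covering in $\MR$ via \ref{def rigid points} and \ref{def rfppf covering}---is exactly the compatibility check the paper leaves to the reader.
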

	
By \ref{functor rho fully faithful} and \eqref{Hom RConv}, the canonical functor $\RConv(X/\rW)\to \RConv(X'/\rW)$ induced by $F_{X/k}$ \eqref{functorial functor rig} is fully faithful.
In view of \ref{lemma adjunction iso}, \ref{lemma condition 4} and \ref{rho XS rconv}, the assertion follows.

\begin{nothing} \textit{Proof of \ref{main theorem}}.
	We prove the assertion for convergent isocrystals. Then the assertion for convergent $F$-isocrystals follows from \ref{Frob descent fppf topos rig} and a similar argument as in \ref{higher direct image of con Fiso weak Frob}. 
	The question being local \eqref{base change coro rig}, we may assume that $Y$ is separated and of finite type by \ref{base change coro rig}. Moreover, we may assume that $Y$ is reduced.

	By applying alteration to each irreducible component of $Y$ (\cite{dJ96} 4.1), there exists a smooth $k$-scheme $\widetilde{Y}$ and a proper surjective $k$-morphism $\widetilde{Y}\to Y$. By Chow's lemma (\cite{EGAII} 5.6.1), there exists a surjective $k$-morphism $Z\to \widetilde{Y}$ such that the composition $f:Z\to \widetilde{Y}\to Y$ is projective and surjective.
	We set $\mathscr{F}=\rR^{i}g_{\rconv,\fppf*}(\mathscr{E})$. 
	In view of \ref{base change coro rig} and \ref{main theorem weak}, the inverse image of $\mathscr{F}$ to $(\widetilde{Y}/\rW)_{\rconv,\fppf}$ is a coherent crystal. Then, so is $f^{*}_{\rconv,\fppf}(\mathscr{F})$.

	Let $(\mathfrak{T},u)$ be an object of $\Conv(Y/\rW)$. The morphism $f$ factor through a closed immersion $Z\to \mathbb{P}_{Y}^{N}$ for some integer $N\ge 1$. We set $T_{Z}=T_{0}\times_{Y}Z$.
	We take again the notation of \ref{pre proper descent} for the projective and surjective $k$-morphisms $T_{Z}\to T_{0}$. We choose an integer $n$ such that the morphism $\mathfrak{T}_{T_{Z},n}(\mathbb{P}_{\mathfrak{T}}^{N})\to \mathfrak{T}$ is faithfully rig-flat \eqref{thm proper descent Ogus}. We set $\mathfrak{R}=\mathfrak{T}_{T_{Z},n}(\mathbb{P}_{\mathfrak{T}}^{N})$, $\mathfrak{R}^{(1)}=\mathfrak{R}\times_{\mathfrak{T}}\mathfrak{R}$ and denote by $p_{1},p_{2}:\mathfrak{R}^{(1)}\to \mathfrak{R}$ two projections.

	Note that $\mathfrak{R}$ and $\mathfrak{R}^{(1)}$ define objects of $\Conv(Z/\rW)$ by \eqref{diagram red to S} and then of $\Conv(Y/\rW)$. Moreover, $\{\mathfrak{R}^{\rig}\to \mathfrak{T}^{\rig}\}$ defines a fppf covering of $\RConv(V/\rW)$. Since $f_{\rconv,\fppf}^{*}(\mathscr{F})$ is a coherent crystal of $\mathscr{O}_{Z/\rW}^{\rig}$-modules, following modules are coherent
	\begin{equation}
		\mathscr{F}_{\mathfrak{R}^{\rig}}=(f_{\rconv,\fppf}^{*}(\mathscr{F}))_{\mathfrak{R}^{\rig}},\qquad \mathscr{F}_{\mathfrak{R}^{(1),\rig}}=(f_{\rconv,\fppf}^{*}(\mathscr{F}))_{\mathfrak{R}^{(1),\rig}},
	\end{equation}
	and we have isomorphisms 
	\begin{equation}
		p_{2}^{\rig*}(\mathscr{F}_{\mathfrak{R}^{\rig}})\xrightarrow{\sim} \mathscr{F}_{\mathfrak{R}^{(1),\rig}} \xleftarrow{\sim} p_{1}^{\rig*}(\mathscr{F}_{\mathfrak{R}^{\rig}}).
	\end{equation}
	Then we obtain a descent data on $\mathscr{F}_{\mathfrak{R}^{\rig}}$ for the fppf covering $\{u: \mathfrak{R}^{\rig}\to \mathfrak{T}^{\rig}\}$. 
	There exists a coherent $\mathscr{O}_{\mathfrak{T}^{\rig}}$-module $\mathscr{M}$ and an isomorphism $u^{*}(\mathscr{M})\xrightarrow{\sim} \mathscr{F}_{\mathfrak{R}^{\rig}}$ by (\cite{Ab10} 5.11.11). 
	
	On the other hand, since $\mathscr{F}$ is a sheaf for fppf topology, we have an exact sequence
	\begin{equation}
		0\to \mathscr{F}(\mathfrak{T}^{\rig})\to \mathscr{F}(\mathfrak{R}^{\rig}) \to \mathscr{F}(\mathfrak{R}^{(1),\rig}).
	\end{equation}
	Then we deduce that $\mathscr{F}_{\mathfrak{T}^{\rig}}$ is isomorphic to $\mathscr{M}$ and hence is coherent.

	Let $g:\mathfrak{T}'\to \mathfrak{T}$ be a morphism of $\Conv(Y/\rW)$. Choose an integer $n$ large enough such that $\mathfrak{R}'=\mathfrak{T}_{T'_{Z},n}(\mathbb{P}_{\mathfrak{T}'}^{N})\to \mathfrak{T}'$ and $\mathfrak{R}=\mathfrak{T}_{T_{Z},n}(\mathbb{P}_{\mathfrak{T}}^{N})\to \mathfrak{T}$ are faithfully rig-flat.
	Since the construction of $\mathfrak{R}$ is functorial, we have a $\rW$-morphism $h: \mathfrak{R}'\to \mathfrak{R}$ compatible with $g$. Moreover, $h$ induces a morphism of $\Conv(Z/\rW)$.
	The transition morphism of $f_{\rconv,\fppf}^{*}(\mathscr{F})$ associated to $h^{\rig}$ is an isomorphism.
	By fppf descent, we deduce that the transition morphism $c_{g^{\rig}}$ of $\mathscr{F}$ associated to $g^{\rig}$ is an isomorphism (cf. \cite{Xu} 8.15). Then $\mathscr{F}$ is a crystal and the theorem follows.
\end{nothing}

\end{document}